\documentclass{amsart}
\usepackage[utf8]{inputenc}
\usepackage{multirow}
\usepackage[margin=1.2 in]{geometry}
\usepackage{amssymb,bm,amsfonts, stmaryrd, amsmath, amsthm, enumerate, mathtools,comment,quiver}
\usepackage[all]{xy}
\usepackage{csquotes}
\usepackage{colonequals}
\usepackage{enumitem}

\usepackage{verbatim}
\usepackage{wrapfig, tikz, tikz-cd}
\usepackage{caption}
\usepackage{subcaption}
\usepackage{float}
\usetikzlibrary{arrows}
\usepackage{blkarray}
\newcommand{\matlabel}[1]{ \makebox[15pt]{\text{\small{$#1$}}} }

\makeatletter
\@namedef{subjclassname@2020}{\textup{2020} Mathematics Subject Classification}
\makeatother
\usepackage[colorlinks]{hyperref}
\definecolor{chianti}{rgb}{0.6,0,0}
\definecolor{meretale}{rgb}{0,0,.6}
\definecolor{leaf}{rgb}{0,.35,0}

\definecolor{mypink}{RGB}{216, 27, 96}
\definecolor{myblue}{RGB}{30, 136, 229}
\definecolor{myyellow}{RGB}{169, 126, 0}
\definecolor{mygreen}{RGB}{0, 160, 64}
\hypersetup{
 colorlinks=true,
 linkcolor=chianti,
 filecolor=magenta, 
 urlcolor=leaf,
 citecolor=meretale,
}

\usepackage{wrapfig,tikz, tikz-cd}
\usetikzlibrary{arrows}
\usepackage[capitalise]{cleveref}
\crefformat{equation}{(#2#1#3)}
\crefrangeformat{equation}{(#3#1#4--#5#2#6)}
\crefformat{enumi}{(#2#1#3)}
\crefrangeformat{enumi}{(#3#1#4--#5#2#6)}
\newtheorem{theorem}{Theorem}[section]
\newtheorem{lemma}[theorem]{Lemma}
\newtheorem{proposition}[theorem]{Proposition}
\newtheorem{corollary}[theorem]{Corollary}
\newcounter{intro}
\newtheorem{questionx}{Question}
\newtheorem{introthm}[intro]{Theorem}

\theoremstyle{definition}
\newtheorem{definition}[theorem]{Definition}
\newtheorem{example}[theorem]{Example}
\newtheorem{remark}[theorem]{Remark}

\newtheorem{chunk}[theorem]{}

\newtheorem*{theorem*}{Theorem}
\newtheorem*{definition*}{Definition}
\newtheorem*{lemma*}{Lemma}
\newtheorem*{proposition*}{Proposition}
\newtheorem*{corollary*}{Corollary}
\newtheorem{question}[theorem]{Question}

\theoremstyle{definition}

\newcommand{\pdim}{{\operatorname{pdim}}}
\newcommand{\ges}{\geqslant}
\newcommand{\les}{\leqslant}

\newcommand{\Spec}{{\operatorname{Spec}}}

\newcommand{\Ext}{{\operatorname{Ext}}}

\renewcommand{\S}{\mathcal{S}}

\DeclareMathOperator{\id}{id}
\DeclareMathOperator{\DB}{DB}
\DeclareMathOperator{\WT}{WT}
\DeclareMathOperator{\spank}{span_k}
\DeclareMathOperator{\lcm}{lcm}
\newcommand{\supp}{{\operatorname{Supp}}}

\newcommand{\f}{\bm{f}}
\newcommand{\g}{\bm{g}}
\newcommand{\V}{{\mathcal{V}}}
\newcommand{\I}{{\mathcal{I}}}
\newcommand{\setbuild}[2]{\left\{ {#1} \;:\;{#2}\right\}}

\newcommand{\cV}{{\mathcal{V}}}

\DeclareMathOperator{\im}{im}

\DeclareMathOperator{\rank}{rank}

\newcommand{\h}{\mathtt{h}}
\newcommand{\e}{\mathtt{e}}
\newcommand{\dd}{\mathtt{d}}
\newcommand{\T}{\mathtt{T}}
\newcommand{\G}{\mathtt{G}}
\newcommand{\Aux}{\mathtt{A}}

\newcommand{\m}{\mathfrak{m}}

\newcommand{\p}{\mathfrak{p}}
\newcommand{\A}{\mathbb{A}}

\DeclareMathOperator{\sign}{sign}

\DeclareMathOperator{\Kos}{Kos}

\DeclareMathOperator{\Sym}{Sym}

\newcommand{\Q}{\mathbb{Q}}

\newcommand{\N}{\mathbb{N}}

\newcommand{\q}{\mathfrak q}

\newcommand{\ognum}[1]{}%\color{blue}{(#1)}\color{black}}

\newcommand*\circled[1]{\tikz[baseline=(char.base)]{\node[shape=circle,draw,inner sep=2pt] (char) {#1};}}
\def\VR{\kern-\arraycolsep\strut\vrule &\kern-\arraycolsep}
\def\vr{\kern-\arraycolsep & \kern-\arraycolsep}

\title{Cohomological support varieties for monomial ideals}
\author[K. Fagerstrom] {Kara Fagerstrom}
\address{University of Nebraska Lincon, NE 68588. U.S.A.}
\email{kfagerstrom2@huskers.unl.edu}
\urladdr{https://math.unl.edu/person/kara-fagerstrom/}

\author[J. Faur] {Julianne Faur}
\address{University of Nebraska-Lincoln, NE 68588. U.S.A.}
\email{juliannefaur@huskers.unl.edu}
\urladdr{https://juliannefaur.github.io}

\author[B. Katz]{Benjamin Katz}
\address{University of Nebraska-Lincoln, NE 68588. U.S.A.}
\email{bkatz2@huskers.unl.edu}
\urladdr{https://math.unl.edu/person/ben-katz/}

\author[K. Mohana Sundaram]{Kesavan Mohana Sundaram}
\address{University of Nebraska-Lincoln, NE 68588. U.S.A.}
\email{km2@huskers.unl.edu}
\urladdr{https://kesavan-ms.github.io/}

\author[S. Stern]{Stephen Stern}
\address{University of Nebraska-Lincoln, NE 68588. U.S.A.}
\email{sstern2@huskers.unl.edu}
\urladdr{https://sterns.github.io}

\author[R. Watson]{Ryan Watson}
\address{University of Nebraska-Lincoln, NE 68588. U.S.A.}
\email{rwatson9@huskers.unl.edu}
\urladdr{https://rawatson1997.github.io}

\keywords{cohomological support variety, clique complex, monomial ideal, Taylor resolution. }
\subjclass[2020]{Primary: 13D99.  Secondary: 05E40}

\begin{document}

\begin{abstract}
    Let $R$ be a local or positively graded ring with a regular presentation $R \cong Q/I$ where $I$ is a monomial ideal generated by $n$ elements on a regular sequence. In \cite{BGP} the authors classify the cohomological support varieties $\cV_R(R)$ for $n \les 5$. In this paper we extend their results to classify the varieties that can occur as $\cV_R(R)$ for $n=6$. 
    Moreover, we provide two families of rings, one realizing cohomological support varieties of unbounded codimension, the other realizing an unbounded number of components.
    Finally, we answer a question of \cite{gintz} about the varieties that occur as $\cV_R(R)$ where $I$ is given by the edge ideal of a cycle.
\end{abstract}
\maketitle
\vspace{-2em}
\section*{Introduction}
Inspired by Quillen's geometric approach to modular representation theory \cite{Quillen:1971}, Avramov imported the theory of support varieties to commutative algebra to study local complete intersection rings \cite{Avramov:1989}. 
This has now expanded to include all noetherian local rings through the work of various authors \cite{Jorgensen:2002,Burke/Walker:2015,Pollitz:2019, Pollitz:2021}. 
Most notably for this paper, Pollitz developed the theory of cohomological support varieties over Koszul complexes which generalizes the theory of Avramov.

Suppose that $R$ is a local or positively graded ring. Then $R$ is said to admit a minimal regular presentation, if one of the following holds:
    \begin{itemize}
        \item $\widehat{R} = Q/I$ with $(Q,\m, k)$ a regular local ring and $I \subset \m^2$ in the local case, or
        \item $R = Q/I$ where $Q$ is a positively graded polynomial algebra over a field $k$ and $I$ is an ideal generated by homogeneous forms of degree at least 2 in the graded case.
    \end{itemize}

Assume that $R$ has a minimal regular presentation given by $Q/I$, where $I$ is minimally generated by $n$ elements. Let $E$ be the Koszul complex on a minimal generating set of $I$. The ring of cohomological operators $\S = k[\chi_1,\dotsc, \chi_n]$ is a polynomial ring over $k$ where each $\chi_i$ is a variable of cohomological degree $2$. For any finitely generated $R$-module $M$, the ring $\S$ acts on $\Ext_E(M,k)$, making $\Ext_E(M,k)$ into a graded $\S$-module. There are many different ways to define this action, but one is explained as follows. The ring $\S$ is a graded $k$-subalgebra of $\Ext_E(k,k)$, and the left action of $\Ext_E(k,k)$ on $\Ext_E(M,k)$ defines an action of $\S$ on $\Ext_E(M,k)$ (see \cite[Section 2]{Avramov/Buchweitz:2000a}). The specific details of how $\S$ acts on this $\Ext_E(M,N)$ are not needed for this paper, so we refer the interested reader to \cite[3.2.6]{Pollitz:2019}. Due to our conditions on $Q$, it turns out that $\Ext_E(M,k)$ is a finitely generated graded $\S$ module \cite[3.2.5]{Pollitz:2019}.

The cohomological support variety of a finitely generated $R$ module $M$, is defined to be
\[
    \cV_R(M) := \supp_{\S}\left( \Ext_E(M,k) \right).
\]

As $\Ext_E(M,k)$ is a finitely generated graded $\S$ module, we can and will view $\cV_R(M)$ as a conical affine variety (i.e. an affine cone), living in $\A^n_k$ which we identify with $\m \Spec (\S)$. 
Geometric properties of these varieties encode homological information of both the ring and the module. For example, they see some of the structure of the lattice of thick subcategories in the derived category of $R$ \cite{Stevenson:2014, Pollitz:2019}. 

When the ring is a complete intersection, the theory of support varieties is generally well-understood and has been studied extensively \cite{Avramov:1989,Avramov/Buchweitz:2000b, Bergh:2007, Avramov/Iyengar:2007,Dao/Sanders:2017}. However, moving away from the complete intersection case, their behavior becomes much more mysterious. In \cite{Pollitz:2021}, the author asks the following question known as the \textit{realization problem} for cohomological support varieties:
\begin{questionx}
    Given a noetherian local ring $R$, what conical affine varieties in $\A^n_k$ can be realized as the support variety of some finitely generated $R$-module?
\end{questionx}
This problem is solved when the ring is a complete intersection \cite{Bergh:2007,Avramov/Iyengar:2007}, with the answer being that every conical affine variety is realizable. However, when the ring is not a complete intersection, much less is known. There are bounds on the dimensions of the varieties that can occur \cite{Briggs/Grifo/Pollitz:2024, BGP}, and certain varieties can be guaranteed to be realizable over certain rings in \cite{Pollitz:2021,Briggs/Grifo/Pollitz:2022,Watson:2025,gintz}. Beyond these results, little is known in general. In fact, it is unknown what varieties can arise as $\cV_R(R)$. Some results in this direction include the following. When $R$ is a complete intersection, $\cV_R(R) =\{0\}$ \cite{Avramov:1989}, and when $R$ is a Golod ring of codepth at least two, $\cV_R(R)$ is all of $\A^n_k$ \cite{Briggs/Grifo/Pollitz:2024}. Moreover, the possible support varieties arising from $\cV_R(R)$ when $R$ has small codepth is contained in \cite{Pollitz:2021}. In \cite{BGP} the authors classify all varieties that can be realized as $\cV_R(R)$ when $R$ is defined by a monomial ideal generated by five elements on a regular sequence of $Q$. These are exactly linear subspaces with dimension not 1 and the union of two hyperplanes. In \cite{gintz}, the author computes the support varieties of equigenerated monomial ideals with six generators on a regular sequence of $Q$ when the residue field is $\Q$. In this paper, we extend the results of \cite{BGP} and \cite{gintz} to all rings defined by six monomial relations with no restrictions on the residue field. 

\begin{introthm}\label{theorem_A}
    Let $R$ be a local or positively graded ring with minimal regular presentation given by $Q/I$ where $I$ is a monomial ideal generated by six elements on a regular sequence of $Q$. Then up to a reordering of the monomial generators of $I$, the varieties realized as $\cV_R(R)$ are listed below:
    \begin{itemize}
        \item a coordinate subspace of $\mathbb A^6_k$ with dimension not equal to $1$
        \item $V(\chi_1,\;\chi_4\chi_6)$ 
        \item $V(\chi_4\chi_6)$
        \item $V(\chi_4\chi_6,\; \chi_5\chi_6)$
        \item $V(\chi_2\chi_4\chi_6)$
        \item $V(\chi_1\chi_3\chi_5+\chi_2\chi_4\chi_6)$.
    \end{itemize}
\end{introthm}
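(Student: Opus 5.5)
We reduce the computation of $\cV_R(R)$ to a finite combinatorial problem. Since $I$ is a monomial ideal on a regular sequence, flat base change lets us replace the regular sequence by variables. The support variety then depends only on the lcm-lattice of the six generators $m_1,\dots,m_6$. Following \cite{BGP}, we compute $\Ext_E(R,k)$ via the Taylor resolution: $\cV_R(R)$ is cut out by the $\chi$-action on the homology of $R\otimes_Q E$, i.e.\ on Koszul homology. For monomial ideals this action is governed by which products of Koszul generators survive. Concretely, we take as given the description from \cite{BGP} in which $\cV_R(R)$ is the zero set of the ideal generated by squarefree monomials $\chi_A=\prod_{i\in A}\chi_i$, together with certain binomial-type sums. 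These arise from subsets $A$ whose Taylor faces are ``essential'' in the lcm-lattice, so that the corresponding cycles are not boundaries. Equivalently, in clique-complex language, a generator is associated to each set of generators whose lcms behave like a complete intersection piece of $R$.

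The steps are as follows.
\begin{enumerate}
\item Reduce to the case where no generator is a unit multiple of a product not involving the others. Split off variables appearing in a single generator: a generator $m_i$ coprime to all others contributes a free complete intersection factor, and $\chi_i$ then vanishes on $\cV_R(R)$. Tensor product decompositions $R=R_1\otimes_k R_2$ give $\cV_R(R)=\cV_{R_1}(R_1)\times\cV_{R_2}(R_2)$. This reduces everything to ``connected'' configurations, where the results of \cite{BGP} for $n\les 5$ apply to the proper factors.
\item For connected configurations with six generators, encode the data by the graph $G$ on $\{1,\dots,6\}$ with an edge $ij$ when $\gcd(m_i,m_j)\neq1$. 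We then refine this by the lcm-lattice, i.e.\ which $\lcm(m_A)$ coincide with $\lcm(m_B)$ for $B\subsetneq A$.
\item Enumerate the finitely many lcm-lattice types up to $S_6$-symmetry and compute the defining ideal for each. Most cases give coordinate subspaces, by the same argument as in \cite{BGP}. Golod-type configurations give all of $\A^6_k$. The remaining ones give the listed nonlinear varieties. $V(\chi_4\chi_6)$ and its variants come from two disjoint ``Golod pieces'' glued along a complete-intersection part. The hypersurface $V(\chi_1\chi_3\chi_5+\chi_2\chi_4\chi_6)$ comes from the hexagon configuration, whose edge ideal of $C_6$ yields a nontrivial relation in homological degree three between two triangle-free matchings.
\item Realize each listed variety by an explicit monomial ideal, verified by the same Koszul homology computation. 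Also show that dimension-one subspaces never occur, via the argument of \cite{BGP} that a single nonvanishing $\chi_i$ forces a Golod-type contribution in at least two coordinates.
\end{enumerate}

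The main obstacle is step (3). The case analysis over lcm-lattices of six monomials is large. Moreover, independence from the characteristic of $k$ must be checked: the sum $\chi_1\chi_3\chi_5+\chi_2\chi_4\chi_6$ has coefficients $\pm1$, so its signs must be verified via Taylor sign conventions. The first approach would be to prune the enumeration by monotonicity: removing a generator yields a five-generator ring whose variety is known from \cite{BGP}, and this constrains the possible ideals for $\cV_R(R)$ to a handful of candidates per graph $G$. The candidates are then distinguished by computing the single relevant Massey-product-type class directly.
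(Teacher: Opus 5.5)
Your outline has the right global shape: use the product formula for disconnected GCD graphs, stratify by GCD graph plus finer divisibility data, and single out the hexagon. But there is a genuine gap. The whole theorem lives in Step (3), and you give no mechanism that could carry it out.

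\textbf{The computational mechanism.} The one you sketch is not correct. $\cV_R(R)$ is the support of the \emph{unbounded} $\S$-module $\Ext_E(R,k)$. Koszul homology is bounded, so the $\chi_i$ act nilpotently on it and its support is $\{0\}$. Moreover, \cite{BGP} gives no description of $\cV_R(R)$ as the zero set of monomials $\chi_A$ over ``essential'' Taylor faces plus binomials; that presupposes the shape of the answer.

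What is actually available, and what the paper's proof runs on, is the following. A point $a$ lies in $\V_{\f}$ iff the square-zero $2^6\times 2^6$ matrix $\T_{\f}(a)$ has rank $<2^5$. This matrix encodes the $2$-periodic part, tensored with $k$, of the Taylor-built resolution of $R$ over $Q/(\sum a_if_i)$. Hence $\I_{\f}$ is the radical of the $2^5$-minors.

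Full support is forced by a degree-$5$ vertex or by the hypothesis of \cite[Lemma 6.12]{BGP}. Each of the $41$ connected graphs that survive these reductions then needs two things:
\begin{itemize}
\item a lower bound, from isolated vertices, homotopy sources/sinks giving $V(\chi_{i_1},\dots,\chi_{i_s})\subseteq\V_{\f}$, Lemmas \ref{lemma more sources than neighbors} and \ref{lemma more sinks than sources}, or the odd balanced walks of Lemma \ref{lemma odd length path in taylor graph};
\item an upper bound, from a $\sigma$-perfect matching with acyclic auxiliary graph, which forces $\chi^\sigma\in\I_{\f}$.
\end{itemize}

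For the hexagon edge ideal, the auxiliary graph has exactly $4$ disjoint cycles. Expanding over cycle decompositions gives $\det\T_{\f}^M=\pm\chi_2^4(\chi_1\chi_3\chi_5+\chi_2\chi_4\chi_6)^4$. Together with the symmetric minor this yields $(\chi_1\chi_3\chi_5+\chi_2\chi_4\chi_6)^2\in\I_{\f}$. The reverse inclusion comes from a connected component of $\T_{\f}$ whose $6\times 6$ block has determinant $a_1a_3a_5+a_2a_4a_6$. These explicit computations are also what settle your sign/characteristic concern. A ``relation in homological degree three between matchings'' or a ``Massey-product-type class'' does not replace them.

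\textbf{The pruning devices.} The devices you propose for taming the enumeration also fail.
\begin{itemize}
\item There is no monotonicity under deleting a generator, in either direction. First, $(x^2,y^2)\subset(x^2,xy,y^2)$ has supports $\{0\}$ and $\A^3_k$. Second, deleting $cd$ from the $\DB(1,2)$ ideal $(ab,bc,cd,deu,ev,uw)$ leaves a five-generated ideal of full support: its GCD graph is an edge plus a $3$-path, and both Taylor graphs have isolated vertices. Yet by Theorem \ref{theorem DB WT varieties} the six-generated ideal has $\V_{\f}=V(\chi_1)\cup V(\gamma_1,\gamma_2)$, which does not contain $V(\eta_2)$.
\item The monotonicity that does hold fixes the number of generators: $X_{\f}\subseteq X_{\g}$ implies $\T_{\g}\subseteq\T_{\f}$ (Proposition \ref{proposition Taylor graphs are order respecting}). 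This is what lets one check sources/sinks and triangular matchings only at the extremes of an interval of variable sets.
\item If Step (1) means discarding variables that divide a single generator, that is not allowed. The presence or absence of exactly those variables separates full from interesting support in types A, B and C.
\item The isomorphism type of the lcm-lattice does not determine $\cV_R(R)$ by itself. For example, $(xy,zw)$ and $(xy,yz)$ have isomorphic lcm-lattices but supports $\{0\}$ and $\A^2_k$, so the GCD graph must be part of the data from the start.
\end{itemize}

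Finally, the absence of one-dimensional coordinate subspaces is not a principle you can quote from \cite{BGP}. It falls out of the classification itself. For disconnected graphs, the $n\le 5$ answers are closed under products. Every connected case yields $\A^6_k$ or one of the listed varieties that is not a linear subspace.
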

When dealing with rings defined by monomial ideals, one has to go all the way to five generators to get a ring whose support variety is not a linear subspace \cite{BGP}. The only such variety is given by $V(\chi_1\chi_5)$ which is a union of hyperplanes which can be realized by $R = k\llbracket x,y,z,w \rrbracket/I$ where $I = (x^2,xy,yz,zw,w^2)$. Our result shows that one need only go up to six generators to attain a non-linear hypersurface, namely $V(\chi_1\chi_3\chi_5+\chi_2\chi_4\chi_6)$. 
Expanding beyond this variety, we obtain the following theorem which answers a question of \cite{gintz}, about the support variety of $R$ where $R$ is defined by the edge ideal of a cycle.

\begin{introthm}\label{theorem_B}
    Let $R=k[x_1,\dotsc,x_n]/(x_1x_2,x_2x_3, \dotsc, x_nx_1)$, the ring defined by the edge ideal of an $n$-cycle. Then
    \[\V_{R}(R) = \begin{cases}
    V(\chi_1\chi_3\ldots\chi_{n-1}+\chi_2\chi_4\ldots\chi_{n}) & \text{if} \ n\equiv2  \pmod {4}\\
     \A_k^n & otherwise.
    \end{cases}\]
\end{introthm}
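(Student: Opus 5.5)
We will use the description of $\cV_R(R)$ for monomial ideals from \cite{BGP}. Since the monomial generators $m_1,\dots,m_n$ of $I$ lie on a regular sequence of $Q$, the support variety is governed by the Taylor resolution $T$ of $Q/I$. It is cut out by the ideal of $\S$ that controls when $\Ext_E(R,k)$ is torsion along a line. Concretely, a point $a=(a_1,\dots,a_n)\in\A^n_k$ lies outside $\cV_R(R)$ exactly when the Koszul-type "restricted" complex is exact. This complex is $T\otimes_Q k$, viewed as a module over the exterior algebra $\bigwedge(e_1,\dots,e_n)$. Its differential is twisted by $\sum a_i\chi_i$, using the DG module structure $E\to T$ in which $e_i$ acts by $e_i\cdot e_J = \pm (m_i m_J/m_{J\cup i}) e_{J\cup i}$. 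After tensoring with $k$, only the terms with $m_i \mid m_J$, i.e. $i$ \emph{redundant} for $J$, survive. The plan is therefore:

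(1) Write down the induced complex $C(a)$ on $k^{2^n}=\bigwedge k^n$. It has a "combinatorial" piece, coming from the Taylor differential modulo $\m$: the face $J\setminus j$ appears when $\lcm(m_{J\setminus j})=\lcm(m_J)$. It also has the multiplication by $\sum_i a_i e_i$ restricted to redundant pairs.

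(2) Split $C(a)$ by $\lcm$-degree: each multidegree $\mu$ gives a subcomplex on the faces $J$ with $m_J=\mu$. For the edge ideal of the cycle $C_n$, the sets $J$ with a given lcm correspond to edge sets covering a fixed vertex set $W$.

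(3) Compute, for each $W$, the cohomology of the complex on edge covers of the induced subgraph of $W$ with operator $\sum a_i e_i$. Induced subgraphs of $C_n$ are disjoint paths or the whole cycle. For a proper vertex subset, the pieces are paths. Their edge-cover complexes factor as tensor products over components, and a path has a leaf edge that must appear. I expect to show these contribute nothing to support: they are exact for all $a$, or acyclic via an explicit contracting homotopy using the forced leaf edge, much as in the $n\le5$ cases of \cite{BGP}.

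(4) The decisive piece is $W=$ all vertices. There the complex lives on edge covers of $C_n$ and carries the operator $\sum a_ie_i$. This is a Koszul-like complex on a "necklace". I would compute it by a transfer-matrix argument: process the edges cyclically with a $2\times2$ (or small) matrix per edge, and read exactness off the determinant of $\prod_i M_i(a_i)-\mathrm{Id}$ around the cycle. I expect this determinant, up to units, to be $a_1a_3\cdots a_{n-1}+a_2a_4\cdots a_n$ when $n\equiv2 \pmod 4$. For other $n$ it should vanish identically, for the following reasons. For $n$ odd there is no perfect matching, so no obstruction cancels. For $n\equiv0\pmod4$ the two alternating perfect matchings contribute with signs $\pm$ whose difference, combined with the Koszul signs of $n/2$ elements, yields zero or a non-invertible relation. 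This sign bookkeeping is the main obstacle. The two alternating perfect matchings $\{1,3,\dots\}$ and $\{2,4,\dots\}$ are exactly the minimal edge covers when $n$ is even, which explains the shape of the polynomial. The parity $n/2$ odd versus even should decide whether the Koszul sign of reordering makes the two contributions add or cancel.

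(5) Combining the pieces, $\cV_R(R)$ is the locus where some component fails to be exact. This yields the hypersurface $V(\chi_1\chi_3\cdots\chi_{n-1}+\chi_2\cdots\chi_n)$ for $n\equiv 2\pmod 4$ and $\A^n_k$ otherwise. The case $n=6$ should be checked against Theorem~\ref{theorem_A} as a sanity test.
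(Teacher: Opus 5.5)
\textbf{There is a genuine gap: step (1) has the survival condition backwards, and steps (2)--(4) rest on it.}

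In the DG module structure of the Taylor resolution over $E$ one has $e_i\cdot e_J=\pm\frac{m_im_J}{m_{J\cup i}}\,e_{J\cup i}=\pm\gcd(m_i,m_J)\,e_{J\cup i}$. So modulo $\m$ this term survives exactly when $\gcd(m_i,m_J)=1$, i.e.\ when $f_i\notin N_{\f}(J)$. These are the homotopy edges $\h_{J,i}$ of $\T_{\f}$. When $m_i\mid m_J$ the coefficient is $m_i\in\m$ and the term dies.

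Your operator is not even a differential. Take the basis element $e_{[n]}$ indexed by all $n$ edges of the cycle: $\bar\partial$ may delete any single edge, and your twist puts it back. So the square sends $e_{[n]}$ to $(a_1+\cdots+a_n)e_{[n]}$.

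With the correct rule, the twist sends $e_J$ to $e_{J\cup i}$ with $m_{J\cup i}=m_Jm_i$. It therefore changes the lcm, and there is no splitting of the complex by vertex sets $W$. This affects the answer, not just the bookkeeping:
\begin{itemize}
\item For $n=6$, a connected component of $\T_{\f}$ witnessing the hypersurface is the $12$-vertex cycle through $v_{\{f_3,f_4\}}$ in Example~\ref{example hexagon}. Its vertices have lcms $x_3x_4x_5$, $x_1x_2x_3$, $x_1x_5x_6$, $x_1\cdots x_5$, $x_1x_2x_3x_5x_6$, and $x_1x_3x_4x_5x_6$, none equal to $x_1\cdots x_6$.
\item For $n=4m$, the witness of full support is the isolated vertex $\{f_1,f_2,f_5,f_6,\ldots,f_{4m-3},f_{4m-2}\}$, whose lcm omits $x_4,x_8,\ldots,x_{4m}$.
\end{itemize}
So the expectation in (3) that proper $W$ contribute nothing is false, and so is the claim in (4) that $W=$ all vertices is decisive.

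Even with a correct model, (3)--(5) are expectations rather than arguments, and the hard direction is not a single-block computation. Write $\chi^{\mathrm{odd}}=\chi_1\chi_3\cdots\chi_{n-1}$ and $\chi^{\mathrm{even}}=\chi_2\chi_4\cdots\chi_n$. The inclusion $\V_{\f}\subseteq V(\chi^{\mathrm{odd}}+\chi^{\mathrm{even}})$ requires $\T_{\f}(a)$ to have rank $2^{n-1}$ on the whole $2^n$-dimensional space off the hypersurface.

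The paper gets this from one $2^{n-1}$-minor. It uses a perfect matching of $\T_{\f}$ whose edges all add or remove even-indexed generators and which is determined by the odd coordinates. Its auxiliary graph has exactly $2^{2m}$ pairwise disjoint cycles, found via the recursive graphs $G(p,s)$. In the Leibniz expansion, each cycle trades $\chi^{\mathrm{even}}$ for $\chi^{\mathrm{odd}}$ with matching sign. This gives $\chi^{\mathrm{even}}(\chi^{\mathrm{odd}}+\chi^{\mathrm{even}})\in\I_{\f}$, and by symmetry $(\chi^{\mathrm{odd}}+\chi^{\mathrm{even}})^2\in\I_{\f}$.

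The reverse inclusion uses an explicit $2n$-vertex component of $\T_{\f}$ like the one above. The case $n\not\equiv 2 \pmod 4$ uses the isolated vertex for $n=4m$, and the odd-length alternating path of Lemma~\ref{lemma odd length path in taylor graph} for $n$ odd.

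Your intuition that the two parity classes and a sign depending on $n/2$ decide the answer is right. But it has to be realized by a global rank argument on $\T_{\f}$, not by a per-lcm decomposition.
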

Note that specializing to the case $n=6$ gives the non-linear hypersurface mentioned above.

The structure of the paper is as follows. In Section \ref{section preliminaries} we go over the necessary background information for computing support varieties over rings defined by monomial ideals. The general method of computing support varieties in this paper is taken from \cite{BGP} and uses the theory of GCD and Taylor graphs. However, in this paper, we employ many combinatorial techniques not present in \textit{loc.cit.}. In Section \ref{section 6 generated ideals}, we deal with support varieties of rings defined by monomial ideals with six generators and prove Theorem \ref{theorem_A} as Theorem \ref{theorem A.2}. In Sections \ref{section families} and \ref{section cycle varieties}, we consider different families of graphs which give rise to rings whose support varieties we can compute. The main results in Section \ref{section families} give three families of non complete intersection rings. The first two families give rings whose support varieties attain arbitrarily large codimension, and the third family gives rise to rings whose support varieties are a union of $n$ hyperplanes in $\A^{2n}_k$ for $n\ges 3$. Lastly, in Section \ref{section cycle varieties}, we compute the support varieties of rings defined by of ideals including edge ideals of cycles, proving Theorem \ref{theorem_B} as Theorem \ref{theorem cycle edge ideal supports}.

\section{Preliminaries}\label{section preliminaries}
%%%%%%%%%%%%%%%%%%%%%%%%%%%%%%%%%%%%

Throughout this paper, we assume that $R$ is either a local ring or a positively graded ring with residue field $k$, admitting a minimal regular presentation $Q/I$. Moreover, we assume the ideal $I$ is minimally generated by the ordered list of monomials $\f = f_1, \ldots, f_n$ on a regular sequence $x_1,\ldots, x_r$ of $Q$. That is, each $f_i$ is a product of powers of the $x_j$.
Our object of interest is the support variety $\V_{\f}:= \V_R(R)$. 
In this setting, \cite{BGP} furnishes a combinatorial construction of $\V_{\f}$ in terms of two graphs that are determined by $\f$, the GCD graph $\G_{\f}$ and the Taylor graph $\T_{\f}$. 
Moreover, for a fixed number of generators, only finitely many $\G_{\f}$ and $\T_{\f}$ can be realized, hence only finitely many support varieties can occur. 

\begin{definition} \label{definition gcd_graph}
The \emph{GCD graph} of $\f$ is the simple graph $\G_{\f}$ with vertex set $[n] = \{f_1 , \ldots , f_n\}$ such that there is an edge $\{f_i,f_j\}$ if and only if $\gcd(f_i, f_j)$ is not a unit. For a set of vertices $\sigma\subseteq [n]$ we denote the \emph{neighborhood} of $\sigma$ as $N_{\f}(\sigma) =\setbuild{f_i}{\{f_i,f_j\}\in \G_{\f},\; j\in \sigma}.$
\end{definition}

\begin{example}\label{example first gcd graphs}
Consider two monomial ideals generated by
    \[ \f_1 = (bce, bd,cd,ae) \ \text{and} \ \f_2 = (de, ae, be, cd).\] In both cases, $\G_{\f}$ is a triangle on $\{f_1,f_2,f_3\}$ along with the edge $\{f_1,f_4\}$. 
    Hence, $N_{\f}(f_1) = \{f_2,f_3,f_4\}$. 
    This also illustrates that a fixed $\G_{\f}$ can have multiple monomial ideals attaining it. 
\end{example}

A priori, for a given $\G_{\f}$ there are infinitely many monomial ideals to consider; our first step is to reduce to considering a finite number of ideals. The first part of this reduction is given by \cite[Remark 6.18]{BGP}, which argues that $\G_{\f}$ and $\T_{\f}$ (and hence $\V_{\f}$) are preserved by polarization. Thus we always assume that $\f$ is a list of square-free monomials. Notably $\V_{\f}$ depends only on $I$ and hence is not sensitive to the choice of the $Q$-regular sequence over which our monomials are defined. Thus, the second step of our reduction is to show that, up to a choice of regular sequence, there are only finitely many square-free monomial ideals with $n$ generators.
Moving forward, we adopt the convention that
$\f$ is an ordered list of $n$ square-free monomials, none of which divide another. 
Hence $[n] := \{f_1,\ldots,f_n\} $ is the set of minimal generators of a square-free monomial ideal.
    
\begin{definition} \label{definition type of rgular element}
    Given a fixed list of square-free monomials $\f$, for a regular element $x$, the \emph{type} $\tau(x)$ is the subset of minimal generators that $x$ divides. That is, we identify $\tau(x) = \setbuild{f_i}{x\mid f_i}\subseteq [n]$. 
\end{definition} 

The image of this function appears as the construction of certain hypergraphs in \cite{lin_hypergraphs_2012, kimura_arithmetical_2009} where it is used to compute the arithmetic rank and regularity of square-free monomial ideals. 
Towards the second step of our reduction, we will consider ideals for which $\tau$ is injective.
Suppose for some $i\neq j$ that $\tau(x_i) = \tau(x_j)$, then $\f$ can be realized on a regular sequence of length $r-1$ by removing both $x_i$ and $x_j$ along with adding in their product $x_ix_j$. Repeating until $\tau$ is injective, we have that $r\leq2^n$ as well as a more insightful naming of our regular sequence: $x_{\sigma} = \tau^{-1}(\sigma)$. 
Note that, when appropriate, we will abbreviate a set $\{f_{i_1},\ldots,f_{i_s}\}\subseteq[n]$ with the concatenation $i_1\cdots i_s$, for example $x_{123}$ refers to $x_{\{f_1,f_2,f_3\}}$. Note further that $\f$ may be determined by the image of $\tau$ by setting $f_i = \prod_{i\in\sigma\subseteq im(\tau)}x_\sigma$.
    
\begin{example} \label{example f given by image of tau}
    Consider the ideal generated by $\f = (abcd, abce, cde)$ then $\tau(a) = \tau(b) = \{f_1,f_2\}$, $\tau(c) = \{f_1,f_2,f_3\}$, $\tau(d) = \{f_1,f_3\}$, and $\tau(e) = \{f_2,f_
    3\}$. Then $\f$ can be realized on the regular sequence $x_{12}=ab,\;x_{123}=c,\;x_{13}=d,\;x_{23} = e$. Hence, $\f = (x_{12}x_{123}x_{13},\; x_{12}x_{123}x_{23},\; x_{123}x_{13}x_{23})$.  
\end{example}    
    
    The possible image sets of $\tau$ are the elements of $2^{2^{[n]}\smallsetminus \emptyset}$, allowing us to reduce to a finite number of square-free monomial ideals for fixed $n$. Moving forward, we assume the regular sequence consists only of regular elements that appear in $\f$ and that no distinct regular elements have the same type. Moreover, we assume that each regular element is is decorated as $x_{\sigma_1},\ldots, x_{\sigma_r}$ where $\sigma_i=\tau(x_{\sigma_i})$. 
    When our regular elements are labeled as such, we refer to them as \emph{variables}. 
    We have lost no generality in these assumptions as any list of square-free monomials $\f$ can be realized in this manner by adjusting the regular sequence appropriately. 
    Let us denote the set of variables used by $\f$ to be $X_{\f}$ which is in correspondence with $ \im(\tau)$. 
    Note that $X_{\f}$ and $\f$ determine one another, hence the square-free monomial ideals we consider are in correspondence with elements of $2^{2^{[n]}\smallsetminus \emptyset}$.
\begin{example}\label{example set of variables}
The two ideals of Example \ref{example f given by image of tau} are determined by the variables \[X_{\f_1} =\{x_{4},x_{12},x_{13}, x_{23}, x_{14}\}\text{ and }X_{\f_2}=\{x_{2}, x_{3},x_{4}, x_{14},x_{123}\},\] so that \[\f_1 = (bce, bd,cd,ae)=(x_{12}x_{13}x_{14},\;x_{12}x_{23},x_{13}x_{23},\;x_{4}x_{14})\] and \[\f_2 = (de, ae, be, cd)=(x_{14}x_{123},\;x_2x_{123},\;x_3x_{123},\;x_4x_{14}).\]
\end{example}

    \begin{definition}
    We call the variable $x_{\sigma}$ \emph{present} if $x_\sigma\in X_{\f}$ and \emph{absent} if $x_{\sigma}\notin X_{\f}$.
    \end{definition}
    
    \begin{remark}
   We can express conditions like $f_3\mid f_{24}$  (where $f_{24} = \lcm(f_{2},f_{4}$)) from \cite[Theorem 6.16]{BGP} in terms of which variables are present and absent. In the two graphs given in that theorem, $f_3\mid f_{24}$ is equivalent to the absence of the variable $x_3$. For larger graphs, we will classify support varieties based on the absence of several variables. 
    \end{remark}
    
    Let us now introduce some combinatorial objects that will be helpful to stratify the square-free monomial ideals we will consider. 

    \begin{definition} \label{definition abstract simplicial complex}
    An \emph{abstract simplicial complex} with vertex set $[n]$ is a nonempty subset $K\subseteq 2^{[n]}$ with the property that $\sigma\in K$ and $\theta\subset \sigma$ implies $\theta\in K$. 
    Elements of $K$ are called \emph{faces} and maximal faces are called \emph{facets}. 
    The dimension of a nonempty face is one less than its cardinality. 
    A \emph{simple graph} is a simplicial complex with facets of dimension at most 1, where the 0-dimensional faces are called vertices and the 1-dimensional faces are called edges. 
    The \emph{1-skeleton} of $K$ is the simple graph given by removing the faces of $K$ with dimension more than 1. 
    The GCD graph $\G_{\f}$ can be identified with 1-skeleton of the smallest simplicial complex containing the labels of the variables $X_{\f}$. 
    \end{definition}

    \begin{definition} \label{definition clique}
    A \emph{complete graph} is a simple graph with every possible edge. 
    If $G$ is a simple graph and $W$ is a subset of its vertices, then the \emph{induced subgraph} of $G$ on $W$ is the graph with vertex set $W$ and edges of $G$ that use vertices in $W$. 
    We call an induced, complete subgraph a \emph{clique}.
    \end{definition}

    \begin{remark}\label{remark clique complex}
    Note that every induced subgraph of a clique is also a clique. Hence, the set of all cliques of a simple graph forms a simplicial complex, which we call the \emph{clique complex}. We denote the clique complex of $\G_{\f}$ by $K_{\f}$. 
     Note that if $i, j \in\sigma$, then the presence of $x_{\sigma}$ witnesses the edge $\{i,j\}$ in $\G_{\f}$. 
     In particular, $\G_{\f}$ is the 1-skeleton of $K_{\f}.$
    Moreover, we always have that the labels of variables in $X_{\f} $ must be elements of $ K_{\f}$. 
    It follows that the $\f$ satisfying $\G = \G_{\f}$ arise from selecting certain subsets of $2^K$ if $K$ is the clique complex of $\G$. 
    \end{remark}

We now prepare some lemmas that relate $X_{\f}$ with the combinatorics of $\G_{\f}$, in particular we identify features of a GCD graph that force certain variables to be present.

    \begin{lemma}\label{lemma leafs and edges}
        If $\{f_i,f_j\}$ is an edge in $\G_{\f}$ that is not contained in a triangle, then $x_{ij}$ is present. Moreover, if $f_\ell$ is a vertex with degree 1 then $x_\ell$ is present. 
    \end{lemma}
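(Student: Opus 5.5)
The plan is to argue directly from the correspondence between the GCD graph $\G_{\f}$ and the set of present variables $X_{\f}$ recorded in Remark \ref{remark clique complex}. The key fact is that $\gcd(f_i,f_j)$ is not a unit exactly when some present variable $x_\sigma$ has $\{f_i,f_j\}\subseteq\sigma$. Here we use that the $f$'s are square-free products of the variables $x_\sigma$, with $x_\sigma\mid f_\ell$ if and only if $f_\ell\in\sigma$, and that the variables form a regular sequence, so distinct variables share no common factor.

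For the first claim, suppose $\{f_i,f_j\}$ is an edge of $\G_{\f}$ lying in no triangle. Since it is an edge, some present $x_\sigma$ has $f_i,f_j\in\sigma$. If $\sigma$ contained a third element $f_\ell$, then by the same reasoning $x_\sigma$ would divide $\gcd(f_i,f_\ell)$ and $\gcd(f_j,f_\ell)$. Both would be edges, and $\{f_i,f_j,f_\ell\}$ would be a triangle, a contradiction. Hence $\sigma=\{f_i,f_j\}$, so $x_{ij}$ is present.

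For the second claim, let $f_\ell$ have degree $1$ with unique neighbor $f_m$. Every present variable dividing $f_\ell$ has type $\{f_\ell\}$ or $\{f_\ell,f_m\}$, since a type containing $f_\ell$ and some other vertex yields an edge at $f_\ell$. Suppose $x_\ell$ were absent. Then $f_\ell$ is a product of copies of $x_{\ell m}$; by square-freeness and injectivity of $\tau$ this means $f_\ell = x_{\ell m}$. But $x_{\ell m}\mid f_m$, so $f_\ell\mid f_m$, contradicting that the $f_i$ minimally generate $I$ (none divides another). Note also that $f_\ell$ is not a unit because $I\subseteq\m^2$ is minimally generated. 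Therefore $x_\ell$ is present.

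The only delicate point is this last step. It uses minimality of the generating set rather than purely graph-theoretic data, so I would take care to invoke the standing convention that no $f_i$ divides another.
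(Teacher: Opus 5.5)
Your proposal is correct and follows essentially the same route as the paper. For the first part, a witnessing variable whose label contains a third generator would force a triangle. For the second, only $x_\ell$ and $x_{\ell m}$ can divide $f_\ell$, so the absence of $x_\ell$ would give $f_\ell \mid f_m$. The paper additionally invokes the first part to record that $x_{\ell m}$ is present and hence $f_\ell = x_\ell x_{\ell m}$; your argument does not need this.
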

    \begin{proof}
        Any edge of $\G_{\f}$ must be witnessed by some $x_{\sigma}$ where $\sigma$ contains the edge. Any clique larger than an edge containing $\{f_i,f_j\}$ would contain a triangle with $\{f_i,f_j\}$ as an edge. Thus, $x_{ij}$ is the only possible variable that can witness this edge in $\G_{\f}.$ If $f_\ell$ has degree 1 with neighbor $f_m$, the only variables that can appear in $f_\ell$ are $x_\ell$ and $x_{\ell m}$. By the first part of this lemma, it must be that  $x_{\ell m}$ is present. However, it cannot be that $f_\ell \vert f_m$ and only the presence of $x_\ell$ can prevent this. Thus $f_\ell = x_\ell x_{\ell m }.$
    \end{proof}

    \begin{corollary} \label{corollary triangle free fiber}
        If $\G$ is a triangle-free graph with $\ell$ vertices of degree 1, then up to a choice of regular sequence there are exactly $2^{n-\ell}$ monomial ideals $\f$ realizing $\G=\G_{\f}$.
    \end{corollary}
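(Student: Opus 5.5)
Because $\G$ is triangle-free, its clique complex $K$ has only vertices and edges as faces, so by Remark \ref{remark clique complex} the only possible variables are $x_i$ for $f_i\in[n]$ and $x_{ij}$ for edges $\{f_i,f_j\}$ of $\G$. The plan is to show that the set $X_{\f}$ of a realizing $\f$ is determined exactly by:
\begin{itemize}
  \item every edge variable $x_{ij}$, which must be present;
  \item every vertex variable $x_\ell$ with $\deg f_\ell = 1$, which must be present;
  \item the vertex variables $x_i$ with $\deg f_i\neq 1$, which may be chosen freely.
\end{itemize}
These choices would then give $2^{n-\ell}$ ideals.

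\textbf{Forced variables.} Since no triangle exists, every edge lies in no triangle. Lemma \ref{lemma leafs and edges} therefore forces all $x_{ij}$ with $\{f_i,f_j\}\in\G$ to be present, and forces $x_\ell$ to be present for each degree-one vertex. Conversely, no other edge variables can appear, since the labels must be faces of $K$.

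\textbf{Free choices are valid.} Take any subset $S$ of the vertices of degree $\neq 1$, and declare $x_i$ present exactly for $i\in S$ together with the leaves. Set $f_i=x_i^{\epsilon_i}\prod_{j\in N(f_i)}x_{ij}$. I would then check three things.
\begin{itemize}
  \item \textbf{The GCD graph is $\G$.} $\gcd(f_i,f_j)\neq 1$ exactly when they share a variable, which happens only through $x_{ij}$.
  \item \textbf{No $f_i$ divides another $f_j$.}
    \begin{itemize}
      \item If $\deg f_i\ge 2$, then $f_i$ contains $x_{ik}$ with $k\neq j$, which does not divide $f_j$.
      \item If $\deg f_i=1$, then $x_i$ is present and does not divide $f_j$.
      \item If $\deg f_i=0$, then $f_i$ is $x_i$ or $1$. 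Here I must check the convention: an isolated vertex needs $x_i$ present, otherwise $f_i=1$, which is not a minimal generator of a proper ideal. So isolated vertices should presumably be counted, or be excluded by hypothesis. I would note that for an isolated vertex $x_i$ is forced, so the count should treat such vertices like leaves. I would flag this and either assume $\G$ has no isolated vertices, or interpret it via the paper's convention.
    \end{itemize}
  \item \textbf{Types are correct and distinct.} The type of $x_i$ is $\{f_i\}$ and the type of $x_{ij}$ is $\{f_i,f_j\}$, so all types are distinct.
\end{itemize}

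\textbf{Counting.} Distinct subsets $S$ give distinct sets $X_{\f}$. Since $X_{\f}$ determines $\f$ up to the choice of regular sequence, this gives exactly $2^{n-\ell}$ ideals.

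\textbf{Main obstacle.} The only delicate point is the minimality and non-divisibility check together with the isolated-vertex edge case. Everything else follows directly from Lemma \ref{lemma leafs and edges} and the description of $K$.
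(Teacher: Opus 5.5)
Your proposal is correct and is essentially the argument the paper intends. The paper states the corollary as an immediate consequence of Lemma~\ref{lemma leafs and edges}, together with the fact that the clique complex of a triangle-free graph has only vertices and edges, so the only freedom is in the singleton variables $x_i$ of vertices of degree at least $2$; you additionally check the converse, which the paper leaves implicit. Your isolated-vertex caveat is a genuine correction: $x_i$ is forced for a degree-$0$ vertex, so the count is really $2^{n-\ell-\ell_0}$ with $\ell_0$ the number of isolated vertices (e.g.\ $n=1$ gives one ideal, not two), though this is harmless for the paper's only application, to $C_n$.
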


\begin{lemma} \label{lemma variables present for degree 2 vertex}
    Suppose $N_{\f}(f_\ell) = \{f_i, f_j\}$, for distinct $f_i, f_j, f_\ell \in [n]$. Then at least one of the following holds: 
     \begin{itemize}
        \item $x_{i\ell}$ and $ x_{j\ell}$ are present; 
        \item $x_\ell$ and  $x_{ij\ell}$ are present.
    \end{itemize}
\end{lemma}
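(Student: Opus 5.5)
Since $N_{\f}(f_\ell)=\{f_i,f_j\}$, every present variable $x_\sigma$ with $\ell\in\sigma$ has $\sigma\subseteq\{i,j,\ell\}$, because $\sigma$ must be a clique of $\G_{\f}$ containing $f_\ell$. So $f_\ell$ is a product of some of the four variables $x_\ell$, $x_{i\ell}$, $x_{j\ell}$, $x_{ij\ell}$. The plan is a short case analysis on which of these are present, using three constraints:
\begin{itemize}
\item the edges $\{f_i,f_\ell\}$ and $\{f_j,f_\ell\}$ must be witnessed;
\item $f_\ell\nmid f_i$;
\item $f_\ell\nmid f_j$.
\end{itemize}

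First consider the witnessing condition. The edge $\{f_i,f_\ell\}$ requires $x_{i\ell}$ or $x_{ij\ell}$ to be present. Likewise $\{f_j,f_\ell\}$ requires $x_{j\ell}$ or $x_{ij\ell}$. (If $f_i,f_j$ are not adjacent, $x_{ij\ell}$ is automatically absent.)

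Next consider divisibility. The variable $x_{ij\ell}$ divides both $f_i$ and $f_j$. The variable $x_{i\ell}$ divides $f_i$ but not $f_j$, and $x_{j\ell}$ divides $f_j$ but not $f_i$. The variable $x_\ell$ divides neither. So the condition $f_\ell\nmid f_i$ means some variable of $f_\ell$ fails to divide $f_i$, i.e.\ $x_\ell$ or $x_{j\ell}$ is present. Symmetrically, $f_\ell\nmid f_j$ forces $x_\ell$ or $x_{i\ell}$ to be present.

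Now split on whether $x_\ell$ is present.

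\emph{Case 1: $x_\ell$ is absent.} The divisibility conditions force both $x_{j\ell}$ and $x_{i\ell}$ to be present. This is the first alternative.

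\emph{Case 2: $x_\ell$ is present.}
\begin{itemize}
\item If $x_{ij\ell}$ is present, the second alternative holds.
\item Otherwise, $x_{ij\ell}$ is absent, and the witnessing conditions force $x_{i\ell}$ and $x_{j\ell}$ to be present. So the first alternative holds.
\end{itemize}

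The only point needing care is the first step, justifying that no other variables can appear in $f_\ell$. This follows exactly as in Lemma \ref{lemma leafs and edges}: any variable label containing $\ell$ lies in $K_{\f}$, hence in the closed neighborhood of $f_\ell$. The rest is routine bookkeeping.
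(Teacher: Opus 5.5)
Your proposal is correct and is essentially the paper's argument: both confine $f_\ell$ to the variables $x_\ell, x_{i\ell}, x_{j\ell}, x_{ij\ell}$ and combine the edge-witnessing requirement with the conditions $f_\ell\nmid f_i$ and $f_\ell\nmid f_j$. The paper packages this as a contrapositive via symmetry (if $x_{i\ell}$ is absent, then $x_{ij\ell}$ must witness $\{f_i,f_\ell\}$ and $x_\ell$ must witness $f_\ell\nmid f_j$), whereas you case-split on whether $x_\ell$ is present and spell out the restriction to those four variables explicitly.
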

\begin{proof}
    By symmetry, it suffices to show that if $x_{i\ell}$ is not present, then $x_\ell$ and $x_{ij\ell}$ are present. By assumption, the only other variable that could witness the edge $\{f_i,f_{\ell}\}$ in $\G_{\f}$ is $x_{ij\ell}$ which thus must be present. On the other hand, the only other variable that ensures $f_\ell  \nmid f_j$ is $x_{\ell}$, which must then be present.
\end{proof}

Let us now consider the Taylor graph which is instrumental in our computation of support varieties.
This graph encodes a $2^n\times 2^n$ matrix that we use to compute the stable homology of an eventually 2-periodic complex. 
In turn this homology is used to compute the support variety of $R$. 
We give a brief description of this process.

Recall that $R$ has a regular presentation $(Q,\m,k)/I$ where $I$ is minimally generated by $n$ monomials $f_1, \dotsc, f_n$. 
Let $E = \Kos^Q(f_1,\dotsc, f_n)$, and let $f = \sum_{i=1}^n a_i f_i$ for some $a_i \in k$. Then we have that $(a_1,\dotsc,a_n) \in \V_{\f} \iff \pdim_{Q/(f)}(R) = \infty$ \cite[6.2.4]{Pollitz:2021}.
One can resolve $R$ over $Q/(f)$ using the Taylor resolution of $R$ over $Q$ along with the dg $E$-module structure on the Taylor resolution that is dictated by its own differential graded algebra structure. 
This construction can be found in \cite[Section 2]{BGP}, and is also explained in higher generality in \cite[Chapter 4]{Watson:2026}. Let us call this resolution $G$. The complex $G$ eventually becomes $2$-periodic and so the finiteness of the projective dimension of $R$ over $Q/(f)$ can be detected by the differential in the $2$-periodic part of $G$. The edges of the Taylor graph encode the $2$-periodic differentials of $G\otimes k$ and so the Taylor graph can be used to detect exactly which points are in the support variety. 

%units that occur in the Eisenbud-Shamash construction. 
For $\sigma\subseteq [n]$ set $f_\sigma = \lcm(\sigma)$ and for $f_i\notin \sigma$ set $\sign(\sigma,i) = (-1)^p$ for $p = \lvert\setbuild{f_j\in \sigma}{j<i}\rvert$.

\begin{definition} \label{definition_Taylor_graph}
   The \emph{Taylor graph} of $\f$ is the weighted directed graph $\T_{\f}$ defined as follows. 

\begin{itemize}
        \item Vertices: one vertex $v_\sigma$ for each subset $\sigma \subseteq [n]$.
        \item Edges: for each $\sigma \subseteq [n]$ and $f_i \in [n]\smallsetminus \sigma$: 
\begin{itemize}
    \item if $f_i\;\vert\;f_\sigma$, there is a \emph{differential edge} $v_{\sigma\cup \{f_i\}} \xrightarrow[]{\dd_{\sigma,i}}v_{\sigma}$  with weight $\dd_{\sigma,i} = \sign(\sigma,i)$,
    \item if $f_i \notin N_{\f}(\sigma)$, there is a \emph{homotopy edge} $v_{\sigma} \xrightarrow[]{\h_{\sigma,i}}v_{\sigma\cup \{f_i\}}$  with weight $\h_{\sigma,i} = \sign(\sigma,i)\cdot \chi_i.$
    \end{itemize}
    \end{itemize}
\end{definition}

Note that homotopy edges depend only on $\G_{\f}$.
If the symbols $\dd_{\sigma,i}, \h_{\sigma,i}$ correspond to edges that do not exist in $\T_{\f}$ we take their value/weight to be 0. When referring to an edge that is unknown to be a homotopy edge or differential edge, we will use the symbol $\e_i$ to denote an edge that either adds or removes $f_i$.
Similarly, if $\sigma$ is unknown or obvious we will use the symbols $\dd_i$ and $\h_i$ accordingly.
We use $\e_i^s$ to refer to the source vertex of $\e_i$, and we denote its target vertex by $\e_i^t$. 

For any point $a=(a_1,\ldots,a_n)\in\A^n_{k}$, we consider a linear transformation $\T_{\f}(a): k^{2^n}\rightarrow k^{2^n}$ obtained by picking the vertices of $\T_{\f}$ as a basis, substituting $\chi_i=a_i$, and  sending 
\begin{equation} \label{differential in linear transformation}
  v_\sigma\mapsto \sum_{f_i \notin \sigma}\h_{\sigma,i}v_{\sigma\cup\{f_i\}} + \sum_{f_i \in \sigma}\dd_{\sigma\smallsetminus \{f_i\},i}v_{\sigma\smallsetminus \{f_i\}}.  
\end{equation}

This choice of basis realizes $\T_{\f}(a)$ as a sparse, $2^n\times2^n$ matrix with entries in $k$. We choose the convention that the columns of $\T_{\f}(a)$ correspond to sources (and rows to targets) of edges in $\T_{\f}$. We have that \cite[Remark 6.4]{BGP} implies that $\T_{f}(a)$ squares to 0,  forcing $\rank\T_{\f}(a) \leq2^{n-1}$, and moreover that  $a\in \V_{\f}$ exactly when this inequality is strict. In particular, $a\notin \V_{\f}$ is equivalent to $\ker(\T_{\f}(a)) = \im(\T_{\f}(a))$. We call the elements of $\ker(\T_{\f}(a))$ \emph{cycles} and elements of $\im(\T_{\f}(a))$ \emph{boundaries}. In the same vein, $\T_{\f}$ encodes an $\S$-module endomorphism of $\S^{2^n}$ that can be represented by the same matrix but without substituting values for the $\chi_i$; by \cite[2.5]{BGP} we are justified in the following definition for the ideal carving out $\V_{\f}$.
\begin{definition}\label{defenition support Ideal}
The \emph{support ideal} of $\f$ is $\I_{\f} = \sqrt{I_{2^{n-1}}(\T_{\f})}\subseteq \S$, the radical of the ideal of $2^{n-1}$-minors of the matrix that $\T_{\f}$ encodes.
\end{definition}

\begin{example}\label{example taylor graph definition}
    Let us consider the two ideals of Example \ref{example set of variables} given by 
    \[\f_1 =(x_{12}x_{13}x_{14},\;x_{12}x_{23},x_{13}x_{23},\;x_{4}x_{14})\text{ and }\f_2 = (x_{14}x_{123},\;x_2x_{123},\;x_3x_{123},\;x_4x_{14}).\]
\end{example}
% https://q.uiver.app/#q=WzAsMzQsWzEsNSwiXFxlbXB0eXNldCJdLFsxLDYsIjEiXSxbMiw1LCIyIl0sWzAsNSwiMyJdLFsxLDQsIjQiXSxbMCwwLCIxMiJdLFsyLDAsIjEzIl0sWzIsNiwiMTQiXSxbMSwxLCIyMyJdLFsyLDQsIjI0Il0sWzAsNCwiMzQiXSxbMSwwLCIxMjMiXSxbMiwzLCIxMjQiXSxbMCwzLCIxMzQiXSxbMSwyLCIyMzQiXSxbMSwzLCIxMjM0Il0sWzEsNywiXFxUX3tcXGZfMX0iXSxbNSw3LCJcXFRfe1xcZl8yfSJdLFs1LDUsIlxcZW1wdHlzZXQiXSxbNSw2LCIxIl0sWzYsNSwiMiJdLFs0LDUsIjMiXSxbNSw0LCI0Il0sWzQsMCwiMTIiXSxbNiwwLCIxMyJdLFs2LDYsIjE0Il0sWzUsMSwiMjMiXSxbNiw0LCIyNCJdLFs0LDQsIjM0Il0sWzUsMCwiMTIzIl0sWzYsMywiMTI0Il0sWzQsMywiMTM0Il0sWzUsMiwiMjM0Il0sWzUsMywiMTIzNCJdLFswLDEsIlxcaF8xIl0sWzAsMiwiXFxoXzIiXSxbMCwzLCJcXGhfMyIsMl0sWzAsNCwiXFxoXzQiLDJdLFs0LDksIlxcaF8yIl0sWzMsMTAsIlxcaF80Il0sWzQsMTAsIlxcaF8zIiwyXSxbMiw5LCJcXGhfNCIsMl0sWzE1LDEyLCJcXGRkXzMiLDJdLFsxNSwxMywiXFxkZF8yIl0sWzE1LDE0LCJcXGRkXzEiLDJdLFsxMSw1LCJcXGRkXzMiLDJdLFsxMSw2LCJcXGRkXzIiXSxbOCwxNCwiXFxoXzQiXSxbMTgsMTksIlxcaF8xIl0sWzE4LDIwLCJcXGhfMiJdLFsxOCwyMSwiXFxoXzMiLDJdLFsxOCwyMiwiXFxoXzQiLDJdLFsyMiwyNywiXFxoXzIiXSxbMjEsMjgsIlxcaF80Il0sWzIyLDI4LCJcXGhfMyIsMl0sWzIwLDI3LCJcXGhfNCIsMl0sWzMzLDMyLCJcXGRkXzEiLDJdLFsyNiwzMiwiXFxoXzQiXSxbMzEsMjgsIlxcZGRfMSIsMl0sWzMwLDI3LCJcXGRkXzEiXV0=
\[\begin{tikzcd}
	12 & 123 & 13 && 12 & 123 & 13 \\
	& 23 &&&& 23 \\
	& 234 &&&& 234 \\
	134 & 1234 & 124 && 134 & 1234 & 124 \\
	34 & 4 & 24 && 34 & 4 & 24 \\
	3 & \emptyset & 2 && 3 & \emptyset & 2 \\
	& 1 & 14 &&& 1 & 14 \\
	& {\T_{\f_1}} &&&& {\T_{\f_2}}
	\arrow["{\dd_3}"', from=1-2, to=1-1]
	\arrow["{\dd_2}", from=1-2, to=1-3]
	\arrow["{\h_4}", from=2-2, to=3-2]
	\arrow["{\h_4}", from=2-6, to=3-6]
	\arrow["{\dd_1}"', from=4-2, to=3-2]
	\arrow["{\dd_2}", from=4-2, to=4-1]
	\arrow["{\dd_3}"', from=4-2, to=4-3]
	\arrow["{\dd_1}"', from=4-5, to=5-5]
	\arrow["{\dd_1}"', from=4-6, to=3-6]
	\arrow["{\dd_1}", from=4-7, to=5-7]
	\arrow["{\h_3}"', from=5-2, to=5-1]
	\arrow["{\h_2}", from=5-2, to=5-3]
	\arrow["{\h_3}"', from=5-6, to=5-5]
	\arrow["{\h_2}", from=5-6, to=5-7]
	\arrow["{\h_4}", from=6-1, to=5-1]
	\arrow["{\h_4}"', from=6-2, to=5-2]
	\arrow["{\h_3}"', from=6-2, to=6-1]
	\arrow["{\h_2}", from=6-2, to=6-3]
	\arrow["{\h_1}", from=6-2, to=7-2]
	\arrow["{\h_4}"', from=6-3, to=5-3]
	\arrow["{\h_4}", from=6-5, to=5-5]
	\arrow["{\h_4}"', from=6-6, to=5-6]
	\arrow["{\h_3}"', from=6-6, to=6-5]
	\arrow["{\h_2}", from=6-6, to=6-7]
	\arrow["{\h_1}", from=6-6, to=7-6]
	\arrow["{\h_4}"', from=6-7, to=5-7]
\end{tikzcd}\]

%%%%%%%%%%%%%%%%%%%%%%%%%%%%%%%%%%%%%%%%%%%%%%

\begin{proposition} \label{proposition taylor edges down/upward closed}
    If $\dd_{\sigma,i}$ is an edge of  $\T_{\f}$, then every $\sigma'\supseteq \sigma$ with $f_i \notin \sigma'$ has an edge $\dd_{\sigma',i}$ in  $\T_{\f}$. Similarly, if $\h_{\theta,j}$ is an edge of $\T_{\f}$, then every $\theta'\subseteq\theta$ has an edge $\h_{\theta',j}$ in $\T_{\f}$. 
\end{proposition}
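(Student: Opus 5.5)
Both statements follow directly from the definitions of the two kinds of edges in Definition \ref{definition_Taylor_graph}. A differential edge $\dd_{\sigma,i}$ exists exactly when $f_i \mid f_\sigma$. A homotopy edge $\h_{\theta,j}$ exists exactly when $f_j \notin N_{\f}(\theta)$. So it suffices to show that the first condition is preserved when $\sigma$ is enlarged and the second when $\theta$ is shrunk. The signs $\sign(\sigma,i)$ play no role, since only the existence of the edges is asserted.

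For the differential edges, suppose $f_i \mid f_\sigma$ and let $\sigma' \supseteq \sigma$ with $f_i\notin\sigma'$. Every element of $\sigma$ lies in $\sigma'$, so $f_\sigma = \lcm(\sigma)$ divides $\lcm(\sigma') = f_{\sigma'}$. By transitivity $f_i \mid f_{\sigma'}$. Since $f_i \notin \sigma'$, the edge $v_{\sigma'\cup\{f_i\}} \xrightarrow{\dd_{\sigma',i}} v_{\sigma'}$ exists in $\T_{\f}$.

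For the homotopy edges, suppose $f_j\notin N_{\f}(\theta)$ and let $\theta'\subseteq\theta$. By Definition \ref{definition gcd_graph}, $N_{\f}(\theta)$ is the union of the neighborhoods of the elements of $\theta$. Hence it is monotone: $N_{\f}(\theta')\subseteq N_{\f}(\theta)$, and so $f_j\notin N_{\f}(\theta')$.

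The only point needing care is that $\h_{\theta',j}$ requires $f_j \notin \theta'$, so that $\theta'\cup\{f_j\}$ is a genuinely larger set. The homotopy-edge condition alone does not literally say $f_j\notin\theta$. I would handle this in one of two ways. First, $f_j\notin\theta$ is part of the edge rule itself, since the definition ranges over $f_i\in[n]\smallsetminus\sigma$; then $f_j\notin\theta'\subseteq\theta$ is automatic. Alternatively, this follows from the conventions of the paper: every $f_j$ is a non-unit, so it is adjacent to itself in the gcd sense. I would use the first route, since it is exactly how Definition \ref{definition_Taylor_graph} is phrased. This is the only potential obstacle, and it is bookkeeping rather than mathematics.
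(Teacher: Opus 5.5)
Your proof is correct and is the paper's argument: $f_\sigma \mid f_{\sigma'}$ gives the differential edges, and the monotonicity $N_{\f}(\theta')\subseteq N_{\f}(\theta)$ gives the homotopy edges. Your first route for $f_j\notin\theta'$, reading it off the range $f_i\in[n]\smallsetminus\sigma$ in the definition, is the right one; drop the self-adjacency alternative, because $\G_{\f}$ is a simple graph without loops and the paper treats $\sigma\subseteq N_{\f}(\sigma)$ as a nontrivial condition.
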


\begin{proof}
    By assumption we have that $f_\sigma\vert f_{\sigma'}$, hence $f_i\vert f_{\sigma'}$ and thus the desired differential edge exists. Similarly, we have $N_{\f}(\theta')\subseteq N_{\f}(\theta)$ hence $f_j$ is not in the (possibly smaller) neighborhood, thus the desired homotopy edge exists. 
\end{proof}

Shortly we will want to consider certain edges of $\T_{\f}$ en masse, so we have the following notation. 
Consider an edge $\e_{\sigma,i}$ in $\T_{\f}$ and a subset $\Lambda\subseteq 2^{[n]\smallsetminus(\sigma\cup \{f_i\})}$.
We set the notation \[\e_{\sigma,i}\otimes \Lambda = \setbuild{\e_{\sigma\cup \lambda,i}}{\lambda\in \Lambda}.\] 
When $\Lambda$ is a power set, we call the resulting set of edges a \emph{hypercube}. 
Let $\min(\Lambda)$ (respectively, $\max(\Lambda)$) denote the set of minimal (respectively, maximal) elements of $\Lambda$ under containment.
 
 \begin{lemma}\label{lemma existence of hypercubes}
If $\T_{\f}$ is a Taylor graph and $\h$ is a homotopy edge, then every edge in the set $\h\otimes \Lambda$ exists in $\T_{\f}$ if and only if every edge of $\h\otimes \max(\Lambda)$ exists in $\T_{\f}$. 
Similarly, if $\dd$ is a differential edge, then every edge in the set $\dd\otimes \Lambda$ exists in $\T_{\f}$ if and only if every edge of $\dd\otimes \min(\Lambda)$ exists in $\T_{\f}$.
\end{lemma}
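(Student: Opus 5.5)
The lemma follows directly from Proposition \ref{proposition taylor edges down/upward closed}. Treat the homotopy case first; the differential case is its order-dual.

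\emph{Homotopy case.} Write $\h=\h_{\sigma,i}$. For $\lambda\in\Lambda$, the edge $\h\otimes\lambda$ is $\h_{\sigma\cup\lambda,i}$, which exists exactly when $f_i\notin N_{\f}(\sigma\cup\lambda)$. The forward implication is immediate, since $\max(\Lambda)\subseteq\Lambda$. For the converse, assume every $\h_{\sigma\cup\mu,i}$ with $\mu\in\max(\Lambda)$ exists, and let $\lambda\in\Lambda$ be arbitrary.

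1. Because $\Lambda$ is a finite poset under containment, some maximal element $\mu\in\max(\Lambda)$ satisfies $\lambda\subseteq\mu$.
2. Then $\sigma\cup\lambda\subseteq\sigma\cup\mu$.
3. The edge $\h_{\sigma\cup\mu,i}$ exists by hypothesis. By the downward-closure statement of Proposition \ref{proposition taylor edges down/upward closed}, applied with $\theta=\sigma\cup\mu$ and $\theta'=\sigma\cup\lambda$, the edge $\h_{\sigma\cup\lambda,i}$ also exists.
4. This step needs $f_i\notin\sigma\cup\lambda$. That holds because $\Lambda\subseteq 2^{[n]\smallsetminus(\sigma\cup\{f_i\})}$.

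\emph{Differential case.} Write $\dd=\dd_{\sigma,i}$. Again the forward implication is trivial. For the converse, take $\lambda\in\Lambda$ and choose $\mu\in\min(\Lambda)$ with $\mu\subseteq\lambda$, which exists by finiteness. The edge $\dd_{\sigma\cup\mu,i}$ exists by hypothesis, and $\sigma\cup\lambda\supseteq\sigma\cup\mu$ with $f_i\notin\sigma\cup\lambda$. The upward-closure statement of Proposition \ref{proposition taylor edges down/upward closed} therefore gives the edge $\dd_{\sigma\cup\lambda,i}$.

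There is no real obstacle here. The only point to check is that the notation $\e_{\sigma,i}\otimes\Lambda$ keeps $f_i$ out of every vertex label, so that the edges are well defined. This is guaranteed by the standing restriction $\Lambda\subseteq 2^{[n]\smallsetminus(\sigma\cup\{f_i\})}$.
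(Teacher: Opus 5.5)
Your proposal is correct and follows the paper's approach: the paper's proof is simply ``Apply Proposition \ref{proposition taylor edges down/upward closed},'' and you carry out that application in detail. Your argument uses the existence of a maximal (respectively minimal) element of $\Lambda$ above (respectively below) each $\lambda$, together with $f_i\notin\sigma\cup\lambda$.
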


\begin{proof}
    Apply Proposition \ref{proposition taylor edges down/upward closed}.
\end{proof}

This lemma is especially useful when $X$ is a hypercube, as the existence of one edge implies the existence of $2^{\lvert \Lambda\rvert}$ edges. In Example \ref{running example taylor graph}, the \color{mygreen} green edges \color{black}  (alternatively, the edges labeled $\dd_3$) comprise the hypercube $\dd_{24,3}\otimes 2^{\{1,5\}}$.

\begin{definition} \label{definition definition homotopically/differentially isolated} If a vertex $v_\sigma$ is not used by any homotopy edge, we say $v_\sigma$ is \emph{homotopically isolated}. Similarly, if no differential edge uses the vertex $v_{\sigma}$, then $v_{\sigma}$ is \emph{differentially isolated}.
\end{definition}

\begin{proposition} \label{proposition homotopically/differentially isolated}
    A vertex $v_{\sigma}$ is homotopically isolated if and only if $N_{\f}(\sigma) = [n]$. 
    If for all $f_i\in [n]$ we have $\lvert N_{\f}(f_i)\cap(\sigma\smallsetminus \{f_i\})\rvert < 2$, then $v_{\sigma}$ is differentially isolated. In particular, no differential edge exists between a two element set and a one element set.
\end{proposition}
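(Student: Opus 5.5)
The plan is to unwind Definition \ref{definition_Taylor_graph} directly: a vertex $v_\sigma$ is incident to a homotopy edge either as a source ($v_\sigma \to v_{\sigma\cup\{f_i\}}$) or as a target ($v_{\sigma\smallsetminus\{f_i\}}\to v_\sigma$), and similarly for differential edges. Each of the three assertions then reduces to checking the defining conditions on $\f$.

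\textbf{Homotopy isolation.} A homotopy edge out of $v_\sigma$ exists exactly when some $f_i\notin\sigma$ satisfies $f_i\notin N_{\f}(\sigma)$. If $N_{\f}(\sigma)=[n]$, no such $f_i$ exists.

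For incoming homotopy edges $\h_{\sigma\smallsetminus\{f_i\},i}$ with $f_i\in\sigma$, we need $f_i\notin N_{\f}(\sigma\smallsetminus\{f_i\})$. Since $N_{\f}(\sigma)=[n]$, the vertex $f_i$ is adjacent to some $f_j\in\sigma$. If we may take $j\neq i$, then $f_i\in N_{\f}(\sigma\smallsetminus\{f_i\})$ and the edge is absent.

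The subtle point is whether $f_i$ might be adjacent only to itself. The neighborhood $N_{\f}(\sigma)$ is defined via edges $\{f_i,f_j\}$ of the simple graph $\G_{\f}$, so a vertex is never its own neighbor. Hence $f_i\in N_{\f}(\sigma)$ forces a neighbor $f_j\in\sigma$ with $j\neq i$, and the incoming edge cannot exist. (One should also note, via Proposition \ref{proposition taylor edges down/upward closed} if convenient, that the statement really uses $N_{\f}(\sigma)=[n]$ including the elements of $\sigma$ themselves.)

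For the converse, suppose $N_{\f}(\sigma)\neq[n]$ and pick $f_i\notin N_{\f}(\sigma)$.
\begin{itemize}
\item If $f_i\notin\sigma$, then $\h_{\sigma,i}$ is an edge out of $v_\sigma$.
\item If $f_i\in\sigma$, then $f_i\notin N_{\f}(\sigma\smallsetminus\{f_i\})\subseteq N_{\f}(\sigma)$, so $\h_{\sigma\smallsetminus\{f_i\},i}$ is an edge into $v_\sigma$.
\end{itemize}
Either way $v_\sigma$ is not homotopically isolated.

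\textbf{Differential isolation.} The key observation is that a differential edge $\dd_{\tau,i}$ with $f_i\notin\tau$ requires $f_i\mid f_\tau$. Since $f_i$ is square-free and not divisible by, nor dividing, any other single generator, $f_i$ has at least two variables $x_\alpha, x_\beta$ that must be covered by elements of $\tau$. More precisely, suppose only one $f_j\in\tau$ shares a variable with $f_i$. Then $f_i\mid f_\tau$ forces every variable of $f_i$ to divide $f_j$, so $f_i\mid f_j$, contradicting minimality. If no element of $\tau$ meets $f_i$, divisibility fails outright. Hence $f_i\mid f_\tau$ implies $\lvert N_{\f}(f_i)\cap\tau\rvert\geq 2$.

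Now apply this to $v_\sigma$:
\begin{itemize}
\item An outgoing differential edge has $\tau=\sigma\smallsetminus\{f_i\}$ with $f_i\in\sigma$.
\item An incoming differential edge has $\tau=\sigma$ with $f_i\notin\sigma$, so $\sigma=\sigma\smallsetminus\{f_i\}$.
\end{itemize}
In both cases the hypothesis $\lvert N_{\f}(f_i)\cap(\sigma\smallsetminus\{f_i\})\rvert<2$ rules the edge out.

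\textbf{The final claim.} A differential edge between $v_{\{f_i,f_j\}}$ and $v_{\{f_j\}}$ would require $f_i\mid f_j$, which is impossible. Equivalently, $\tau$ is a singleton, so the intersection above has size at most $1$.

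The only real obstacle is bookkeeping: one must check incoming as well as outgoing edges, and get the self-neighborhood convention right.
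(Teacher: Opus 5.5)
Your proposal is correct and follows the paper's argument. Homotopy isolation splits into ``no outgoing homotopy edge'' ($[n]=\sigma\cup N_{\f}(\sigma)$) and ``no incoming homotopy edge'' ($\sigma\subseteq N_{\f}(\sigma)$), and differential isolation rests on the fact that an edge $\dd_{\tau,i}$ forces $\lvert N_{\f}(f_i)\cap\tau\rvert\geq 2$. The only difference is that you prove this last fact directly from square-freeness and minimality of the generators, whereas the paper cites \cite[Remark 6.7]{BGP}.
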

\begin{proof}
    We first note that no homotopy edge leaves $v_\sigma$  when every $f_j\notin\sigma$ is a neighbor of some generator in $\sigma$, i.e., $[n] = \sigma\cup N_{\f}(\sigma)$. On the other hand, no homotopy edge has target $v_\sigma$ exactly when $\sigma \subseteq N_{\f}(\sigma)$. The second statement of the proposition follows from \cite[Remark 6.7]{BGP}, that if a differential edge $\dd_{\sigma,i}$ exists then $\lvert N_{\f}(f_i)\cap \sigma\rvert \geq 2$.
\end{proof}

\section{Computing Support Varieties}\label{section computing support varieties}

In the case that the support variety is $V(\chi_1\chi_5)$ for 5 generated monomial ideals, the authors of \cite{BGP} wrote out and argued about certain features of the images and kernels of two $16\times 16$ matrices. 
This exact strategy is not well suited to scale; in the case of 6 generators there would be more than just two $32 \times 32$ matrices to analyze.
The purpose of this section is to develop a combinatorial framework for computing support varieties from Taylor graphs. 
As a demonstration of this framework, we use the ideals realizing $V(\chi_1\chi_5)$ in \cite[Theorem 6.16]{BGP} as an example. 
Our argument proceeds via a two sided containment. 
Certain features of the Taylor graph give $V(\chi_1)\subseteq \V_{\f}$ and $V(\chi_5)\subseteq \V_{\f}$ and hence $V(\chi_1\chi_5)\subseteq \V_{\f}$.
For the other containment, we compute a minor to find that $\chi_1\chi_5\in \I_{\f}$ so that $\V_{\f}\subseteq V(\chi_1\chi_5)$.
We first provide this argument for just one ideal, then later show that the argument applies to each 5 generated ideal realizing $\V_{\f} = V(\chi_1\chi_5)$.

\begin{example}{\cite[Theorem 6.16]{BGP}}\label{running example taylor graph}

If $\f$ has that $\G_{\f}$ is either A or $P_5$ depicted below and $x_3\notin X_{\f}$ then $\V_{\f} = V(\chi_1\chi_5).$
\begin{center}
\begin{minipage}{0.4\textwidth}
\centering
\begin{tikzpicture}[scale=.9, every node/.style={circle, inner sep=0pt, minimum size=15pt, draw}]

  % Nodes
  \node[dashed] (v1) at (0,1) {$3$};
  \node (v2) at (-1,0) {$2$};
  \node (v3) at (1,0) {$4$};
  \node (v4) at (-2.3,0) {$1$};
  \node (v5) at (2.3,0) {$5$};

  % Edges
  \draw (v1) -- (v2);
  \draw (v1) -- (v3);
  %\draw (v1) -- (v5);
  \draw (v2) -- (v3);
  \draw (v2) -- (v4);
  %\draw (v2) -- (v5);
  \draw (v3) -- (v5);
\end{tikzpicture}\\
Graph A
\end{minipage}
\begin{minipage}{0.4\textwidth}
\centering
\begin{tikzpicture}[scale=.9, every node/.style={circle, inner sep=0pt, minimum size=15pt, draw}]

  % Nodes
  \node[dashed] (v1) at (0,1) {$3$};
  \node (v2) at (-1,0) {$2$};
  \node (v3) at (1,0) {$4$};
  \node (v4) at (-2.3,0) {$1$};
  \node (v5) at (2.3,0) {$5$};

  % Edges
  \draw (v1) -- (v2);
  \draw (v1) -- (v3);
  %\draw (v1) -- (v5);
  %\draw (v2) -- (v3);
  \draw (v2) -- (v4);
  %\draw (v2) -- (v5);
  \draw (v3) -- (v5);
\end{tikzpicture}\\
Graph $P_5$
\end{minipage}
\\
\vspace{10pt} 
\end{center}  

For a running example, we consider $\f = (x_1x_{12},\, x_{12}x_{23},\, x_{23}x_{34},\, x_{34}x_{45},\,x_{45}x_5)$, so in particular $X_{\f} = \{x_1,x_5,x_{12},x_{23},x_{34},x_{45}\}$. 
Below is the Taylor graph $\T_{\f}$. 
The colors and dashed edges will be used later to highlight various features that we develop. 
For now, they can be ignored and every edge depicted is indeed an edge regardless of color or dashed-ness.

\[\begin{tikzcd}
	& \textcolor{myyellow}{{\circled{125}}} & \textcolor{myyellow}{{\circled{12}}} & \textcolor{mygreen}{{\circled{124}}} & \textcolor{mygreen}{{\circled{1234}}} & \textcolor{myblue}{{\circled{134}}} & \textcolor{myblue}{{\circled{34}}} & \\
	& \textcolor{myblue}{{\circled{145}}} & \textcolor{myblue}{{\circled{45}}} & \textcolor{mygreen}{{\circled{245}}} & \textcolor{mygreen}{{\circled{2345}}} & \textcolor{myyellow}{{\circled{235}}} & \textcolor{myyellow}{{\circled{23}}} \\
	& \textcolor{myblue}{{\circled{4}}} & \textcolor{myblue}{{\circled{14}}} & \textcolor{mypink}{{\circled{1}}} & \textcolor{mypink}{{\circled{13}}} & \textcolor{myyellow}{{\circled{123}}} & \textcolor{myyellow}{{\circled{1235}}} \\
	\\
	\textcolor{mygreen}{{\circled{234}}} & \textcolor{mygreen}{{\circled{24}}} & \textcolor{mypink}{{\circled{$\emptyset$}}} & \textcolor{mypink}{{\circled{3}}} & \textcolor{mypink}{{\circled{15}}} & \textcolor{mypink}{{\circled{135}}} & \textcolor{mygreen}{{\circled{12345}}} & \textcolor{mygreen}{{\circled{1245}}} \\
	\\
	& \textcolor{myyellow}{{\circled{2}}} & \textcolor{myyellow}{{\circled{25}}} & \textcolor{mypink}{{\circled{5}}} & \textcolor{mypink}{{\circled{35}}} & \textcolor{myblue}{{\circled{345}}} & \textcolor{myblue}{{\circled{1345}}}
	\arrow["{\h_5}"', color={myyellow}, from=1-3, to=1-2]
	\arrow["{\h_4}", dashed, from=1-3, to=1-4]
	\arrow["{\dd_3}"', color={mygreen}, from=1-5, to=1-4]
	\arrow["{\dd_2}", dashed, from=1-5, to=1-6]
	\arrow["{\h_1}"', color={myblue}, from=1-7, to=1-6]
	\arrow["{\h_1}"', color={myblue}, from=2-3, to=2-2]
	\arrow["{\h_2}", dashed, from=2-3, to=2-4]
	\arrow["{\dd_3}"', color={mygreen}, from=2-5, to=2-4]
	\arrow["{\dd_4}", dashed, from=2-5, to=2-6]
	\arrow["{\h_5}"', color={myyellow}, from=2-7, to=2-6]
	\arrow["{\h_1}", color={myblue}, from=3-2, to=3-3]
	\arrow["{\h_2}"', dashed, from=3-2, to=5-2]
	\arrow["{\h_4}"', from=3-4, to=3-3]
	\arrow["{\h_3}", from=3-4, to=3-5]
	\arrow["{\h_5}"{description, pos=0.3}, from=3-4, to=5-5]
	\arrow["{\h_5}"{description}, from=3-5, to=5-6]
	\arrow["{\dd_2}"', dashed, from=3-6, to=3-5]
	\arrow["{\h_5}", color={myyellow}, from=3-6, to=3-7]
	\arrow["{\dd_2}"{description}, dashed, from=3-7, to=5-6]
	\arrow["{\dd_3}", color={mygreen}, from=5-1, to=5-2]
	\arrow["{\h_4}"{description}, from=5-3, to=3-2]
	\arrow["{\h_1}"{description}, color={mypink}, from=5-3, to=3-4]
	\arrow["{\h_3}", from=5-3, to=5-4]
	\arrow["{\h_2}"{description}, from=5-3, to=7-2]
	\arrow["{\h_5}"{description}, from=5-3, to=7-4]
	\arrow["{\h_1}"{description, pos=0.7}, color={mypink}, from=5-4, to=3-5]
	\arrow["{\h_5}"{description, pos=0.7}, from=5-4, to=7-5]
	\arrow["{\h_3}", from=5-5, to=5-6]
	\arrow["{\dd_4}"', dashed, from=5-7, to=3-7]
	\arrow["{\dd_3}", color={mygreen}, from=5-7, to=5-8]
	\arrow["{\dd_2}", dashed, from=5-7, to=7-7]
	\arrow["{\h_4}", dashed, from=7-2, to=5-2]
	\arrow["{\h_5}", color={myyellow}, from=7-2, to=7-3]
	\arrow["{\h_1}"{description, pos=0.3}, color={mypink}, from=7-4, to=5-5]
	\arrow["{\h_2}"', from=7-4, to=7-3]
	\arrow["{\h_3}", from=7-4, to=7-5]
	\arrow["{\h_1}"{description}, color={mypink}, from=7-5, to=5-6]
	\arrow["{\dd_4}"', dashed, from=7-6, to=7-5]
	\arrow["{\h_1}", color={myblue}, from=7-6, to=7-7]
	\arrow["{\dd_4}"{description}, dashed, from=7-7, to=5-6]
\end{tikzcd}\]

\end{example}

We now develop some tools that will be fruitful in the cases that $\V_{\f}$ is a union of proper coordinate subspaces -- that is, when $\I_{\f}$ is a square-free monomial ideal. 
The first tool will allow us to identify certain coordinate subspaces that are subvarieties of $\V_{\f}$. 
Recall that in a directed graph, vertices with only outgoing edges are \textit{sources} and those with only incoming edges are \textit{sinks}.

\begin{definition}\label{homotopy sink/source}
Let $v_\sigma$ be a differentially isolated vertex such that any homotopy edge using $v_{\sigma}$ is one of $\h_{i_1},\ldots, \h_{i_s}$.
We say that $v_\sigma$ is a \emph{homotopy sink for $\chi_{i_1}, \ldots, \chi_{i_s}$} if $[n] = \sigma\cup N_{\f}(\sigma)$. 
Similarly, $v_\sigma$ is a \emph{homotopy source for $\chi_{i_1}, \ldots, \chi_{i_s}$} if $ \sigma \subseteq  N_{\f}(\sigma)$.
\end{definition}
Indeed, by the proof of Proposition \ref{proposition homotopically/differentially isolated} the conditions $[n] = \sigma\cup N_{\f}(\sigma)$ and $ \sigma \subseteq  N_{\f}(\sigma)$ force $v_{\sigma}$ to be a sink/source respectively. 

\begin{proposition}\label{proposition homotopy soure/sink gives subvariety}
    If $v$ is a homotopy source/sink for $\chi_{i_1}, \ldots, \chi_{i_s}$ in $\T_{\f}$, then $V(\chi_{i_1}, \ldots, \chi_{i_s})\subseteq \V_{\f}$.
\end{proposition}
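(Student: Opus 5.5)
Fix a point $a=(a_1,\ldots,a_n)\in V(\chi_{i_1},\ldots,\chi_{i_s})$, so $a_{i_1}=\cdots=a_{i_s}=0$. By the criterion recalled after \eqref{differential in linear transformation}, $a\in\V_{\f}$ if and only if $\ker(\T_{\f}(a))\neq\im(\T_{\f}(a))$. The plan is to show that the basis vector $v=v_\sigma$ is a cycle that is not a boundary when $v_\sigma$ is a homotopy sink, and a boundary-free direction when it is a homotopy source. In the source case we dualize and show that $v_\sigma$ contributes a nonzero class to the homology of the transpose. Since $\rank$ is transpose-invariant, this equally forces $\rank\T_{\f}(a)<2^{n-1}$.

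\emph{Sink case.} Here $v_\sigma$ is differentially isolated, and since $[n]=\sigma\cup N_{\f}(\sigma)$, no homotopy edge leaves $v_\sigma$. This is the computation in the proof of Proposition \ref{proposition homotopically/differentially isolated}. Hence the column of $\T_{\f}(a)$ indexed by $v_\sigma$ is identically zero, and $v_\sigma\in\ker\T_{\f}(a)$ for every $a$. Next, the row indexed by $v_\sigma$ has nonzero entries only from edges with target $v_\sigma$. These are homotopy edges $\h_{\sigma\smallsetminus\{f_i\},i}$, since no differential edges touch $v_\sigma$, and each has $i\in\{i_1,\ldots,i_s\}$ by hypothesis. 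Their weights $\pm\chi_i$ therefore vanish at $a$. So the $v_\sigma$-row of $\T_{\f}(a)$ is zero, i.e.\ no element of $\im\T_{\f}(a)$ has a nonzero $v_\sigma$-coordinate. Thus $v_\sigma$ is a cycle that is not a boundary, giving $\ker\neq\im$ and $a\in\V_{\f}$.

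\emph{Source case.} Symmetrically, $\sigma\subseteq N_{\f}(\sigma)$ means no homotopy edge enters $v_\sigma$, and no differential edge touches it, so the $v_\sigma$-row of $\T_{\f}(a)$ is zero for all $a$. The outgoing edges from $v_\sigma$ are homotopy edges $\h_i$ with $i\in\{i_1,\ldots,i_s\}$, so the $v_\sigma$-column vanishes at $a$. Thus $v_\sigma$ is again a cycle, and the zero row shows it is not a boundary. Equivalently, one can pass to the transpose $\T_{\f}(a)^{T}$, which also squares to zero and has the same rank. The conclusion is the same: $\rank\T_{\f}(a)\le 2^{n-1}-1<2^{n-1}$, since $\dim\ker-\operatorname{rank}\ge1$ and $\dim\ker+\operatorname{rank}=2^n$.

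The main care point is bookkeeping: the definition lists only the homotopy edges \emph{using} $v_\sigma$, in either direction. One must check that the sink (respectively source) condition kills all edges on the other side, including differential ones, which holds by differential isolation. After that, both cases reduce to the observation that a zero row and a zero column at a basis vector produce nontrivial homology.
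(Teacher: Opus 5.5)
Your proof is correct and follows the paper's argument: at a point $a$ with $a_{i_1}=\cdots=a_{i_s}=0$, both the row and the column of $\T_{\f}(a)$ indexed by $v_\sigma$ vanish, so $v_\sigma$ is a cycle that is not a boundary. The paper writes this out for the source case and calls the sink case symmetric. Your remarks about dualizing to the transpose are unnecessary, since your direct source-case argument already suffices.
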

\begin{proof}
    Take $a\in V(\chi_{i_1}, \ldots, \chi_{i_s})$. If $v$ is a source, then it is not in the image of $\T_{\f}(a)$ and every coefficient appearing in $\T_{\f}(a)(v)$ is 0, meaning that $v$ is a cycle that is not a boundary. A symmetric argument holds when $v$ is a sink.
\end{proof}

\begin{corollary}\label{corollary isolated vertex}
    \cite[Lemma 6.9]{BGP} If $v$ is an isolated vertex in $\T_{\f}$ then it is a homotopy sink for $\emptyset$ and hence $V(\emptyset) = \A^n_k \subseteq \V_{\f}$.
\end{corollary}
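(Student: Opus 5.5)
The plan is to reduce the corollary to Proposition \ref{proposition homotopy soure/sink gives subvariety} by checking that an isolated vertex satisfies the definition of a homotopy sink for the empty list of variables. Let $v = v_\sigma$ be isolated in $\T_{\f}$, so no edge of any kind has $v_\sigma$ as source or target.

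The first step is to verify the hypotheses of Definition \ref{homotopy sink/source} with $s=0$. Since no differential edge touches $v_\sigma$, it is differentially isolated. Since no homotopy edge touches it, the condition that every homotopy edge using $v_\sigma$ lies among $\h_{i_1},\ldots,\h_{i_s}$ holds vacuously with the empty list.

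The second step is to check the sink condition $[n]=\sigma\cup N_{\f}(\sigma)$. Here I would use Proposition \ref{proposition homotopically/differentially isolated}: homotopically isolated means $N_{\f}(\sigma)=[n]$, which gives the sink condition at once. The same conclusion can be read off the proof of that proposition: no outgoing homotopy edge means every $f_j\notin\sigma$ lies in $N_{\f}(\sigma)$. In fact $\sigma\subseteq N_{\f}(\sigma)$ also holds, so $v_\sigma$ is a source too, but one is enough.

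Finally, Proposition \ref{proposition homotopy soure/sink gives subvariety} gives $V(\emptyset)\subseteq\V_{\f}$, and $V(\emptyset)$ is the zero set of the empty ideal, namely $\A^n_k$. As a sanity check, for any $a$ the vector $v_\sigma$ is sent to $0$ by $\T_{\f}(a)$ and does not appear in the image of any basis vector. So it is a cycle that is not a boundary, and the rank of $\T_{\f}(a)$ is strictly less than $2^{n-1}$. There is no real obstacle; the only point needing care is that the empty list of variables is allowed in the definitions.
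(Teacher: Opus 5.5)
Your proposal is correct and follows the paper's implicit argument. An isolated $v_\sigma$ is differentially isolated, and the homotopy-edge condition holds vacuously for the empty list. Homotopic isolation gives $N_{\f}(\sigma)=[n]$, hence $[n]=\sigma\cup N_{\f}(\sigma)$, so $v_\sigma$ is a homotopy sink for $\emptyset$, and Proposition \ref{proposition homotopy soure/sink gives subvariety} yields $V(\emptyset)=\A^n_k\subseteq\V_{\f}$.
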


\begin{example}\label{running example subvarieties}
    Let us find all of the homotopy sources in sinks and Example \ref{running example taylor graph}. 
    The empty set is homotopy source for $\chi_1,\chi_2,\chi_3,\chi_4,\chi_5$. Moreover, for $\chi_1$, 145 is a homotopy sink and 34 is a homotopy source. For $\chi_5$, 125 is a homotopy sink and 23 is a homotopy source. So, by Proposition \ref{proposition homotopy soure/sink gives subvariety}, the hyperplanes $\chi_1 = 0$ and $\chi_5=0$ are subvarieties of $\V_{\f}$ for our running example.

\end{example}

\begin{remark} 
 It is not generally true that every point of $\V_{\f}$ is witnessed by a homotopy source or sink, otherwise all support ideals would be square-free monomial ideals. However, it turns out that every primary component of the square-free monomial support ideals that we consider is indeed witnessed by some homotopy source or sink. We now outline an approach to show that, in these cases, the support variety is no larger than the union of the subvarieties witnessed by homotopy sources and sinks. 
\end{remark}

\begin{definition} \label{perfect matching}
    Given $\T_{\f}$ and some $\sigma\subseteq [n]$, a $\sigma$\emph{-perfect matching} of $\T_{\f}$ is a set $M$ of $2^{n-1}$ edges in $\T_{\f}$ such that no two edges in $M$ share a vertex and every homotopy edge $\h_{i}\in M$ has $f_i\in \sigma$.
\end{definition}

\begin{example}\label{running example perfect matching}
    The four colors of Example \ref{running example taylor graph} give the $\{f_1,f_5\}$-perfect matching: \[M = 
    \color{mypink} \h_{\emptyset,1}\otimes 2^{\{3,5\}} \color{black} \cup 
    \color{myblue} \h_{\{4\},1}\otimes 2^{\{3,5\}} \color{black}\cup 
    \color{myyellow} \h_{\{2\},5}\otimes 2^{\{1,3\}}
    \color{black}\cup 
    \color{mygreen} \dd_{\{24\},3}\otimes 2^{\{1,5\}}.\]
\end{example}

\begin{remark}\label{remark matchings give minors}
We note that the data of a $\sigma$-perfect matching $M$ induces a partition of the vertices into sources and targets. 
We denote the $2^{n-1}\times 2^{n-1}$ submatrix of $\T_{\f}$ given by picking the sources' columns and targets' rows by $\T_{\f}^M$. 
Given some ordering of the columns of  $\T_{\f}^M$, the data of $M$ induces an ordering on the rows so that every entry of the diagonal is non-zero and the non-unit entries are $\pm\chi_i$ with $i\in \sigma$.
\end{remark}

\begin{example}\label{running example submatrix from matching}
    The matching $M$ of Example \ref{running example perfect matching} gives $\T_{\f}^M$ below.
   \renewcommand{\arraystretch}{1.25}
 \[\begin{blockarray}{ccccc|cccc|cccc|cccc}
    &\matlabel{\emptyset} & \matlabel{3}  & \matlabel{5}  & \matlabel{35} & \matlabel{4} & \matlabel{34} & \matlabel{45} &\matlabel{345} & \matlabel{2}  & \matlabel{12} & \matlabel{23} & \matlabel{123} & \matlabel{234}  &\matlabel{1234} & \matlabel{2345} &  \matlabel{12345}\\ 
    \begin{block}{c(cccc|cccc|cccc|cccc)}
    \matlabel{1}    &\h_1& \raisebox{13pt}{~}  &   &   &   &   &   &   &   &   &   &    &    &    &    &     \\
    \matlabel{13}   &   &\h_1&   &   &   &   &   &   &   &   &   &\dd_2&    &    &    &     \\
    \matlabel{15}   &   &   &\h_1&   &   &   &   &   &   &   &   &    &    &    &    &     \\
    \matlabel{135}  &   &   &   &\h_1&   &   &   &   &   &   &   &    &    &    &    &     \\
 \BAhline
    \matlabel{14}   &  &   &   &    &\h_1&   &   &   &   &   &   &    &    &    &    &     \cr
    \matlabel{134 } &  &   &   &    &   &\h_1&   &   &   &   &   &    &    &\dd_2&    &     \cr
    \matlabel{145}  &  &   &   &    &   &   &\h_1&   &   &   &   &    &    &    &    &     \cr
    \matlabel{1345} &  &   &   &    &   &   &   &\h_1&   &   &   &    &    &    &    &\dd_2 \cr
    \BAhline
   % \cr\hline\cr
    %
    \matlabel{25}   &  &   &\h_2&    &   &   &   &   &\h_5&   &   &    &    &    &    &     \cr
    \matlabel{125}  &  &   &   &    &   &   &   &   &   &\h_5&   &    &    &    &    &     \cr
    \matlabel{235}  &  &   &   &    &   &   &   &   &   &   &\h_5&    &    &    &\dd_4&     \cr
    \matlabel{1235} &  &   &   &    &   &   &   &   &   &   &   &\h_5 &    &    &    &\dd_4 \cr
     \BAhline
    %\cr\hline\cr
    %
    \matlabel{24}   &  &   &   &    &\h_2&   &   &   &\h_4&   &   &    &\dd_3&    &    &     \cr
    \matlabel{124}  &  &   &   &    &   &   &   &   &   &\h_4&   &    &    &\dd_3&    &     \cr
    \matlabel{245}  &  &   &   &    &   &   &\h_2&   &   &   &   &    &    &    &\dd_3&     \cr
    \matlabel{1245} &  &   &   &    &   &   &   &   &   &   &   &    &    &    &    &\dd_3 \raisebox{-10pt}{} \cr
    \end{block}
    \end{blockarray}
    \]
\end{example}

If there exists an ordering of the columns so that a submatrix $\T_{\f}^M$ is (upper) triangular, then $\chi^\sigma := \prod_{i\in\sigma}\chi_i\in \I_{\f}$. 
When there is such an ordering we say $M$ is \emph{triangular}. 
The existence of such an ordering can be detected by the nonexistence of directed cycles in another directed graph $\Aux_{\f}^M:=\Aux(\T_{\f}^M)$ which we call the auxiliary graph.

\begin{definition}\label{definition auxiliary graph}
    Given an $m\times m$ matrix $A = \{a_{ij}\}$ with $a_{ii}\neq 0$, the \emph{auxiliary graph} of $A$ is the directed graph $\Aux(A)$ with vertex set $C = \{c_1,\ldots, c_m\}$ and edges $c_i\rightarrow c_j$ when $i\neq j$ and $a_{ji}\neq0$. A \emph{cycle decomposition} of $\Aux(A)$ is a permutation $\rho\in \Sym(C)$ such that for every $c\in C$ either $c = \rho(c)$ or $c\rightarrow\rho(c)$ is an edge of $\Aux(A)$. We denote the set of all cycle decompositions of $\Aux(A)$ by $\mathcal{C}(A)$.
\end{definition}

\begin{proposition} \label{proposition det by cycle decompositions}
    In the setting of Definition \ref{definition auxiliary graph}, $\det(A) = \sum_{\rho\in \mathcal{C}(A)}\sign(\rho)\prod_ia_{i\rho(i)}$. Moreover, if $M$ is a $\sigma$-perfect matching of $\T_{\f}$ such that $ \Aux_{\f}^M$ has no directed cycles, then $M$ is triangular and $\chi^\sigma\in \I_{\f}$. 
\end{proposition}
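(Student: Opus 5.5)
The first claim is the Leibniz expansion of the determinant, reorganized through the auxiliary graph. I would write
\[
\det(A)=\sum_{\rho\in \Sym(C)}\sign(\rho)\prod_i a_{i\rho(i)}
\]
and observe that a term is nonzero only if $a_{i\rho(i)}\neq 0$ for every $i$. Whenever $\rho(i)\neq i$, the nonvanishing of $a_{i\rho(i)}$ needs to be converted into an edge of $\Aux(A)$ joining $c_i$ and $c_{\rho(i)}$. Here the indexing conventions must be fixed carefully: with edges $c_i\to c_j$ exactly when $a_{ji}\neq 0$, the term $\prod_i a_{i\rho(i)}$ corresponds to following $\rho^{-1}$. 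So I would identify $\rho$ with $\rho^{-1}$ (or reindex), using that $\sign(\rho)=\sign(\rho^{-1})$ and that the product is merely reindexed. The diagonal entries are nonzero, so fixed points impose no condition. Therefore the permutations with possibly nonzero term are exactly the cycle decompositions in $\mathcal C(A)$. Permutations outside $\mathcal C(A)$ contribute zero, and summing the rest gives the formula.

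For the second claim, I would start from Remark \ref{remark matchings give minors}. The matching $M$ gives the square submatrix $\T_{\f}^M$ of size $2^{n-1}$. After ordering the rows so that matched pairs lie on the diagonal, every diagonal entry is a matched edge weight: either $\pm 1$ for a differential edge, or $\pm\chi_i$ with $f_i\in\sigma$ for a homotopy edge. If $\Aux_{\f}^M$ has no directed cycles, then the only cycle decomposition is the identity, since any nontrivial permutation contains a cycle of length at least $2$, which would give a directed cycle in the graph. By the formula just proved, $\det \T_{\f}^M$ equals the product of the diagonal entries.

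Equivalently, a topological sort of the acyclic graph orders the columns so that the matrix is triangular, which justifies calling $M$ triangular. The product of the diagonal entries is $\pm\prod_{\h_i\in M}\chi_i$. This is a $2^{n-1}$-minor of $\T_{\f}$, so it lies in $I_{2^{n-1}}(\T_{\f})$.

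The remaining step is to pass from this monomial to $\chi^\sigma$. Each $\chi_i$ appearing in the product has $f_i\in\sigma$, so the monomial divides some power of $\chi^\sigma$. Conversely, the monomial may omit some $\chi_i$ with $i\in\sigma$, in which case it divides $\chi^\sigma$ itself. In either case, any power of the monomial is a multiple of it, and since all its variables lie in $\sigma$, it divides $(\chi^\sigma)^N$ for $N$ large. Hence $(\chi^\sigma)^N\in I_{2^{n-1}}(\T_{\f})$, and taking radicals gives $\chi^\sigma\in\I_{\f}$ by Definition \ref{defenition support Ideal}. The only delicate point overall is keeping the row/column and edge-direction conventions consistent between $\Aux(A)$ and the Leibniz terms; the rest is routine.
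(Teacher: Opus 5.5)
Your proposal is correct and follows the paper's own argument. Both use the Leibniz expansion with the vanishing terms discarded; acyclicity of $\Aux_{\f}^M$ then forces the identity to be the only cycle decomposition, so the minor $\det(\T_{\f}^M)$ is $\pm$ a monomial in the $\chi_i$ with $f_i\in\sigma$, a topological order gives triangularity, and the radical yields $\chi^\sigma\in\I_{\f}$. Your attention to the $\rho$ versus $\rho^{-1}$ convention is warranted: with edges $c_i\to c_j$ when $a_{ji}\neq 0$, the summand attached to $\rho\in\mathcal{C}(A)$ should be $\prod_i a_{\rho(i)i}$, not $\prod_i a_{i\rho(i)}$. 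One aside is slightly off: a monomial omitting some $\chi_i$ need not divide $\chi^\sigma$ itself when exponents exceed one. This does not matter, since only divisibility into $(\chi^\sigma)^N$ is used.
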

\begin{proof}
The edges of $\Aux(A)$ encode when column $j$ can be replaced by column $i$ without putting $0$ in the diagonal. Thus cycle decompositions of $\Aux(A)$ correspond to the permutations $\rho\in\Sym(C)$ such that the product $\prod_i a_{i\rho(i)}$ is nonzero. The first statement follows by computing $\det(A)$ with the Leibniz formula, excluding the summands that are 0. The condition that $\Aux_{\f}^M$ has no directed cycles is equivalent to the statement $\mathcal{C}(\T_{\f}^M) = \{\id\}$. 
Up to a sign we have  $\det(\T_{\f}^M) = \prod_{i\in \sigma}\chi_i^{p_i}$. Since $\I_{\f}$ is the radical of the ideal of $2^{n-1}$-minors of $\T_{\f}$ it follows that $\chi^\sigma\in\I_{\f}$. To see that $M$ is triangular note that $\Aux_{\f}^M$ having no cycles gives a partial order on $C$ where $c_i\preceq c_j$ if there is a directed path from $c_i$ to $c_j$ in $\Aux_{\f}^M$. Any total order on $C$ that respects this partial order gives rise to an upper triangular representation of $\T_{\f}^M$ where the diagonal entries are given by the the edges in $M.$
\end{proof}

\begin{remark}\label{remark taylor weights give auxilliary weights}
    The edges of auxiliary graphs of the form $\Aux_{\f}^M$ inherit weights from $\T_{\f}$.  
    In particular, we formally identify the vertices of $\Aux_{\f}^M$ with the sources of the edges in $M$.
    Hence an edge $\e_i^s\rightarrow \e_{j}^s$ exists in $\Aux_{\f}^M$ exactly when there is an edge $\e_\ell = \e_i^s\rightarrow \e_{j}^t$ in $\T_{\f}$. 
    In particular, then there is a ``walk" from $\e_i^s$ to $\e_{j}^s$ in $\T_{\f}$ given by $\e_i^s\rightarrow \e_{j}^t\leftarrow \e_{j}^s$.
    Hence we assign the weight $\e_j^{-1}\e_\ell$ to the edge $\e_i^s\rightarrow \e_{j}^s$ in $\Aux_{\f}^M$. 
\end{remark}

\begin{example}\label{running example}
The 10 non-zero off-diagonal entries of $\T_{\f}^M$ in Example \ref{running example submatrix from matching} yield 10 edges in $\Aux_{\f}^M$ depicted below. 
Since this graph has no directed cycles, $M$ is triangular and thus $\chi_1\chi_5\in \I_{\f}$. 
Combining with the fact that $V(\chi_1\chi_5)\subseteq \V_{\f}$ from Example \ref{running example subvarieties} we have that $\V_{f} = V(\chi_1\chi_5)$.
% https://q.uiver.app/#q=WzAsMTYsWzYsMSwiXFxjaXJjbGVkezB9IixbMCw2MCw2MCwxXV0sWzYsMywiXFxjaXJjbGVkezN9IixbMCw2MCw2MCwxXV0sWzYsMCwiXFxjaXJjbGVkezV9IixbMCw2MCw2MCwxXV0sWzYsMiwiXFxjaXJjbGVkezM1fSIsWzAsNjAsNjAsMV1dLFswLDAsIlxcY2lyY2xlZHs0fSIsWzI0MCw2MCw2MCwxXV0sWzAsMSwiXFxjaXJjbGVkezM0fSIsWzI0MCw2MCw2MCwxXV0sWzAsMiwiXFxjaXJjbGVkezQ1fSIsWzI0MCw2MCw2MCwxXV0sWzAsMywiXFxjaXJjbGVkezM0NX0iLFsyNDAsNjAsNjAsMV1dLFs0LDAsIlxcY2lyY2xlZHsyfSIsWzYwLDYwLDYwLDFdXSxbNCwxLCJcXGNpcmNsZWR7MTJ9IixbNjAsNjAsNjAsMV1dLFs0LDIsIlxcY2lyY2xlZHsyM30iLFs2MCw2MCw2MCwxXV0sWzQsMywiXFxjaXJjbGVkezEyM30iLFs2MCw2MCw2MCwxXV0sWzIsMCwiXFxjaXJjbGVkezIzNH0iLFsxMjAsNjAsMzQsMV1dLFsyLDEsIlxcY2lyY2xlZHsxMjM0fSIsWzEyMCw2MCwzNCwxXV0sWzIsMiwiXFxjaXJjbGVkezIzNDV9IixbMTIwLDYwLDM0LDFdXSxbMiwzLCJcXGNpcmNsZWR7MTIzNDV9IixbMTIwLDYwLDM0LDFdXSxbMiw4LCJcXGhfNV57LTF9XFxoXzIiLDFdLFs0LDEyLCJcXGRkXzNeey0xfVxcaF8yIiwxXSxbNiwxNCwiXFxkZF97M31eey0xfVxcaF8yIiwxXSxbOCwxMiwiXFxkZF97M31eey0xfVxcaF80IiwxXSxbOSwxMywiXFxkZF97M31eey0xfVxcaF80IiwxXSxbMTEsMSwiXFxoX3sxfV57LTF9XFxkZF8yIiwxXSxbMTMsNSwiXFxoX3sxfV57LTF9XFxkZF8yIiwxXSxbMTUsNywiXFxoX3sxfV57LTF9XFxkZF8yIiwxXSxbMTQsMTAsIlxcaF97NX1eey0xfVxcZGRfNCIsMV0sWzE1LDExLCJcXGhfezV9XnstMX1cXGRkXzQiLDFdXQ==
\[\begin{tikzcd}
	\textcolor{myblue}{{\circled{4}}} && \textcolor{mygreen}{{\circled{234}}} && \textcolor{myyellow}{{\circled{2}}} && \textcolor{mypink}{{\circled{5}}} \\
	\textcolor{myblue}{{\circled{34}}} && \textcolor{mygreen}{{\circled{1234}}} && \textcolor{myyellow}{{\circled{12}}} && \textcolor{mypink}{{\circled{$\emptyset$}}} \\
	\textcolor{myblue}{{\circled{45}}} && \textcolor{mygreen}{{\circled{2345}}} && \textcolor{myyellow}{{\circled{23}}} && \textcolor{mypink}{{\circled{35}}} \\
	\textcolor{myblue}{{\circled{345}}} && \textcolor{mygreen}{{\circled{12345}}} && \textcolor{myyellow}{{\circled{123}}} && \textcolor{mypink}{{\circled{3}}}
	\arrow["{\dd_3^{-1}\h_2}", from=1-1, to=1-3]
	\arrow["{\dd_{3}^{-1}\h_4}"', from=1-5, to=1-3]
	\arrow["{\h_5^{-1}\h_2}"', from=1-7, to=1-5]
	\arrow["{\h_{1}^{-1}\dd_2}"', from=2-3, to=2-1]
	\arrow["{\dd_{3}^{-1}\h_4}"', from=2-5, to=2-3]
	\arrow["{\dd_{3}^{-1}\h_2}", from=3-1, to=3-3]
	\arrow["{\h_{5}^{-1}\dd_4}", from=3-3, to=3-5]
	\arrow["{\h_{1}^{-1}\dd_2}"', from=4-3, to=4-1]
	\arrow["{\h_{5}^{-1}\dd_4}", from=4-3, to=4-5]
	\arrow["{\h_{1}^{-1}\dd_2}", from=4-5, to=4-7]
\end{tikzcd}\]
\end{example}

We now argue that this calculation can be leveraged to show that every $\f$ with $\G_{\f}\in\{A, P_5\}$ and $x_3\notin X_{\f}$ has $\V_{\f} = V(\chi_1\chi_5)$. 
Note that by Lemmas \ref{lemma leafs and edges} and \ref{lemma variables present for degree 2 vertex} along with Remark \ref{remark clique complex}, any such $\f$ must have \[X_{\min}:=\{x_1,x_5,x_{12},x_{23},x_{34},x_{45}\}\subseteq X_{\f}\subseteq \{x_1,x_2,x_4,x_5,x_{12},x_{23},x_{24},x_{34},x_{45}, x_{234}\}=:X_{\max}.\]
In Proposition \ref{proposition Taylor graphs are order respecting} we argue that adding variables to $X_{\f}$ has the effect of removing edges in $\T_{\f}$.
Intuitively, this is because more variables give larger neighborhoods and more disparate $\lcm$'s.
For $i\in \{2,4\}$, adding $x_{i}$ to $X_{\min}$ removes every $\dd_i$ edge in $\T_{\f}$. 
Moreover adding either of $x_{24}$ or $x_{234}$ causes $f_2$ and $f_4$ to be neighbors in $\G_{\f}$, removing any $\h_{2}$ and $\h_{4}$ with target containing $\{f_2,f_4\}$. 
Hence the Taylor graph for $X_{\max}$ is depicted by Example \ref{running example taylor graph}, where the dashed edges are removed. 
Notably, removing these edges preserves the homotopy sources and sinks for $\chi_1$ and $\chi_5$ along with the $\{f_1,f_5\}$-perfect matching $M$ of Example \ref{running example perfect matching}. 
Fewer edges in $\T_{\f}$ yields fewer edges of $\Aux_{\f}^M$ hence $\Aux_{\f}^M$ is still acyclic and thus $M$ remains triangular. 
It thus follows that for all $\f$ with $X_{\min}\subseteq X_{\f}\subseteq X_{\max}$, we have $\V_{\f} = V(\chi_1\chi_5)$.
This proves the portion of \cite[Theorem 6.16]{BGP} stated in Example \ref{running example taylor graph}.

\begin{proposition}\label{proposition Taylor graphs are order respecting}
    Suppose $\f$ and $\g$ minimally generate square-free monomial ideals with the same number of minimal generators.
    If $X_{\f} \subseteq X_{\g}$ then $\T_{\g}$ is a subgraph of $\T_{\f}$.  
\end{proposition}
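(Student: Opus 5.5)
Both Taylor graphs have the same vertex set $\{v_\sigma : \sigma\subseteq[n]\}$ and the same edge weights whenever an edge exists, since weights depend only on $\sigma$, $i$ and $\chi_i$. It therefore suffices to show two things: every differential edge of $\T_{\g}$ is a differential edge of $\T_{\f}$, and every homotopy edge of $\T_{\g}$ is a homotopy edge of $\T_{\f}$. I will work throughout with the description of the generators through their variables: $f_i = \prod_{x_\tau\in X_{\f},\, i\in\tau} x_\tau$, and similarly for $g_i$ using $X_{\g}$. I identify $[n]$ for $\f$ and $\g$ via the common index set, and treat $f_\sigma=\lcm(\sigma)$ as the product of the variables in $X_{\f}$ whose label meets $\sigma$.

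\emph{Homotopy edges.} First I show that $\G_{\f}$ is a subgraph of $\G_{\g}$. For this, note that $\{f_i,f_j\}$ is an edge exactly when some $x_\tau\in X_{\f}$ has $i,j\in\tau$ (as in Remark~\ref{remark clique complex}). Since $X_{\f}\subseteq X_{\g}$, the same variable witnesses the edge $\{g_i,g_j\}$. Hence $N_{\f}(\sigma)\subseteq N_{\g}(\sigma)$ for every $\sigma$. A homotopy edge $\h_{\sigma,i}$ of $\T_{\g}$ requires $g_i\notin N_{\g}(\sigma)$, which gives $f_i\notin N_{\f}(\sigma)$, so the edge is present in $\T_{\f}$.

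\emph{Differential edges.} Here the key step is to translate divisibility into a statement about labels. Since the variables form a regular sequence and each monomial is square-free in them, $f_i\mid f_\sigma$ holds if and only if every $x_\tau\in X_{\f}$ with $i\in\tau$ also satisfies $\tau\cap\sigma\neq\emptyset$. Equivalently, no present variable has a label $\tau$ with $i\in\tau$ and $\tau\cap\sigma=\emptyset$. This condition is monotone decreasing in the set of present variables. So if $\dd_{\sigma,i}$ is an edge of $\T_{\g}$, every $x_\tau\in X_{\g}$ containing $i$ meets $\sigma$; in particular every $x_\tau\in X_{\f}\subseteq X_{\g}$ containing $i$ meets $\sigma$, giving $f_i\mid f_\sigma$. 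Thus $\dd_{\sigma,i}$ is also an edge of $\T_{\f}$.

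The only real obstacle is justifying the divisibility criterion carefully. This requires the convention that the $x_\tau$ are distinct elements of a regular sequence, so that monomials in them are compared exponentwise, together with the fact that the lcm of square-free monomials is the product of the variables occurring in any of them. Once that is in hand, both inclusions are formal. One should also remark that the hypothesis that $\g$ has the same number $n$ of minimal generators ensures that the vertex sets and signs $\sign(\sigma,i)$ agree.
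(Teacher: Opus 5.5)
Your proof is correct and follows essentially the same route as the paper. For homotopy edges, $\G_{\f}\subseteq\G_{\g}$ gives $N_{\f}(\sigma)\subseteq N_{\g}(\sigma)$, and for differential edges, enlarging the set of present variables can only destroy divisibilities $f_i\mid f_\sigma$. The only cosmetic difference is that the paper reduces to adding a single variable $x_\sigma$ and notes $g_\theta/f_\theta\in\{1,x_\sigma\}$, whereas your label criterion ($f_i\mid f_\sigma$ iff every present label containing $i$ meets $\sigma$) handles the whole inclusion at once.
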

\begin{proof}
    It suffices to show the case that $X_{\g} = X_{\f}\cup \{x_\sigma\}$.
    Let us consider how $\T_{\f}$ changes when $x_{\sigma}$ changes from absent to present.
    All edges of $\G_{\f}$ are also edges of $\G_{\g}$. Moreover, the 2 element subsets of $\sigma$ give rise to (possibly) more edges of $\G_{\g}$.
    Consequently, no homotopy edges are added and some homotopy edges may be removed.
    For differential edges note that for all $\theta,$ we have $f_\theta \mid g_\theta$ and $g_\theta / f_\theta\in\{1,x_\sigma\}$.
    It follows that no differential edges are added and some differential edges may be removed.
\end{proof}

We prove a quick proposition in the local setting.  Let $R$ be a local ring with minimal regular presentation given by $Q/I$. 
\begin{proposition}
    Let $\p$ be a prime ideal in $R$ and consider the local ring $R_{\p}$. Then the following inequality holds 
    \[
    \dim \cV_{R_{\p}}(R_{\p}) \les \dim \cV_R(R).
    \]
\end{proposition}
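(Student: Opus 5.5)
The plan is to express both dimensions as growth rates of Betti numbers and then compare resolutions under localization. By Pollitz's work (\cite[Section 3]{Pollitz:2019}), $\Ext_E(M,k)$ is a finitely generated graded module over $\S=k[\chi_1,\dotsc,\chi_n]$ with the $\chi_i$ in degree $2$. Hilbert--Serre theory then identifies $\dim \cV_R(M)$ with the complexity of $M$ over $E$: the order of polynomial growth of $\beta^E_i(M)=\dim_k \Ext^i_E(M,k)$. So the goal becomes showing that the complexity of $R_\p$ over the Koszul complex attached to a minimal regular presentation of $R_\p$ is at most the complexity of $R$ over $E$.

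The first step is to set up the localization. Let $\q\subset Q$ be the preimage of $\p$ (after passing to $\widehat R$, which does not change $\cV_R(R)$ and over which the primes of $R$ lift to primes of $\widehat{R}$). Then $R_\p\cong Q_\q/I_\q$ with $Q_\q$ regular, though this presentation need not be minimal. Let $F\to R$ be a minimal semifree resolution over $E$; each $F_i$ has rank $\beta^E_i(R)$. Localizing, $F_\q$ is a semifree resolution of $R_\p$ over $E_\q=\Kos^{Q_\q}(f_1,\dotsc,f_n)$, with the same ranks but possibly not minimal. Hence $\beta^{E_\q}_i(R_\p)\le \beta^E_i(R)$, and the complexity of $R_\p$ over $E_\q$ is at most $\dim\cV_R(R)$. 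This step is routine.

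The second step, which I expect to be the main obstacle, is to pass from $E_\q$ to the Koszul complex $E'$ of a minimal regular presentation of $R_\p$ without increasing complexity. There are two kinds of non-minimality. First, if some $f_i$ is a unit in $Q_\q$, then $I_\q=Q_\q$ and $R_\p=0$, so the inequality is trivial. Second, after an invertible linear change of generators over $Q_\q$, it may happen that $I_\q\not\subseteq \q^2Q_\q$ or that the $f_i$ fail to minimally generate $I_\q$. For the first issue, I would cut $Q_\q$ down by generators of $I_\q$ lying in $\q\smallsetminus\q^2$; these form part of a regular system of parameters, and the Koszul complex on them is a resolution of the smaller regular ring. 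The second issue is handled by replacing redundant generators by $0$. Both operations decompose $E_\q$ up to quasi-isomorphism as $E'\otimes_{Q'}\Lambda$, where $\Lambda$ is either an exterior algebra on generators that are zero or a Koszul complex on regular parameters. Next I would compare $\Ext$ over $E_\q$ with $\Ext$ over $E'$. In the regular-parameter case the change of rings is an isomorphism. In the zero-generator case, a spectral sequence, or Pollitz's statement that support varieties over a non-minimal Koszul presentation are the minimal ones times an affine factor, gives $\beta^{E_\q}_i(R_\p)\ge \beta^{E'}_i(R_\p)$. I would first try to quote this invariance of the variety under change of presentation from \cite{Pollitz:2019}. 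Failing that, I would prove the Betti inequality directly by noting that $E'\to E_\q$ is split as a map of dg algebras, so that $R_\p$ over $E'$ is a retract of the corresponding computation over $E_\q$.

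The third step combines the two. We obtain
\[
\dim\cV_{R_\p}(R_\p)=\operatorname{cx}_{E'}(R_\p)\le \operatorname{cx}_{E_\q}(R_\p)\le \operatorname{cx}_E(R)=\dim\cV_R(R).
\]
Two further points need attention. The residue field changes from $k$ to $k(\p)$, but complexity is measured by ranks, so this is harmless. One must also check that the first equality holds for the presentation of $R_\p$ that Pollitz uses to define $\cV_{R_\p}(R_\p)$, namely the minimal one; this is exactly why the second step is needed.
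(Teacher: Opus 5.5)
Your proposal is correct, and its core is the paper's argument: identify $\dim\cV$ with complexity over the Koszul complex, localize a minimal semifree $E$-resolution of $R$ at $\q$, and conclude that $\dim_{\kappa(\q)}\Ext^i_{E_\q}(R_\p,\kappa(\q))\le\dim_k\Ext^i_E(R,k)$.

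Your second step adds something the paper skips. Its proof passes straight from the complexity of $R_\p$ over $E_\q$ to $\dim\cV_{R_\p}(R_\p)$, even though $Q_\q/I_\q$ need not be a minimal regular presentation. For example, take $I=(x^2,xy)\subseteq k\llbracket x,y\rrbracket$ and $\q=(x)$. Then $R_\p=\kappa(\q)$, whose support variety is a point, while $\Ext_{E_\q}(R_\p,\kappa(\q))\cong\kappa(\q)[\chi]$ has complexity $1$. Your reduction handles this correctly. After an invertible change of generators, those that form part of a regular system of parameters are absorbed by a quasi-isomorphism. The redundant ones split off an exterior factor with zero differential, which makes $\Ext_{E'}(R_\p,\kappa(\q))$ a retract of $\Ext_{E_\q}(R_\p,\kappa(\q))$. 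So the discrepancy only works in your favor.

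Two minor remarks. First, the case of a unit $f_i$ cannot occur, since $\q\supseteq I$. Second, your parenthetical about passing to $\widehat R$ does not by itself handle the case $R\neq\widehat R$, because $R_\p$ is not a localization of $\widehat R$. Like the paper's proof, and like Corollary~\ref{corollary dimension is continuous}, where $R=Q/(\g)$ with $Q$ complete, you should simply take $R=Q/I$.
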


\begin{proof}
    The dimension of the support variety $\cV_R(R)$ gives the complexity of $R$ over $E$ (see \cite[5.2.8 and 5.2.9]{Pollitz:2021}). That is, $\dim \cV_R(R)$ gives the minimal degree of a polynomial that bounds the growth of the following sequence $\left (\dim_k \left(\Ext_E^i(R,k)\right) \right )_{i \in \mathbb N}$. Let $F$ be a minimal semifree $E$-resolution as in \cite[B.6]{Avramov/Iyengar/Nasseh/SatherWagstaff:2019}. Then $\dim_k(\Ext^i_E(R,k)) = \dim_k(F \otimes_E k)_i$. Localizing at $\q = Q \cap \p$, we have that $F_{\q}$ is an $E_{\q}$ resolution of $R_{\p}$. It follows that $\dim_{\kappa({\q})}(\Ext^i_{E_{\q}}(R_{\p},\kappa(\q))) \les  \dim_k \left(\Ext_E^i(R,k)\right)$. Hence $\dim \cV_{R_\p} (R_{\p} )\les \dim \cV_R(R) $.
\end{proof}

\begin{corollary}\label{corollary dimension is continuous}
    Suppose $\f$ and $\g$ minimally generate square-free monomial ideals with the same number of minimal generators.
    If $X_{\f} \subseteq X_{\g}$  then $\dim \V_{\f} \les \dim \V_{\g}$ 
\end{corollary}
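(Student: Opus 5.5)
The plan is to realize $\V_{\f}$ as the support variety of a localization of $R_{\g} := Q/(\g)$ and then apply the preceding proposition. By Corollary \ref{corollary triangle free fiber} and the discussion before it, $\V_{\f}$ and $\V_{\g}$ depend only on $X_{\f}$ and $X_{\g}$. They do not depend on the particular regular sequence. So we may take $Q = k[x_\sigma : x_\sigma\in X_{\g}]$ localized at its homogeneous maximal ideal, with the regular sequence consisting of the variables themselves.

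\textbf{Step 1: the localization.} Let $\q \subseteq Q$ be the ideal generated by the variables $x_\sigma \in X_{\f}$. It is prime, since $Q/\q$ is again a (localized) polynomial ring. Write $g_i = f_i \cdot \prod x_\sigma$, where the product runs over $x_\sigma \in X_{\g}\smallsetminus X_{\f}$ with $f_i\in\sigma$. Each $f_i$ is a nonunit monomial in the variables of $X_{\f}$, so $(\g)\subseteq \q$. Set $\p = \q/(\g)$, a prime of $R_{\g}$. In $Q_{\q}$ every extra variable $x_\sigma\in X_{\g}\smallsetminus X_{\f}$ is a unit. Hence $(\g)Q_{\q} = (\f)Q_{\q}$, and $(R_{\g})_{\p}\cong Q_{\q}/(\f)$.

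\textbf{Step 2: identify the localized variety with $\V_{\f}$.} The ring $Q_{\q}$ is regular local with regular system of parameters $\{x_\sigma : x_\sigma\in X_{\f}\}$ and residue field $K=\kappa(\q) = k(x_\sigma : x_\sigma\notin X_{\f})$. The $f_i$ are monomials on this regular sequence, none dividing another, so they minimally generate $(\f)Q_{\q}$ with $n$ generators. The combinatorial description of \cite{BGP} then applies. The support ideal is the radical of the ideal of $2^{n-1}$-minors of the matrix $\T_{\f}$, whose entries are $\pm1$ and $\pm\chi_i$ with coefficients in the prime field. Therefore $\cV_{(R_{\g})_{\p}}((R_{\g})_{\p})$ is the variety over $K$ cut out by the same polynomials that define $\V_{\f}$ over $k$. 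Since $K/k$ is a field extension, the dimension of this variety is unchanged under base change. This gives $\dim \cV_{(R_{\g})_\p}((R_{\g})_\p) = \dim\V_{\f}$.

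\textbf{Step 3: conclude.} The previous proposition gives $\dim \cV_{(R_{\g})_{\p}}((R_{\g})_{\p}) \les \dim \cV_{R_{\g}}(R_{\g}) = \dim \V_{\g}$, which yields the claim.

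The main obstacle is Step 2, namely checking that the localized presentation satisfies the hypotheses under which $\V_{\f}$ was computed. One issue is minimality: we need $I\subseteq \m^2$. If some $f_i$ were a single variable this could fail, but such an $f_i$ has degree at least $2$ in the original setting unless $f_i$ is a single variable there too. In either case the Taylor graph computation of \cite[Section 2, 6.2.4]{BGP} only uses that the $f_i$ are monomials on a regular sequence, so I would cite it in that form. The other issue is the change of residue field; this is handled by the field-independence of the defining matrix together with the invariance of dimension under extension of scalars.
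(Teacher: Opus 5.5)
Your proposal is correct and follows the paper's argument: realize $\g$ on the variables $X_{\g}$, localize $R_{\g}$ at the prime generated by $X_{\f}$ so that $(R_{\g})_{\p}\cong Q_{\q}/(\f)$, and apply the preceding localization proposition; your explicit handling of the residue field $\kappa(\q)$ spells out a point the paper leaves implicit. The $I\subseteq\m^2$ issue you raise (which the paper also glosses over) is settled more cleanly by replacing each $x_\sigma$ with $x_\sigma^2$, which leaves both Taylor graphs, and hence $\V_{\f}$ and $\V_{\g}$, unchanged, than by asserting that \cite{BGP} needs no minimality hypothesis.
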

\begin{proof}
    We remark that $\V_{\f}$ is only sensitive to the combinatorics of $\f$, not the setting, hence we may assume $Q = k\llbracket X_{\g}\rrbracket$ and $R = Q/(\g)$.
    Since none of the monomials in $\f$ are 1, we have that if $\p = (X_{\f})$ then $R_\p = Q_\p/(\g)_{\p} = Q_\p/(\f)$. 
\end{proof}

Proposition \ref{proposition Taylor graphs are order respecting} and Corollary \ref{corollary dimension is continuous} motivate further study of the relation $X_{\f}\mapsto \T_{\f}$ which is the content of the next part of this section.
Recall that in general $X_{\f}$ can be identified as an element of $2^{2^{[n]}}$, however we note that not all elements of $2^{2^{[n]}}$ arise as $X_{\f}$ for some $\f$.
The obstructions occur from the requirement that $f_i\nmid f_j$.
Recall that the \emph{support} of a monomial is the set of variables dividing it.

\begin{proposition}\label{proposition X realizing valid f}
Let $[n] = \{1,\ldots,n\}$ and $Q = k[\setbuild{x_\sigma}{\sigma \in 2^{[n]}}]$ and for $i\neq j$ we consider prime ideals  $P_{ij} = (\setbuild{x_\sigma}{i\in \sigma,\; j\notin\sigma })$.
An element $X\in 2^{2^{[n]}}$ has $X=X_{\f}$ for some $\f$ if and only if $X$ is the support of a monomial in \[J = \bigcap_{i\neq j}P_{ij}.\]
\end{proposition}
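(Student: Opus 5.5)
Since $J$ is an intersection of monomial prime ideals, it is a squarefree monomial ideal. A monomial $m$ lies in $J$ exactly when, for every ordered pair $i\neq j$, some variable $x_\sigma$ with $i\in\sigma$ and $j\notin\sigma$ divides $m$. So "$X$ is the support of a monomial in $J$" reduces to the following combinatorial condition on the set $X$:
\[
(\ast)\qquad \text{for all } i\neq j \text{ there is } x_\sigma\in X \text{ with } i\in\sigma,\ j\notin\sigma.
\]
This holds because membership of a monomial in $P_{ij}$ depends only on its support. The plan is to show that $X=X_{\f}$ for some valid $\f$ if and only if $X$ satisfies $(\ast)$.

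For the forward direction, let $\f$ be an ordered list of squarefree monomials, none dividing another, written in the variables $X_{\f}$, so $f_i=\prod_{x_\sigma\in X_{\f},\, i\in\sigma}x_\sigma$. Fix $i\neq j$. Since $f_i\nmid f_j$, some variable $x_\sigma$ divides $f_i$ but not $f_j$. By definition of type this means $i\in\sigma$ and $j\notin\sigma$, and $x_\sigma\in X_{\f}$. Hence the squarefree monomial $\prod_{x_\sigma\in X_{\f}}x_\sigma$ lies in every $P_{ij}$, so in $J$, and its support is $X_{\f}$.

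For the converse, let $X$ be the support of a monomial in $J$, so $X$ satisfies $(\ast)$. Define $f_i=\prod_{x_\sigma\in X,\, i\in\sigma}x_\sigma$. We must check three things.

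\begin{enumerate}
\item \emph{$f_i\neq 1$ and no $f_i$ divides another.} Given $i\neq j$, $(\ast)$ supplies $x_\sigma\in X$ with $x_\sigma\mid f_i$ and $x_\sigma\nmid f_j$. So $f_i\nmid f_j$, and in particular $f_i\neq 1$. This uses $n\ges 2$; the case $n=1$ has no pairs, and there one should simply require $X$ nonempty. For the same reason the $f_i$ are pairwise distinct.
\item \emph{The type of each variable is its label.} $x_\sigma$ divides $f_i$ exactly when $i\in\sigma$, so $\tau(x_\sigma)=\sigma$. Distinct variables therefore have distinct types, which is the injectivity convention.
\item \emph{Every variable is used.} We need each $x_\sigma\in X$ to have $\sigma\neq\emptyset$, so that the variable set of $\f$ is exactly $X$. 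A monomial in $J$ may be divided by $x_\emptyset$, which appears in no $f_i$.
\end{enumerate}

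The only real obstacle is this last degenerate point. I would handle it by noting that $x_\emptyset$ lies in no $P_{ij}$, so deleting it from a monomial in $J$ keeps the monomial in $J$. Then either read the statement with $\sigma$ ranging over nonempty sets, consistent with the earlier restriction to $2^{2^{[n]}\smallsetminus\emptyset}$, or note that the map $X\mapsto\f$ ignores $x_\emptyset$. With that, $X_{\f}=X$, and the equivalence follows.
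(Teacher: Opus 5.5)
Your proof is correct and is essentially the paper's argument: both reduce membership of a monomial in $J$ to its support containing, for every ordered pair $i\neq j$, some $x_\sigma$ with $i\in\sigma$ and $j\notin\sigma$, which is exactly a witness to $f_i\nmid f_j$ for $f_i=\prod_{i\in\sigma\in X}x_\sigma$. Your treatment of $x_\emptyset$ and of $n=1$ covers degenerate cases the paper passes over. Note that only your first reading (restricting to nonempty $\sigma$, as in $2^{2^{[n]}\smallsetminus\emptyset}$) gives $X_{\f}=X$ literally; under the second reading you only get $X_{\f}=X\smallsetminus\{x_\emptyset\}$.
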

\begin{proof}
    Given $X$, checking that $X=X_{\f}$ for some $\f$ amounts to showing that the monomials given by $f_i=\prod_{i\in \sigma\in X}x_\sigma$ indeed form a minimal generating set. 
    Equivalently, for all $i\neq j$ we require that $f_i\nmid f_j$.
    Note that each of these conditions must be witnessed by some $x_\sigma$ with $i\in \sigma$ and $j\not \in \sigma$, or rather $x_\sigma\in P_{ij}$.
    It follows that a monomial being contained in every $P_{ij}$ is equivalent to its support containing a witness to every $f_i\nmid f_j$.
\end{proof}

In light of Propositions \ref{proposition Taylor graphs are order respecting} and \ref{proposition X realizing valid f}, the minimal generators of $J$ correspond to the largest possible Taylor graphs. 

\begin{example}\label{example largest Taylor graphs}
    Let us consider two extremal cases of minimal generators of $J$. 
    Take \[X_{\f_h} = \{x_{1}, x_2,\ldots,x_n\}\text{ and } X_{\f_d} = \{x_{[n]\smallsetminus \{f_1\}}, x_{[n]\smallsetminus \{f_2\}},\ldots, x_{[n]\smallsetminus \{f_n\}}\}.\] 
    In particular, $\f_h$ generates a monomial complete intersection and $\G_{\f_h}$ has no edges. 
    Thus $\T_{\f_h}$ has every homotopy edge possible.
    In general, a Taylor graph $\T_{\f}$ having no differential edges is equivalent to $X_{\f_h}\subseteq X_{\f}$.   
    On the other hand, $\G_{\f_d}$ is the complete graph.
    Consequently the only homotopy edges of $\T_{\f_d}$ have targets that are singletons, which always occur. 
    Moreover, every differential edge possible (i.e. with target cardinality at least 2) is present in $\T_{\f_d}$.
\end{example}

In general, the number of minimal generators of $J$ is hopelessly large to compute.
As \cite{BGP} shows, support varieties are heavily dependent on GCD graphs which serve as a nice stepping stone in classification.
We now construct an analogue of $J$ for a fixed GCD graph $\G$. 
Recall from Remark \ref{remark clique complex} that $K_\G$, the clique complex of $\G,$ is the set of all complete induced subgraphs.

\begin{proposition}\label{proposition X realizing given G}
    Fix a graph $\G$ and set $Q = k[K_{\G}]$. 
    For every edge $e = \{i,j\}\in\G $ set \[P_e = 
    (\setbuild{x_\varphi}{i\in \varphi,\; j \in \varphi})\cap(\setbuild{x_\sigma}{i\in \sigma,\; j\not \in \sigma})\cap
    (\setbuild{x_\theta}{j\in \theta,\; i\not \in \theta}).\] 
    The supports of square-free monomials in $J_{\G} = \bigcap_{e\in \G}P_e$ are exactly the elements $X_{\f} \in 2^{K_{\G}}$ with $\G = \G_{\f}$.
\end{proposition}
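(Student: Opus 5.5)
The plan is to mirror the proof of Proposition \ref{proposition X realizing valid f}: translate the condition ``$X = X_{\f}$ for some $\f$ with $\G_{\f} = \G$'' into a family of witness conditions, one for each edge of $\G$. We then observe that a square-free monomial lies in a monomial ideal $\bigcap P_e$ exactly when its support meets the generating set of each monomial prime component of each $P_e$. Two facts are used throughout:
\begin{itemize}
\item a square-free monomial $m$ lies in a monomial prime $(x_\sigma : \sigma\in S)$ if and only if its support contains some $x_\sigma$ with $\sigma \in S$;
\item membership in an intersection of ideals means membership in each one.
\end{itemize}
So a square-free monomial with support $X\subseteq K_\G$ lies in $J_\G$ if and only if, for every edge $e=\{i,j\}\in\G$, the set $X$ contains: some $x_\varphi$ with $\{i,j\}\subseteq\varphi$; some $x_\sigma$ with $i\in\sigma$, $j\notin\sigma$; and some $x_\theta$ with $j\in\theta$, $i\notin\theta$.

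Given $X\subseteq K_\G$, define $f_\ell = \prod_{\ell\in\sigma\in X}x_\sigma$ as before. For the forward direction, suppose the support $X$ of a square-free monomial lies in $J_\G$. I would check three things.
\begin{enumerate}
\item The $f_\ell$ form a minimal generating set. Since $X\subseteq K_\G$, any label $\sigma$ that contains two generators is a clique. For $i\neq j$ there are two cases. If $\{i,j\}\in\G$, the $P_e$ conditions give a variable in $f_i$ but not in $f_j$, and vice versa, so $f_i\nmid f_j$ and $f_j \nmid f_i$.
\item Every edge of $\G$ is an edge of $\G_{\f}$. The witness $x_\varphi$ divides both $f_i$ and $f_j$.
\item No non-edge of $\G$ is an edge of $\G_{\f}$. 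A common variable $x_\sigma$ would have $\{i,j\}\subseteq\sigma\in K_\G$, which forces $\{i,j\}\in\G$.
\end{enumerate}
Conversely, if $X=X_{\f}$ with $\G_{\f}=\G$, then $X\subseteq K_\G$ by Remark \ref{remark clique complex}. Each edge $\{i,j\}$ is witnessed by a common variable, and $f_i\nmid f_j$, $f_j\nmid f_i$ supply the other two witnesses. Hence $\prod_{x\in X}x\in J_\G$.

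The main obstacle is the non-edge case in step 1: for $\{i,j\}\notin\G$ we must show $f_i\nmid f_j$, and no $P_e$ controls this pair directly. My argument is that since $\{i,j\}$ is not a clique, no variable divides both $f_i$ and $f_j$. So it suffices that $f_i\neq 1$, meaning some $x_\sigma\in X$ has $i\in\sigma$.

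That holds if $i$ lies on some edge of $\G$, by the $P_e$ conditions. If $i$ is an isolated vertex of $\G$, then $J_\G$ imposes nothing on it. Here I will have to either note that the convention $x_i$ present is forced (an isolated generator must equal a product of variables of type $\{i\}$), or add the intended hypothesis, or tacitly include the prime $(x_i)$ for isolated vertices. I expect the authors handle this by the implicit convention that generators are non-units. I would state that explicitly and then conclude.
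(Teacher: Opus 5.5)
Your proposal is correct and follows the paper's argument. The three monomial primes in each $P_e$ encode the edge witness and the two non-divisibility witnesses, and restricting labels to $K_\G$ rules out witnesses for non-edges. You are more careful than the paper's one-line proof, which never addresses $f_i\nmid f_j$ for non-adjacent $i,j$. Your reduction of that case to $f_i\neq 1$ is right, and so is your observation that the statement as written fails when $\G$ has an isolated vertex, since nothing in $J_\G$ forces $x_i$. The paper only applies the proposition to connected graphs on at least two vertices, where this problem does not arise.
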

\begin{proof}
Note that restricting variables to $K_{\G}$ gives that no edge not in $\G$ could be witnessed by a variable. 
Hence it suffices to note that every edge of $\G$ is witnessed by some $x_{\varphi}$, whereas the $x_\sigma$ and $x_\theta$ force $X$ to yield a minimal generating set.
\end{proof}

\begin{example} \label{example X realizing given G}
    Let us compute $J_{\G}$ for the GCD graph in Example \ref{example first gcd graphs}. 
    \begin{align*}
         P_{12} &= (x_{12},x_{123})\cap(x_{1},x_{13},x_{14})\cap (x_{2},x_{23})  \label{e = \{f_1,f_2\}}\\ 
        P_{13}&=  (x_{13},x_{123})\cap(x_{1},x_{12},x_{14})\cap (x_3,x_{23}) \\
        P_{14}&=  (x_{14})\cap (x_{1},x_{12}, x_{13}, x_{123})\cap (x_4) \\
        P_{23}&= (x_{23}, x_{123})\cap (x_{2},x_{12}) \cap (x_{3},x_{13})
    \end{align*} 
    Hence \[J_{\G} = ( x_{4}x_{12}x_{13}x_{14}x_{23},\;x_{2}x_{3}x_{4}x_{14}x_{123},\; x_{2}x_{4}x_{13}x_{14}x_{23}x_{123},\; x_{3}x_{4}x_{12}x_{14}x_{23}x_{123}).\]

    Note then that in Example \ref{example set of variables}, $X_{\f_1}$ and $X_{\f_2}$ are the supports of the first and second listed generators of $J_{\G}$ respectively. 
    We observe that the resulting Taylor graphs in Example \ref{example taylor graph definition} are incomparable. 
    We remark that $J_{\G}$ is principle if and only if $\G$ is triangle-free.
    \end{example}
    \begin{remark}\label{remark computer good}
        Proposition \ref{proposition X realizing given G} invites a computational approach to collecting data. In particular, our arguments in Section \ref{section 6 generated ideals} stem from computing $J_{\G}$ for 41 graphs and analyzing the largest possible Taylor graphs corresponding to the minimal generators. 
    \end{remark}
We now prepare a lemma providing a sufficient condition for a matching to be triangular.
In general, arguing that $\Aux_{\f}^M$ is acyclic requires understanding the paths that can occur.
By Remark \ref{remark taylor weights give auxilliary weights}, directed paths in $\Aux_{\f}^M$ are given by certain structures in the Taylor graph.

\begin{definition}\label{definition walk on taylor graph}
We treat $\h_i$, $\dd_i$, $\h_i^{-1}$, and $\dd_i^{-1}$ as functions that are partially defined on the vertices of a Taylor graph.  
The function $\e_i$ sends $\e_i^s$ to $\e_i^t$, whereas $\e_i^{-1}$ sends $\e_i^t$ to $\e_i^s$. 
A \emph{walk} on a Taylor graph is a pair of vertices $v_s$ and $v_t$ along with a word $w=w_1w_2\cdots w_\ell$ in the alphabet $\{\h_1, \h_{1}^{-1},\dd_1, \dd_{1}^{-1},\ldots,\h_n, \h_{n}^{-1},\dd_n, \dd_{n}^{-1}\}$ such that $v_t = (w_1\circ w_2\circ \cdots \circ w_\ell )(v_s)$. We say a walk is \emph{balanced} if the letters of $w_1w_2\cdots w_\ell$ alternate between $\e_i$ and $\e_j^{-1}$.
In Example \ref{running example taylor graph}, $v_{34} = \h_1^{-1}\dd_{2}\dd_{3}^{-1}\h_4(v_{12})$ is a balanced walk whereas $v_{15} = \h_{3}^{-1}\dd_4\dd_2(v_{12345})$ is not a balanced walk.
Given a $\sigma$-perfect matching $M$, an \emph{$M$-walk} is a balanced walk such that $v_s$ and $v_t$ are sources of edges in $M$ and every $\e_i^{-1}$ in the walk has that $\e_i$ is an edge of $M$. 
Note that $M$-walks on $\T_{\f}$ correspond to directed paths in $\Aux_{\f}^M$.
The walk above from $v_{12}$ to $v_{34}$ is an $M$-walk and corresponds to the directed path in $\Aux_{\f}^M$ between these vertices.
\end{definition}

Recall that a proper coloring of a graph with vertex set $V$ and edge set $E$ is a function $c$ with domain $V$ such that an edge in $E$ using vertices $i$ and $j$ implies $c(i)\neq c(j)$. 
We will typically use two different colorings of $\Aux_{\f}^M$ to argue about possible cycles that could occur. 

For a $\sigma$-perfect matching, $M$, we set the notation $D_i(M) = \{\dd_{\theta,i}\in M\}$ for all $1\leq i\leq n$ as well as $H_j(M) = \{\h_{\theta,j}\in M\}$ for all $j\in \sigma$. 
Moreover, we denote the set of source vertices of edges in $D_i(M)$ by $D^s_i(M)$ and corresponding targets by $D^t_i(M)$. We also use the same convention for $H_j(M)$.

\begin{proposition}\label{proposition type coloring is a coloring}
    If $M$ is a $\sigma$-perfect matching then the function $\dd_{\theta,i}^s\mapsto D_i^s(M)$ and $\h_{\theta,j}^s\mapsto H_j^s(M)$ is a proper coloring of $\Aux_{\f}^M$.
\end{proposition}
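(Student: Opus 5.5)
We must show that whenever $\Aux_{\f}^M$ has an edge between two vertices, those vertices receive different colors. A vertex of $\Aux_{\f}^M$ is the source of a unique edge of $M$, because $M$ is a matching. That edge is either some $\dd_{\theta,i}$ or some $\h_{\theta,j}$ with $f_j\in\sigma$, so the color is well defined. The plan is to take an edge $\e^s\to\e'^s$ of $\Aux_{\f}^M$ where both endpoints have the same color, say both lie in $D_i^s(M)$ or both lie in $H_j^s(M)$, and derive a contradiction.

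By Remark \ref{remark taylor weights give auxilliary weights}, an edge $\e^s\to\e'^s$ with $\e\neq\e'$ arises from an edge $\e_\ell$ of $\T_{\f}$ running from $\e^s$ to $\e'^t$. I will compare cardinalities and set memberships.

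Suppose first that $\e=\dd_{\theta,i}$ and $\e'=\dd_{\theta',i}$, with sources $\theta\cup\{f_i\}$ and $\theta'\cup\{f_i\}$ and targets $\theta$ and $\theta'$.
\begin{itemize}
\item The edge $\e_\ell$ goes from $\theta\cup\{f_i\}$ to $\theta'$, and it changes exactly one element.
\item Since $f_i\notin\theta'$, this element must be $f_i$ itself, so $\theta'=\theta$.
\item Then $\e=\e'$, which is a contradiction.
\end{itemize}

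Suppose next that $\e=\h_{\theta,j}$ and $\e'=\h_{\theta',j}$, with sources $\theta$ and $\theta'$ and targets $\theta\cup\{f_j\}$ and $\theta'\cup\{f_j\}$.
\begin{itemize}
\item The edge $\e_\ell$ goes from $\theta$ to $\theta'\cup\{f_j\}$.
\item Since $f_j\notin\theta$ but $f_j\in\theta'\cup\{f_j\}$, the element changed must be $f_j$ added, so $\theta'=\theta$.
\item Again $\e=\e'$, a contradiction.
\end{itemize}

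So adjacent vertices always get distinct colors and the coloring is proper. There is no real obstacle here. The only point to be careful about is that each Taylor edge toggles exactly one generator, and that the membership of $f_i$ (respectively $f_j$) in the source and target is what pins down $\theta'=\theta$.
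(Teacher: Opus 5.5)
Your proof is correct and takes essentially the same approach as the paper. In both, the Taylor edge witnessing an $\Aux_{\f}^M$-edge between two sources in $D_i^s(M)$ (resp.\ $H_j^s(M)$) must toggle $f_i$ (resp.\ $f_j$). That edge is then the matching edge itself, which would force a loop that $\Aux_{\f}^M$ does not have; your explicit treatment of the homotopy case, which the paper calls identical, is fine.
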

\begin{proof}
    Suppose there is some edge from $\dd_{\theta,i}^s$ to $\dd_{\theta',i}^s$. 
    Note that $f_i\in \dd_{\theta,i}^s$ but $f_i\notin \dd_{\theta',i}^t$, so the edge of $\T_{\f}$ witnessing this edge of $\Aux_{\f}^M$ must also be $\dd_{\theta,i}$ which is preposterous. 
    An identical argument holds for edges contained in some $H_j(M)$.
\end{proof}

\begin{definition} \label{definition matching theta-determined}
    We call the coloring in Proposition \ref{proposition type coloring is a coloring} the \emph{type coloring}. 
    We now define a (possibly) finer coloring, under the assumption that $M$ satisfies the following combinatorial condition.
    Let $\theta\subseteq[n]$ and $M$ be a $\sigma$-perfect matching such that for all $i\in \theta,$ we have $D_i(M) = H_i(M)=\emptyset$. 
    For every \emph{coordinate} $\varphi \in  2^\theta$, set $M_\varphi = \setbuild{\e_{\psi,j}^s} {\e_{\psi,j}\in M,\; \varphi = \theta\cap\psi}$ i.e. the sources of edges in $M$, denoted $v_\psi$, with $\theta\cap\psi = \varphi$. 
    We say $M$ is \emph{$\theta$-determined} if every coordinate $\varphi$ has $M_\varphi\subseteq H_i^s(M)$ or $M_\varphi\subseteq D_i^s(M)$. 
    In this case the function $\e_{\psi,j}^s\mapsto M_{\theta\cap\psi}$ is the \emph{coordinate coloring}.
\end{definition}

\begin{example}\label{running example type and coordinate coloring}
    The $\{f_1, f_5\}$-perfect matching given in Example \ref{running example perfect matching} is $\{f_2, f_4\}$-determined. 
    Recall this matching was given as \(M = 
     \h_{\emptyset,1}\otimes 2^{\{3,5\}}  \cup 
     \h_{\{4\},1}\otimes 2^{\{3,5\}} \cup 
     \h_{\{2\},5}\otimes 2^{\{1,3\}}\cup 
     \dd_{\{24\},3}\otimes 2^{\{1,5\}}\) which we will observe coincides with the coordinate coloring. 
    In this case, the coordinate coloring is a 4-coloring which is finer than the 3 color type coloring. 
    In particular, $H_1^s = M_{\emptyset}\cup M_4$, which in turn consists of the sources of edges in $\h_{\emptyset,1}\otimes 2^{\{3,5\}}$ and $\h_{\{4\},1}\otimes 2^{\{3,5\}}$ respectively.
    Moreover, $H_{5}^s = M_2$ consists of the sources of edges in $\h_{\{2\},5}\otimes 2^{\{1,3\}}$.
    Similarly $D_1 = M_{24}$ consists of the sources of edges in $\dd_{\{24\},3}\otimes 2^{\{1,5\}}$.
\end{example}

\begin{lemma} \label{lemma 2 determined matchings are triangular}
If $M$ is a $\sigma$-perfect matching of $\T_{\f}$ that, up to relabeling, is $\{f_1,f_2\}$-determined then $M$ is triangular. 
\end{lemma}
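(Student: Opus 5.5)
By Proposition \ref{proposition det by cycle decompositions}, it suffices to show that $\Aux_{\f}^M$ has no directed cycles. The idea is to collapse $\Aux_{\f}^M$ onto the four coordinates $\emptyset,\{f_1\},\{f_2\},\{f_1,f_2\}$ of $2^{\{f_1,f_2\}}$. We then show two things: no edge joins two vertices of the same coordinate class, and the induced edges between classes respect a partial order.

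\emph{Step 1: edges change the coordinate.} Take an edge of $\Aux_{\f}^M$ from $\e_{a}^s$ to $\e_{b}^s$. By Remark \ref{remark taylor weights give auxilliary weights} it comes from an $M$-walk $\e_a^s\xrightarrow{\e_\ell}\e_b^t\xleftarrow{\e_b}\e_b^s$. The hypothesis of $\{f_1,f_2\}$-determinedness includes $D_i(M)=H_i(M)=\emptyset$ for $i\in\{1,2\}$. So $\e_b$ neither adds nor removes $f_1,f_2$, and $\e_b^t$, $\e_b^s$ have the same coordinate. If $\ell\notin\{1,2\}$, the source $\e_a^s$ also has that coordinate, so both endpoints lie in one class $M_\varphi$. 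Since $M$ is $\theta$-determined, all of $M_\varphi$ lies in a single $H_i^s(M)$ or $D_i^s(M)$. This contradicts the type coloring being proper (Proposition \ref{proposition type coloring is a coloring}). Hence $\ell\in\{1,2\}$, and every edge of $\Aux_{\f}^M$ adds or removes exactly one of $f_1,f_2$ from the coordinate. In other words, the coordinate coloring is proper, and moving along an edge moves to an adjacent vertex of the square $2^{\{f_1,f_2\}}$.

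\emph{Step 2: orient the square.} Any directed cycle in $\Aux_{\f}^M$ would project to a closed walk on the square that uses at least one edge. So it suffices to show that the direction in which $f_1$ (resp.\ $f_2$) can be toggled is forced. Suppose an $\h_1$ edge leaves a vertex of coordinate $\varphi$ to one of coordinate $\varphi\cup\{f_1\}$. Proposition \ref{proposition taylor edges down/upward closed} relates this edge to others at comparable sets, so I expect the direction to be controlled by the type of the class. If the class at $\varphi\cup\{f_1\}$ is of differential type $D_i$, then an incoming edge from $\varphi$ lowers toward a class whose type must be compatible.

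The cleanest route I would try is a contradiction argument: assume edges toggling $f_1$ occur in both directions, $\varphi\to\varphi\cup\{f_1\}$ and $\varphi'\cup\{f_1\}\to\varphi'$. Such edges are $\h_1$ and $\dd_1$ respectively. Combined with the existence of the matched edges at those vertices, downward/upward closure would produce an edge in $\T_{\f}$ inside a single class, or a homotopy edge $\h_1$ at a vertex where $f_1\in N_{\f}(\cdot)$. Either outcome contradicts Step 1 or the neighborhood condition.

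\emph{Main obstacle.} Step 2 is the hard part. With only two coordinates, a cycle on the square must traverse an $f_1$-toggle in both directions, or an $f_2$-toggle in both directions. Ruling this out requires carefully combining Proposition \ref{proposition homotopically/differentially isolated}, Proposition \ref{proposition taylor edges down/upward closed}, and the fact that $\h_1$ requires $f_1\notin N_{\f}$ while $\dd_1$ requires $f_1\mid f_\sigma$. If a uniform argument fails, I would fall back on a case analysis over the at most four classes and their types. Within each case, I would check that no $2$-cycle or $4$-cycle of classes can be lifted to an actual directed cycle.
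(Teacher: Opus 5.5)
\textbf{There is a genuine gap in Step 2.} Your Step 1 is correct and is the same setup the paper uses. Every edge of $\Aux_{\f}^M$ has weight $\e^{-1}\h_i$ or $\e^{-1}\dd_i$ with $i\in\{1,2\}$, and it moves between adjacent classes of the square $M_{{\tt00}},M_{{\tt01}},M_{{\tt10}},M_{{\tt11}}$.

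The problem is the claim you set out to prove in Step 2, namely that the direction in which $f_1$ (or $f_2$) is toggled is globally forced. This claim is false, so the contradiction you expect from ``edges toggling $f_1$ in both directions'' does not exist. The paper's running example shows this. There $M$ is $\{f_2,f_4\}$-determined (Example \ref{running example type and coordinate coloring}), and $\Aux_{\f}^M$ in Example \ref{running example} contains two relevant edges:
\begin{itemize}
\item $v_5\to v_2$ with weight $\h_5^{-1}\h_2$, going from $M_{\emptyset}$ to $M_{\{f_2\}}$ and adding $f_2$;
\item $v_{123}\to v_3$ with weight $\h_1^{-1}\dd_2$, going from $M_{\{f_2\}}$ back to $M_{\emptyset}$ and removing $f_2$.
\end{itemize}
Yet this auxiliary graph is acyclic. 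So both toggle directions can coexist between the same two classes. What is impossible is only certain consecutive patterns along a walk.

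Your fallback, checking that $2$-cycles and $4$-cycles of classes do not lift, does not identify the mechanism. It is also not the right reduction: a directed cycle of $\Aux_{\f}^M$ can project to a long closed walk on the square. You need to forbid local walk segments, not lift short cycles of classes.

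The missing ideas are two neighbor-counting arguments.

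\emph{No backtracking.} An edge $\h_i$ at $v_\psi$ requires $N_{\f}(f_i)\cap\psi=\emptyset$. An edge $\dd_i$ at $v_\psi$ requires $\lvert N_{\f}(f_i)\cap(\psi\smallsetminus\{f_i\})\rvert\geq 2$ (Proposition \ref{proposition homotopically/differentially isolated}). A backtrack $A\to B\to A$ on the square is two consecutive edges toggling the same $f_i$ in opposite directions. These are separated by a single letter $\e^{-1}$ of a matched edge, which changes only one element outside $\{f_1,f_2\}$. So neither $\dd_i\e^{-1}\h_i$ nor $\h_i\e^{-1}\dd_i$ can occur, and any cycle must wind around the square with a fixed orientation.

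\emph{Oriented loops raise cardinality.} Take the loop $\e_{{\tt00}}^{-1}\dd_1\,\e_{{\tt10}}^{-1}\dd_2\,\e_{{\tt11}}^{-1}\h_1\,\e_{{\tt01}}^{-1}\h_2$ starting at $M_{{\tt00}}$. The $\dd_2$ step needs two neighbors of $f_2$, and then the $\dd_1$ step needs two neighbors of $f_1$. This forces $\e_{{\tt01}},\e_{{\tt11}},\e_{{\tt10}}$ to be differential edges, whose inverses add elements. Hence each loop raises the cardinality of the vertex by at least $2$, and the walk can never close. The other orientation is symmetric.
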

\begin{proof}
 We have the coordinate coloring $M = M_{{\tt00}}\cup M_{{\tt01}}\cup M_{{\tt10}}\cup M_{{\tt11}}$ with $\e^s\in M_{b_1b_2}$ where $b_i$ is the indicator function of $f_i$ applied to $\e^s.$
 Note that the edges in $M$ with source in $M_{b_1b_2}$ have the same type (i.e. $\dd_i$ or $\h_j$ for whichever of $D_i^s$ or $H_j^s$ that $M_{b_1b_2}$ is a subset of) which is denoted by $\e_{b_1b_2}$. 
 Notably, no $\e_{b_1b_2}$ can be $\dd_i$ or $\h_i$ for $i\in \{1,2\}$. 
 Below are the coordinate color classes of $\Aux_{f}^M$ as well as possible edge weights between them.

\begin{center}
\begin{minipage}{\textwidth}
\centering
\begin{tikzpicture}[scale=1, fatnode/.style={circle, inner sep=0pt, minimum size=20pt, draw}]

  % Nodes
\node[fatnode] (v11) at (2,-2) {$M_{{\tt11}}$};
\node[fatnode] (v01) at (2,2) {$M_{{\tt01}}$};
\node[fatnode] (v10) at (-2,-2) {$M_{{\tt10}}$};
\node[fatnode] (v00) at (-2,2) {$M_{{\tt00}}$};

\draw[-{Stealth}, thick] (v00) edge[bend left = 10] node[midway, above, scale = .9, minimum size = 20pt]{$\e_{{\tt01}}^{-1}\h_2$}  (v01);
\draw[-{Stealth}, thick] (v01) edge[bend left = 10] node[midway, below, scale = .9, minimum size = 20pt]{$\e_{{\tt00}}^{-1}\dd_2$}  (v00);

\draw[-{Stealth}, thick] (v01) edge[bend left = 10] node[midway, right, scale = .9, minimum size = 20pt]{$\e_{{\tt11}}^{-1}\h_1$}  (v11);
\draw[-{Stealth}, thick] (v11) edge[bend left = 10] node[midway, left, scale = .9, minimum size = 20pt]{$\e_{{\tt01}}^{-1}\dd_1$}  (v01);

\draw[-{Stealth}, thick] (v10) edge[bend left = 10] node[midway, above, scale = .9, minimum size = 20pt]{$\e_{{\tt11}}^{-1}\h_2$}  (v11);
\draw[-{Stealth}, thick] (v11) edge[bend left = 10] node[midway, below, scale = .9, minimum size = 20pt]{$\e_{{\tt10}}^{-1}\dd_2$}  (v10);

\draw[-{Stealth}, thick] (v00) edge[bend left = 10] node[midway, right, scale = .9, minimum size = 20pt]{$\e_{{\tt10}}^{-1}\h_1$}  (v10);
\draw[-{Stealth}, thick] (v10) edge[bend left = 10] node[midway, left, scale = .9, minimum size = 20pt]{$\e_{{\tt00}}^{-1}\dd_1$}  (v00);
\end{tikzpicture}\\
\vspace{10pt}
Coordinate Coloring and Edge Labels of $\Aux_{\f}^M$
\end{minipage}

\end{center}

Any two-cycle would have to contain $\dd_i \e^{-1}\h_i$. This is impossible as $\dd_i^s$ would have to contain at least two neighbors of $f_i$ in $\G_{\f}$ whereas $\h_i^s$ must contain 0 neighbors.
Hence any cycle must be oriented (clockwise or counterclockwise) and 4-color periodic. Say a clockwise cycle exists. Starting at $M_{{\tt00}}$ and traversing  $\e_{{\tt00}}^{-1}\dd_1\cdot \e_{{\tt10}}^{-1}\dd_2\cdot \e_{{\tt11}}^{-1}\h_1\cdot \e_{{\tt01}}^{-1}\h_2$ (from right to left) must result in the identity. Being able to apply $\dd_2$ after traversing $\e_{{\tt11}}^{-1}\h_1\cdot \e_{{\tt01}}^{-1}\h_2$ requires that $\e_{{\tt11}}$ and $\e_{{\tt01}}$ are differential edges that remove neighbors of $f_2$. 
It cannot be that $\e_{{\tt01}}$ removes a neighbor of $f_1$, otherwise we could not have traversed $\h_1$ at $M_{{\tt01}}$. 
Thus upon arriving at $M_{{\tt10}}$, $\dd_1$ can only be traversed if $\e_{{\tt11}}$ and $\e_{{\tt10}}$ are differential edges removing neighbors of $f_1$. 
Hence to arrive at $M_{{\tt10}}$, the cardinality of the vertex must increase by 4 and since traversing $\e_{{\tt00}}^{-1}\dd_1$ can remove at most 2 elements from the vertex, no such cycle may exist.
\end{proof}

\section{Support Varieties of Six Generated Monomial Ideals }\label{section 6 generated ideals}

In this section we classify the support varieties $\V_{\f}$ that occur when $\f$ has six elements. We follow the same broad strategy as \cite{BGP}, using GCD graphs as a basis for classification. 

\begin{theorem}\label{theorem A.2}
    Let $R$ be a local or positively graded ring with minimal regular presentation given by $Q/I$ where $I$ is a monomial ideal minimally generated by six monomials listed by $\f$. 
    Then up to a reordering of $\f$, the varieties realized as $\V_{\f}$ are listed below:
    \begin{itemize}
        \item a coordinate subspace of $\mathbb A^6_k$ with dimension not equal to $1$
        \item $V(\chi_1,\;\chi_4\chi_6)$ 
        \item $V(\chi_4\chi_6)$
        \item $V(\chi_4\chi_6,\; \chi_5\chi_6)$
        \item $V(\chi_2\chi_4\chi_6)$
        \item $V(\chi_1\chi_3\chi_5+\chi_2\chi_4\chi_6)$.
    \end{itemize}
\end{theorem}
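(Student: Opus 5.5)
The proof is a case analysis over GCD graphs, following \cite{BGP}. First I reduce to combinatorial data. By polarization \cite[Remark 6.18]{BGP} and the discussion after Definition \ref{definition type of rgular element}, $\V_{\f}$ depends only on $X_{\f}\in 2^{K_{\G}}$ with $\G=\G_{\f}$ a graph on six vertices. Several graphs can be set aside immediately:
\begin{itemize}
\item If $\G$ is disconnected, $R$ is a tensor product of rings with fewer generators. Then $\V_{\f}$ is a product of the varieties for each piece, and the classification for $n\le 5$ from \cite{BGP} applies. Products of coordinate subspaces of dimension $\neq 1$ with $\{0\}$ or with a single $V(\chi\chi')$ produce the coordinate-subspace and $V(\chi_4\chi_6)$ items, and the dimension-one exclusion survives. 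I would check this carefully, including the factor $\A^1$ coming from an isolated vertex.
\item If $\G$ has a vertex adjacent to all others, a Golod-type argument, or an isolated vertex of $\T_{\f}$ via Corollary \ref{corollary isolated vertex}, gives $\A^6_k$.
\end{itemize}
This leaves roughly the 41 connected graphs of Remark \ref{remark computer good}.

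For each remaining $\G$ I compute $J_{\G}$ as in Proposition \ref{proposition X realizing given G}. Its minimal generators give the maximal Taylor graphs. By Proposition \ref{proposition Taylor graphs are order respecting}, every $\f$ with $\G_{\f}=\G$ has a Taylor graph that is a subgraph of one of these and a supergraph of $\T$ for $X_{\f}=$ all of $K_{\G}$. I then stratify $2^{K_{\G}}$ into intervals $X_{\min}\subseteq X_{\f}\subseteq X_{\max}$ by the presence or absence of a few key variables, using Lemmas \ref{lemma leafs and edges} and \ref{lemma variables present for degree 2 vertex} to force variables. The model is the argument after Example \ref{running example}. On each interval I need two containments.
\begin{itemize}
\item \textbf{Lower bound.} I exhibit homotopy sources and sinks that survive in $\T$ for $X_{\max}$. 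By Proposition \ref{proposition homotopy soure/sink gives subvariety} these give coordinate subspaces inside $\V_{\f}$.
\item \textbf{Upper bound.} I exhibit, in $\T$ for $X_{\min}$, $\sigma$-perfect matchings $M$ for each generator $\chi^\sigma$ of the candidate square-free monomial ideal. They must be triangular, either by checking that $\Aux_{\f}^M$ is acyclic (Proposition \ref{proposition det by cycle decompositions}) or by showing $M$ is $\{f_i,f_j\}$-determined (Lemma \ref{lemma 2 determined matchings are triangular}). Since removing edges keeps the matching and only deletes auxiliary edges, triangularity persists across the whole interval.
\end{itemize}
Corollary \ref{corollary dimension is continuous} serves as a consistency check and prunes cases: dimension is monotone in $X_{\f}$. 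In particular, it shows that no one-dimensional variety can occur in the middle of an interval whose endpoints have dimension $0$ or at least $2$. I would also invoke the general dimension bounds of \cite{Briggs/Grifo/Pollitz:2024, BGP}. Together these rule out dimension-one coordinate subspaces.

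The non-monomial case $V(\chi_1\chi_3\chi_5+\chi_2\chi_4\chi_6)$ arises only for the hexagon $C_6$. Since $C_6$ is triangle-free, Corollary \ref{corollary triangle free fiber} shows it admits exactly $2^6$ ideals, and these depend on the presence of each $x_i$. For the hexagon I would compute directly. The edge ideal with no $x_i$ present should give the hypersurface, which is essentially Theorem \ref{theorem_B} with $n=6$. Adding any $x_i$ should collapse the variety to one of the monomial cases.

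The main obstacle is the bookkeeping. I must make sure every one of the 41 graphs, and every stratum of $2^{K_{\G}}$, is covered by some interval on which both a surviving source or sink and a triangular matching exist. I also need the matching arguments, which resemble Lemma \ref{lemma 2 determined matchings are triangular}, to work when $M$ is not $2$-determined, for instance for $V(\chi_2\chi_4\chi_6)$. There I would first try to find a $3$-element $\theta$ and analyze cycles in the coordinate coloring via cardinality counting, as in the proof of that lemma.

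Finally, realizability of each listed variety needs one explicit ideal. For example, $(x^2,xy,yz,zw,w^2)$ with an extra variable handles $V(\chi_4\chi_6)$, and the $6$-cycle handles the hypersurface.
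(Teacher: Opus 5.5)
\textbf{Gap: certifying full support.} Your method is the paper's method for the graphs with interesting support: stratify $X_{\f}$ into intervals for each remaining graph, use homotopy sources/sinks for lower bounds and triangular matchings for upper bounds. It has a genuine hole on the full-support side, because your only way to certify $\V_{\f}=\A^6_k$ is an isolated vertex of $\T_{\f}$. Nothing else in the plan can do it:
\begin{itemize}
\item Homotopy sources/sinks for a nonempty set of $\chi_i$ give only proper coordinate subspaces, and a finite union of these is still proper.
\item Triangular matchings give only upper bounds.
\item Corollary \ref{corollary dimension is continuous} only transports fullness from an ideal already known to be full to ideals with more variables.
\end{itemize}
For most of the graphs the answer is $\A^6_k$, and the paper proves this in many cases without exhibiting an isolated vertex. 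Examples are Graph 20 when $\dd_{35,4}$ exists, Graphs 23--26, the F$_3$ graphs via Lemma \ref{lemma edges and triangles}, and the strata of Graphs 27 and 29 with $x_1$ present. On such $\f$ your plan finds a few hyperplanes and no triangular matching, and cannot conclude.

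The missing idea is direct linear algebra on $\T_{\f}(a)$, in three forms:
\begin{itemize}
\item Lemmas \ref{lemma more sources than neighbors} and \ref{lemma more sinks than sources}: a set of sinks larger than the set of vertices mapping into it forces a cycle that is not a boundary.
\item The rank count for Graphs 23--24, using that the image of $v_{2345}$ lies in the kernel of $\T_{\f}(a)$ restricted to $W$.
\item Explicit cycles that are non-boundaries for every $a$ off $V(\chi_1\cdots\chi_6)$ (Graph 25 and Lemma \ref{lemma odd length path in taylor graph}). These give fullness after passing to $\bar k$ and using density.
\end{itemize}

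\textbf{Errors in the bookkeeping.}
\begin{itemize}
\item A one-vertex component of $\G_{\f}$ is a hypersurface factor, so its support is $\{0\}\subset\A^1_k$, not $\A^1_k$. Disconnected graphs therefore give only coordinate subspaces and $\{0\}\times V(\chi_1\chi_5)\cong V(\chi_1,\chi_4\chi_6)$. They never give $V(\chi_4\chi_6)$ or $V(\chi_4\chi_6,\chi_5\chi_6)$; those come from the connected Graphs 27--35 and from $\DB(1,2)$, $\WT(1,2)$ (Theorem \ref{theorem DB WT varieties}).
\item As a consequence, $(x^2,xy,yz,zw,w^2)$ plus a coprime sixth generator realizes $V(\chi_1,\chi_4\chi_6)$, not $V(\chi_4\chi_6)$.
\item Monotonicity of $\dim\V_{\f}$ along an interval does not exclude dimension $1$ in the middle: a nondecreasing function can pass through $1$. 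In the paper, the absence of dimension $1$ is an output of the complete case computation.
\item Passing from $X_{\min}$ to $X_{\max}$ deletes edges. Differential isolation of your sources/sinks and acyclicity of $\Aux_{\f}^M$ must be checked in the Taylor graph of $X_{\min}$. The edges of $M$ must be checked in the Taylor graph of $X_{\max}$, since deleting edges can destroy a matching; your claim that removing edges keeps the matching is false in general.
\item For $V(\chi_2\chi_4\chi_6)$, a three-element analogue of Lemma \ref{lemma 2 determined matchings are triangular} is false. The hexagon's $O$-determined matching has four cycles in its auxiliary graph. The paper instead checks directly that each possible extra variable $x_1$, $x_{15}$, $x_{135}$, $x_{156}$ breaks those cycles.
\end{itemize}
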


\begin{proof} We first note that \cite[Lemma 6.13]{BGP} shows that if the connected components of $\G_{\f}$ are $\G_{\f_1}, \ldots \G_{\f_s}$ then $\V_{\f}\cong \V_{\f_1}\times \cdots \times \V_{\f_s}$. 
Thus if $\G_{\f}$ is not connected, \cite[Theorem 6.14]{BGP} gives that if $\G_{\f_i}$ has 4 or fewer vertices then $\V_{\f_i}$ is a coordinate subspace not of dimension 1.
Such varieties are closed under products.
The only other case when $\G_{\f}$ is not connected is that $\G_{\f_1}$ is a singleton and $\G_{\f_2}$ is a connected graph with 5 vertices. 
Note that $\V_{\f_1}$ is a point, hence $\V_{\f}$ is an embedding of $\V_{\f_2}$ in $\A_k^{6}.$ 
Applying \cite[Theorem 6.16]{BGP} yields either a coordinate subspace or a variety isomorphic to $V(\chi_1,\; \chi_{4}\chi_6)$.

We now consider connected graphs.
By \cite[Remark 6.2]{BGP} any $\G_{\f}$ with a vertex with degree 5 has full support, i.e.  $\V_{\f} = \A_k^6$.
Using SageMath \cite{sagemath} to access the database \cite{grout_graph_database_nodate}, we find there are 78 connected graphs with 6 vertices and no vertices with degree 5. 
Of these graphs, 37 satisfy the hypothesis of \cite[Lemma 6.12]{BGP} and hence yield full support varieties. 
The remaining 41 graphs appear throughout the remainder of the paper, named Graph 1 to Graph 41. 
Moreover, we group these graphs by types A, B, C, and F. 
This classification is based on the support varieties that can be realized by ideals with each GCD graph. 

Type F stands for full as these graphs always give rise to full support varieties. 
We further assign subtypes F$_1$ to F$_4$ of type F based on how we prove they yield full support varieties. 
In particular, we introduce several new lemmas that can be used to determine when certain GCD graphs always yield full support.

The other types can realize ideals with interesting support (i.e. not full). The two type A graphs fall into an infinite family of graphs considered in Section \ref{section families}. 
These families form a generalization of the two graphs on 5 vertices realizing interesting support, i.e. A and $P_5$ of Example \ref{running example taylor graph}. 
Every type B graph is a subgraph of Graph 27, but this is not a defining feature as the type A graphs also are subgraphs of Graph 27. 
Type C stands for cycle, as the graphs contain the 6-cycle as a subgraph. 
The table below has one caveat* given by the 6-cycle, which we consider in great detail in Section \ref{section cycle varieties}. 
Specifically, graphs 38-40 realize exactly the support varieties listed in the table, but graph 41 realizes both of these along with two additional support varieties: $V(\chi_1\chi_3\chi_5)$ and $V(\chi_1\chi_3\chi_5 + \chi_2\chi_4\chi_6)$.

\begin{table}[H]
\renewcommand{\arraystretch}{1.25}
\begin{tabular}{|c|c|c|cc|c|}
\hline
Type  & Count & Graph Numbers & \multicolumn{2}{c|}{Support Varieties Realized}               & Proof                                               
\\ \hline
F$_1$ & 6     & 1-6           & \multicolumn{2}{c|}{$\A_k^{6}$}                                   & Lemma \ref{lemma isolated in degree 3}              \\ \hline
F$_2$ & 2     & 7-8           & \multicolumn{2}{c|}{$\A_k^{6}$}                                   & Lemma \ref{lemma pair of edges with no common edge} \\ \hline
F$_3$ & 11    & 9-19          & \multicolumn{2}{c|}{$\A_k^{6}$}                                   & Lemma \ref{lemma edges and triangles}               \\ \hline
F$_4$ & 7     & 20-26         & \multicolumn{2}{c|}{$\A_k^{6}$}                                   & Theorem \ref{theorem F for full support}  
\\ \hline
A     & 2     & 36-37         & \multicolumn{1}{c|}{$\A_k^{6}$} & $V(\chi_4\chi_5, \chi_4\chi_6)$ & Theorem \ref{theorem DB WT varieties}               \\ \hline
B     & 9     & 27-35         & \multicolumn{1}{c|}{$\hspace{5pt}\A_k^{6}\hspace{5pt}$} & $V(\chi_4\chi_6)$               & Theorem \ref{theorem interesting supports 6 gen}    \\ \hline
C     & 4*    & 38-41         & \multicolumn{1}{c|}{$\A_k^{6}$} & $V(\chi_2\chi_4\chi_6)$         & Corollary \ref{corollary gcd fiber over hexagon}     \\ \hline
\end{tabular}
\end{table}
\end{proof}

We start with two lemmas whose arguments are based on counting dimensions of certain subspaces of cycles and boundaries of $\T_{\f}(a)$ for all $a$. These arguments motivate the argument we later provide for Lemma \ref{lemma odd length path in taylor graph}.

\begin{lemma}\label{lemma more sources than neighbors}
    Let $S$ be a set of sources in the Taylor graph $\T_{\f}$. Let $N$ be the neighbors of $S$ in $\T_{\f}$. 
    If $|S|>|N|$ then $\V_{\f}$ is full.
\end{lemma}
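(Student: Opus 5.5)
Fix an arbitrary point $a\in\A^n_k$. We will show $\ker\T_{\f}(a)\neq\im\T_{\f}(a)$, so that $a\in\V_{\f}$ by the criterion recalled after Definition \ref{definition_Taylor_graph}. Here a \emph{source} is a vertex of $\T_{\f}$ with no incoming edges, and $N$ is the set of vertices $v$ such that some edge $u\to v$ of $\T_{\f}$ has $u\in S$. Let $k^S$ be the span of the basis vectors $v_\sigma$ with $v_\sigma\in S$, and let $k^N$ be the span of the vertices in $N$.

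The key observation is that, by \eqref{differential in linear transformation}, $\T_{\f}(a)$ maps $k^S$ into $k^N$. This holds for every $a$: specializing the weights $\chi_i\mapsto a_i$ can only kill entries, never create new ones. Hence the restriction $\T_{\f}(a)|_{k^S}\colon k^S\to k^N$ has kernel of dimension at least $|S|-|N|>0$. Choose a nonzero $z\in k^S$ with $\T_{\f}(a)z=0$; then $z$ is a cycle.

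It remains to see that $z$ is not a boundary. Since every vertex of $S$ is a source, no edge of $\T_{\f}$ ends at a vertex of $S$. So the row of the matrix $\T_{\f}(a)$ indexed by any $v\in S$ is identically zero, for every $a$. Consequently every element of $\im\T_{\f}(a)$ has zero coordinates along $S$. But $z$ is a nonzero vector supported on $S$, so $z\notin\im\T_{\f}(a)$.

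Thus $\ker\T_{\f}(a)\supsetneq\im\T_{\f}(a)$, i.e.\ $\rank\T_{\f}(a)<2^{n-1}$, so $a\in\V_{\f}$. As $a$ was arbitrary, $\V_{\f}=\A^n_k$. Equivalently, $\I_{\f}=0$, since no $2^{n-1}$-minor can be nonzero after any specialization.

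There is essentially no hard step. The only point requiring care is checking that "neighbors" means out-neighbors in $\T_{\f}$, which is forced because sources have no in-neighbors. The same argument applies dually to a set of sinks with fewer in-neighbors, via the transposed matrix.
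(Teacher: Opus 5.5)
Your proof is correct and follows the paper's own argument: a nonzero kernel vector of the $N\times S$ submatrix is a cycle, because columns indexed by $S$ only have entries in rows indexed by $N$. It is not a boundary, because the rows indexed by sources vanish. Your version is just more explicit than the paper's, in that you fix a point $a$ and note that specializing the $\chi_i$ cannot create new nonzero entries.
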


\begin{proof}
    Consider the submatrix of $\T_{\f}$ with columns corresponding to $S$ and rows corresponding to $N$. Since there are more columns than rows, there is some non-trivial solution to this submatrix. We claim this solution is a cycle of $\T_{\f}$ as the only rows with entries in columns given by $S$ belong to $N$. Moreover, this cycle is not a boundary as $S$ consists of sources.
\end{proof}

\begin{lemma}\label{lemma more sinks than sources}
    Let $S$ be a set of sinks in the Taylor graph $\T_{\f}$. Let $N$ be the set of vertices that are sources of edges in $\T_{\f}$ with targets in $S$. If $|S|>|N|$ then $\V_{\f}$ is full.
\end{lemma}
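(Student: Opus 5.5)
The plan is to dualize the argument of Lemma \ref{lemma more sources than neighbors}, working with rows instead of columns. Fix any point $a\in\A^n_k$; we show $a\in\V_{\f}$. Consider the submatrix of $\T_{\f}(a)$ with rows indexed by $S$ and columns indexed by $N$. It has $|S|$ rows and $|N|$ columns, so its rank is at most $|N|<|S|$. We want to produce either a cycle that is not a boundary, or directly the strict inequality $\rank \T_{\f}(a)<2^{n-1}$.

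The cleanest route is through images. Since each vertex of $S$ is a sink, every edge touching a vertex of $S$ points into $S$. Hence, by the convention that columns are sources, the columns indexed by $S$ are zero. So each $v\in S$ is a cycle.

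Consider now the span $\langle S\rangle$. It is a $|S|$-dimensional subspace of $\ker \T_{\f}(a)$. Project $\im\T_{\f}(a)$ onto the coordinates indexed by $S$. An entry of $\T_{\f}(a)$ in a row of $S$ is nonzero only if the column is the source of an edge into $S$, that is, a column in $N$. So the projection of $\im \T_{\f}(a)$ onto the $S$-coordinates equals the column space of the $S\times N$ submatrix, which has dimension at most $|N|$.

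Now suppose, for contradiction, that $\ker=\im$. Then $\langle S\rangle\subseteq \im\T_{\f}(a)$. The projection onto the $S$-coordinates restricts to the identity on $\langle S\rangle$, so the projection of $\im\T_{\f}(a)$ has dimension at least $|S|>|N|$. This is a contradiction.

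Therefore some element of $\langle S\rangle$ is a cycle that is not a boundary, so $a\in\V_{\f}$. Since $a$ was arbitrary, $\V_{\f}=\A^n_k$.

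The only point needing care is the bookkeeping of the row/column convention, in particular the claim that the nonzero entries in rows of $S$ lie only in columns of $N$. This holds by the definition of $N$ together with the fact that the weights $\dd$ and $\h$ are entries placed at (target row, source column). Everything else is linear algebra identical in spirit to the proof of Lemma \ref{lemma more sources than neighbors}.
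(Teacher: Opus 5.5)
Your proof is correct and follows the paper's argument: the span $Z$ of $S$ lies in $\ker \T_{\f}(a)$ because sinks give zero columns, and the boundaries inside $Z$ are controlled by the $|N|$ columns indexed by $N$, so $Z\not\subseteq \im\T_{\f}(a)$. Your projection step is slightly more careful than the paper, which asserts $\im(\T_{\f}(a))\cap Z\subseteq B=\spank\setbuild{\T_{\f}(a)(v_\theta)}{v_\theta\in N}$; strictly, one only gets containment in the projection of $B$ onto the $S$-coordinates, which is exactly the column space of your $S\times N$ submatrix and gives the same dimension count.
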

\begin{proof}
    We will explicitly find a cycle that is not a boundary of the linear transformation $\T_{\f}(a)$ for all $a \in \A^n_k$.  Let us consider $Z = \spank(S)$, by assumption all $\sigma\in S$ are sinks so $\T_f(a)(v_\sigma)=0$ hence $Z\subseteq \ker (\T_{\f}(a))$.   
    Set $B = \spank(\setbuild{\T_{f}(a)(v_\theta)}{v_\theta \in N})$ and observe that $\im(\T_{\f}(a))\cap Z \subseteq B$. However, by assumption $\dim_k(Z)>\dim_k(B)\geq \dim_k(\im(\T_{\f}(a))\cap Z )$ so $Z$ contains cycles that are not boundaries. Thus $\V_{\f}$ is full. 
\end{proof}

The following graphs constitute type F$_1$, each satisfying the hypothesis of Lemma \ref{lemma isolated in degree 3} by taking $\sigma = \{3,4,5\}$.

\begin{center}
    \normalfont
    \noindent \begin{minipage}{0.16\textwidth}
\centering 
\begin{tikzpicture}[x=.3\textwidth,y=.3\textwidth,scale=1, every node/.style={circle, inner sep=0pt, minimum size=13pt, draw}]

  % Nodes
  \node (v1) at (1,0) {1};
  \node (v2) at (1,2) {2};
  \node (v3) at (-1,2) {3};
  \node (v4) at (-1,0) {4};
  \node (v5) at (-.55,1) {5};
  \node (v6) at (.55,1) {6};

  % Edges
  \draw (v1) -- (v4);
  \draw (v4) -- (v5);
  \draw (v5) -- (v6);
  \draw (v5) -- (v3);
  \draw (v3) -- (v2);
 
\end{tikzpicture}
Graph 1 \ognum{6}
\end{minipage}
%%%%%%%%%%%%%%%%%%%%%%%%%%%%%%%%%%%%%%%%%%%%%%
%graph 7 in Stephen's 
\begin{minipage}{0.16\textwidth}
\centering

\begin{tikzpicture}[x=.3\textwidth,y=.3\textwidth,scale=1, every node/.style={circle, inner sep=0pt, minimum size=13pt, draw}]

  % Nodes
  \node (v1) at (1,0) {1};
  \node (v2) at (1,2) {2};
  \node (v3) at (-1,2) {3};
  \node (v4) at (-1,0) {4};
  \node (v5) at (-.55,1) {5};
  \node (v6) at (.55,1) {6};

  % Edges
  \draw (v1) -- (v4);
  \draw (v4) -- (v5);
  \draw (v5) -- (v3);
  \draw (v3) -- (v4);
  \draw (v3) -- (v2);
  \draw (v5) -- (v6);
 
\end{tikzpicture}
Graph 2 \ognum{7}
\end{minipage}
%%%%%%%%%%%%%%%%%%%%%%%%%%%%%%%%%%%%%%%%%%%%%%
%graph 22 in Stephen's 
\begin{minipage}{0.16\textwidth}
\centering

\begin{tikzpicture}[x=.3\textwidth,y=.3\textwidth,scale=1, every node/.style={circle, inner sep=0pt, minimum size=13pt, draw}]

  % Nodes
  
  \node (v1) at (1,0) {1};
  \node (v2) at (1,2) {2};
  \node (v3) at (-1,2) {3};
  \node (v4) at (-1,0) {4};
  \node (v5) at (-.55,1) {5};
  \node (v6) at (.55,1) {6};
  
  % Edges
  \draw (v5) -- (v6);
  \draw (v5) -- (v3);
  \draw (v3) -- (v2);
  \draw (v2) -- (v1);
  \draw (v1) -- (v4);
  \draw (v4) -- (v5);
 
\end{tikzpicture}
Graph 3 \ognum{22}
\end{minipage}
%%%%%%%%%%%%%%%%%%%%%%%%%%%%%%%%%%%%%%%%%%%%%%
%graph 23 in Stephen's 
\begin{minipage}{0.16\textwidth}
\centering
\begin{tikzpicture}[x=.3\textwidth,y=.3\textwidth,scale=1, every node/.style={circle, inner sep=0pt, minimum size=13pt, draw}]

  % Nodes
  \node (v1) at (1,0) {1};
  \node (v2) at (1,2) {2};
  \node (v3) at (-1,2) {3};
  \node (v4) at (-1,0) {4};
  \node (v5) at (-.55,1) {5};
  \node (v6) at (.55,1) {6};

  % Edges
  \draw (v5) -- (v6);
  \draw (v5) -- (v4);
  \draw (v4) -- (v1);
  \draw (v1) -- (v2);
  \draw (v2) -- (v3);
  \draw (v3) -- (v5);
  \draw (v3) -- (v4);
 
\end{tikzpicture}
Graph 4 \ognum{23}
\end{minipage}
%%%%%%%%%%%%%%%%%%%%%%%%%%%%%%%%%%%%%%%%%%%%%%
%graph 25 in Stephen's 
\begin{minipage}{0.16\textwidth}
\centering
\begin{tikzpicture}[x=.3\textwidth,y=.3\textwidth,scale=1, every node/.style={circle, inner sep=0pt, minimum size=13pt, draw}]

  % Nodes
  \node (v1) at (1,0) {1};
  \node (v2) at (1,2) {2};
  \node (v3) at (-1,2) {3};
  \node (v4) at (-1,0) {4};
  \node (v5) at (-.55,1) {5};
  \node (v6) at (.55,1) {6};

  % Edges
  \draw (v5) -- (v6);
  \draw (v4) -- (v1);
  \draw (v1) -- (v2);
  \draw (v2) -- (v3);
  \draw (v3) -- (v5);
  \draw (v3) -- (v4);
 
\end{tikzpicture}
Graph 5 \ognum{25}
\end{minipage}
%%%%%%%%%%%%%%%%%%%%%%%%%%%%%%%%%%%%%%%%%%%%%%
%graph 38 in Stephen's 
\begin{minipage}{0.16\textwidth}
\centering
\begin{tikzpicture}[x=.3\textwidth,y=.3\textwidth,scale=1, every node/.style={circle, inner sep=0pt, minimum size=13pt, draw}]

  % Nodes
  \node (v1) at (1,0) {1};
  \node (v2) at (1,2) {2};
  \node (v3) at (-1,2) {3};
  \node (v4) at (-1,0) {4};
  \node (v5) at (-.55,1) {5};
  \node (v6) at (.55,1) {6};

  % Edges
  \draw (v1) -- (v6);
  \draw (v6) -- (v2);
  \draw (v2) -- (v3);
  \draw (v3) -- (v4);
  \draw (v4) -- (v1);
  \draw (v4) -- (v5);
  \draw (v5) -- (v6);
 
\end{tikzpicture}
Graph 6 \ognum{38}
\end{minipage}
\\
\vspace{10pt} 
Type F$_1$ Graphs 
\end{center}

\begin{lemma}\label{lemma isolated in degree 3}
    Suppose that there exists a set $\sigma \subseteq [n]$ with the following properties: 
    \begin{itemize}
        \item $N_{\f}(\sigma)=[n]$
        \item for all $f_i \in  \sigma$, we have $\lvert N_{\f}(f_i) \cap [n]\smallsetminus\sigma \rvert \geq 1$
        \item for all $f_j \in [n]\smallsetminus \sigma$, we have $\lvert N_{\f}(f_j) \cap \sigma \rvert = 1$ 
    \end{itemize}
then $\V_{\f}$ is full.
\end{lemma}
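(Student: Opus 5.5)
The plan is to exhibit a single isolated vertex of $\T_{\f}$, namely $v_\sigma$ itself. Corollary \ref{corollary isolated vertex} then gives $\V_{\f}=\A^n_k$. Isolation splits into three checks: no homotopy edges touch $v_\sigma$, no differential edges leave $v_\sigma$, and no differential edges enter $v_\sigma$.

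\textbf{Homotopy edges.} The first hypothesis says $N_{\f}(\sigma)=[n]$. By Proposition \ref{proposition homotopically/differentially isolated}, this is exactly the condition for $v_\sigma$ to be homotopically isolated.

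\textbf{Incoming differential edges.} Such an edge would be $\dd_{\sigma,j}\colon v_{\sigma\cup\{f_j\}}\to v_\sigma$ with $f_j\notin\sigma$ and $f_j\mid f_\sigma$. By \cite[Remark 6.7]{BGP}, as recalled in the proof of Proposition \ref{proposition homotopically/differentially isolated}, this edge forces $\lvert N_{\f}(f_j)\cap\sigma\rvert\ges 2$. That contradicts the third hypothesis.

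One can also argue directly with variables. Every variable $x_\tau$ dividing both $f_j$ and $f_\sigma$ has $\tau$ a clique containing $f_j$ and meeting $\sigma$. Since $f_j$ has a unique neighbor $f_i$ in $\sigma$, we get $\tau\cap\sigma=\{f_i\}$. Hence $f_j\mid f_\sigma$ would force $f_j\mid f_i$, which contradicts minimality of the generating set.

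\textbf{Outgoing differential edges.} Such an edge would be $\dd_{\sigma\smallsetminus\{f_i\},i}$ for some $f_i\in\sigma$, and it requires $f_i\mid f_{\sigma\smallsetminus\{f_i\}}$. This is the step I expect to need care, because the hypotheses do not bound $\lvert N_{\f}(f_i)\cap\sigma\rvert$, so Proposition \ref{proposition homotopically/differentially isolated} alone does not suffice. I would instead use the second and third hypotheses together.

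By the second hypothesis, pick $f_j\notin\sigma$ adjacent to $f_i$. By the third hypothesis, $f_i$ is the only neighbor of $f_j$ in $\sigma$. The edge $\{f_i,f_j\}$ is witnessed by some present variable $x_\tau$ with $f_i,f_j\in\tau$ (Remark \ref{remark clique complex}), and $\tau$ is a clique of $\G_{\f}$. If some $f_k\in\sigma\smallsetminus\{f_i\}$ lay in $\tau$, then $f_k$ would be a second neighbor of $f_j$ in $\sigma$, which is impossible. So $x_\tau$ divides $f_i$ but not $f_{\sigma\smallsetminus\{f_i\}}$, and therefore $f_i\nmid f_{\sigma\smallsetminus\{f_i\}}$. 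Thus no differential edge leaves $v_\sigma$.

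Combining the three checks, $v_\sigma$ is an isolated vertex of $\T_{\f}$, and Corollary \ref{corollary isolated vertex} yields that $\V_{\f}$ is full.
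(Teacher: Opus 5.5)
Your proposal is correct and follows essentially the same route as the paper. You show $v_\sigma$ is an isolated vertex of $\T_{\f}$: the first hypothesis gives homotopic isolation, the third hypothesis with \cite[Remark 6.7]{BGP} rules out $\dd_{\sigma,j}$, and a variable $x_\tau$ witnessing an edge $\{f_i,f_j\}$ with $f_j\notin\sigma$ (so $\tau\cap\sigma=\{f_i\}$) rules out $\dd_{\sigma\smallsetminus\{f_i\},i}$, exactly as in the paper; your extra direct variable-level check for incoming edges is valid but redundant.
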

\begin{proof}
We will show that $v_\sigma$ is an isolated vertex in the Taylor graph $\T_{\f}$, which by Corollary \ref{corollary isolated vertex} implies that $\V_{\f}$ is full. Since $v_\sigma$ is homotopically isolated it suffices to show no differential edges use $v_\sigma$. The third condition, combined with Proposition \ref{proposition homotopically/differentially isolated} implies there is no differential edge $\dd_{\sigma,j}$. For each $f_i\in \sigma$ the second condition gives some neighbor $f_j\notin\sigma$. The third condition gives $N_{\f}(f_j)\cap \sigma = \{f_i\}$. Hence any variable $x_\theta$ witnessing the edge $\{f_i,f_j\}$ of $\G_{\f}$ must have $\theta\cap \sigma = \{f_i\}$. In particular, there is no edge $\dd_{\sigma\smallsetminus\{f_i\},i}$ because $x_{\theta}\mid f_{\sigma}$ and $x_{\theta}\nmid f_{\sigma\smallsetminus{\{f_i\}}}$.
\end{proof}

The following graphs constitute type F$_2$. These satisfy the hypothesis of Lemma \ref{lemma pair of edges with no common edge} by taking edges $\sigma_1 = \{1,2\}$ and $\sigma_2 = \{5,6\}$.

\begin{center}
    \normalfont
    \noindent %graph 8 in Stephen's
\begin{minipage}{0.19\textwidth}
\centering
\begin{tikzpicture}[x=.3\textwidth,y=.3\textwidth,scale=1, every node/.style={circle, inner sep=0pt, minimum size=13pt, draw}]

  % Nodes
  \node (v1) at (-1,1.155) {1};
  \node (v3) at (1,1.155) {3};
  \node (v4) at (1,0) {4};
  \node (v2) at (0,1.73) {2};
  \node (v5) at (0,-.577) {5};
  \node (v6) at (-1,0) {6};

  % Edges
  \draw[line width = 1.5pt] (v6) -- (v5);
  \draw (v5) -- (v4);
  \draw (v3) -- (v2);
  \draw[line width = 1.5pt] (v2) -- (v1);
  \draw (v4) -- (v3);

\end{tikzpicture}
Graph 7 \ognum{8}
\end{minipage}
%graph 20 in Stephen's
\begin{minipage}{0.19\textwidth}
\centering
\begin{tikzpicture}[x=.3\textwidth,y=.3\textwidth,scale=1, every node/.style={circle, inner sep=0pt, minimum size=13pt, draw}]

  % Nodes
  \node (v1) at (-1,1.155) {1};
  \node (v2) at (0,1.73) {2};
  \node (v3) at (1,1.155) {3};
  \node (v4) at (1,0) {4};
  \node (v5) at (0,-.577) {5};
  \node (v6) at (-1,0) {6};

  % Edges
  \draw[line width = 1.5pt] (v6) -- (v5);
  \draw (v5) -- (v4);
  \draw (v4) -- (v3);
  \draw (v3) -- (v2);
  \draw[line width = 1.5pt] (v2) -- (v1);
  \draw (v3) -- (v1);

\end{tikzpicture}
Graph 8 \ognum{20}
\end{minipage}
\\
\vspace{10pt} 
Type F$_2$ Graphs 
\end{center}

\begin{lemma} \label{lemma pair of edges with no common edge}
    Suppose $\{\sigma_1,\ldots,\sigma_s\}$ is a set of edges in $\G_{\f}$ and $\theta = \bigcup_i\sigma_i$ such that:
    \begin{itemize}
        \item $N_{\f}(\theta) = [n]$
        \item for all $i\neq j$ we have $N_{\f}(\sigma_i)\cap N_{\f}(\sigma_j) =\emptyset$
        \item  for every $\sigma_i$ and every $f_\ell\in N_{\f}(\sigma_i)\smallsetminus \sigma_i$ we have $N_{\f}(f_\ell)\not\subseteq N_{\f}(\sigma_i)$.
    \end{itemize}
    Then $\V_{\f}$ is full. 
       
\end{lemma}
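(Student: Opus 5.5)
The plan is to show that $v_\theta$ is an isolated vertex of $\T_{\f}$, so that Corollary \ref{corollary isolated vertex} gives $\V_{\f}=\A^n_k$. This is the same strategy as in Lemma \ref{lemma isolated in degree 3}. The first hypothesis $N_{\f}(\theta)=[n]$ makes $v_\theta$ homotopically isolated by Proposition \ref{proposition homotopically/differentially isolated}. It then remains to rule out two kinds of differential edges: those leaving $v_\theta$, of the form $\dd_{\theta\smallsetminus\{f_i\},i}$ with $f_i\in\theta$, and those entering $v_\theta$, of the form $\dd_{\theta,\ell}$ with $f_\ell\notin\theta$. In each case I will produce a variable $x_\tau$ that divides the larger lcm but not the smaller one.

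\emph{Edges leaving $v_\theta$.} Take $f_i\in\theta$, say $f_i\in\sigma_a=\{f_i,f_{i'}\}$.
\begin{enumerate}
\item Since $f_i\nmid f_{i'}$, some present variable $x_\tau$ has $f_i\in\tau$ and $f_{i'}\notin\tau$.
\item The type $\tau$ is a clique containing $f_i$, so $\tau\subseteq N_{\f}(f_i)\cup\{f_i\}\subseteq N_{\f}(\sigma_a)$. Here I use that $\sigma_a$ is an edge, so its members lie in $N_{\f}(\sigma_a)$.
\item For $b\neq a$, the second hypothesis gives $N_{\f}(\sigma_a)\cap N_{\f}(\sigma_b)=\emptyset$. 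Since $\sigma_b\subseteq N_{\f}(\sigma_b)$, we get $\tau\cap\sigma_b=\emptyset$.
\item Hence $\tau\cap\theta=\{f_i\}$. So $x_\tau\mid f_i$ but $x_\tau\nmid f_{\theta\smallsetminus\{f_i\}}$, which means $f_i\nmid f_{\theta\smallsetminus\{f_i\}}$ and the edge $\dd_{\theta\smallsetminus\{f_i\},i}$ does not exist.
\end{enumerate}

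\emph{Edges entering $v_\theta$.} Take $f_\ell\notin\theta$. Since $N_{\f}(\theta)=[n]$, $f_\ell\in N_{\f}(\sigma_a)\smallsetminus\sigma_a$ for some $a$. By the second hypothesis this $a$ is unique.
\begin{enumerate}
\item The third hypothesis gives a neighbor $f_m$ of $f_\ell$ with $f_m\notin N_{\f}(\sigma_a)$.
\item Let $x_\tau$ be a present variable witnessing the edge $\{f_\ell,f_m\}$, so $\tau$ is a clique containing $f_\ell$ and $f_m$.
\item Suppose $x_\tau\mid f_\theta$. Then $\tau$ meets some $\sigma_b$, and since $\tau$ is a clique, both $f_\ell$ and $f_m$ lie in $N_{\f}(\sigma_b)$.
\item If $b\neq a$, then $f_\ell\in N_{\f}(\sigma_a)\cap N_{\f}(\sigma_b)$, contradicting the second hypothesis. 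If $b=a$, then $f_m\in N_{\f}(\sigma_a)$, contradicting the choice of $f_m$.
\item Hence $x_\tau\mid f_\ell$ but $x_\tau\nmid f_\theta$, so $f_\ell\nmid f_\theta$ and there is no edge $\dd_{\theta,\ell}$.
\end{enumerate}

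With both kinds of edges excluded, $v_\theta$ is isolated and the lemma follows from Corollary \ref{corollary isolated vertex}. The main point needing care is the bookkeeping that every element of a clique containing a vertex of $N_{\f}(\sigma)$ lies in the neighborhood of $\sigma$. In particular, I need the convention that a vertex of $\sigma$ counts as lying in $N_{\f}(\sigma)$ because $\sigma$ is an edge; I would state this explicitly at the start of the proof.
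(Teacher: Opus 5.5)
Your proof is correct and follows the paper's argument. You show that $v_\theta$ is isolated in $\T_{\f}$, getting homotopic isolation from $N_{\f}(\theta)=[n]$, and you rule out each incoming edge $\dd_{\theta,\ell}$ using the variable that witnesses the edge $\{f_\ell,f_m\}$ with $f_m\notin N_{\f}(\sigma_a)$. The only minor difference is for outgoing edges: the paper notes $\lvert N_{\f}(f_i)\cap\theta\rvert=1$ and invokes the neighbor-count criterion of Proposition~\ref{proposition homotopically/differentially isolated}, whereas you exhibit an explicit witness $x_\tau$ with $\tau\cap\theta=\{f_i\}$, which is equally valid.
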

\begin{proof}
    To show $\V_{\f}$ is full we will show that $v_\theta$ is an isolated vertex in $\T_{\f}$. The first condition imposes that this vertex is homotopically isolated, so we only need to consider differential edges. The second condition implies that every $f_i\in \theta$ has $\lvert N_{\f}(f_i)\cap\theta\rvert = 1$. 
    Hence $v_\theta$ is not the source of any differential edge by Proposition \ref{proposition homotopically/differentially isolated}. 
    Together, all of these conditions imply that every $f_\ell\in [n]\smallsetminus \theta$ has a neighbor $f_j$ also not in $\theta$. 
    Since $N_{\f}(f_\ell)\cap \theta \subseteq \sigma_i$ for some unique $i$, $\dd_{\theta,\ell}$ exists if and only if $\dd_{\sigma_i,\ell}$ exists. By assumption, $f_j\notin N_{\f}(\sigma_i)$. 
    It follows that the variable witnessing the edge $\{f_\ell, f_i\}$ of $\G_{\f}$ prevents the differential edge $\dd_{\sigma_i,\ell}$.
\end{proof}

The following graphs constitute type F$_3$. These satisfy the hypothesis of Lemma \ref{lemma edges and triangles}, which requires counting edges $\sigma$ in $\G_{\f}$  that satisfy $N_{\f}(\sigma)=[n]$. For convenience, we display these edges as ``bold".

\begin{center}
    \normalfont
    \noindent %graph 8 in Stephen's
\begin{minipage}{0.19\textwidth}
\centering

\begin{tikzpicture}[x=.3\textwidth,y=.3\textwidth,scale=1, every node/.style={circle, inner sep=0pt, minimum size=13pt, draw}]

  % Nodes
  \node (v1) at (-1,1.155)  {1};
  \node (v2) at (1,1.155) {3};
  \node (v3) at (0,1.73) {2};
  \node (v4) at (1,0) {4};
  \node (v5) at (-1,0) {6};
  \node (v6) at (0,-.577) {5};

  % Edges
  \draw (v1) -- (v5);
  \draw (v2) -- (v3);
  \draw (v2) -- (v4);
  \draw (v3) -- (v4);
  \draw[line width = 1.5pt] (v3) -- (v5);
  \draw[line width = 1.5pt] (v4) -- (v5);
  \draw (v5) -- (v6);
 
\end{tikzpicture}
Graph 9 \ognum{5}
\end{minipage}
\vspace{5pt}
%graph 18 in Stephen's
\begin{minipage}{0.19\textwidth}
\centering
\begin{tikzpicture}[x=.3\textwidth,y=.3\textwidth,scale=1, every node/.style={circle, inner sep=0pt, minimum size=13pt, draw}]

  % Nodes
  \node (v1) at (-1,1.155) {1};
  \node (v2) at (0,-.577) {5};
  \node (v3) at (0,1.73) {2};
  \node (v4) at (1,0) {4};
  \node (v5) at (-1,0) {6};
  \node (v6) at (1,1.155) {3};

  % Edges
  \draw (v1) -- (v4);
  \draw (v1) -- (v5);
  \draw (v2) -- (v4);
  \draw (v2) -- (v5);
  \draw[line width = 1.5pt] (v3) -- (v4);
  \draw[line width = 1.5pt] (v3) -- (v5);
  \draw (v3) -- (v6);
  \draw (v4) -- (v5);
 
\end{tikzpicture}
Graph 10 \ognum{18}
\end{minipage}
%graph 24 in Stephen's
\begin{minipage}{0.19\textwidth}
\centering

\begin{tikzpicture}[x=.3\textwidth,y=.3\textwidth,scale=1, every node/.style={circle, inner sep=0pt, minimum size=13pt, draw}]

  % Nodes
  \node (v1) at (1,1.155) {3};
  \node (v2) at (0,-.577) {5};
  \node (v3) at (0,1.73) {2};
  \node (v4) at (1,0) {4};
  \node (v5) at (-1,0) {6};
  \node (v6) at (-1,1.155) {1};

  % Edges
  \draw (v1) -- (v3);
  \draw (v1) -- (v4);
  \draw (v1) -- (v2);
  \draw (v2) -- (v4);
  \draw[line width = 1.5pt] (v2) -- (v5);
  \draw (v3) -- (v4);
  \draw[line width = 1.5pt] (v3) -- (v5);
  \draw[line width = 1.5pt] (v4) -- (v5);
  \draw (v5) -- (v6);
 
\end{tikzpicture}
Graph 11 \ognum{24}
\end{minipage}
%graph 27 in Stephen's
\begin{minipage}{0.19\textwidth}
\centering

\begin{tikzpicture}[x=.3\textwidth,y=.3\textwidth,scale=1, every node/.style={circle, inner sep=0pt, minimum size=13pt, draw}]

  % Nodes
  \node (v1) at (-1,1.155) {1};
  \node (v2) at (-1,0) {6};
  \node (v3) at (0,1.73) {2};
  \node (v4) at (1,1.155) {3};
  \node (v5) at (0,-.577) {5};
  \node (v6) at (1,0) {4};

  % Edges
  \draw (v1) -- (v2);
  \draw (v1) -- (v3);
  \draw[line width = 1.5pt] (v2) -- (v4);
  \draw[line width = 1.5pt] (v2) -- (v5);
  \draw[line width = 1.5pt] (v3) -- (v4);
  \draw[line width = 1.5pt] (v3) -- (v5);
  \draw (v4) -- (v5);
  \draw (v4) -- (v6);
  \draw (v5) -- (v6);
 
\end{tikzpicture}
Graph 12 \ognum{27}
\end{minipage}
%graph 30 in Stephen's

\begin{minipage}{0.19\textwidth}
\centering

\begin{tikzpicture}[x=.3\textwidth,y=.3\textwidth,scale=1, every node/.style={circle, inner sep=0pt, minimum size=13pt, draw}]

  % Nodes
  \node (v1) at (-1,0) {6};
  \node (v2) at (0,-.577) {5};
  \node (v3) at (-1,1.155) {1};
  \node (v4) at (1,0) {4};
   \node (v5) at (0,1.73) {2};
  \node (v6) at (1,1.155) {3};

  % Edges
  \draw (v1) -- (v2);
  \draw (v1) -- (v3);
  \draw[line width = 1.5pt] (v1) -- (v5);
  \draw (v2) -- (v3);
  \draw[line width = 1.5pt] (v2) -- (v4);
  \draw[line width = 1.5pt] (v3) -- (v4);
  \draw[line width = 1.5pt] (v3) -- (v5);
  \draw[line width = 1.5pt] (v4) -- (v5);
  \draw (v4) -- (v6);
  \draw (v5) -- (v6);
 
\end{tikzpicture}
Graph 13 \ognum{30}
\end{minipage}
%graph 31 in Stephen's
\begin{minipage}{0.19\textwidth}
\centering

\begin{tikzpicture}[x=.3\textwidth,y=.3\textwidth,scale=1, every node/.style={circle, inner sep=0pt, minimum size=13pt, draw}]

  % Nodes
  \node (v1) at (-1,0) {6};
  \node (v2) at (0,-.577) {5};
  \node (v3) at (0,1.73) {2};
  \node (v4) at (1,0) {4};
  \node (v5) at (-1,1.155) {1};
  \node (v6) at (1,1.155) {3};

  % Edges
  \draw (v1) -- (v2);
  \draw (v1) -- (v4);
  \draw (v1) -- (v5);
  \draw (v2) -- (v4);
  \draw (v2) -- (v5);
  \draw[line width = 1.5pt] (v3) -- (v4);
  \draw[line width = 1.5pt] (v3) -- (v5);
  \draw (v3) -- (v6);
  \draw (v4) -- (v5);
 
\end{tikzpicture}
Graph 14 \ognum{31}
\end{minipage}
\vspace{5pt}
%graph 32 in Stephen's
\begin{minipage}{0.19\textwidth}
\centering
\begin{tikzpicture}[x=.3\textwidth,y=.3\textwidth,scale=1, every node/.style={circle, inner sep=0pt, minimum size=13pt, draw}]

  % Nodes
  \node (v1) at (-1,1.155) {1};
  \node (v2) at (0,1.73) {2};
  \node (v3) at (0,-.577) {5};
  \node (v4) at (1,1.155) {3};
  \node (v5) at (-1,0) {6};
  \node (v6) at (1,0) {4};

  % Edges
  \draw (v1) -- (v2);
  \draw (v1) -- (v5);
  \draw[line width = 1.5pt] (v2) -- (v4);
  \draw (v2) -- (v5);
  \draw (v3) -- (v4);
  \draw[line width = 1.5pt] (v3) -- (v5);
  \draw (v3) -- (v6);
  \draw[line width = 1.5pt] (v4) -- (v5);
  \draw (v4) -- (v6);
 
\end{tikzpicture}
Graph 15 \ognum{32}
\end{minipage}
%graph 33 in Stephen's
\begin{minipage}{0.19\textwidth}
\centering

\begin{tikzpicture}[x=.3\textwidth,y=.3\textwidth,scale=1, every node/.style={circle, inner sep=0pt, minimum size=13pt, draw}]

  % Nodes
  \node (v1) at (-1,0) {6};
  \node (v2) at (0,-.577) {5};
  \node (v3) at (0,1.73) {2};
  \node (v4) at (1,0) {4};
  \node (v5) at (-1,1.155) {1};
  \node (v6) at (1,1.155) {3};

  % Edges
  \draw (v1) -- (v2);
  \draw[line width = 1.5pt] (v1) -- (v4);
  \draw[line width = 1.5pt] (v1) -- (v5);
  \draw[line width = 1.5pt] (v2) -- (v4);
  \draw[line width = 1.5pt] (v2) -- (v5);
  \draw[line width = 1.5pt] (v3) -- (v4);
  \draw[line width = 1.5pt] (v3) -- (v5);
  \draw (v3) -- (v6);
  \draw[line width = 1.5pt] (v4) -- (v6);
  \draw[line width = 1.5pt] (v5) -- (v6);

\end{tikzpicture}
Graph 16 \ognum{33}
\end{minipage}

%graph 35 in Stephen's
\begin{minipage}{0.19\textwidth}
\centering

\begin{tikzpicture}[x=.3\textwidth,y=.3\textwidth,scale=1, every node/.style={circle, inner sep=0pt, minimum size=13pt, draw}]

  % Nodes
  \node (v1) at (-1,0) {6};
  \node (v2) at (0,-.577) {5};
  \node (v3) at (0,1.73) {2};
  \node (v4) at (1,0) {4};
  \node (v5) at (-1,1.155) {1};
  \node (v6) at (1,1.155) {3};

  % Edges
  \draw (v1) -- (v2);
  \draw[line width = 1.5pt] (v1) -- (v4);
  \draw[line width = 1.5pt] (v1) -- (v5);
  \draw[line width = 1.5pt] (v2) -- (v3);
  \draw[line width = 1.5pt] (v2) -- (v4);
  \draw[line width = 1.5pt] (v2) -- (v5);
  \draw[line width = 1.5pt] (v3) -- (v4);
  \draw[line width = 1.5pt] (v3) -- (v5);
  \draw (v3) -- (v6);
  \draw[line width = 1.5pt] (v4) -- (v6);
  \draw[line width = 1.5pt] (v5) -- (v6);
 
\end{tikzpicture}
Graph 17 \ognum{35}
\end{minipage}
%graph 37 in Stephen's
\begin{minipage}{0.19\textwidth}
\centering
\begin{tikzpicture}[x=.3\textwidth,y=.3\textwidth,scale=1, every node/.style={circle, inner sep=0pt, minimum size=13pt, draw}]

  % Nodes
  \node (v1) at (-1,0) {6};
  \node (v2) at (0,-.577) {5};
  \node (v3) at (0,1.73) {2};
  \node (v4) at (1,0) {4};
  \node (v5) at (-1,1.155) {1};
  \node (v6) at (1,1.155) {3};

  % Edges
  \draw[line width = 1.5pt] (v1) -- (v2);
  \draw[line width = 1.5pt] (v1) -- (v3);
  \draw[line width = 1.5pt] (v1) -- (v4);
  \draw[line width = 1.5pt] (v1) -- (v5);
  \draw[line width = 1.5pt] (v2) -- (v4);
  \draw[line width = 1.5pt] (v2) -- (v5);
  \draw[line width = 1.5pt] (v2) -- (v6);
  \draw[line width = 1.5pt] (v3) -- (v4);
  \draw[line width = 1.5pt] (v3) -- (v4);
  \draw[line width = 1.5pt] (v3) -- (v5);
  \draw[line width = 1.5pt] (v3) -- (v6);
  \draw[line width = 1.5pt] (v4) -- (v6);
  \draw[line width = 1.5pt] (v5) -- (v6);

\end{tikzpicture}
Graph 18 \ognum{37}
\end{minipage}
%graph 39 in Stephen's
\begin{minipage}{0.19\textwidth}
\centering
\begin{tikzpicture}[x=.3\textwidth,y=.3\textwidth,scale=1, every node/.style={circle, inner sep=0pt, minimum size=13pt, draw}]

  % Nodes
  \node (v1) at (-1,1.155) {1};
  \node (v2) at (1,0) {4};
  \node (v3) at (0,-.577) {5};
  \node (v4) at (1,1.155) {3};
  \node (v5) at (-1,0) {6};
  \node (v6) at (0,1.73) {2};

  % Edges
  \draw (v1) -- (v5);
  \draw (v1) -- (v6);
  \draw (v2) -- (v3);
  \draw (v2) -- (v4);
  \draw (v3) -- (v4);
  \draw[line width = 1.5pt] (v3) -- (v5);
  \draw[line width = 1.5pt] (v4) -- (v5);
  \draw (v5) -- (v6);
 
\end{tikzpicture}
Graph 19 \ognum{39}
\end{minipage}
\\
\vspace{10pt} 
Type F$_3$ Graphs 
\end{center}
\begin{lemma} \label{lemma edges and triangles}
    Let us consider two disjoint subsets of vertices of $\T_{\f}$ given by
    \begin{itemize}
        \item $E = \setbuild{v_\sigma}{\sigma \ \text{is an edge in} \ \G_{\f} \ \text{and } v_\sigma \text{ is homotopically isolated} }$
        \item $T = \setbuild{v_\theta}{\theta \ \text{is a triangle in} \ \G_{\f} \ \text{and } v_\theta \text{ is homotopically isolated} }.$
        
    \end{itemize}
    If $|E| > |T|$, then $\V_{\f}$ is full. 
\end{lemma}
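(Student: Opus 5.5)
The plan is to apply Lemma \ref{lemma more sinks than sources} with $S = E$: show that every vertex of $E$ is a sink of $\T_{\f}$, and that every vertex with an edge into $E$ lies in $T$. Then the set $N$ of that lemma satisfies $N \subseteq T$, so $|S| = |E| > |T| \ges |N|$, and $\V_{\f}$ is full.

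\emph{Step 1: vertices of $E$ are sinks.} Let $v_\sigma \in E$, where $\sigma = \{f_i, f_j\}$ is an edge of $\G_{\f}$.
\begin{itemize}
\item By definition of $E$, $v_\sigma$ is homotopically isolated, so no homotopy edge leaves or enters it.
\item The only differential edges that could leave $v_\sigma$ go to a singleton. By Proposition \ref{proposition homotopically/differentially isolated}, no differential edge runs between a two element set and a one element set.
\end{itemize}
Hence $v_\sigma$ has no outgoing edges and is a sink.

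\emph{Step 2: incoming edges come from $T$.} Since $v_\sigma$ is homotopically isolated, every edge into $v_\sigma$ is a differential edge $\dd_{\sigma,\ell}$ with source $v_{\sigma\cup\{f_\ell\}}$.
\begin{itemize}
\item By the remark cited in the proof of Proposition \ref{proposition homotopically/differentially isolated} (from \cite[Remark 6.7]{BGP}), the existence of $\dd_{\sigma,\ell}$ forces $\lvert N_{\f}(f_\ell)\cap\sigma\rvert \ges 2$. So $f_\ell$ is adjacent to both $f_i$ and $f_j$.
\item Hence $\theta = \sigma\cup\{f_\ell\}$ is a triangle in $\G_{\f}$.
\item Since $\sigma \subseteq \theta$, we have $N_{\f}(\theta) \supseteq N_{\f}(\sigma) = [n]$. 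By Proposition \ref{proposition homotopically/differentially isolated}, $v_\theta$ is therefore homotopically isolated, so $v_\theta \in T$.
\end{itemize}
Thus the set $N$ of sources of edges with targets in $S$ is contained in $T$.

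\emph{Step 3: conclude.} We now have $|S| = |E| > |T| \ges |N|$, and Lemma \ref{lemma more sinks than sources} gives that $\V_{\f}$ is full. The sets $E$ and $T$ are disjoint automatically, since they consist of vertices indexed by sets of different sizes.

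The main point needing care is Step 2. One must check that homotopic isolation of $v_\sigma$ passes up to $v_\theta$ (this uses the monotonicity of neighborhoods in $\sigma$). One must also check that \cite[Remark 6.7]{BGP} applies with $\sigma$ itself as the base set of the differential edge. Beyond that, the argument is routine.
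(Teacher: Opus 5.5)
Your proof is correct and matches the paper's argument. Both show that every $v_\sigma\in E$ is a sink whose only possible incoming edges are differential edges $\dd_{\sigma,\ell}$ with source a triangle vertex, and then apply Lemma \ref{lemma more sinks than sources} with $N\subseteq T$. You also spell out the monotonicity step $N_{\f}(\theta)\supseteq N_{\f}(\sigma)=[n]$, which the paper leaves implicit when it asserts $N\subseteq T$, and you avoid the paper's separate isolated-vertex case by noting that an isolated vertex is vacuously a sink.
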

\begin{proof}
 
    For all $v_\sigma \in E$,  Proposition \ref{proposition homotopically/differentially isolated}, along with being homotopically isolated gives that $v_\sigma$ is either a sink or an isolated vertex. If $v_\sigma$ is isolated then $\V_{\f}$ is full, so we will consider the case when every element of $E$ is a sink with some incoming differential edge. As a consequence of Proposition \ref{proposition homotopically/differentially isolated} it must be that $\sigma\cup\{f_i\}$ is a triangle in $\G_{\f}$ so we can apply Lemma \ref{lemma more sinks than sources} noting that $N\subseteq T$.
    
\end{proof}

The following graphs constitute type F$_4$. The proofs we provide are not encompassed by any lemmas. We give a specific argument for each of these graphs yielding a full support variety in Theorem \ref{theorem F for full support}
\begin{center}
\normalfont
\begin{minipage}{0.19\textwidth}
\centering

\begin{tikzpicture}[x=.3\textwidth,y=.3\textwidth,scale=1, every node/.style={circle, inner sep=0pt, minimum size=13pt, draw}]

  % Nodes
  \node (v1) at (-1,1.155) {1};
  \node (v2) at (1,1.155) {2};
  \node (v3) at (1,0) {3};
  \node (v4) at (0,1.73) {4};
  \node (v5) at (-1,0) {5};
  \node (v6) at (0,-.577) {6};

  % Edges
  \draw (v1) -- (v5);
  \draw (v1) -- (v4);
  \draw (v2) -- (v3);
  \draw (v2) -- (v4);
  \draw (v3) -- (v4);
  \draw (v3) -- (v5);
  \draw (v4) -- (v5);
  \draw (v5) -- (v6);
 
\end{tikzpicture}
Graph 20 \ognum{11}
\end{minipage}
%graph 21 in Stephen's
%graph 13 in Stephen's
\begin{minipage}{0.19\textwidth}
\centering

\begin{tikzpicture}[x=.3\textwidth,y=.3\textwidth,scale=1, every node/.style={circle, inner sep=0pt, minimum size=13pt, draw}]

  % Nodes
  \node (v1) at (1,1.155) {1};
  \node (v2) at (0,-.577) {2};
  \node (v3) at (0,1.73) {3};
  \node (v4) at (1,0) {4};
  \node (v5) at (-1,0) {5};
  \node (v6) at (-1,1.155) {6};

  % Edges
  \draw (v1) -- (v3);
  \draw (v1) -- (v4);
  \draw (v2) -- (v3);
  \draw (v2) -- (v4);
  \draw (v2) -- (v5);
  \draw (v3) -- (v5);
  \draw (v4) -- (v5);
  \draw (v5) -- (v6);
 
\end{tikzpicture}
Graph 21 \ognum{13}
\end{minipage}
\begin{minipage}{0.19\textwidth}
\centering

\begin{tikzpicture}[x=.3\textwidth,y=.3\textwidth,scale=1, every node/.style={circle, inner sep=0pt, minimum size=13pt, draw}]

  % Nodes
  \node (v1) at (-1,1.155) {1};
  \node (v2) at (0,-.577) {2};
  \node (v3) at (0,1.73) {3};
  \node (v4) at (1,0) {4};
  \node (v5) at (-1,0) {5};
  \node (v6) at (1,1.155) {6};
  
  % Edges
  \draw (v1) -- (v2);
  \draw (v1) -- (v5);
  \draw (v2) -- (v5);
  \draw (v3) -- (v4);
  \draw (v3) -- (v5);
  \draw (v4) -- (v5);
  \draw (v4) -- (v6);
 
\end{tikzpicture}
Graph 22 \ognum{21}
\end{minipage}
%%%%%%%%%%%%%%%%%%%%%%%%%%%%%%%
%{Cycle from source}
%graph 29 in Stephen's
\begin{minipage}{0.19\textwidth}
\centering
\begin{tikzpicture}[x=.3\textwidth,y=.3\textwidth,scale=1, every node/.style={circle, inner sep=0pt, minimum size=13pt, draw}]

  % Nodes
  \node (v1) at (-1,0) {1};
  \node (v2) at (0,-.577) {2};
  \node (v3) at (-1,1.155) {3};
  \node (v4) at (1,0) {4};
  \node (v5) at (0,1.73) {5};
  \node (v6) at (1,1.155) {6};

  % Edges
  \draw (v6) -- (v5);
  \draw (v5) -- (v3);
  \draw (v3) -- (v1);
  \draw (v1) -- (v2);
  \draw (v2) -- (v4);
  \draw (v4) -- (v6);
  \draw (v4) -- (v5);
  \draw (v2) -- (v3);
  \draw (v4) -- (v3);
  \draw (v2) -- (v5);
  
\end{tikzpicture}
Graph 23 \ognum{29}
\end{minipage}

%graph 28 in Stephen's
\begin{minipage}{0.19\textwidth}
\centering
\begin{tikzpicture}[x=.3\textwidth,y=.3\textwidth,scale=1, every node/.style={circle, inner sep=0pt, minimum size=13pt, draw}]

  % Nodes
  \node (v1) at (1,1.155) {1};
  \node (v2) at (0,-.577) {2};
  \node (v3) at (0,1.73) {3};
  \node (v4) at (1,0) {4};
  \node (v5) at (-1,0) {5};
  \node (v6) at (-1,1.155) {6};

  % Edges
  \draw (v6) -- (v5);
  \draw (v5) -- (v2);
  \draw (v2) -- (v1);
  \draw (v1) -- (v3);
  \draw (v3) -- (v5);
  \draw (v5) -- (v4);
  \draw (v4) -- (v1);
  \draw (v2) -- (v4);
  \draw (v2) -- (v3);
  \draw (v4) -- (v3);
 
\end{tikzpicture}
Graph 24 \ognum{28}
\end{minipage}
%graph 34 in Stephen's
\begin{minipage}{0.19\textwidth}
\centering
\begin{tikzpicture}[x=.3\textwidth,y=.3\textwidth,scale=1, every node/.style={circle, inner sep=0pt, minimum size=13pt, draw}]

  % Nodes
  \node (v1) at (0,-.577) {1};
  \node (v2) at (-1,1.155) {2};
  \node (v3) at (0,1.73) {3};
  \node (v4) at (1,0) {4};
  \node (v5) at (-1,0) {5};
  \node (v6) at (1,1.155) {6};

  % Edges
  \draw (v1) -- (v2);
  \draw (v2) -- (v3);
  \draw (v3) -- (v6);
  \draw (v6) -- (v4);
  \draw (v4) -- (v1);
  \draw (v1) -- (v5);
  \draw (v5) -- (v3);
  \draw (v2) -- (v5);
  \draw (v5) -- (v4);
 
\end{tikzpicture}
Graph 25 \ognum{34}
\end{minipage}
%graph 40 in Stephen's
\begin{minipage}{0.19\textwidth}
\centering
\begin{tikzpicture}[x=.3\textwidth,y=.3\textwidth,scale=1, every node/.style={circle, inner sep=0pt, minimum size=13pt, draw}]

  % Nodes
  \node (v1) at (1,1.155) {1};
  \node (v2) at (0,-.577) {2};
  \node (v3) at (0,1.73) {3};
  \node (v4) at (1,0) {4};
  \node (v5) at (-1,0) {5};
  \node (v6) at (-1,1.155) {6};

  % Edges
  \draw (v6) -- (v5);
  \draw (v5) -- (v2);
  \draw (v2) -- (v4);
  \draw (v4) -- (v1);
  \draw (v1) -- (v6);
  \draw (v5) -- (v3);
  \draw (v3) -- (v4);
  \draw (v2) -- (v3);
 
\end{tikzpicture}
Graph 26 \ognum{40}
\end{minipage}
\\
\vspace{10pt} 
Type F$_4$ Graphs
\end{center}

\begin{theorem} \label{theorem F for full support}
    If $\f$ generates a monomial ideal such that $\G_{\f}$ is type F, then $\V_{\f} = \A_k^{6}$.  
\end{theorem}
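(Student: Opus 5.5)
For types F$_1$, F$_2$ and F$_3$ the claim is already established: Graphs 1--6 satisfy the hypotheses of Lemma \ref{lemma isolated in degree 3} with $\sigma=\{3,4,5\}$, Graphs 7--8 satisfy Lemma \ref{lemma pair of edges with no common edge} with $\sigma_1=\{1,2\}$ and $\sigma_2=\{5,6\}$, and Graphs 9--19 satisfy Lemma \ref{lemma edges and triangles} by counting the bold edges against the homotopically isolated triangles. So the work is the seven type F$_4$ graphs, Graphs 20--26. There I would produce, uniformly over every $\f$ with the given GCD graph, either an isolated vertex of $\T_{\f}$ (then Corollary \ref{corollary isolated vertex} applies) or a set of sources or sinks violating the counts of Lemma \ref{lemma more sources than neighbors} or Lemma \ref{lemma more sinks than sources}.

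To make this uniform I would use Proposition \ref{proposition Taylor graphs are order respecting}. Adding variables only deletes edges of $\T_{\f}$. So it suffices to check the minimal generators of $J_{\G}$ from Proposition \ref{proposition X realizing given G}, which give the largest Taylor graphs, \emph{provided} the witnessing configuration is monotone. An isolated vertex stays isolated when edges are removed. A source stays a source (or becomes isolated), and its set of neighbours can only shrink, so the hypothesis $|S|>|N|$ persists. The same holds for sinks. Hence for each graph I would compute $X_{\min}$ forced by Lemmas \ref{lemma leafs and edges} and \ref{lemma variables present for degree 2 vertex}, list the finitely many minimal supports from $J_{\G}$, and in each resulting Taylor graph locate the configuration.

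Concretely, I would first look for homotopically isolated vertices $v_\sigma$, meaning $N_{\f}(\sigma)=[6]$, of small cardinality. Examples are $\{4,5\}$ or $\{3,5\}$ in Graph 20, $\{5\}\cup\{\text{neighbor}\}$ in Graphs 21--22, and the pairs and triangles dominating the graph in Graphs 23--25. I would then determine which differential edges can touch them using Proposition \ref{proposition homotopically/differentially isolated}. As in the proof of Lemma \ref{lemma edges and triangles}, such a pair-vertex can only receive $\dd$ edges from triangles. The idea is to group the homotopically isolated edge-vertices $E$ together with the homotopically isolated sources/sinks of cardinality three. For vertices not covered by the earlier lemmas (for instance, triangles that are homotopically isolated but receive edges from 4-sets), I would refine the count in Lemma \ref{lemma more sinks than sources} by choosing $S$ to be the sinks among $E\cup T$ and $N$ their differential sources. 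I would then show that the forced variables (for example $x_6$ and $x_{56}$ at the leaf $f_6$) kill enough of the differential edges $\dd_{\sigma,i}$, as in the proof of Lemma \ref{lemma isolated in degree 3}, so that $|S|>|N|$.

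The main obstacle is the graphs without a leaf, namely 23--26 (Graph 26 being a 6-cycle with chords), where $J_{\G}$ is not principal and several incomparable maximal Taylor graphs occur. A single counting set may not work across all of them. There I would split by which of the triangle variables $x_{ijk}$ are present. Absence forces edge variables via Lemma \ref{lemma variables present for degree 2 vertex}, and those edge variables destroy the relevant $\dd$ edges. Presence merges neighbourhoods, which removes homotopy edges and creates new homotopically isolated vertices. I would then exhibit in each case either an isolated vertex or a source/sink surplus, verifying the finitely many cases by the computer search mentioned in Remark \ref{remark computer good} if the hand argument becomes unwieldy.
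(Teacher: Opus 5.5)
\textbf{Where the proposal fails.} Your handling of types F$_1$--F$_3$ and of Graphs 20--22 agrees with the paper, and your monotonicity reduction to minimal supports is sound. The gap is the central claim that every type F$_4$ Taylor graph contains an isolated vertex or a source/sink surplus. This is false for Graph 23. Take
$f_1=x_1x_{123}$, $f_2=x_{123}x_{234}x_{235}x_{245}$, $f_3=x_{123}x_{234}x_{235}x_{345}$, $f_4=x_{456}x_{234}x_{245}x_{345}$, $f_5=x_{456}x_{235}x_{245}x_{345}$, $f_6=x_6x_{456}$.
Its GCD graph is Graph 23, and no variable can be removed, so $\T_{\f}$ is one of the maximal Taylor graphs you propose to examine.

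A direct check of $\T_{\f}$ gives three facts.
\begin{itemize}
\item Every vertex lies on an edge, so there is no isolated vertex.
\item The thirteen sources $v_\emptyset, v_{12}, v_{13}, v_{23}, v_{45}, v_{46}, v_{56}, v_{123}, v_{456}, v_{2345}, v_{12345}, v_{23456}, v_{123456}$ have nonempty, pairwise disjoint out-neighbourhoods.
\item The $29$ sinks can be matched injectively to in-neighbours, for instance $v_{24}\leftarrow v_{234}$, $v_{25}\leftarrow v_{245}$, $v_{34}\leftarrow v_{345}$, $v_{35}\leftarrow v_{235}$, $v_{14}\leftarrow v_{4}$, $v_{145}\leftarrow v_{45}$, $v_{1456}\leftarrow v_{456}$, and so on.
\end{itemize}
By Hall's theorem, no set $S$ of sources or of sinks has $|S|>|N|$. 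So neither Lemma \ref{lemma more sources than neighbors} nor Lemma \ref{lemma more sinks than sources} applies to any subset, and a computer search over your three configuration types would return nothing here. In particular, the natural sink set $Z=\{v_{24},v_{25},v_{34},v_{35}\}$ has exactly the four in-neighbours $W=\{v_{234},v_{235},v_{245},v_{345}\}$, and all eight connecting edges are present.

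\textbf{What is missing.} You need an argument that uses more than the zero pattern of $\T_{\f}(a)$. For Graph 23 the paper uses $\T_{\f}(a)^2=0$. The vector $\T_{\f}(a)(v_{2345})$ lies in $\spank W$ and is killed by $\T_{\f}(a)$, so the restriction of $\T_{\f}(a)$ to $W$ has rank at most $3<\dim Z$. If that vector is $0$, then Proposition \ref{proposition taylor edges down/upward closed} rules out every edge from $W$ to $Z$. Graph 24 is handled the same way with three sinks.

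For Graphs 25 and 26 the paper also goes beyond counting. It writes down a cycle supported partly on non-source vertices ($v_5,v_6$, respectively $v_4,v_5$) whose coefficient on a source is nonzero only for $a$ off a hypersurface. It then concludes by density over an algebraically closed field, as in Lemma \ref{lemma odd length path in taylor graph}. Your proposal needs arguments of this kind for Graphs 23--26.
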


\begin{proof}
With Lemmas \ref{lemma isolated in degree 3}, \ref{lemma pair of edges with no common edge}, and \ref{lemma edges and triangles} in hand, it suffices to show type F$_4$ graphs yield full support varieties.

\noindent \underline{\textbf{Graph 20}:} Note that vertices $v_{35}$ and $v_{45}$ are homotopically isolated and are not the sources of differential edges. The possible differential edges into these sinks are $\dd_{35,4}$, $\dd_{45,1}$, and $\dd_{45,3}$. If $\dd_{35,4}$ exists then $x_{14}$ is not present, so by Lemma \ref{lemma variables present for degree 2 vertex} $x_1$ must be present. 
Consequently, $\dd_{45,1}$ does not exist. 
It follows that we may apply Lemma \ref{lemma more sinks than sources} to yield full support. On the other hand, if $\dd_{35,4}$ does not exist then $v_{35}$ is isolated.

\noindent \underline{\textbf{Graph 21}:} 
We again have that $v_{35}$ and $v_{45}$ are  homotopically isolated sinks. 
The only possible differential edges with these as targets are $\dd_{35,2}$ and $\dd_{45,2}$. 
If both edges exist then it must be $f_2\mid x_{25}x_{235}x_{245}$, but then $f_2\mid f_5$. 
It must be that at least one of
$v_{35}$ and $v_{45}$ is an isolated vertex.

\noindent \underline{\textbf{Graph 22}:} We consider the homotopically isolated vertices $v_{1246}$ and $v_{45}$. It follows from Proposition \ref{proposition homotopically/differentially isolated} that both vertices are sinks, with possible incoming edges $\dd_{1246,5}$ and $\dd_{45,3}$ respectively. Using Lemma \ref{lemma variables present for degree 2 vertex} at vertex $f_3$ precludes both from occurring simultaneously as at least one of $x_3$ and $x_{35}$ is present. Hence one of our sinks is an isolated vertex.

\noindent\underline{\textbf{Graph 23\ognum{29}}:}
We first observe that $Z = \spank\{v_{24}, v_{25}, v_{34},v_{35}\}$ is a 4-dimensional subspace of $\ker(\T_{\f}(a))$ as each of these vertices are homotopically isolated and hence are sinks. 
Set $W $ to be the $k$-span of $ \{v_{234}, v_{235},$ $ v_{245}, v_{345}\}$ and note by Proposition \ref{proposition homotopically/differentially isolated} we have that  $\T_{\f}(a)^{-1}(Z)\subseteq W$. Below we display all possible edges of $\T_{\f}$ that have targets in $Z$ as well as all edges of $\T_{\f}$ with sources in $W\cup\{v_{2345}\}.$
% https://q.uiver.app/#q=WzAsOSxbMiwwLCJ2X3syNH0iXSxbMiwyLCJ2X3syNX0iXSxbMCwyLCJ2X3szNX0iXSxbMCwwLCJ2X3szNH0iXSxbMSwwLCJ2X3syMzR9Il0sWzIsMSwidl97MjQ1fSJdLFsxLDIsInZfezIzNX0iXSxbMCwxLCJ2X3szNDV9Il0sWzEsMSwidl97MjM0NX0iXSxbOCw0XSxbOCw1XSxbOCw2XSxbOCw3XSxbNywzXSxbNCwzXSxbNywyXSxbNiwyXSxbNiwxXSxbNSwxXSxbNCwwXSxbNSwwXV0=
\[\begin{tikzcd}
	{v_{34}} & {v_{234}} & {v_{24}} \\
	{v_{345}} & {v_{2345}} & {v_{245}} \\
	{v_{35}} & {v_{235}} & {v_{25}}
	\arrow[from=1-2, to=1-1]
	\arrow[from=1-2, to=1-3]
	\arrow[from=2-1, to=1-1]
	\arrow[from=2-1, to=3-1]
	\arrow[from=2-2, to=1-2]
	\arrow[from=2-2, to=2-1]
	\arrow[from=2-2, to=2-3]
	\arrow[from=2-2, to=3-2]
	\arrow[from=2-3, to=1-3]
	\arrow[from=2-3, to=3-3]
	\arrow[from=3-2, to=3-1]
	\arrow[from=3-2, to=3-3]
\end{tikzcd}\]
Let $T$ be the matrix given by the restriction of $\T_{\f}(a)$ to columns in $W$. We will show that $T$ has rank at most 3, hence $Z$ contains a cycle that is not boundary. Observe that $w=\T_{\f}(a)(v_{2345})\in \ker(T)$. If $w \neq 0$ then we are done by the rank-nullity theorem. Otherwise it must be that none of the differential edges with source $v_{2345}$ exist, but then by Proposition \ref{proposition taylor edges down/upward closed} none of the edges in the diagram above exist. In this case then $T$ has rank 0.

\noindent \underline{\textbf{Graph 24\ognum{28}}:} We provide a similar argument as given for graph 23. Consider $Z = \spank\{v_{25},v_{35},v_{45}\}$, a 3-dimensional subspace of $\ker(\T_{\f}(a))$. Moreover, we take $W = \spank\{v_{235},v_{245},v_{345}\}$ and note just as before, $\T_{\f}(a)^{-1}(Z) \subseteq W.$
Below, we display all possible edges of $\T_{\f}$ with target in $Z$ or with source in $W\cup \{v_{2345}\}$.

% https://q.uiver.app/#q=WzAsNyxbMSwxLCJ2X3syMzQ1fSJdLFsxLDAsInZfezI1fSJdLFsyLDIsInZfezQ1fSJdLFswLDIsInZfezM1fSJdLFswLDAsInZfezIzNX0iXSxbMiwwLCJ2X3syNDV9Il0sWzEsMiwidl97MzQ1fSJdLFs0LDFdLFs0LDNdLFs1LDJdLFs1LDFdLFs2LDJdLFs2LDNdLFswLDRdLFswLDVdLFswLDZdXQ==
\[\begin{tikzcd}
	{v_{235}} & {v_{25}} & {v_{245}} \\
	& {v_{2345}} \\
	{v_{35}} & {v_{345}} & {v_{45}}
	\arrow[from=1-1, to=1-2]
	\arrow[from=1-1, to=3-1]
	\arrow[from=1-3, to=1-2]
	\arrow[from=1-3, to=3-3]
	\arrow[from=2-2, to=1-1]
	\arrow[from=2-2, to=1-3]
	\arrow[from=2-2, to=3-2]
	\arrow[from=3-2, to=3-1]
	\arrow[from=3-2, to=3-3]
\end{tikzcd}\]

Let $T$ be the matrix given by the restriction of $\T_{\f}(a)$ to columns of $W$. We will show that $T$ has rank at most 2, hence $Z$ contains a cycle that is not boundary. Observe that $w=\T_{\f}(a)(v_{2345})\in \ker(T)$. If $w \neq 0$ then we are done by the rank-nullity theorem. Otherwise it must be that none of the differential edges with source $v_{2345}$ exist, but then by Proposition \ref{proposition taylor edges down/upward closed} none of the edges in the diagram above exist. In this case then $T$ has rank 0.

\noindent \underline{\textbf{Graph 25\ognum{34}}:} We first observe that $v_{236}$ is homotopically isolated. No differential edge can have target $v_{236}$ as the only candidate $\dd_{236,5}$ would require all of the variables witnessing the edge $\{4,5\}$ to be absent. Note the symmetry given by the permutation $(12)(34)$. It follows that both $v_{236}$ and $v_{146}$ are sources with degree at most one. If either of $\dd_{26,3}$ and $\dd_{16,4}$ do not exist we have an isolated vertex, however if both exist we have the following subgraph of $\T_{\f}$. 
% https://q.uiver.app/#q=WzAsNyxbMCwxLCJ2XzYiXSxbMiwxLCJ2X3syNn0iXSxbMiwwLCJ2X3sxNn0iXSxbMiwyLCJ2X3s1Nn0iXSxbNCwyLCJ2X3s1fSJdLFs0LDEsInZfezIzNn0iXSxbNCwwLCJ2X3sxNDZ9Il0sWzAsMywiXFxjaGlfNSIsMV0sWzAsMSwiXFxjaGlfMiIsMV0sWzAsMiwiXFxjaGlfMSIsMV0sWzQsMywiLVxcY2hpXzYiLDFdLFs1LDEsIi0xIiwxXSxbNiwyLCItMSIsMV1d
\[\begin{tikzcd}
	&& {v_{16}} && {v_{146}} \\
	{v_6} && {v_{26}} && {v_{236}} \\
	&& {v_{56}} && {v_{5}}
	\arrow["{-1}"{description}, from=1-5, to=1-3]
	\arrow["{\chi_1}"{description}, from=2-1, to=1-3]
	\arrow["{\chi_2}"{description}, from=2-1, to=2-3]
	\arrow["{\chi_5}"{description}, from=2-1, to=3-3]
	\arrow["{-1}"{description}, from=2-5, to=2-3]
	\arrow["{-\chi_6}"{description}, from=3-5, to=3-3]
\end{tikzcd}\]

Note that every edge with a source in $\{v_5,v_6,v_{146},v_{236}\}$ is pictured.
Hence for all $a\in \A_k^{6}$, we have the cycle $z = a_6v_6 + a_2a_6v_{236} + a_5v_5 + a_1a_6v_{146}$. 
Let $U = \A_k^{6}\smallsetminus V(\chi_1\chi_2\chi_5\chi_6)$ and note that for all $a\in U,$ we have that $z$ is a nonzero cycle that is not a boundary (as it has a nonzero $v_{146}$ component).
Hence $U\subseteq \V_{\f}$, and moreover, by \cite[2.5]{BGP} we can assume $k$ is algebraically closed so $U$ is dense in $\A_k^{6}$ and hence $\V_{\f} = \A_k^{6}$.

\noindent \underline{\textbf{Graph 26\ognum{40}}:} We provide a similar argument as the one given for graph 25. Both of $v_{146}$ and $v_{156}$ are sources with with degree at most one. If both $\dd_{46,1}$ and $\dd_{15,6}$ exist then we have the following subgraph of $\T_{\f}$ (which we note satisfies the hypothesis of Lemma \ref{lemma odd length path in taylor graph}). 

% https://q.uiver.app/#q=WzAsNyxbNCwwLCJ2X3sxNDZ9Il0sWzQsMiwidl97MTU2fSJdLFsyLDAsInZfezQ2fSJdLFsyLDIsInZfezE1fSJdLFswLDAsInZfNCJdLFswLDIsInZfNSJdLFsyLDEsInZfezQ1fSJdLFswLDIsIjEiLDFdLFsxLDMsIjEiLDFdLFs0LDIsIi1cXGNoaV82IiwxXSxbNSwzLCJcXGNoaV8xIiwxXSxbNSw2LCJcXGNoaV80IiwxXSxbNCw2LCItXFxjaGlfNSIsMV1d
\[\begin{tikzcd}
	{v_4} && {v_{46}} && {v_{146}} \\
	&& {v_{45}} \\
	{v_5} && {v_{15}} && {v_{156}}
	\arrow["{-\chi_6}"{description}, from=1-1, to=1-3]
	\arrow["{-\chi_5}"{description}, from=1-1, to=2-3]
	\arrow["1"{description}, from=1-5, to=1-3]
	\arrow["{\chi_4}"{description}, from=3-1, to=2-3]
	\arrow["{\chi_1}"{description}, from=3-1, to=3-3]
	\arrow["1"{description}, from=3-5, to=3-3]
\end{tikzcd}\]

Note  $v_4 $ and $v_{5}$ indeed have out-degree 2, hence hence $a_5v_4 +a_4v_5 + a_6a_5v_{146} - a_1a_4v_{156}$ is generically cycle that is not a boundary.  We conclude that $\V_{\f}$ is full. 

\end{proof}

\vspace{5pt}

We now consider graphs that are the GCD graphs of ideals with support varieties that are not full. Whether or not a support variety is realized is determined by the non-presence of various prescribed variables. Hence when displaying graphs of types A, B, and C we use dashed vertices and edges to indicate bona fide vertices and edges of the GCD graph for which the absence of the corresponding variables yields interesting support varieties. For example, ideals with GCD graph 27 below will have $\V_{\f} = V(\chi_4\chi_6)$ exactly when the variables $x_{1}$, $x_{12}$, and $x_{15}$ are not present. The following graphs constitute type B.

\begin{center}
%graph 1
\begin{minipage}{0.19\textwidth}
\centering

\begin{tikzpicture}[scale=.9, every node/.style={circle, inner sep=0pt, minimum size=13pt, draw}]

  % Nodes
  \node[dashed] (v1) at (0,1) {1};
  \node (v2) at (-1,0) {2};
  \node (v3) at (1,0) {3};
  \node (v4) at (-1,-1) {4};
  \node (v5) at (0,-1) {5};
  \node (v6) at (1,-1) {6};

  % Edges
  \draw[dashed] (v1) -- (v2);
  \draw (v1) -- (v3);
  \draw[dashed] (v1) -- (v5);
  \draw (v2) -- (v3);
  \draw (v2) -- (v4);
  \draw (v2) -- (v5);
  \draw (v3) -- (v5);
  \draw (v3) -- (v6);
  \draw (v4) -- (v5);
\end{tikzpicture}
Graph 27
\end{minipage}
%graph 3
\begin{minipage}{0.19\textwidth}
\centering
\begin{tikzpicture}[scale=.9, every node/.style={circle, inner sep=0pt, minimum size=13pt, draw}]

  % Nodes
  \node[dashed] (v1) at (0,1) {1};
  \node (v2) at (-1,0) {2};
  \node (v3) at (1,0) {3};
  \node (v4) at (-1,-1) {4};
  \node (v5) at (0,-1) {5};
  \node (v6) at (1,-1) {6};

  % Edges
  \draw[dashed] (v1) -- (v2);
  \draw (v1) -- (v3);
  \draw[dashed] (v1) -- (v5);
  %\draw (v2) -- (v3);
  \draw (v2) -- (v4);
  \draw (v2) -- (v5);
  \draw (v3) -- (v5);
  \draw (v3) -- (v6);
  \draw (v4) -- (v5);
\end{tikzpicture}
Graph 28
\end{minipage}
%graph 8
\begin{minipage}{0.19\textwidth}
\centering
\begin{tikzpicture}[scale=.9, every node/.style={circle, inner sep=0pt, minimum size=13pt, draw}]

  % Nodes
  \node[dashed] (v1) at (0,1) {1};
  \node (v2) at (-1,0) {2};
  \node (v3) at (1,0) {3};
  \node (v4) at (-1,-1) {4};
  \node (v5) at (0,-1) {5};
  \node (v6) at (1,-1) {6};

  % Edges
  \draw[dashed] (v1) -- (v2);
  \draw (v1) -- (v3);
  \draw[dashed] (v1) -- (v5);
  %\draw (v2) -- (v3);
  \draw (v2) -- (v4);
  \draw (v2) -- (v5);
  %\draw (v3) -- (v5);
  \draw (v3) -- (v6);
  \draw (v4) -- (v5);
\end{tikzpicture}
Graph 29
\end{minipage}
%graph 2
\begin{minipage}{0.19\textwidth}
\centering
\begin{tikzpicture}[scale=.9, every node/.style={circle, inner sep=0pt, minimum size=13pt, draw}]

  % Nodes
  \node[dashed] (v1) at (0,1) {1};
  \node (v2) at (-1,0) {2};
  \node (v3) at (1,0) {3};
  \node (v4) at (-1,-1) {4};
  \node (v5) at (0,-1) {5};
  \node (v6) at (1,-1) {6};

  % Edges
  \draw (v1) -- (v2);
  \draw (v1) -- (v3);
  \draw[dashed] (v1) -- (v5);
  \draw (v2) -- (v3);
  \draw (v2) -- (v4);
  \draw (v2) -- (v5);
  \draw (v3) -- (v5);
  \draw (v3) -- (v6);
\end{tikzpicture}
Graph 30
\end{minipage}
%%%%%%%%%%%%%%%%%%%%%%%%%%%%%%%%%%%%%%%%%%%%%%%%
\begin{minipage}{0.19\textwidth}
\centering
\begin{tikzpicture}[scale=.9, every node/.style={circle, inner sep=0pt, minimum size=13pt, draw}]

  % Nodes
  \node[dashed] (v1) at (0,1) {1};
  \node (v2) at (-1,0) {2};
  \node (v3) at (1,0) {3};
  \node (v4) at (-1,-1) {4};
  \node (v5) at (0,-1) {5};
  \node (v6) at (1,-1) {6};

  % Edges
  \draw (v1) -- (v2);
  \draw (v1) -- (v3);
  \draw[dashed] (v1) -- (v5);
  %\draw (v2) -- (v3);
  \draw (v2) -- (v4);
  \draw (v2) -- (v5);
  \draw (v3) -- (v5);
  \draw (v3) -- (v6);
\end{tikzpicture}
Graph 31
\end{minipage}

%graph 4
\begin{minipage}{0.19\textwidth}
\centering
\begin{tikzpicture}[scale=.9, every node/.style={circle, inner sep=0pt, minimum size=13pt, draw}]

  % Nodes
  \node[dashed] (v1) at (0,1) {1};
  \node (v2) at (-1,0) {2};
  \node (v3) at (1,0) {3};
  \node (v4) at (-1,-1) {4};
  \node (v5) at (0,-1) {5};
  \node (v6) at (1,-1) {6};

  % Edges
  \draw (v1) -- (v2);
  \draw (v1) -- (v3);
  %\draw (v1) -- (v5);
  \draw (v2) -- (v3);
  \draw (v2) -- (v4);
  \draw (v2) -- (v5);
  \draw (v3) -- (v5);
  \draw (v3) -- (v6);
\end{tikzpicture}
Graph 32
\end{minipage}
\begin{minipage}{0.19\textwidth}
\centering
\begin{tikzpicture}[scale=.9, every node/.style={circle, inner sep=0pt, minimum size=13pt, draw}]

  % Nodes
  \node[dashed] (v1) at (0,1) {1};
  \node (v2) at (-1,0) {2};
  \node (v3) at (1,0) {3};
  \node (v4) at (-1,-1) {4};
  \node (v5) at (0,-1) {5};
  \node (v6) at (1,-1) {6};

  % Edges
  \draw (v1) -- (v2);
  \draw (v1) -- (v3);
  %\draw (v1) -- (v5);
  %\draw (v2) -- (v3);
  \draw (v2) -- (v4);
  \draw (v2) -- (v5);
  \draw (v3) -- (v5);
  \draw (v3) -- (v6);
\end{tikzpicture}
Graph 33
\end{minipage}
\begin{minipage}{0.19\textwidth}
\centering
\begin{tikzpicture}[scale=.9, every node/.style={circle, inner sep=0pt, minimum size=13pt, draw}]

  % Nodes
  \node[dashed] (v1) at (0,1) {1};
  \node (v2) at (-1,0) {2};
  \node (v3) at (1,0) {3};
  \node (v4) at (-1,-1) {4};
  \node (v5) at (0,-1) {5};
  \node (v6) at (1,-1) {6};

  % Edges
  \draw (v1) -- (v2);
  \draw (v1) -- (v3);
  \draw[dashed] (v1) -- (v5);
  \draw (v2) -- (v3);
  \draw (v2) -- (v4);
  \draw (v2) -- (v5);
  %\draw (v3) -- (v5);
  \draw (v3) -- (v6);
\end{tikzpicture}
Graph 34
\end{minipage}
\begin{minipage}{0.19\textwidth}
\centering
\begin{tikzpicture}[scale=.9, every node/.style={circle, inner sep=0pt, minimum size=13pt, draw}]

  % Nodes
  \node[dashed] (v1) at (0,1) {1};
  \node (v2) at (-1,0) {2};
  \node (v3) at (1,0) {3};
  \node (v4) at (-1,-1) {4};
  \node (v5) at (0,-1) {5};
  \node (v6) at (1,-1) {6};

  % Edges
  \draw (v1) -- (v2);
  \draw (v1) -- (v3);
  \draw[dashed] (v1) -- (v5);
  %\draw (v2) -- (v3);
  \draw (v2) -- (v4);
  \draw (v2) -- (v5);
  %\draw (v3) -- (v5);
  \draw (v3) -- (v6);
\end{tikzpicture}
Graph 35
\end{minipage}
\\
\vspace{10pt} 
Type B Graphs
\end{center}

\begin{theorem}\label{theorem interesting supports 6 gen}
    If, up to relabeling, $\G_{\f}$ is Type B then $\V_{f} = V(\chi_4\chi_6)$ if all of the dashed variables are not present, otherwise $\V_{\f} = \A_k^{6}.$ 
\end{theorem}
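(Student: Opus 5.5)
Our plan is to mirror the argument given after Example~\ref{running example taylor graph} for $V(\chi_1\chi_5)$: sandwich the admissible ideals between a minimal and a maximal variable set, and use monotonicity (Proposition~\ref{proposition Taylor graphs are order respecting}) so that one Taylor graph computation covers a whole interval. All nine type B graphs are subgraphs of Graph 27 and share the structure $f_4$ adjacent to $f_2,f_5$ and $f_6$ a leaf on $f_3$. By Lemma~\ref{lemma leafs and edges}, $x_6$ and $x_{36}$ are always present, and $x_4$ is forced whenever $f_4$ has degree one. Lemma~\ref{lemma variables present for degree 2 vertex} at $f_4$ (neighbors $f_2,f_5$) forces either $x_{24},x_{45}$ or $x_4,x_{245}$. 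For each graph we list the candidate variables from the clique complex (Remark~\ref{remark clique complex}) and compute $J_{\G}$ as in Proposition~\ref{proposition X realizing given G}. This leaves finitely many minimal generators, i.e. finitely many largest Taylor graphs, as the remark after Proposition~\ref{proposition X realizing valid f} indicates.

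\textbf{Lower bound $V(\chi_4\chi_6)\subseteq\V_{\f}$.} For each graph we exhibit homotopy sources or sinks for $\chi_4$ and for $\chi_6$ separately, in the sense of Definition~\ref{homotopy sink/source}, and then apply Proposition~\ref{proposition homotopy soure/sink gives subvariety}. Natural candidates are:
\begin{itemize}
\item a set $\sigma$ with $\sigma\cup N_{\f}(\sigma)=[6]\smallsetminus\{f_4\}$, for example containing $f_3$ and $f_2$ or $f_5$;
\item the analogous set with $f_6$ omitted, for example $\{f_2,f_5\}$-type sets that miss $f_3$'s leaf.
\end{itemize}
We must check differential isolation via Proposition~\ref{proposition homotopically/differentially isolated}. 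Since homotopy edges depend only on $\G_{\f}$ and differential isolation is checked on the largest Taylor graph, one choice per graph should work for every $\f$ in the fiber.

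\textbf{Upper bound when the dashed variables are absent.} We take $X_{\max}$ to be all clique variables except the dashed ones ($x_1,x_{12},x_{15}$ where they are edges of the graph). For each minimal element $X_{\min}$ of $J_{\G}$ avoiding them, we build a $\{f_4,f_6\}$-perfect matching $M$ in $\T$ of $X_{\min}$ that uses only $\h_4,\h_6$ and differential edges surviving up to $X_{\max}$. We would try to build $M$ as a union of hypercubes, following Example~\ref{running example perfect matching} and using Lemma~\ref{lemma existence of hypercubes}, so that $M$ is $\{f_i,f_j\}$-determined, with the coordinate pair probably $\{f_2,f_5\}$ or $\{f_1,f_3\}$. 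Lemma~\ref{lemma 2 determined matchings are triangular} then makes $M$ triangular, and Proposition~\ref{proposition det by cycle decompositions} gives $\chi_4\chi_6\in\I_{\f}$. Fewer edges keep the auxiliary graph acyclic, so the matching persists to the larger variable sets, but only if every edge of $M$ still exists in them. This is exactly where absence of the dashed variables should be needed: $x_1$, $x_{12}$, or $x_{15}$ would kill the differential edges (for instance $\dd_1$ edges) used in $M$.

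\textbf{Full support when a dashed variable is present.} In this case we would show $\V_{\f}=\A^6_k$ by finding an isolated vertex (Corollary~\ref{corollary isolated vertex}) or a sink/source count violating Lemmas~\ref{lemma more sources than neighbors} or \ref{lemma more sinks than sources}, in the style of the Graph 20--22 arguments. The natural candidate is a homotopically isolated vertex whose only possible incoming differential edge $\dd_{\cdot,1}$ is destroyed by the dashed variable. By Proposition~\ref{proposition Taylor graphs are order respecting} and Corollary~\ref{corollary dimension is continuous}, it suffices to handle the minimal sets containing each dashed variable.

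The main obstacle is the upper bound. We need a single perfect matching that is both $2$-determined and robust across every minimal generator of $J_{\G}$ for nine graphs. If a $2$-determined matching fails for some graph, we would fall back on checking acyclicity of $\Aux_{\f}^M$ directly, using the type coloring of Proposition~\ref{proposition type coloring is a coloring} and cardinality-counting along $M$-walks as in the proof of Lemma~\ref{lemma 2 determined matchings are triangular}.
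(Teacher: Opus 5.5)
Your three-part architecture is the paper's: a triangular $\{f_4,f_6\}$-perfect matching for $\V_{\f}\subseteq V(\chi_4\chi_6)$, homotopy sources and sinks for the reverse containment, and isolated vertices or Lemma~\ref{lemma more sinks than sources} when a dashed variable is present, all made uniform by Proposition~\ref{proposition Taylor graphs are order respecting}. However, the proposal supplies none of the explicit choices that make up the proof, and several of the choices it does suggest fail.

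\textbf{The full-support half aims at a false statement.} ``Dashed variables present'' has to be read up to relabeling. Graphs 30--33 have the automorphism $(1\,5)$, which fixes $f_4$ and $f_6$ but does not preserve the dashed set. If $x_1$ is present while $x_5$ (and $x_{15}$, when $\{f_1,f_5\}$ is an edge) are absent, relabeling by $(1\,5)$ moves $\f$ into the dashed-absent case. So $\V_{\f}=V(\chi_4\chi_6)$ there, and your reduction to ``the minimal sets containing each dashed variable'' would ask you to prove full support exactly where it fails. For graphs 30--33 the cases to treat are $x_{15}$ present, or $x_1$ and $x_5$ both present; the paper then finds the isolated vertex $v_{23}$ or $v_{2346}$. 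For graphs 27 and 29, the automorphism $(2\,5)$ identifies the $x_{12}$ and $x_{15}$ cases. Two cases (graphs 27 and 29 with $x_1$ present) need Lemma~\ref{lemma more sinks than sources} rather than an isolated vertex. Relatedly, $x_{12}$ is dashed only in graphs 27--29. Excluding it from $X_{\max}$ for every type B graph misplaces, for example, graph 30 with $x_{12}$ present, which still has $\V_{\f}=V(\chi_4\chi_6)$.

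\textbf{The lower bound.} This is left at ``should work'', and the specifics you offer do not. If $\sigma\cup N_{\f}(\sigma)=[6]\smallsetminus\{f_4\}$, then $\sigma$ contains no neighbor of $f_4$, so it cannot contain $f_2$ (nor $f_5$ in graphs 27--29). The paper instead uses the sink $v_{346}$ for graphs 27--33 and the source $v_{13}$ for graphs 34--35. For $\chi_6$, $v_{25}$ fails in graph 27 whenever $x_4$ is absent, since then $\dd_{25,4}$ exists. One vertex per graph is also not automatic, because a fiber generally has no single largest Taylor graph. The paper splits graph 27 on whether $x_{45}$ is present. The reason is that $v_{12}$ is a $\chi_6$-source only if one of $x_5,x_{35},x_{45}$ is present (otherwise $\dd_{12,5}$ exists), so $v_{24}$ is used when $x_{45}$ is absent.

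\textbf{The upper bound.} $\{f_1,f_3\}$ cannot be a determining set. At your $X_{\max}$ every $x_i$ with $i\neq 1$ is present, so the only differential edges are $\dd_1$'s, and $v_{[6]}$ can only be matched by one of them. The paper's matching
\[
M_2=\dd_{23,1}\otimes 2^{\{4,5,6\}}\cup \h_{3,4}\otimes 2^{\{1,5,6\}}\cup \h_{\emptyset,6}\otimes 2^{\{1,2,4,5\}}
\]
is $\{f_2,f_3\}$-determined and settles graphs 30--35. In graphs 27--29, however, the edge $\{f_4,f_5\}$ kills $\h_{1356,4}$. There the paper uses $M_1$, obtained from $M_2$ by replacing $\h_{35,4}\otimes 2^{\{1,6\}}$ with $\dd_{35,1}\otimes 2^{\{4,6\}}$; this is where the absence of $x_{12}$ is needed. $M_1$ is not $2$-determined, so Lemma~\ref{lemma 2 determined matchings are triangular} does not apply. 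Its triangularity needs a separate argument with the three type colors $D_1,H_4,H_6$, using that $H_6$ consists exactly of the sources not containing $f_3$.
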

\begin{proof}
We start by showing type B graphs have $\V_{\f} =V(\chi_4\chi_6)$ when the various prescribed variables are indeed absent. We follow the strategy outlined by our running Example \ref{running example taylor graph}, \ref{running example subvarieties}, and \ref{running example perfect matching} of Section \ref{section computing support varieties} en masse. There are a number of cases to consider, which we consolidate in the table below. 

\begin{table}[H]
\begin{tabular}{|c|c|c|c|c|c|}
\hline
Graphs                & $\chi_4$ Sink & $\chi_4$ Source & $\chi_6$ Sink & $\chi_6$ Source & Matching \\ \hline
27 ($x_{45}$ present) & $v_{346}$     &                 &               & $v_{12}$        & $M_1$    \\ \hline
27 ($x_{45}$ absent)  & $v_{346}$     &                 &               & $v_{24}$        & $M_1$    \\ \hline
28, 29                & $v_{346}$     &                 &               & $v_{15}$        & $M_1$    \\ \hline
30, 31, 32, 33        & $v_{346}$     &                 & $v_{246}$     &                 & $M_2$    \\ \hline
34, 35                &               & $v_{13}$        & $v_{246}$     &                 & $M_2$    \\ \hline
\end{tabular}
\end{table}

We first demonstrate $\{f_4, f_6\}$-perfect matchings that are triangular. For graphs 27, 28 and 29 let \[M_1 = \dd_{23,1}\otimes 2^{\{4,5,6\}} \cup \dd_{35,1}\otimes 2^{\{4,6\}}\cup \h_{3,4}\otimes 2^{\{1,6\}}\cup\h_{\emptyset,6}\otimes 2^{\{1,2,4,5\}}.\]
Recall from Lemma \ref{lemma existence of hypercubes} that we need only check that $\dd_{23,1}$, $\dd_{35,1}$, $\h_{136,4}$ and $\h_{1245,6}$ exist in $\T_{\f}$ to show that all of the edges in $M_1$ also exist. Indeed the homotopy edges exist as $f_4$ is not a neighbor of $f_1$, $f_3$, and $f_6$ and the only neighbor of $f_6$ is $f_3$. The differential edges occur precisely because $x_{1}$, $x_{12}$, and $x_{15}$ are not present. We now show $M_1$ is triangular. Lemma \ref{lemma 2 determined matchings are triangular} does not apply in to this matching, however a similar argument holds. 
In particular, $\Aux_{\f}^M$ has a 3 coloring given by $D_1$, $H_4$, and $H_6$. Just as in Lemma \ref{lemma 2 determined matchings are triangular}, no cycle can exist with just two colors. Note that a vertex of $\Aux_{\f}^M$ does not contain $f_3$ if and only if it is in $H_6$.
It follows that edges into $H_6$ must have weight $\h_6^{-1}\dd_3$, whereas edges leaving $H_6$ have weight $\e^{-1} \h_3$. 
Thus no cycle exists in $\Aux_{\f}^M$, otherwise it would traverse $\h_3\h_6^{-1}\dd_3$ which is impossible.

For graphs 30 to 35 let \[M_2 = \dd_{23,1}\otimes 2^{\{4,5,6\}}\cup \h_{3,4}\otimes 2^{\{1,5,6\}}\cup \h_{\emptyset,6}\otimes 2^{\{1,2,4,5\}}.\]
We observe that $\dd_{23,1},$ $\h_{1356,4}$, and $\h_{1245,6}$  each exist as well as the fact that $M_2$ is $\{f_2,f_3\}$-determined. Hence for graphs of type B we have $\chi_4\chi_6\in \I_{\f}$ and thus $\V_{\f} \subseteq V(\chi_4\chi_6)$ when all of the dashed variables are absent.

Recall that a differentially isolated vertex $v_\sigma$ is a homotopy sink if $[n] = \sigma \cup N_{\f}(\sigma)$ and a homotopy source if $\sigma\subseteq N_{\f}(\sigma)$. We claim that $v_{346}$ is a homotopy sink for $\chi_4$ for graphs 27-33, whereas $v_{13}$ is a homotopy source for $\chi_4$ for graphs 34 and 35. Checking that the neighborhood conditions are satisfied in each case is routine, hence it suffices to show these vertices are differentially isolated. We have that $v_{346}$ and $v_{13}$ are not the sources of any differential edges by Proposition \ref{proposition homotopically/differentially isolated}. 
Moreover, Proposition \ref{proposition homotopically/differentially isolated} implies for graphs 29, 31, and 33, the vertex $v_{346}$ cannot be the target of a differential edge. 
Also, that $v_{346}$ can only be the target of a differential edges: $\dd_2$ or $\dd_5$ for graph 27, $\dd_5$ for graph 28, and $\dd_2$ for graphs 30 and 32. 
For each of these graphs, some variable must divide $f_1$ and not $f_3$. 
In each case this variable must be one of $x_{125}$ or $x_{12}$, which accordingly prevents the differential edges of concern. 
We again use Proposition \ref{proposition homotopically/differentially isolated} to argue $v_{13}$ is differentially isolated in graph 35 and that the only possible differential edges using this vertex for graph 34 is $\dd_2$. 
However $x_{24}$ is necessarily present in this case, preventing such an edge. Thus we may conclude $V(\chi_4)\subseteq \V_{\f}$ for each $\f$ under consideration.

We now demonstrate a homotopy source/sink for $\chi_6$ in each case. First consider graph 27. If $x_{45}$ is present then $v_{12}$ is a homotopy source. The only non-routine thing to check is incoming differential edges, but $x_{45}$ prevents a $\dd_5$ edge and $x_{36}$ must be present and prevents a $\dd_3$ edge. On the other hand, if $x_{45}$ is not present then $v_{24}$ is a homotopy source. After routine checks, the only concern is a differential edge $\dd_5$, however some variable must be present that divides $f_5$ but not $f_2$. As $x_{45}$ is not present, it cannot be that $f_5\vert f_{24}$.
For graphs 28 and 29 we have that $v_{15}$ is a homotopy source. The only incoming differential edges to consider are $\dd_{3}$ (prevented by $x_{36}$) and $\dd_2$. For both graphs, a variable dividing $f_2$ but not $f_5$ cannot divide $f_1$, hence there is no $\dd_2$ into $v_{15}$. Finally, for graphs 30 to 35 we have that $v_{246}$ is a homotopy sink. This vertex is differentially isolated in each case as a variable witnessing the edge $\{f_1,f_3\}$ must be present. 
Moreover, these graphs also have $\{f_1,f_3,f_5\}\subset N_{\f}(f_2,f_4,f_6)$. 
Thus we may conclude $V(\chi_6)\subseteq \V_{\f}$ for each $\f$ under consideration. 
We have that for type B graphs, $\V_{\f} = V(\chi_4\chi_6)$ when the dashed variables are not present.

We now consider the cases when a dashed variable is present. Notably, some subtleties occur due to the ``up to relabeling" condition in conjunction with certain graph automorphisms. In particular, for graphs 27 and 29 the permutation swapping 2 and 5 respects dashed variables. This allows us to reduce to the two cases for these graphs: $x_1$ is present or $x_{15}$ is present. On the other hand graphs 30-33 have an automorphism transposing 1 and 5 that does not respect dashed variables. 
Hence for each of these, we do not consider the case when just $x_1$ is present. 
Instead, it suffices to consider when $x_1$ and $x_5$ are both present. 
In total this gives 17 cases to consider. 
In every case we use the present variable in conjunction with Proposition \ref{proposition homotopically/differentially isolated} to argue about certain vertices of $\T_{\f}$. Graphs 30, 32, and 34 encompass 5 of these cases, each of which has $v_{23}$ as an isolated vertex. 
Graphs 31, 33, and 35 encompass 5 more of these cases, each of which has $v_{2346}$ as an isolated vertex.  
The remaining 7 cases explicitly appear in the table below. 
Most of these cases admit an isolated vertex, however we apply Lemma \ref{lemma more sinks than sources} for 2 of them, listing the relevant sinks and (possible) neighbors.
\begin{table}[H]
\begin{tabular}{|c|c|c|c|c|}
\hline
Graph      & Present  & Isolated Vertex & Lemma \ref{lemma more sinks than sources} Sinks & Lemma \ref{lemma more sinks than sources} Neighbors \\ \hline
27         & $x_{1}$  &                 & $v_{23}$, $v_{35}$                              & $v_{235}$                                           \\ \hline
27         & $x_{15}$ & $v_{23}$        &                                                 &                                                     \\ \hline
28        & $x_{1}$  & $v_{35}$        &                                                 &                                                     \\ \hline
28         & $x_{12}$ & $v_{35}$      &                                                 &                                                     \\ \hline
28         & $x_{15}$ & $v_{2346}$        &                                                 &                                                     \\ \hline
29         & $x_{1}$  &                 & $v_{2346}$, $v_{3456}$                          & $v_{23456}$                                         \\ \hline
29         & $x_{15}$ & $v_{2346}$      &                                                 &                                                     \\ \hline
30, 32, 34 & any      & $v_{23}$        &                                                 &                                                     \\ \hline
31, 33, 35 & any      & $v_{2346}$      &                                                 &                                                     \\ \hline
\end{tabular}
\end{table}

\end{proof}

%%%%%%%%%%%%%%%%%%%%%%%%%%%%%%%%%%%%

\section{Some Families of Support Varieties}\label{section families}
    
In this section we consider three families of monomial ideals. 
The first two achieve an arbitrarily large codimension of $\V_{\f}$, provided that the ambient space is large enough. 
The third family yields a support variety that is the union of $n$ hyperplanes inside of $\A_k^{2n}$. 
We recall that if the GCD graph, $\G_{\f},$ is disconnected then the cohomological support variety, $\V_{\f},$ can be computed as the product of varieties corresponding to each connected component of $\G_{\f} $ \cite[Lemma 6.13]{BGP}. Thus one can easily provide examples of support varieties with arbitrarily large codimension, however, our families do not make use of this approach as they have connected GCD graphs. We start by defining these graphs, which depend on two positive parameters $a,b\in \N$.

\begin{definition}
    The \emph{double broom} graph $\DB(a,b)$ is a simple graph with $n=a+b+3$ vertices consisting of a path on three vertices $f_1,f_2,f_3$ along with $a$ vertices $g_1,\ldots,g_a$ each connected to $f_1$ and $b$ vertices $h_1,\ldots,h_b$ each connected to $f_3$. The \emph{whiskered triangle} graph $\WT(a,b)$ is given by taking $\DB(a,b)$ and adding an edge between $f_1$ and $f_3$.
\end{definition}
\begin{example}
These generalize the two GCD graphs with 5 vertices identified by \cite{BGP} to have interesting support. 
The graph $\DB(1,1)$ is the path with 5 vertices i.e. $P_5$ of Example \ref{running example taylor graph}. Similarly, $\WT(1,1)$ is graph A in that example. 
Moreover, $\WT(1,2)$ and $\DB(1,2)$ constitute graphs 36 and 37, the 2 type A graphs of Section \ref{section 6 generated ideals}. Just as before, we use dashed vertices to denote the variable required to be absent for interesting support to occur.

\begin{center}
\begin{minipage}{0.4\textwidth}
\centering
\begin{tikzpicture}[scale=.9, every node/.style={circle, inner sep=0pt, minimum size=15pt, draw}]

  % Nodes
  \node[dashed] (v1) at (0,1) {$f_2$};
  \node (v2) at (-1,0) {$f_1$};
  \node (v3) at (1,0) {$f_3$};
  \node (v4) at (-2.3,0) {$g_1$};
  \node (v5) at (2.3,1) {$h_1$};
  \node (v6) at (2.3,0) {$h_2$};

  % Edges
  \draw (v1) -- (v2);
  \draw (v1) -- (v3);
  %\draw (v1) -- (v5);
  \draw (v2) -- (v3);
  \draw (v2) -- (v4);
  %\draw (v2) -- (v5);
  \draw (v3) -- (v5);
  \draw (v3) -- (v6);
\end{tikzpicture}\\
Graph 36\\ $\WT(1,2)$
\end{minipage}
\begin{minipage}{0.4\textwidth}
\centering
\begin{tikzpicture}[scale=.9, every node/.style={circle, inner sep=0pt, minimum size=15pt, draw}]

  % Nodes
  \node[dashed] (v1) at (0,1) {$f_2$};
  \node (v2) at (-1,0) {$f_1$};
  \node (v3) at (1,0) {$f_3$};
  \node (v4) at (-2.3,0) {$g_1$};
  \node (v5) at (2.3,1) {$h_1$};
  \node (v6) at (2.3,0) {$h_2$};

  % Edges
  \draw (v1) -- (v2);
  \draw (v1) -- (v3);
  %\draw (v1) -- (v5);
  %\draw (v2) -- (v3);
  \draw (v2) -- (v4);
  %\draw (v2) -- (v5);
  \draw (v3) -- (v5);
  \draw (v3) -- (v6);
\end{tikzpicture}\\
Graph 37\\$\DB(1,2)$
\end{minipage}
\\
\vspace{10pt} 
Type A Graphs
\end{center}    
\end{example}
    
For the next part of this section we adopt the notation $[3] = \{f_1,f_2,f_3\}$, $[a] = \{g_1,\ldots,g_a\}$, and $[b] = \{h_1,\ldots,h_b\}$. We set $[n] = [3]\cup[a]\cup [b]$. 
Let us also adopt the following convention for cohomological operators: $\eta_\ell$ corresponds to $f_\ell,$ $\chi_i$ corresponds to $g_i$, and $\gamma_j$ corresponds to $h_j$. Finally, since symbols such as $v_{12}$ and $\dd_{123,4}$ are nonsensical in this context we explicitly name the generators in such subscripts throughout this section. For example $\h_{{[n] \smallsetminus \{f_3,h_j\}},h_j}$ is the homotopy edge from $v_{[n] \smallsetminus \{f_3,h_j\}}$ to $v_{[n] \smallsetminus \{f_3\}}$.

\begin{theorem}\label{theorem DB WT varieties}
    Suppose $\G_{\f}$ is $\DB(a,b)$ or $\WT(a,b)$. If $x_{f_2}$ is not present then 
    \[
    \V_{\f} = V(\chi_1,\ldots,\chi_a) \cup V(\gamma_1,\ldots,\gamma_b),
    \]
    otherwise if $x_{f_2}$ is present then $\V_{\f} = \A_k^{a+b+3}$.
\end{theorem}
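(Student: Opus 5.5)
The plan is a two-sided containment argument patterned on Examples \ref{running example subvarieties}--\ref{running example} and Lemma \ref{lemma 2 determined matchings are triangular}. Afterwards I treat the case where $x_{f_2}$ is present by exhibiting an isolated vertex.

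\textbf{Forced variables.} First I record which variables must be present. Each $g_i$ and each $h_j$ is a leaf of $\G_{\f}$, so Lemma \ref{lemma leafs and edges} forces the following to be present:
\[
x_{g_i},\quad x_{f_1g_i},\quad x_{h_j},\quad x_{f_3h_j}.
\]
The vertex $f_2$ has $N_{\f}(f_2)=\{f_1,f_3\}$. By Lemma \ref{lemma leafs and edges} (in $\DB$) or Lemma \ref{lemma variables present for degree 2 vertex} (in $\WT$), when $x_{f_2}$ is absent both $x_{f_1f_2}$ and $x_{f_2f_3}$ are present. In every case the only variables dividing $f_2$ are among
\[
x_{f_2},\quad x_{f_1f_2},\quad x_{f_2f_3},\quad x_{f_1f_2f_3}.
\]
Hence $f_2\mid f_{\{f_1,f_3\}}$ holds if and only if $x_{f_2}$ is absent. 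This single divisibility is what distinguishes the two cases.

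\textbf{Lower containment ($x_{f_2}$ absent).} Put $\sigma=[a]\cup\{f_3\}\cup[b]$. Then $\sigma\cup N_{\f}(\sigma)=[n]$, and the only homotopy edges touching $v_\sigma$ are the incoming $\h_{\sigma\smallsetminus\{g_i\},g_i}$; no $\h_{h_j}$ edge arrives, since $h_j$ is adjacent to $f_3\in\sigma$. I then check that $v_\sigma$ is differentially isolated:
\begin{itemize}
\item no $\dd_{g_i}$ or $\dd_{h_j}$ edge removes a leaf, because $x_{g_i}$ and $x_{h_j}$ are present;
\item no $\dd_{f_3}$ edge into $v_{\sigma\smallsetminus\{f_3\}}$ exists, because $x_{f_2f_3}\nmid f_{\sigma\smallsetminus\{f_3\}}$;
\item no $\dd_{f_1}$ or $\dd_{f_2}$ edge into $v_\sigma$ exists, because $x_{f_1g_1}\nmid f_\sigma$ and $x_{f_1f_2}\nmid f_\sigma$.
\end{itemize}
So $v_\sigma$ is a homotopy sink for $\chi_1,\dots,\chi_a$, and Proposition \ref{proposition homotopy soure/sink gives subvariety} gives $V(\chi_1,\dots,\chi_a)\subseteq\V_{\f}$. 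The symmetric vertex $[a]\cup\{f_1\}\cup[b]$ gives $V(\gamma_1,\dots,\gamma_b)\subseteq\V_{\f}$.

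\textbf{Upper containment ($x_{f_2}$ absent).} The ideal of $V(\chi_1,\dots,\chi_a)\cup V(\gamma_1,\dots,\gamma_b)$ is the radical ideal $(\chi_i\gamma_j)_{i,j}$. It therefore suffices to show $\chi_i\gamma_j\in\I_{\f}$ for every pair $(i,j)$. For a fixed pair let $\Lambda=2^{[n]\smallsetminus\{f_1,f_3,g_i,h_j\}}$; this includes $f_2$ and all the other leaves. I generalize the matching of Example \ref{running example perfect matching}:
\[
M=\h_{\emptyset,g_i}\otimes 2^{[n]\smallsetminus\{f_1,f_3,g_i\}}\ \cup\ \h_{\{f_3\},g_i}\otimes 2^{[n]\smallsetminus\{f_1,f_3,g_i\}}\ \cup\ \h_{\{f_1\},h_j}\otimes 2^{[n]\smallsetminus\{f_1,f_3,h_j\}}\ \cup\ \dd_{\{f_1,f_3\},f_2}\otimes 2^{[n]\smallsetminus\{f_1,f_2,f_3\}}.
\]
Counting vertices, this partitions all $2^n$ of them into $2^{n-1}$ pairs. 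Existence of all these edges reduces, via Lemma \ref{lemma existence of hypercubes}, to three checks:
\begin{itemize}
\item the maximal $\h_{g_i}$ edge exists, since $g_i$ is adjacent only to $f_1$;
\item the maximal $\h_{h_j}$ edge exists, since $h_j$ is adjacent only to $f_3$;
\item the minimal $\dd_{\{f_1,f_3\},f_2}$ edge exists, since $f_2\mid f_{\{f_1,f_3\}}$.
\end{itemize}
The only homotopy edges in $M$ are of types $g_i$ and $h_j$, so $M$ is a $\{g_i,h_j\}$-perfect matching. It is $\{f_1,f_3\}$-determined, with coordinate classes $\tt00,01\mapsto H_{g_i}$, $\tt10\mapsto H_{h_j}$ and $\tt11\mapsto D_{f_2}$, and no edge of $M$ adds or removes $f_1$ or $f_3$. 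Lemma \ref{lemma 2 determined matchings are triangular} then makes $M$ triangular, and Proposition \ref{proposition det by cycle decompositions} gives $\chi_i\gamma_j\in\I_{\f}$.

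\textbf{Case $x_{f_2}$ present.} Take $\theta=\{f_1,f_3\}\cup[a]\cup[b]$. Every vertex lies in $N_{\f}(\theta)$, including $f_1$ and $f_3$ via the leaves, so $v_\theta$ is homotopically isolated by Proposition \ref{proposition homotopically/differentially isolated}. For differential edges:
\begin{itemize}
\item removing a leaf is blocked by $x_{g_i}$ or $x_{h_j}$;
\item removing $f_1$ (resp.\ $f_3$) is blocked by $x_{f_1g_1}$ (resp.\ $x_{f_3h_1}$), which divides only $f_1$ (resp.\ $f_3$) within $\theta$;
\item adding $f_2$ is blocked because $x_{f_2}\nmid f_\theta$.
\end{itemize}
So $v_\theta$ is isolated, and Corollary \ref{corollary isolated vertex} gives $\V_{\f}=\A_k^{a+b+3}$.

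The step I expect to need the most care is the matching. I must confirm the four hypercubes are pairwise disjoint and exhaust all $2^n$ vertices. I must also confirm that $\{f_1,f_3\}$-determinedness holds exactly as Lemma \ref{lemma 2 determined matchings are triangular} requires: the coordinate $\tt01$ (containing $f_3$ but not $f_1$) uses $\h_{g_i}$ and never an $f_1$- or $f_3$-type edge.
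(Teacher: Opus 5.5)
Your lower containment and your matching both work. For the lower containment you use homotopy sinks $v_{[a]\cup\{f_3\}\cup[b]}$ and $v_{[a]\cup\{f_1\}\cup[b]}$ instead of the paper's homotopy sources $v_{\{f_2,f_3\}}$ and $v_{\{f_1,f_2\}}$; your matching is exactly the paper's $D\cup G_i\cup H_j$. However, the case where $x_{f_2}$ is present has a genuine gap for $\WT(a,b)$.

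You block removal of $f_1$ from $\theta=[n]\smallsetminus\{f_2\}$ with $x_{f_1g_1}$. But $g_1\in\theta$, so $x_{f_1g_1}\mid f_{\theta\smallsetminus\{f_1\}}$ and it blocks nothing. Likewise $x_{f_3h_1}$ divides $h_1\in\theta$. The only variables that can block $\dd_{\theta\smallsetminus\{f_1\},f_1}$ are $x_{f_1}$ and $x_{f_1f_2}$.

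For $\DB(a,b)$ this is harmless. The edges $\{f_1,f_2\}$ and $\{f_2,f_3\}$ lie in no triangle, so Lemma~\ref{lemma leafs and edges} forces $x_{f_1f_2}$ and $x_{f_2f_3}$, and $v_\theta$ is then isolated; this is exactly the paper's argument for the double broom.

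For $\WT(a,b)$ with $x_{f_2}$ present, though, Lemma~\ref{lemma variables present for degree 2 vertex} allows $x_{f_1f_2}$ to be absent, with $x_{f_1f_2f_3}$ present instead. Nothing forces $x_{f_1}$ either, since $f_1\nmid f_3$ is already witnessed by $x_{f_1g_1}$. For instance, in $\WT(1,1)$ take $X_{\f}=\{x_{g_1},x_{f_1g_1},x_{h_1},x_{f_3h_1},x_{f_2},x_{f_1f_2f_3}\}$. This is a valid minimal generating set with the right GCD graph. Here $f_1=x_{f_1g_1}x_{f_1f_2f_3}$ divides $\lcm(f_3,g_1,h_1)$, so $\dd_{\theta\smallsetminus\{f_1\},f_1}$ is an edge and $v_\theta$ is not isolated.

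For the whiskered triangle you need a different vertex; the paper uses $v_{\{f_1,f_3\}}$.
\begin{itemize}
\item It is homotopically isolated because $N_{\f}(\{f_1,f_3\})=[n]$. This uses the edge $\{f_1,f_3\}$, which is why the vertex is unavailable for $\DB$.
\item Having two elements, it is the source of no differential edge, by Proposition~\ref{proposition homotopically/differentially isolated}.
\item The only candidate incoming edges are $\dd_{\{f_1,f_3\},g_i}$, $\dd_{\{f_1,f_3\},h_j}$ and $\dd_{\{f_1,f_3\},f_2}$, and these are blocked by $x_{g_i}$, $x_{h_j}$ and $x_{f_2}$ respectively.
\end{itemize}

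The same slip appears in your lower containment: the claim $x_{f_1g_1}\nmid f_\sigma$ is false, since $g_1\in\sigma$. There it is harmless, because $x_{f_1f_2}$ is present when $x_{f_2}$ is absent and blocks both $\dd_{\sigma,f_1}$ and $\dd_{\sigma,f_2}$.
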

\begin{proof}
    We first consider the case that $x_{f_{2}}$ is present. Every vertex of $\G_{\f}$ contained in $[a]\cup[b]$ has degree 1, hence every $x_{g_i}$ and $x_{h_j}$ must be present. This prevents any differential edges of the form $\dd_{g_i}$ and $\dd_{h_j}$.
    
    For the double broom we show $v_\sigma=v_{[n]\smallsetminus\{f_2\}}$ is an isolated vertex in $\T_{\f}$. When $\G_{\f}$ is connected, every vertex of size $n-1$ in $\T_{\f}$ is homotopically isolated. Since $\G_{\f}$ is triangle-free, $x_{f_1f_2}$ and $x_{f_2f_3}$ must be present, preventing $\dd_{\sigma,f_1}$ and  $\dd_{\sigma,f_3}$. 
    It follows that $v_\sigma$ is not the source of a differential edge. 
    The variable $x_{f_2}$ prevents the only possible edge with target $v_\sigma$ hence $v_\sigma$ is isolated. 

    For the whiskered triangle, $v_{f_1f_3}$ is an isolated vertex in $\T_{\f}$.  Every vertex of $\G_{\f}$ outside of $f_1$ and $f_3$ has its corresponding singleton variable present, hence $v_{f_1f_3}$ is not the target of a differential edge, and is thus differentially isolated. It now suffices to observe $[n] = N_{\f}(f_1,f_2)$ i.e. that $v_{f_1f_3}$ is homotopically isolated.

    Now suppose that $x_{f_2}$ is absent. We first claim that $v_{f_2f_3}$ is homotopy source for $\chi_1,\ldots,\chi_a$. Since $N_{\f}(f_2,f_3) = [3]\cup [b]$ it suffices to check that $v_{f_2f_3}$ is differentially isolated. By Proposition \ref{proposition homotopically/differentially isolated} the only possible differential edge using this vertex is $\dd_{f_2f_3,f_1}$ in the case of $\WT(a,b)$. However, the variable $x_{f_1g_1}$ is present and prevents this differential edge. 
    A symmetric argument yields that $v_{f_1f_2}$ is a homotopy source for $\gamma_1,\ldots,\gamma_b$. By Proposition \ref{proposition homotopy soure/sink gives subvariety} this shows $V(\chi_1,\ldots,\chi_a) \cup V(\gamma_1,\ldots,\gamma_b)\subseteq \V_{\f}.$

    The theorem's conclusion in this case is equivalent to $\I_{\f}$ being minimally generated by $\chi_i\gamma_j$ for all $g_i\in[a]$ and $h_j\in [b]$. 
    We fix such an $i$ and $j$ then proceed by producing a triangular $\{g_i,h_j\}$-perfect matching. By Proposition \ref{proposition det by cycle decompositions} this implies $\chi_i\gamma_j\in \I_{\f}$. This completes the proof as it gives $\V_{\f}\subseteq V(\chi_1,\ldots,\chi_a) \cup V(\gamma_1,\ldots,\gamma_b).$
    
    We first note that $\dd_{f_1f_3,f_2}$ is an edge of $\T_{\f}$ because $x_{f_2}$ is not present, so we set $D =\dd_{f_1f_3,f_2}\otimes 2^{[a]\cup[b]}$. For any $\sigma\cap \{f_1,g_i\}=\emptyset$ we have the edge $\h_{\sigma,g_i}$ so let $G_i = \h_{\emptyset,g_i}\otimes 2^{[n] \smallsetminus \{f_1,g_i\}}$. The vertices not contained by $D$ and $G_i$ are exactly those containing $f_1$ but not $f_3$, hence we take $H_j = \h_{f_1,h_j}\otimes 2^{[n] \smallsetminus \{f_1, f_3, h_j\}}$, while noting $\h_{{[n] \smallsetminus \{f_3,h_j\}},h_j}$ is indeed an edge of $\T_{\f}$. Membership of an edge using $v_\sigma$ in $D$, $G_i$, or $H_j$ can be determined by $\sigma\cap\{f_1,f_3\}$ so these sets are disjoint. Moreover, $G_i$ has $2^{a+b+1}$ edges while $D$ and $H_j$ each have $2^{a+b}$ edges, so $M=D\cup G_i\cup H_j$ has $2^{a+b+2}$ edges and is thus a $\{g_i,h_j\}$-perfect matching.
    It suffices to note that $M$ is indeed $\{f_1,f_3\}$-determined and hence triangular by Lemma \ref{lemma 2 determined matchings are triangular}. 
\end{proof}

We now consider a family of ideals inspired by Graph 38 (on page \pageref{graph 38}) along with its requisite absent variables. We think of this graph by starting with the triangle 135 and adding one more triangle along each edge, i.e. a stacked polytope. We copy this construction for simplices with larger dimension.  

\begin{definition}\label{definition stacked polytope}
For $n\geq 3$ let $\Delta(n)$ be the graph with vertex set 
\[
[n]=[f]\cup[g] = \{f_1,\ldots,f_n\}\cup \{g_1,\ldots, g_n\}
\]
and edges $\setbuild{\{f_i,f_j\}}{i\neq j}\cup \setbuild{\{f_i,g_j\}}{i\neq j}$. That is, $\Delta(n)$ is a union of a complete graph on $[f]$ and a complete graph for each $([f]\smallsetminus \{f_i\})\cup\{g_i\}$.    
\end{definition}

We will only demonstrate one ideal with $\G_{\f} = \Delta(n)$ with interesting support, though in general many exist. The fifth author plans to study ideals with this GCD graph as well as 7 and 8 generated monomial ideals in future work. We prepare the following lemma to show that the ideal we will give indeed has $\G_{\f} = \Delta(n).$

\begin{lemma}\label{lemma boundary present gives gcd graph}
Let $K$ be a simplicial complex and $\f$ be given by a set of variables corresponding to faces of $K$. If every facet $\sigma\in K$ has $\dim(\sigma) \geq 2$ and every $f_i\in \sigma$ has that $x_{\sigma\smallsetminus \{f_i\}}$ is present, then $\G_{\f}$ is the 1-skeleton of $K.$
\end{lemma}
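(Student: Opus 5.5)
The plan is to prove two inclusions of edge sets on the common vertex set $[n]$: every edge of $\G_{\f}$ is an edge of the 1-skeleton of $K$, and conversely. Throughout, I read the hypothesis as saying that the labels of the present variables are faces of $K$. I also assume that the vertex set of $K$ is exactly $[n]$, i.e.\ every generator $f_i$ is divisible by some present variable.

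For the first inclusion, suppose $\{f_i,f_j\}$ is an edge of $\G_{\f}$. Then $\gcd(f_i,f_j)$ is not a unit, so some present variable $x_\tau$ divides both $f_i$ and $f_j$. Since variables are labeled by their type, this means $f_i,f_j\in\tau$. By hypothesis $\tau$ is a face of $K$, so its subset $\{f_i,f_j\}$ is also a face of $K$, because $K$ is closed under taking subsets. This shows $\{f_i,f_j\}$ lies in the 1-skeleton of $K$. The argument is exactly the observation of Remark \ref{remark clique complex} that a present $x_\tau$ witnesses all edges inside $\tau$.

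For the reverse inclusion, let $\{f_i,f_j\}$ be a 1-dimensional face of $K$ and choose a facet $\sigma\supseteq\{f_i,f_j\}$. By hypothesis $\dim\sigma\ge 2$, so $|\sigma|\ge 3$. Hence there is some $f_\ell\in\sigma$ with $f_\ell\ne f_i,f_j$. The hypothesis then gives that $x_{\sigma\smallsetminus\{f_\ell\}}$ is present. Its label contains both $f_i$ and $f_j$, so it divides $f_i$ and $f_j$ and the gcd is not a unit. Therefore $\{f_i,f_j\}\in\G_{\f}$.

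The only delicate point is making sure no facet has dimension below $2$. That is precisely why the dimension hypothesis is needed. Without it, a facet that is a bare edge $\{f_i,f_j\}$ would require $x_{ij}$ itself to be present, and the boundary variables $x_{\sigma\smallsetminus\{f_\ell\}}$ would be singletons, which do not witness the edge. I also need the vertex sets to agree: any $f_i$ lies in some facet and so is divisible by some boundary variable, so it is a genuine vertex, and isolated vertices are handled trivially. No further obstacle is expected.
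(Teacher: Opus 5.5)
You prove both inclusions of edge sets correctly, and your reverse inclusion is the paper's argument: an edge $\{f_i,f_j\}$ lies in a facet $\sigma$ with a third element $f_\ell$, and $x_{\sigma\smallsetminus\{f_\ell\}}$ witnesses it. The gap is the other half of the paper's proof: you never check that no $f_i$ divides $f_j$. In this paper $\G_{\f}$ is only defined for a list of minimal generators, by the standing convention that no $f_i$ divides another. By Proposition \ref{proposition X realizing valid f}, an arbitrary set of present variables need not yield such a list. If some $f_i\mid f_j$, the ideal has fewer than $n$ minimal generators, so its GCD graph cannot be the $1$-skeleton of $K$. This is exactly how the lemma is used in Theorem \ref{theorem union of hyperplanes}. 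There $x_{g_1},x_{g_2},x_{g_3}$ and many other faces through $g_1,g_2,g_3$ are deliberately absent, so minimality is not automatic. Your write-up uses the hypothesis that every codimension-one face of each facet is present only to witness edges. Its second job, which you skip, is to rule out divisibility.

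The repair is short. Fix $i\neq j$ and a facet $\sigma$ containing $f_i$. If $f_j\in\sigma$, then $x_{\sigma\smallsetminus\{f_j\}}$ is present, divides $f_i$, and does not divide $f_j$. If $f_j\notin\sigma$, pick $f_\ell\in\sigma\smallsetminus\{f_i\}$, which exists since $\lvert\sigma\rvert\geq 3$. Then $x_{\sigma\smallsetminus\{f_\ell\}}$ divides $f_i$ but not $f_j$. Either way $f_i\nmid f_j$, and with this step added your proof is complete.
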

\begin{proof}
    It suffices to check that every edge of the 1-skeleton of $K$ is witnessed and that no $f_i \;\vert\; f_j$. By assumption, every edge $\{f_i, f_j\}$ in the 1-skeleton is contained in some larger facet $\sigma$ containing $\{f_i,f_j,f_\ell\}$. Hence $x_{\sigma\smallsetminus \{f_\ell\}}$ witnesses this edge in $\G_{\f}$. Similarly, $f_i\;\vert\; f_j$ could only happen if some facet $\sigma$ contains $\{f_i,f_j,f_\ell\}$. The presence of $x_{\sigma\smallsetminus \{f_j\}}$ prevents $f_i\;\vert\; f_j$. 
\end{proof}

\begin{theorem}\label{theorem union of hyperplanes}
Let $\f$ be given by taking $X_{\f}$ to contain the variables corresponding to faces of the clique complex $K_{\Delta(n)}$ except for the faces: 
\begin{itemize}
    \item $\setbuild{{\{g_1\}\cup\sigma}}{\sigma\subseteq [f]\smallsetminus \{f_1\},\; \lvert\sigma\rvert\leq n-3}$
    \item $\setbuild{{\{g_2\}\cup\theta}}{\theta\subseteq [f]\smallsetminus \{f_1,f_2\},\; \lvert\theta\rvert\leq n-3}$
    \item $\setbuild{{\{g_3\}\cup\varphi}}{\varphi\subseteq [f]\smallsetminus \{f_1,f_2,f_3\},\; \lvert\varphi\rvert\leq n-3}.$
\end{itemize}
Then $\V_{\f} = V(\chi_1\chi_2\cdots\chi_n)$, where $\chi_i$ is associated to $g_i.$
\end{theorem}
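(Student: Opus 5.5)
The plan is to show $\V_{\f}=V(\chi_1\cdots\chi_n)$ by the two-sided containment used in Section \ref{section computing support varieties}, after first confirming that $\G_{\f}=\Delta(n)$. The facets of $K_{\Delta(n)}$ are $[f]$ and $F_i:=([f]\smallsetminus\{f_i\})\cup\{g_i\}$, each of dimension $n-1\geq 2$. Every removed face has cardinality at most $n-2$. Hence for every facet $\sigma$ and every $v\in\sigma$, the codimension-one face $\sigma\smallsetminus\{v\}$ (of cardinality $n-1$) is still present. Lemma \ref{lemma boundary present gives gcd graph} then gives $\G_{\f}=\Delta(n)$, and from here on the neighborhoods $N_{\f}$, and hence all homotopy edges of $\T_{\f}$, are those of $\Delta(n)$.

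For the containment $V(\chi_i)\subseteq\V_{\f}$ I would use Proposition \ref{proposition homotopy soure/sink gives subvariety} with the vertex $v_{\sigma_i}$, where $\sigma_i=\{f_i\}\cup([g]\smallsetminus\{g_i\})$.
\begin{itemize}
\item Since $N(f_i)=([f]\smallsetminus\{f_i\})\cup([g]\smallsetminus\{g_i\})$ and each $g_j$ with $j\neq i$ is adjacent to $f_i$, we get $\sigma_i\subseteq N_{\f}(\sigma_i)$ and $N_{\f}(\sigma_i)=[n]\smallsetminus\{g_i\}$.
\item Consequently the only homotopy edge at $v_{\sigma_i}$ is $\h_{\sigma_i,g_i}$. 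There are no incoming homotopy edges, because removing $f_i$ or any $g_j$ leaves a set whose neighborhood still contains the removed vertex (this uses $n\geq 3$).
\end{itemize}
It remains to show $v_{\sigma_i}$ is differentially isolated. Outgoing differential edges $\dd_{\sigma_i,f_j}$ and incoming edges $\dd_{\sigma_i\smallsetminus\{x\},x}$ are constrained by Proposition \ref{proposition homotopically/differentially isolated}. Any candidate that survives should be killed by a present variable: the facet-boundary variables $x_{F_j\smallsetminus\{f_\ell\}}$, or $x_{[f]\smallsetminus\{f_\ell\}}$, all of which are present. Here I must check that the three families of deleted faces (all containing $g_1$, $g_2$ or $g_3$) never remove the witness needed, which I expect to be a finite case check on $i\in\{1,2,3\}$ versus $i>3$.

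For the reverse containment, I would exhibit a triangular $[g]$-perfect matching $M$ and invoke Proposition \ref{proposition det by cycle decompositions} to get $\chi_1\cdots\chi_n\in\I_{\f}$. The natural shape is modeled on the proof of Theorem \ref{theorem DB WT varieties}.
\begin{itemize}
\item Stratify the vertices $v_\psi$ by the coordinate $\psi\cap[f]$, aiming for a $[f]$-determined matching (or $\{f_1,f_2,f_3\}$-determined).
\item On most coordinates, use homotopy hypercubes $\h_{\varphi,g_j}\otimes 2^{\Lambda}$, choosing $g_j$ with $f_j\in\varphi$ so that $g_j\notin N(\varphi)$.
\item On the coordinates where no $g_j$-homotopy edge is available (in particular $\psi\cap[f]=\emptyset$ is fine, but coordinates with $\psi\cap[f]=[f]$ are not), use differential hypercubes $\dd_{\cdot,f_\ell}\otimes 2^{\Lambda}$. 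These exist exactly because the listed faces containing $g_1,g_2,g_3$ are absent, making $f_1\mid f_{\cdots}$-type divisibilities hold.
\end{itemize}
Lemma \ref{lemma existence of hypercubes} reduces the existence of each hypercube to a single extremal edge. Counting the edges should give $2^{2n-1}$.

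The main obstacle is proving that $\Aux_{\f}^M$ is acyclic, since Lemma \ref{lemma 2 determined matchings are triangular} only handles two coordinates. My first attempt would be the argument from Lemma \ref{lemma 2 determined matchings are triangular} and the Type B proof. Use the type coloring (Proposition \ref{proposition type coloring is a coloring}) together with the coordinate coloring. Any two-colour cycle would have to contain $\dd_i\e^{-1}\h_i$, which is impossible. Any longer cycle must move monotonically through the coordinates (the $\dd$ edges only decrease $\psi\cap[f]$ and the $\h$ edges only increase it by a controlled amount). I would then track the cardinality change along a putative cycle to derive a contradiction, exactly as in the final counting step of Lemma \ref{lemma 2 determined matchings are triangular}.
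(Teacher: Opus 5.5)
Your reduction to $\G_{\f}=\Delta(n)$ via Lemma \ref{lemma boundary present gives gcd graph} is correct. Your witness $v_{\{f_i\}\cup([g]\smallsetminus\{g_i\})}$ is also a genuine homotopy source for $\chi_i$; the paper uses the sink $v_{[g]\cup\{f_i\}}$ instead. The reverse containment, however, rests on a wrong picture of $\T_{\f}$.

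\textbf{Which differential edges exist.} Every removed face contains $g_1$, $g_2$ or $g_3$, so all singleton variables $x_{\{f_\ell\}}$ are present, as are $x_{\{g_j\}}$ for $j>3$. Hence no $f_\ell$ divides an $\lcm$ of other generators, and $\T_{\f}$ has \emph{no} $\dd_{f_\ell}$ edges at all. The only differential edges are $\dd_{g_1},\dd_{g_2},\dd_{g_3}$. The absent faces produce the divisibilities $g_1\mid\lcm(f_i,f_j)$ for $1<i<j$, $g_2\mid\lcm(f_1,f_i)$ for $i>2$, and $g_3\mid\lcm(f_1,f_2)$, not ``$f_1\mid f_{\cdots}$''. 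So your hypercubes $\dd_{\cdot,f_\ell}\otimes 2^{\Lambda}$ do not exist. They would also change $\psi\cap[f]$, which is incompatible with the $[f]$-determined shape you want, since that definition requires $D_\ell(M)=H_\ell(M)=\emptyset$ for $f_\ell\in[f]$.

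\textbf{Where homotopy edges live.} You also have the proportions reversed. The edge $\h_{\sigma,g_j}$ exists iff $\sigma\cap[f]\subseteq\{f_j\}$. So homotopy edges are available only on the $n+1$ coordinates with $\lvert\psi\rvert\leq 1$, and every other coordinate must be matched by $\dd_{g_1},\dd_{g_2},\dd_{g_3}$. Since $N_{\f}(g_k)\subseteq[f]$, every differential edge uses vertices with $\lvert\psi\rvert\geq 2$. Thus homotopy and differential edges touch disjoint vertex sets. This also gives differential isolation of your source immediately, with no case check.

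\textbf{The missing matching.} Use $\h_{g_1}$ on the coordinates $\emptyset$ and $\{f_1\}$, and $\h_{g_i}$ on $\{f_i\}$ for $i>1$. Use $\dd_{g_1}$ on every $\psi$ containing two elements other than $f_1$, $\dd_{g_2}$ on $\{f_1,f_i\}$ for $i>2$, and $\dd_{g_3}$ on $\{f_1,f_2\}$. Lemma \ref{lemma existence of hypercubes} reduces existence of these edges to the three divisibilities above.

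\textbf{Acyclicity.} With this matching your anticipated main obstacle disappears, and no cardinality tracking in the style of Lemma \ref{lemma 2 determined matchings are triangular} is needed.
\begin{itemize}
\item Every edge of $\Aux_{\f}^M$ has weight $\h_{g_i}^{-1}\h_{f_j}$ or $\dd_{g_i}^{-1}\dd_{f_j}$, and the latter never occurs.
\item The only $\h_{f_j}$ edges are $\h_{\emptyset,f_j}\otimes 2^{\{g_j\}}$.
\item So every edge of $\Aux_{\f}^M$ runs from the class $\psi=\emptyset$ into some class $\psi=\{f_j\}$, and nothing leaves those classes.
\end{itemize}
Hence $\Aux_{\f}^M$ is acyclic, and Proposition \ref{proposition det by cycle decompositions} gives $\chi_1\cdots\chi_n\in\I_{\f}$. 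Your sketched monotonicity argument (``$\dd$ edges only decrease $\psi\cap[f]$'') depends on the nonexistent $\dd_{f_\ell}$ edges and cannot be carried out as written.
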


\begin{example}\label{example Delta(4)}
   The case when $n=3$ gives GCD graph 38 (on page \pageref{graph 38}) where $[f] = \{1,3,5\}$ and $[g] = \{2,4,6\}$ and the ideal is as pictured there, i.e. the only variables not present correspond to the singletons $\{g_i\}$. This example is too small to understand behavior of $g_i$ when $i>3$. We depict $\Delta(4)$ below where the absent variables for our ideal (at largest, edges) are dashed. Variables corresponding to triangles and tetrahedra are all present. 
\begin{center}
\begin{minipage}{0.4\textwidth}
\centering
\begin{tikzpicture}[scale=.9, every node/.style={circle, inner sep=0pt, minimum size=15pt, draw}]

  % Nodes
  \node (v1) at (-.8,.8) {$f_1$};
  \node (v2) at (.8,.8) {$f_2$};
  \node (v3) at (-.8,-.8) {$f_3$};
  \node (v4) at (.8,-.8) {$f_4$};

  \node[dashed] (w1) at (2,-2) {$g_1$};
  \node[dashed] (w2) at (-2,-2) {$g_2$};
  \node[dashed] (w3) at (2,2) {$g_3$};
  \node (w4) at (-2,2) {$g_4$};

  % Edges
  \draw (v1) -- (v2);
  \draw (v1) -- (v3);
  \draw (v1) -- (v4);
  \draw (v2) -- (v3);
  \draw (v2) -- (v4);
  \draw (v3) -- (v4);

  \draw[dashed] (w1) -- (v2);
  \draw[dashed] (w1) -- (v3);
  \draw[dashed] (w1) -- (v4);
  \draw (w2) -- (v1);
  \draw[dashed] (w2) -- (v3);
  \draw[dashed] (w2) -- (v4);
  \draw (w3) -- (v1);
  \draw (w3) -- (v2);
  \draw[dashed] (w3) -- (v4);
  \draw (w4) -- (v1);
  \draw (w4) -- (v2);
  \draw (w4) -- (v3);
  
\end{tikzpicture}\\
$\Delta(4)$
\end{minipage}

\end{center}
   
\end{example}

\begin{proof}[Proof of Theorem \ref{theorem union of hyperplanes}]
Note that the facets of the clique complex are $[f]$ and $[f]\oplus \{f_i,g_i\}$ where $\oplus$ is the symmetric difference of sets e.g. $([f]\smallsetminus \{f_i\}) \cup \{g_i\}.$ Thus this ideal satisfies the hypothesis of Lemma \ref{lemma boundary present gives gcd graph}, so $\G_{\f} = \Delta(n)$.
In particular, $N_{\f}(f_i) = [n]\smallsetminus \{g_i\}$ and $N_{\f}(g_i) = [f]\smallsetminus \{f_i\}$. 
It follows that homotopy edges of $\T_{\f}$ are enumerated by $\h_{\emptyset,f_i} \otimes 2^{\{g_i\}}$ and $\h_{\emptyset,g_i} \otimes 2^{[g] \oplus \{g_i, f_i\}}$. Note that every vertex $v_\sigma$ used by a homotopy edge has $\lvert[f]\cap \sigma\rvert<2$.

    If there is any hope for $\V_{\f}$ to be not full, every $v_\sigma$ with  $\lvert[f]\cap \sigma\rvert\geq 2$ must be involved with some differential edge, otherwise we would have an isolated vertex. 
    Moreover, the only possible differential edges are $\dd_{g_1}$, $ \dd_{g_2}$, and $ \dd_{g_3}$ as every other generator enjoys a corresponding singleton variable. By Proposition \ref{proposition homotopically/differentially isolated}, $\lvert[f]\cap \sigma\rvert\geq 2$ is a necessary condition for $v_\sigma$ to belong to one of these differential edges. 
    In particular, we have that no homotopy edge shares a vertex with a differential edge. 
    We observe for each $i$, that each $v_{[g]\cup\{f_i\}}$ is a homotopy sink for $\chi_i$. 
    It follows that $ V(\chi_1\cdots\chi_n)\subseteq \V_{\f}$.
    
    Set $H_{g_1} = \h_{\emptyset,g_1}\otimes 2^{[g]\oplus \{g_1,f_1\}}$ and for $i>1$, $H_{g_i} = \h_{f_i,g_i}\otimes 2^{[g]\smallsetminus \{g_i\}}.$ 
    Further, let \[D_1 = \bigcup_{1<i<j}\dd_{f_if_j,g_1}\otimes 2^{[n]\smallsetminus\{f_i,f_j,g_1\}} \qquad D_2 = \bigcup_{2<i}\dd_{f_1f_i,g_2}\otimes 2^{[g]\smallsetminus \{g_2\}}\qquad D_3 = \dd_{f_1f_2,g_3}\otimes 2^{[g]\smallsetminus\{g_3\}}.\]

    The existence of the edges in $D_i$ can be checked using Lemma \ref{lemma existence of hypercubes}, so it suffices to show $\dd_{f_if_j,g_1}$, $\dd_{f_1f_i,g_2}$, and $\dd_{f_1f_2,g_3}$ exist. For $D_1$, note that $g_1$ is the product of the $x_{\{g_1\}\cup\sigma}$ subject to $\sigma\subseteq [f]\smallsetminus\{f_1\}$ and $\lvert[f]\cap \sigma\rvert\geq n-2$. Hence for each $f_i$ with $i>1$ the only variable dividing $g_1$ and not $f_i$ is  $x_{[f]\oplus\{g_1,f_1,f_i\}}$. In particular, for $1<i<j$ we indeed have $g_1\vert \lcm(f_i,f_j)$.
    For $D_2$, we note that every $x_{\{g_2\}\cup \sigma}$ with $f_1\in \sigma\subseteq [f]\smallsetminus \{f_2\}$ is present. Further, the only $x_{\{g_2\}\cup \theta}$ present with $f_1\notin\theta$ has $\theta = [f]\smallsetminus\{f_1,f_2\}$. Hence every $i>2$ has $g_2\vert \lcm(f_1,f_i)$. For $D_3$, note that the variables $x_{\{g_3\}\cup \varphi}$ not present are exactly those with $f_1,f_2\notin\varphi$. 
    Thus, we have $g_3\vert \lcm(f_1,f_2)$.

    We thus have a $[g]$-perfect matching $M = H_{g_1}\cup \cdots\cup H_{g_n}\cup D_1\cup D_2\cup D_3 $. We argue $M$ is $[f]$-determined. That is, for any $\e =\e_{\sigma,g_i}\in M$ we can determine which piece of $M$ the edge $\e$ belongs from the data of $\psi = [f]\cap \sigma.$ 

    \begin{itemize}
    \item   If $\psi = \emptyset$, then $\e\in H_{g_1}$.
    \item If $\lvert \psi\rvert = 1$, then $\e\in H_{g_i}$.
    \item If $\lvert \psi\rvert >2$ or $\lvert \psi\rvert=2$ and $f_1\notin \psi$ then $\e\in D_1$.
    \item If $\psi = \{f_1,f_i\}$ for some $i>2$ then $\e\in D_2$.
     \item If $\psi = \{f_1,f_2\}$ then $e\in D_3$.
    \end{itemize}
   We thus have that $M$ is $[f]$-determined. 
    
    The facts that differential edges and homotopy edges use disjoint sets of vertices and that $M$ is $[f]$-determined give that the only possible weights of edges in the auxiliary graph  $\Aux_{\f}^M$ are $\dd_{g_i}^{-1}\dd_{f_j}$ and $\h_{g_i}^{-1}\h_{f_j}$. Clearly no such differential weight may exist as there are no $\dd_{f_i}$. The homotopy weight can exist, but must have a source in $H_{g_1}$. It follows then there are no cycles in $\Aux_{\f}^M$ and hence by Proposition \ref{proposition det by cycle decompositions} $M$ is triangular. 
    We conclude $\chi_1\cdots\chi_n\in \I_{\f}$ and hence $\V_{\f} = V(\chi_1\cdots\chi_n)$.

\end{proof}

%%%%%%%%%%%%%%%%%%%%%%%%%%%%%%%%%%%%

\section{Support Varieties for Cyclic GCD Graphs}\label{section cycle varieties}
 The main objects of study in this section are the edge ideals of cycles and their cohomological support varieties. 
 Recall that \cite{BGP} and \cite{gintz} give examples of these ideals achieving support varieties that are non-linear hypersurfaces.
 In fact, these are the only known support varieties of rings defined by monomial ideals that are nonlinear hypersurfaces.
 In general, the edge ideal construction associates to any simple graph a square-free monomial ideal with degree 2 generators. 
 In particular--and in contrast to GCD graphs--the vertices of the simple graph correspond to variables of a polynomial ring and the generators of the edge ideal are the products of each of the variables corresponding to edges. 
 For $n\geq 3$ let $C_n$ denote the $n$-cycle, the simple graph with $n$ vertices and $n$ edges given by a regular $n$-gon. These graphs have the remarkable property that they are the GCD graphs of their own edge ideals. In particular, ideals with GCD graph $C_n$ forms a family of ideals which contains the edge ideal of $C_n$.
 
 \begin{remark}\label{remark cycle edge ideals and taylro subgraphs}
     Whenever $n>3$, we have that $C_n$ is triangle-free and hence by Corollary \ref{corollary triangle free fiber} there are $2^n$ monomial ideals generated by $\f$ that have $\G_{\f} =C_n$ to consider. Moreover, these ideals are in bijection with the power set on singleton variables, where the empty set corresponds to the edge ideal of $C_n$. Hence by Proposition \ref{proposition Taylor graphs are order respecting}, every $\f$ with $\G_{\f} = C_n$ has that $\T_{\f}$ is a subgraph of the Taylor graph of the edge ideal of $C_n$. Hence the prime directive of this section is understanding the Taylor graphs of edge ideals of cycles. 
 \end{remark}

\begin{remark}\label{remark cycle edges}
    A useful representation of vertices in the Taylor graph will be with binary strings of length $n$, that is, the string $b_1b_2\cdots b_n$ represents the vertex containing exactly the $f_i$ with $b_i=1$. As we are considering the edge ideals of cycles, by definitions and Proposition \ref{proposition homotopically/differentially isolated}, we perfectly understand how differential and homotopy edges behave in $\T_{\f}$. 
    Using appropriate modular arithmetic, note that any $\dd_i^s$ must have $b_{i-1} = b_i = b_{i+1} = 1$, $\h_i^s$ must have $b_{i-1} = b_i = b_{i+1} = 0$, whereas the target vertices have  $b_{i-1} = b_{i+1} = 1-b_i$. For convenience below, we will use $\odot$ to represent concatenation for our binary strings, e.g. ${\tt101}\odot{\tt011=101011}$. 
    Similarly, $b^{\odot n}$ is the concatenation of $b$ with itself $n$ many times.
\end{remark}

\begin{lemma}\label{lemma odd length path in taylor graph}
    Suppose $s>2$ is an odd integer and $\T_{\f}$ contains either of the following balanced walks $\upsilon$ or $\omega$ with $s-1$ steps.
    First, $v_s = \upsilon(v_1)$ where $v_1$ and $v_s$ are sinks with in-degree 1 and $v_j$ has in-degree 2 for every odd $1<j<s$. Alternatively,  $w_s = \omega(w_1)$ such that $w_1$ and $w_s$ are sources with out-degree 1 and $w_j$ has out-degree 2 for every odd $1<j<s$. In both cases, $\V_{\f}$ is full.
\end{lemma}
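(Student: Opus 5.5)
Treat the source case first; the sink case follows by the dimension count of Lemma \ref{lemma more sinks than sources}, or by duality (transposing $\T_{\f}(a)$ swaps sources and sinks and preserves rank). The model is the Graph 26 computation in Theorem \ref{theorem F for full support}. Write the balanced walk as $w_1 \to u_1 \leftarrow w_3 \to u_3 \leftarrow \cdots \leftarrow w_s$. Here the odd-indexed vertices $w_1,w_3,\dots,w_s$ are sources. The even-indexed vertices $u_2,\dots,u_{s-1}$ are common targets of consecutive steps. Each source $w_j$ with $1<j<s$ has out-degree exactly $2$, so its only edges go to $u_{j-1}$ and $u_{j+1}$. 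The endpoints $w_1$ and $w_s$ have their unique out-edge to $u_2$ and $u_{s-1}$ respectively.

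For a point $a$, consider vectors $z=\sum_{j \text{ odd}} c_j w_j$. Then $\T_{\f}(a)(z)$ is supported on $u_2,\dots,u_{s-1}$. Its $u_{j+1}$-coordinate is $c_j\,\mathrm{wt}(w_j\to u_{j+1}) + c_{j+2}\,\mathrm{wt}(w_{j+2}\to u_{j+1})$. The condition $z\in\ker\T_{\f}(a)$ is therefore a linear system of $(s-1)/2$ equations in $(s+1)/2$ unknowns. Since there are more unknowns than equations, a nonzero solution exists for every $a$. This is precisely Lemma \ref{lemma more sources than neighbors} applied to $S=\{w_1,w_3,\dots,w_s\}$ with $N=\{u_2,\dots,u_{s-1}\}$, once we check that $N$ is the full neighborhood of $S$.

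Checking that $N$ is the full neighborhood of $S$ is the key step, and I expect it to be the main obstacle. The out-degree hypotheses give exactly that the out-neighbors of $S$ lie in $N$. Since the $w_j$ are sources, they have no in-edges, so $N$ is the whole neighborhood of $S$ in $\T_{\f}$. One must also make sure the $w_j$ are pairwise distinct, and likewise the $u_j$, or at least that $|S|>|N|$ still holds. If $|S|$ shrinks through repetition, $|N|$ shrinks correspondingly; I would argue by restricting to a minimal sub-walk between repeated vertices, which again has the required shape. Alternatively, assume distinctness is implicit in the notion of such a walk, as in the paper's applications. The solution $z$ is a cycle. It is not a boundary because every vertex in its support is a source, so no image vector has a nonzero coordinate there.

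For the sink case, the sinks $v_1,v_3,\dots,v_s$ span a $(s+1)/2$-dimensional space $Z\subseteq\ker\T_{\f}(a)$. The in-degree hypotheses say that the only vertices mapping into $Z$ are the $(s-1)/2$ intermediate vertices $v_2,\dots,v_{s-1}$. Lemma \ref{lemma more sinks than sources} then gives a cycle in $Z$ that is not a boundary for all $a$. In either case $\V_{\f}=\A^n_k$.
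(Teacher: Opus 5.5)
There is a genuine gap. Both halves of your argument assume that \emph{every} odd-indexed vertex of the walk is a source (resp.\ sink). You write ``Since the $w_j$ are sources, they have no in-edges'' and ``the sinks $v_1,v_3,\dots,v_s$ span a subspace of $\ker\T_{\f}(a)$''. The lemma only assumes this for the endpoints. For odd $1<j<s$ it prescribes only the out-degree of $w_j$ (resp.\ the in-degree of $v_j$), and the paper's own applications rely on exactly this weaker hypothesis:
\begin{itemize}
\item In Graph 26 (Theorem \ref{theorem F for full support}), the inner vertices $v_4,v_5$ of the walk $v_{146}\to v_{46}\leftarrow v_4\to v_{45}\leftarrow v_5\to v_{15}\leftarrow v_{156}$ receive the edges $\h_{\emptyset,4}$ and $\h_{\emptyset,5}$.
\item For the edge ideal of $C_5$ (Lemma \ref{lemma not 2 mod 4 edge ideals}), the inner vertex $w_3=\texttt{11110}$ is the target of a $\dd_5$ edge from $\texttt{11111}$. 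So $w_3$ is not a source, the full neighborhood of $S=\{w_1,w_3,w_5\}$ has $|N|=3=|S|$, and Lemma \ref{lemma more sources than neighbors} does not apply.
\item For $C_7$, the inner vertex $v_3=\texttt{1010011}$ is the source of a $\dd_7$ edge. So $\T_{\f}(a)(v_3)\neq 0$ and Lemma \ref{lemma more sinks than sources} does not apply either.
\end{itemize}
Your count of out-neighbors still produces a nonzero cycle $z=\sum_{j\text{ odd}}c_jw_j$ for every $a$. However, your reason that $z$ is not a boundary (every vertex in its support is a source) is no longer available. Only $w_1$ and $w_s$ are known to be sources, and nothing in your argument forces $c_1$ or $c_s$ to be nonzero.

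The missing ingredient is genericity combined with the path structure. Pass to an algebraically closed field via \cite[2.5]{BGP}. Since $\V_{\f}$ is closed and $(k^*)^n$ is dense, it suffices to treat $a\in(k^*)^n$, where every edge weight along the walk is nonzero.

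In the source case, suppose $c_1=0$. Then the $u_2$-coordinate of $\T_{\f}(a)(z)$ forces $c_3=0$, the $u_4$-coordinate then forces $c_5=0$, and so on. Hence $z\neq 0$ implies $c_1\neq 0$, and $z$ is not a boundary because $w_1$ is a genuine source.

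In the sink case, $v_1$ itself is a cycle. By the in-degree hypotheses, the $v_1,v_3,\dots,v_s$-coordinates of $\T_{\f}(a)(y)$ depend only on the coefficients $c_2,c_4,\dots,c_{s-1}$ of $y$. So if $v_1=\T_{\f}(a)(y)$, reading the rows $v_s,v_{s-2},\dots,v_3$ in turn forces $c_{s-1}=c_{s-3}=\dots=c_2=0$, which contradicts the $v_1$-row.

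This is the paper's proof. When your counting-lemma route applies, it has the merit of working for every $a$ with no density argument. But it genuinely needs the stronger source/sink hypothesis on all odd vertices, which fails in precisely the cases the lemma is designed for.
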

% https://q.uiver.app/#q=WzAsMTQsWzAsMCwidl8xIl0sWzYsMCwidl9zIl0sWzEsMCwidl8yIl0sWzIsMCwidl8zIl0sWzMsMCwidl80Il0sWzUsMCwidl97cy0xfSJdLFs0LDAsIlxcY2RvdHMiXSxbMCwxLCJ3XzEiXSxbMSwxLCJ3XzIiXSxbMiwxLCJ3XzMiXSxbMywxLCJ3XzQiXSxbNCwxLCJcXGNkb3RzIl0sWzUsMSwid197cy0xfSJdLFs2LDEsIndfcyJdLFsyLDBdLFsyLDNdLFs0LDNdLFs1LDFdLFs1LDYsIiIsMSx7InN0eWxlIjp7ImJvZHkiOnsibmFtZSI6ImRvdHRlZCJ9fX1dLFs0LDYsIiIsMSx7InN0eWxlIjp7ImJvZHkiOnsibmFtZSI6ImRvdHRlZCJ9fX1dLFs3LDhdLFs5LDhdLFs5LDEwXSxbMTEsMTAsIiIsMSx7InN0eWxlIjp7ImJvZHkiOnsibmFtZSI6ImRvdHRlZCJ9fX1dLFsxMywxMl0sWzExLDEyLCIiLDEseyJzdHlsZSI6eyJib2R5Ijp7Im5hbWUiOiJkb3R0ZWQifX19XV0=
\[\begin{tikzcd}
	{v_1} & {v_2} & {v_3} & {v_4} & \cdots & {v_{s-1}} & {v_s} \\
	{w_1} & {w_2} & {w_3} & {w_4} & \cdots & {w_{s-1}} & {w_s}
	\arrow[from=1-2, to=1-1]
	\arrow[from=1-2, to=1-3]
	\arrow[from=1-4, to=1-3]
	\arrow[dotted, from=1-4, to=1-5]
	\arrow[dotted, from=1-6, to=1-5]
	\arrow[from=1-6, to=1-7]
	\arrow[from=2-1, to=2-2]
	\arrow[from=2-3, to=2-2]
	\arrow[from=2-3, to=2-4]
	\arrow[dotted, from=2-5, to=2-4]
	\arrow[dotted, from=2-5, to=2-6]
	\arrow[from=2-7, to=2-6]
\end{tikzcd}\]
\begin{proof}
It suffices to show that $\A_k^{n}\smallsetminus V(\chi_1\chi_2\cdots\chi_n)\subseteq\V_{\f}$, as by \cite[2.5]{BGP} we can assume $k$ is algebraically closed so $\A_k^{n}\smallsetminus V(\chi_1\chi_2\cdots\chi_n)$ is an open dense subset of $\A_k^{n}$. Thus, we assume that $a\in (k^{*})^n$.

In the first setting, note that $v_1\in\ker(\T_{\f}(a))$. 
By assumption the only columns with entries in the rows corresponding to $v_1, v_3, \ldots, v_s$ are the columns corresponding to $v_2,v_4,\ldots, v_{s-1}$.  
Hence, if $v_1\in \im(\T_{\f}(a))$ it must be the image of some $z = c_2v_2 + c_4v_4 + \cdots + c_{s-1}v_{s-1}$ with $c_2\neq 0$. 
If $\alpha_{i,j}$ is the weight of the edge $v_i\rightarrow v_j$ (which is nonzero by assumption), we have that the projection of $\T_{\f}(a)(z)$ to the span of $v_1, v_3, \ldots, v_s$ is $\alpha_{2,1}c_2v_1 + (\alpha_{2,3}c_2 + \alpha_{4,3}c_4)v_3 + \cdots + \alpha_{s-1,s}c_{s-1}v_s.$ By assumption this sum is $v_1$ so it must be that $c_{s-1} = 0$, which in turn implies that $c_{s-3} = 0$ and so on. It follows that $z=0$, a contradiction. Hence $v_1$ is a cycle that is not a boundary and $\V_{\f} = \A_k^n$ in the first setting.

In the second setting we have that for odd $i$, $\T_{\f}(a)(w_i)$
is in the span of $w_{2},w_4,\ldots, w_{s-1}$. Since each image is nonzero and there are more odd vertices than even, there must be a linear dependence among the images. That is, there exists some nonzero cycle $c_1w_1 + c_3w_3+\cdots+c_sw_s$. By a similar argument as the first setting, it must be that every $c_i\neq 0$. In particular this cycle is not a boundary as $w_1$ is a source, but the cycle has a nonzero $w_1$ component.

\end{proof}

\begin{lemma} \label{lemma not 2 mod 4 edge ideals}
    If $n \not\equiv 2 \pmod {4}$ and $\f$ generates the edge ideal of $C_n$, then $\V_{\f} = \A_k^{n}.$
\end{lemma}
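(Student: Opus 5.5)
The plan is to split according to the residue of $n$ and, in each case, exhibit a feature of $\T_{\f}$ that forces full support. I will use the binary-string encoding of Remark \ref{remark cycle edges}, indices read modulo $n$. In the edge ideal of $C_n$ there are no singleton variables, so every edge allowed by Remark \ref{remark cycle edges} is actually present.

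\begin{itemize}
\item A differential edge $\dd_i$ uses a vertex $b$ exactly when $b_{i-1}=b_{i+1}=1$. The vertex is the source if $b_i=1$ and the target if $b_i=0$.
\item A homotopy edge $\h_i$ uses $b$ exactly when $b_{i-1}=b_{i+1}=0$. The vertex is the source if $b_i=0$ and the target if $b_i=1$.
\end{itemize}

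Hence the edges at $b$ are in bijection with the \emph{defects} of $b$, meaning the indices $i$ with $b_{i-1}=b_{i+1}$. The direction and type of each edge are read off from $b_i$ and the common value $b_{i\pm1}$.

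\textbf{Case $n\equiv 0 \pmod 4$.} Take $b=({\tt 1100})^{\odot n/4}$. A direct check shows $b_{i-1}\neq b_{i+1}$ for every $i$, so $b$ has no defects. Thus $v_b$ is an isolated vertex of $\T_{\f}$, and Corollary \ref{corollary isolated vertex} gives $\V_{\f}=\A^n_k$. This also explains the exceptional residue. Along the cycle $i\mapsto i+2$ the defect condition asks the string to alternate. This is possible exactly when the orbits of $i\mapsto i+2$ have even length, that is, $n\equiv 0\pmod 4$. For $n\equiv 2\pmod 4$ the orbits have odd length $n/2$, and for $n$ odd there is a single orbit of odd length $n$.

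\textbf{Case $n$ odd.} Here some defect is unavoidable, so I aim for the hypothesis of Lemma \ref{lemma odd length path in taylor graph}. List the indices along the single orbit of $i\mapsto i+2$ and assign alternating values. Because $n$ is odd, this produces exactly one defect. Such a vertex has degree $1$.

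\begin{itemize}
\item Choosing the value at the defect appropriately makes the vertex a source with out-degree $1$ or a sink with in-degree $1$.
\item Toggling a bit $b_i$ leaves the defect status at $i$ unchanged and flips the status at $i\pm1$ only.
\item So from a one-defect vertex, traversing its unique edge $\e_i$ leads to a vertex whose defects are $i-1,i,i+1$.
\item Toggling $b_{i+1}$ or $b_{i-1}$ next pushes the defect two steps along the orbit.
\end{itemize}

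I would write the resulting walk explicitly. It starts at one one-defect vertex and ends at another one-defect vertex, alternating between degree-one endpoints and intermediate vertices. The aim is to choose it so that it is balanced and of odd length $s$. Each intermediate vertex at an odd position should have exactly two relevant edges, both incoming (respectively both outgoing), matching the pictures in Lemma \ref{lemma odd length path in taylor graph}. The natural candidate is the shortest such walk, for instance moving the single defect by one step of the orbit. For small $n$ (say $n=3,5$) I would first sanity-check it against the explicit Taylor graph.

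\textbf{Main obstacle.} The hard step is the degree and direction bookkeeping in the odd case. The lemma needs exact in-degree (or out-degree) $2$ at the odd interior vertices, not merely degree $2$. The intermediate vertices carry up to three defects, and the edge associated to each defect points in or out according to the local bits. So I must choose the starting string, equivalently which bit sits at the defect and whether the endpoints are sinks or sources, so that every odd interior vertex has both of its edges oriented toward it (or both away from it). I may need to lengthen the walk to arrange this, or to use the alternative $\omega$-form of Lemma \ref{lemma odd length path in taylor graph}. If the local orientation cannot be arranged uniformly, my fallback is a counting argument with a set $S$ of degree-one sinks. Via Lemma \ref{lemma more sinks than sources}, I would compare $|S|$ with the set of sources of edges into $S$, exploiting the $n$ rotations of a one-defect string.
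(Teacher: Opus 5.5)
Your case $n\equiv 0\pmod 4$ is exactly the paper's argument: $({\tt1100})^{\odot n/4}$ has no defects, so it is an isolated vertex of $\T_{\f}$. For $n$ odd you also choose the paper's tool, Lemma \ref{lemma odd length path in taylor graph}. However, you never exhibit the walk, and the orientation bookkeeping you call the main obstacle is the entire content of this case. As written, the odd case is unproved. Moreover, the heuristics you propose for building the walk are wrong in ways that matter.

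\textbf{(i) There is no choice between source and sink.} A one-defect string is unique up to rotation and complement: toggling the bit at the defect $i$ creates defects at $i\pm1$. Along the orbit $i+1,i+3,\ldots$, the bit $b_i$ sits $(n-1)/2$ steps after $b_{i+1}$, so $b_i=b_{i\pm1}$ exactly when $n\equiv 1\pmod 4$. Hence one-defect vertices are all sources when $n\equiv 1\pmod 4$ and all sinks when $n\equiv 3\pmod 4$, and this forces which form of the lemma you must use.

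\textbf{(ii) The ``shortest walk'' fails.} The first step must toggle the defect bit $i$. Encode a toggle of bit $j$ as the edge $\{j-1,j+1\}$ of the cycle $j\mapsto j+2$. Then a walk with distinct toggles between one-defect vertices is an arc from $i$ through the edge $\{i-1,i+1\}$, so it has at least $(n+1)/2$ steps.
\begin{itemize}
\item This minimum is odd when $n\equiv 1\pmod 4$ (three steps for $n=5$), so it cannot join two sources.
\item For $n=7$, both four-step candidates get stuck at the second interior vertex: its only other incoming edge toggles a bit off the arc.
\end{itemize}

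\textbf{(iii) Your fallback fails for $n\ge 5$.} When $n\equiv 1\pmod 4$ there are no degree-one sinks at all. When $n\equiv 3\pmod 4$, the source of the unique edge into a one-defect sink $b$ (defect $i$) is $b$ with bit $i$ toggled. Its defects are exactly $\{i-1,i,i+1\}$, so it is adjacent to no other one-defect vertex. Thus $|S|=|N|=2n$, and Lemma \ref{lemma more sinks than sources} gives nothing.

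\textbf{What is missing: the long walk.} You should move the defect by one orbit step the long way, using $n-1$ toggles. The paper starts at the one-defect vertex with defect at index $1$: this is $({\tt0011})^{\odot m}\odot{\tt0}$, a source, for $n=4m+1$, and $({\tt0110})^{\odot m}\odot{\tt011}$, a sink, for $n=4m+3$. It then toggles bits $1,2,\ldots,n-1$ in order. After $j$ toggles, the defects are exactly $\{j,j+1,n\}$.
\begin{itemize}
\item The walk edge at defect $j$ points away from the current vertex iff $b_j\neq b_{j+1}$ in the original string. The walk edge at defect $j+1$ points away iff $b_{j+1}=b_{j+2}$.
\item In the period-$4$ patterns these two conditions coincide and alternate with the parity of $j$. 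This gives the balanced alternating walk.
\item The bit $b_n$ is never toggled, and $b_1=b_{n-1}$ at every interior vertex. So the third edge $\e_n$ is always incoming (the target of $\dd_n$) in the source case and always outgoing (the source of $\dd_n$) in the sink case.
\end{itemize}
Hence $\e_n$ never contributes to the out-degree (respectively in-degree) counted in Lemma \ref{lemma odd length path in taylor graph}, and the lemma's degree hypotheses hold at every odd interior vertex.
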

\begin{proof}
    Let $n=4m$ and note that none of ${\tt000},$ ${\tt111}$, ${\tt101},$ and ${\tt010}$ appear as a substring in $v={\tt1100}^{\odot m}$. Hence by Remark \ref{remark cycle edges}, $v$ is isolated in $\T_{\f}$.

    When $n$ is odd there are no isolated vertices in the Taylor graph, however we will demonstrate sets of vertices in $\T_{\f}$ satisfying the hypothesis of Lemma \ref{lemma odd length path in taylor graph}. Consider the following procedure. 
    For $n = 4m+1$, take a walk consisting of $4m$ steps on $\T_{\f}$ starting at $w_1 =({\tt0011})^{\odot m}\odot 0$ and ending at $w_{4m+1} = ({\tt1100})^{\odot m}\odot 0$. 
    For $n = 4m+3$, take a walk consisting of $4m+2$ steps on $\T_{\f}$ starting at $v_1=({\tt0110})^m\odot {\tt011}$ and ending at $v_{4m+3} = ({\tt1001})^m\odot {\tt101}$. 
    In both cases, the $i$-th step consists of changing the $i$-th bit. Hence in total the procedure changes the bit at every index except for the last one.
    
    It is easiest to see that these walks satisfy the hypothesis of Lemma \ref{lemma odd length path in taylor graph} by working out the two cases for $m=2$, but we provide an argument nonetheless. Note that $w_1$ and $w_{4m+1}$ are indeed sources with out-degree 1 and $v_1$ and $v_{4m+3}$ are sinks with in-degree 1. For $n=4m+1$ it suffices to note that for $1\leq i\leq m$, vertices of the form $({\tt1100})^{\odot m-i}  \odot ({\tt0011})^{\odot i}\odot 0$ and $({\tt1100})^{\odot m-i}  \odot {\tt1111}\odot  ({\tt0011})^{\odot i-1}\odot 0$
    have out-degree 2 when they are not $w_1$ and $w_{4m+1}$. Similarly for $n=4m+3,$ we check that vertices of the forms $({\tt1001})^{\odot m-i}  \odot ({\tt0110})^{\odot i}\odot {\tt011}$ and $({\tt1001})^{\odot m-i} \odot {\tt1010}\odot  ({\tt0110})^{\odot i-1}\odot {\tt011}$ 
    have in-degree 2 when they are not $v_1$. 
\end{proof}

\begin{corollary}\label{corollary cycle gcds n not 2 mod 4}
If $n\not \equiv 2 \pmod {4}$ and $\G_{\f} = C_n$, then $\V_{\f} = \A_k^{n}.$ 
\end{corollary}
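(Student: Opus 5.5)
The plan is to reduce everything to the edge ideal and then invoke Lemma \ref{lemma not 2 mod 4 edge ideals}. First dispose of $n=3$. There $C_3$ is a triangle, so every vertex of $\G_{\f}$ has degree $2=n-1$, i.e.\ each vertex is adjacent to all others. By \cite[Remark 6.2]{BGP} (a vertex adjacent to every other vertex forces full support), or directly because $v_{\{f_1\}}$ is then homotopically isolated with no differential edges by Proposition \ref{proposition homotopically/differentially isolated}, we get $\V_{\f}=\A_k^3$.

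Now let $n>3$, so $C_n$ is triangle-free. By Remark \ref{remark cycle edge ideals and taylro subgraphs}, any $\f$ with $\G_{\f}=C_n$ has $X_{\f}$ equal to the edge variables $x_{i,i+1}$ together with some set of singleton variables $x_i$. Hence $X_{\f}\supseteq X_{\g}$, where $\g$ generates the edge ideal of $C_n$. Corollary \ref{corollary dimension is continuous}, applied with the roles $X_{\g}\subseteq X_{\f}$, gives $\dim\V_{\g}\les\dim\V_{\f}$. Lemma \ref{lemma not 2 mod 4 edge ideals} gives $\V_{\g}=\A_k^n$, so $\dim\V_{\f}\ges n$. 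Since $\V_{\f}$ is a closed subvariety of the irreducible $\A^n_k$, it must equal $\A^n_k$.

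The one point needing care is the direction of Corollary \ref{corollary dimension is continuous}: adding variables shrinks the Taylor graph (Proposition \ref{proposition Taylor graphs are order respecting}), yet the dimension can only go up. This is exactly what we need, since $\f$ has at least the variables of $\g$. I would double-check that the corollary's localization argument applies: localizing $Q/(\f)$ at the prime generated by the variables of $\g$ inverts the singleton variables, which turns $\f$ into $\g$. This is consistent with the corollary as stated. No combinatorial work beyond this is expected.
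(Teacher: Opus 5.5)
Your argument is correct, and it takes a genuinely different route from the paper's.

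\textbf{The paper's route.} The paper stays inside the Taylor graph. Since the edge ideal $\g$ has $X_\g\subseteq X_\f$, Proposition \ref{proposition Taylor graphs are order respecting} makes $\T_\f$ a subgraph of $\T_\g$. For $n=4m$ the isolated vertex ${\tt1100}^{\odot m}$ of $\T_\g$ therefore stays isolated. For odd $n$, either $\f=\g$ and Lemma \ref{lemma not 2 mod 4 edge ideals} applies directly, or some singleton variable is present. In the latter case, after rotating the cycle, that variable deletes the unique differential edge at a degree-one vertex taken from the walks in that lemma, which leaves an isolated vertex.

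\textbf{Your route.} You replace this case analysis by the monotonicity $\dim\V_\g\les\dim\V_\f$ from Corollary \ref{corollary dimension is continuous}. That transfers fullness from the edge ideal to the whole fiber over $C_n$ at once.

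\textbf{Comparison.} The paper's route is elementary and exhibits explicit witnesses. However, it requires locating the right vertex in each case. For $n=4m+1$, the endpoint $w_1=({\tt0011})^{\odot m}\odot{\tt0}$ is in fact attached by the homotopy edge $\h_1$, which no singleton can remove. One must instead use its complement $({\tt1100})^{\odot m}\odot{\tt1}$, whose only edge is $\dd_1$. Your route needs no such bookkeeping, and it shows more generally that full support propagates upward along $X_\g\subseteq X_\f$. The cost is relying on the localization/complexity argument behind that corollary.

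\textbf{The case $n=3$.} Separating out $n=3$ is a good idea, since $C_3$ is not triangle-free and its fiber is not just the edge ideal plus singletons. Your citation of \cite[Remark 6.2]{BGP} covers this case. Your alternative justification, however, is wrong: $v_{\{f_1\}}$ is the target of $\h_{\emptyset,1}$ (because $N_\f(\emptyset)=\emptyset$), so it is not homotopically isolated. A correct direct argument: $v_{12}$, $v_{13}$, $v_{23}$ are homotopically isolated sinks whose only possible in-neighbor is $v_{123}$, so Lemma \ref{lemma more sinks than sources} applies.
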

\begin{proof}
    If $n=4m$ we already have an isolated vertex in $\T_{\f}$. If $n$ is odd, note that in the proof of Lemma \ref{lemma not 2 mod 4 edge ideals} $v_{4m+3}$ and $w_1$ are degree 1 vertices using a differential edge in $\T_{\f}$. If a variable corresponding to a singleton is present, then up to relabeling, this differential edge does not exist, giving an isolated vertex.
\end{proof}

    We now consider edge ideals of cycles with $n=4m+2$ vertices. 
    Set $E = \{f_2, f_4, \ldots, f_n\}$ and $O=\{f_1, f_3, \ldots, f_{n-1}\}$. First, we provide a $E$-perfect matching $M$ that is $O$-determined. 
    Intuitively, we construct $M$ by taking edges using the first appearance of ${\tt000}$, ${\tt010}$, ${\tt101}$, or ${\tt111}$ where the middle bit has an even index.
    This matching will fail to be triangular, so our goal is to understand the cycles that appear in $\Aux_{\f}^M.$ 
    Let $b = b_1b_3\cdots b_{n-1}$ be a binary string of length $2m+1$ corresponding to only the odd indexed generators. 
    In particular $b$ will correspond to an $O$-coordinate.
    Set $i$ to be the smallest index such that $b_i = b_{i+2}$ (if no such repetition occurs, it must be that $b_1 = b_{n-1}$, so we take $i=n-1$). 
    Take $\sigma_b = \setbuild{f_j}{b_j=1}\subseteq O$, and if $b_i=0$ we set $M_b = \h_{\sigma_b,i+1}\otimes2^{E\smallsetminus \{f_{i+1}\}}$, otherwise $M_b = \dd_{\sigma_b,i+1}\otimes2^{E\smallsetminus \{f_{i+1}\}}$. 
    Note that $M_b$ contains $2^{2m}$ edges. 
    Hence $M=\bigcup_{b\in \{0,1\}^{2m+1}}M_b$ consists of $2^{2m+2m+1}$ edges and is thus an $E$-perfect matching that is $O$-determined. 
    Note that each $H_j$ and $D_j$ is empty when $j$ is odd, and is comprised of $\max\{1,2^{2m-j/2}\}$ many $M_b$ when $j$ is even.

\begin{example}\label{example hexagon}
In this example we demonstrate our argument for Theorem \ref{theorem cycle edge ideal supports} in the simplest case, the edge ideal of the hexagon. 
This will demonstrate why Lemma \ref{lemma 2 determined matchings are triangular} cannot in general be extended to a matching determined by a 3 element set. 
We depict the coordinate coloring of $\Aux_{\f}^M$ below, decorating each color with its type color in the superscript. 
Moreover, the label of an edge in $\Aux_{\f}^M$ is a function of the coordinate color of its source and target, hence we label the edges accordingly.

\begin{center}
\begin{minipage}{\textwidth}
\centering
\begin{tikzpicture}[scale=1, fatnode/.style={circle, inner sep=0pt, minimum size=20pt, draw}]

  % Nodes
\node[fatnode] (v000) at (-2,0) {$M_{{\tt000}}^{H_2}$};

\node[fatnode] (v001) at (2,-3) {$M_{{\tt001}}^{H_2}$};
\node[fatnode] (v010) at (-5.5,0) {$M_{{\tt010}}^{H_6}$};
\node[fatnode] (v100) at (2,3) {$M_{{\tt100}}^{H_4}$};

\node[fatnode] (v011) at (-2,-3) {$M_{{\tt011}}^{D_4}$};
\node[fatnode] (v101) at (5.5,0) {$M_{{\tt101}}^{D_6}$};
\node[fatnode] (v110) at (-2,3) {$M_{{\tt110}}^{D_2}$};

\node[fatnode] (v111) at (2,0) {$M_{{\tt111}}^{D_2}$};

  % Edges
\draw[-{Stealth}, thick] (v110) edge[bend left = 7] node[midway, right, scale = .9, minimum size = 40pt]{$\h_6^{-1}\dd_1$}  (v010);
\draw[-{Stealth}, thick] (v010) edge[bend left = 7] node[midway, left, scale = .9,minimum size = 40pt]{$\dd_2^{-1}\h_1$}  (v110);

\draw[-{Stealth}, thick] (v000) edge[bend left = 7] node[midway, below, scale = .9]{$\h_6^{-1}\h_3$}  (v010);
\draw[-{Stealth}, thick] (v010) edge[bend left = 7] node[midway, above, scale = .9]{$\h_2^{-1}\dd_3$}  (v000);

\draw[-{Stealth}, thick] (v100) edge[bend left = 7] node[midway, right, scale = .9,minimum size = 40pt]{$\h_2^{-1}\dd_1$}  (v000);
\draw[-{Stealth}, thick] (v000) edge[bend left = 7] node[midway, left, scale = .9,minimum size = 40pt]{$\h_4^{-1}\h_1$}  (v100);

\draw[-{Stealth}, thick] (v100) edge[bend left = 7] node[midway, below, scale = .9]{$\dd_2^{-1}\h_3$}  (v110);
\draw[-{Stealth}, thick] (v110) edge[bend left = 7] node[midway, above, scale = .9]{$\h_4^{-1}\dd_3$}  (v100);

\draw[-{Stealth}, thick] (v011) edge[bend left = 7] node[midway, left, scale = .9, minimum size = 40pt]{$\h_6^{-1}\dd_5$}  (v010);
\draw[-{Stealth}, thick] (v010) edge[bend left = 7] node[midway, right, scale = .9,minimum size = 40pt]{$\dd_4^{-1}\h_5$}  (v011);

\draw[-{Stealth}, thick] (v100) edge[bend left = 7] node[midway, right, scale = .9, minimum size = 40pt]{$\dd_6^{-1}\h_5$}  (v101);
\draw[-{Stealth}, thick] (v101) edge[bend left = 7] node[midway, left, scale = .9,minimum size = 40pt]{$\h_4^{-1}\dd_5$}  (v100);

\draw[-{Stealth}, thick] (v001) edge[bend left = 7] node[midway, below, scale = .9]{$\dd_4^{-1}\h_3$}  (v011);
\draw[-{Stealth}, thick] (v011) edge[bend left = 7] node[midway, above, scale = .9]{$\h_2^{-1}\dd_3$}  (v001);

\draw[-{Stealth}, thick] (v111) edge[bend left = 7] node[midway, right, scale = .9, minimum size = 40pt]{$\dd_4^{-1}\dd_1$}  (v011);
\draw[-{Stealth}, thick] (v011) edge[bend left = 7] node[midway, left, scale = .9,minimum size = 40pt]{$\dd_2^{-1}\h_1$}  (v111);

\draw[-{Stealth}, thick] (v101) edge[bend left = 7] node[midway, below, scale = .9]{$\dd_2^{-1}\h_3$}  (v111);
\draw[-{Stealth}, thick] (v111) edge[bend left = 7] node[midway, above, scale = .9]{$\dd_6^{-1}\dd_3$}  (v101);

\draw[-{Stealth}, thick] (v101) edge[bend left = 7] node[midway, right, scale = .9, minimum size = 40pt]{$\h_2^{-1}\dd_1$}  (v001);
\draw[-{Stealth}, thick] (v001) edge[bend left = 7] node[midway, left, scale = .9,minimum size = 40pt]{$\dd_6^{-1}\h_1$}  (v101);
\end{tikzpicture}\\
\vspace{10pt}
Coordinate Coloring and Edge Labels of $\Aux_{\f}^M$
\end{minipage}

\end{center}

Any cycle in $\Aux_{\f}^M$ will yield a cycle in the graph above. Note that each color class consists of 4 vertices of $\T_{\f}$, which can be specified by the even index entries of a length 6 binary string. Since 2-cycles are impossible, it must be that any cycle traverses $\dd_1$ and $\h_1$ at least once. 
These require $b_6=1$ and $b_6=0$ respectively. This implies that the cycle passes through $M_{{\tt010}}^{H_6}$ and $M_{{\tt101}}^{D_6}$ as entering these colors is the only way to change $b_6$. Let us then consider paths from $M_{{\tt010}}^{H_6}$ to $M_{{\tt101}}^{D_6}$. Every vertex in $M_{{\tt010}}^{H_6}$ has $b_6=0$, so the starting vertex is determined by $b_2$ and $b_4.$ 
If $b_4= 1$, it is impossible to traverse $\dd_{4}^{-1}$ so the top half of the graph must be used. On the other hand, if $b_{4} = 0$, the top half cannot be traversed because that would require applying $\h_4^{-1}$ so the bottom half must be used. 
Similarly, the value of $b_2$ determines whether or not $D_2$ or $H_2$ is visited along the way to $M_{{\tt101}}^{D_6}$. 
Thus there is a bijection between paths from $M_{{\tt010}}^{H_6}$ to $M_{{\tt101}}^{D_6}$ and vertices in $M_{{\tt010}}^{H_6}$. 
Notably, traversing these paths results in landing at the vertex in $M_{{\tt101}}^{D_6}$ corresponding to the complement of the starting point of the path. 
The argument for paths from right to left is entirely symmetric and each such path is given by taking the complement of the corresponding left to right path at each step. 
Hence there are exactly 4 cycles in $\Aux_{\f}^M$ and we note they are all disjoint. 

With the cycles in $\Aux_{\f}^M$ in hand, we use Proposition \ref{proposition det by cycle decompositions} to compute $\det(\T_{\f}^M)$. 
Each cycle involves 12 edges of $\T_{\f}$ and uses every $\dd_i$ and $\h_i$ exactly once. 
Due to the complementary nature of these cycles, each $\dd_i^s$ is the complement of the $\h_i^s$ in the same cycle, and likewise for the targets. 
Notably if $i$ is odd, these have the same sign and if $i$ is even these have opposite signs. 
In total there are $2^4=16$ cycle decompositions. Each cycle is a permutation of 6 elements, hence has sign $-1$. The effect of a cycle on the product of the diagonal is that it replaces the product of $\dd_2,\dd_4,\dd_6,\h_2,\h_4,$ and $\h_6$ with the product $\dd_1,\dd_3,\dd_5,\h_1,\h_3,$ and $\h_5$. Note that the even product involves exactly 3 negative signs, while the odd product has an even number of negatives. Hence when computing the determinant (with the sign of the permutation in mind) each cycle replaces $-\chi_2\chi_4\chi_6$ with $-\chi_1\chi_3\chi_5$ in the product of the diagonal of $\T_{\f}^M$. Since the product of the actual diagonal is, up to a sign, $\chi_{2}^8\chi_4^4\chi_6^4$ we have that $\det(\T_{\f}^M) = \pm \chi_2^4(\chi_1\chi_3\chi_5+\chi_2\chi_4\chi_6)^4$. In particular $\chi_2\chi_4\chi_6(\chi_1\chi_3\chi_5+\chi_2\chi_4\chi_6)\in \I_{\f}$, but then by symmetry so is $\chi_1\chi_3\chi_5(\chi_1\chi_3\chi_5+\chi_2\chi_4\chi_6)$. Their sum is the square of $\chi_1\chi_3\chi_5+\chi_2\chi_4\chi_6$, hence $\V_{\f}\subseteq V(\chi_1\chi_3\chi_5+\chi_2\chi_4\chi_6)$.

We demonstrate a cycle that is not a boundary of $\T_{\f}(a)$ for $a\in V(\chi_1\chi_3\chi_5+\chi_2\chi_4\chi_6)$, for the other containment. 
Let us consider the cycle of $\Aux_{\f}^M$ starting at ${\tt001100}$ in $M_{{\tt010}}^{H_6}$. 

% https://q.uiver.app/#q=WzAsMTIsWzAsMiwiXFx0dHswMDExMDB9Il0sWzAsMCwiXFx0dHsxMDExMDB9Il0sWzIsMCwiXFx0dHsxMTExMDB9Il0sWzQsMCwiXFx0dHsxMTAxMDB9Il0sWzYsMCwiXFx0dHsxMTAwMDB9Il0sWzgsMCwiXFx0dHsxMTAwMTB9Il0sWzgsMiwiXFx0dHsxMTAwMTF9Il0sWzgsNCwiXFx0dHswMTAwMTF9Il0sWzYsNCwiXFx0dHswMDAwMTF9Il0sWzQsNCwiXFx0dHswMDEwMTF9Il0sWzIsNCwiXFx0dHswMDExMTF9Il0sWzAsNCwiXFx0dHswMDExMDF9Il0sWzAsMSwiXFxoXzEiXSxbMiwxLCItXFxkZF8yIiwyXSxbMiwzLCJcXGRkXzMiXSxbNCwzLCJcXGhfNCIsMl0sWzQsNSwiXFxoXzUiXSxbNiw1LCItXFxkZF82IiwyXSxbNiw3LCJcXGRkXzEiXSxbOCw3LCJcXGhfMiIsMl0sWzgsOSwiXFxoXzMiXSxbMTAsMTEsIlxcZGRfNSJdLFsxMCw5LCItXFxkZF80IiwyXSxbMCwxMSwiXFxoXzYiLDJdXQ==
\[\begin{tikzcd}
	{{\tt101100}} && {{\tt111100}} && {{\tt110100}} && {{\tt110000}} && {{\tt110010}} \\
	\\
	{{\tt001100}} &&&&&&&& {{\tt110011}} \\
	\\
	{{\tt001101}} && {{\tt001111}} && {{\tt001011}} && {{\tt000011}} && {{\tt010011}}
	\arrow["{-\dd_2}"', from=1-3, to=1-1]
	\arrow["{\dd_3}", from=1-3, to=1-5]
	\arrow["{\h_4}"', from=1-7, to=1-5]
	\arrow["{\h_5}", from=1-7, to=1-9]
	\arrow["{\h_1}", from=3-1, to=1-1]
	\arrow["{\h_6}"', from=3-1, to=5-1]
	\arrow["{-\dd_6}"', from=3-9, to=1-9]
	\arrow["{\dd_1}", from=3-9, to=5-9]
	\arrow["{\dd_5}", from=5-3, to=5-1]
	\arrow["{-\dd_4}"', from=5-3, to=5-5]
	\arrow["{\h_3}", from=5-7, to=5-5]
	\arrow["{\h_2}"', from=5-7, to=5-9]
\end{tikzcd}\]
Note that every vertex in this cycle is either either a source or sink and in both cases, degree 2. Hence, we have identified a connected component of $\T_{\f}$. In particular $\T_{\f}(a)$ admits a block diagonal structure with one block given by the following matrix.
\[ \begin{pmatrix}
    a_1 & -1 & 0   & 0  & 0   & 0  \\ 
    0   & 1  & a_4 & 0  & 0   & 0  \\ 
    0   & 0  & a_5 & -1 & 0   & 0  \\
    0   & 0  & 0   & 1  & a_2 & 0  \\
    0   & 0  & 0   & 0  & a_3 & -1 \\
    a_6   & 0  & 0   & 0  & 0   & 1 
\end{pmatrix}\]
Notably, the determinant of this matrix is $a_1a_3a_5 + a_2a_4a_6$, hence when $a\in V(\chi_1\chi_3\chi_5+\chi_2\chi_4\chi_6)$ we have a cycle in the span of the corresponding columns. Note these columns correspond to sources in $\T_{\f}$ so this cycle is not a boundary and hence $a\in \V_{\f}$. We conclude that $\V_{\f}= V(\chi_1\chi_3\chi_5+\chi_2\chi_4\chi_6)$.
\end{example}

This brings us to the 4 type C graphs of Section \ref{section 6 generated ideals} depicted below. Just as before, we use dashed vertices to denote the variable required
to be absent for interesting support to occur.

\begin{center}

\begin{minipage}{0.19\textwidth}
\centering
\begin{tikzpicture}[scale=1, every node/.style={circle, inner sep=0pt, minimum size=13pt, draw}]

  % Nodes
  \node (v1) at (-.5,.87) {1};
  \node (v2) at (0,0) {5};
  \node[dashed] (v3) at (0,1.73) {2};
  \node[dashed] (v4) at (1,0) {4};
  \node[dashed] (v5) at (-1,0) {6};
  \node (v6) at (.5,.87) {3};

  % Edges
  \draw (v1) -- (v2);
  \draw (v1) -- (v3);
  \draw (v1) -- (v5);
  \draw (v1) -- (v6);
  \draw (v2) -- (v4);
  \draw (v2) -- (v5);
  \draw (v2) -- (v6);
  \draw (v3) -- (v6);
  \draw (v4) -- (v6);
\end{tikzpicture}
Graph 38 \label{graph 38}
\end{minipage}
\begin{minipage}{0.19\textwidth}
\centering
\begin{tikzpicture}[scale=1, every node/.style={circle, inner sep=0pt, minimum size=13pt, draw}]

  % Nodes
  \node (v1) at (-.5,.87) {1};
  \node (v2) at (0,0) {5};
  \node[dashed] (v3) at (0,1.73) {2};
  \node[dashed] (v4) at (1,0) {4};
  \node[dashed] (v5) at (-1,0) {6};
  \node (v6) at (.5,.87) {3};

  % Edges
  \draw (v1) -- (v2);
  \draw (v1) -- (v3);
  \draw (v1) -- (v5);
  %\draw (v1) -- (v6);
  \draw (v2) -- (v4);
  \draw (v2) -- (v5);
  \draw (v2) -- (v6);
  \draw (v3) -- (v6);
  \draw (v4) -- (v6);
\end{tikzpicture}
Graph 39
\end{minipage}
\begin{minipage}{0.19\textwidth}
\centering
\begin{tikzpicture}[scale=1, every node/.style={circle, inner sep=0pt, minimum size=13pt, draw}]

  % Nodes
  \node (v1) at (-.5,.87) {1};
  \node (v2) at (0,0) {5};
  \node[dashed] (v3) at (0,1.73) {2};
  \node[dashed] (v4) at (1,0) {4};
  \node[dashed] (v5) at (-1,0) {6};
  \node (v6) at (.5,.87) {3};
  % Edges
  \draw (v1) -- (v2);
  \draw (v1) -- (v3);
  \draw (v1) -- (v5);
  %\draw (v1) -- (v6);
  \draw (v2) -- (v4);
  \draw (v2) -- (v5);
  %\draw (v2) -- (v6);
  \draw (v3) -- (v6);
  \draw (v4) -- (v6);
\end{tikzpicture}
Graph 40
\end{minipage}
\begin{minipage}{0.19\textwidth}
\centering
\begin{tikzpicture}[scale=1, every node/.style={circle, inner sep=0pt, minimum size=13pt, draw}]

  % Nodes
  \node (v1) at (-.5,.87) {1};
  \node (v2) at (0,0) {5};
  \node[dashed] (v3) at (0,1.73) {2};
  \node[dashed] (v4) at (1,0) {4};
  \node[dashed] (v5) at (-1,0) {6};
  \node (v6) at (.5,.87) {3};

  % Edges
  %\draw (v1) -- (v2);
  \draw (v1) -- (v3);
  \draw (v1) -- (v5);
  %\draw (v1) -- (v6);
  \draw (v2) -- (v4);
  \draw (v2) -- (v5);
  %\draw (v2) -- (v6);
  \draw (v3) -- (v6);
  \draw (v4) -- (v6) ;
\end{tikzpicture}
Graph 41
\end{minipage}
\\
\vspace{10pt} 
Type C Graphs
\end{center}
\begin{corollary}\label{corollary gcd fiber over hexagon}
    If, up to relabeling, $\G_{\f}$ is Type C and $\f$ does not generate the edge ideal of the hexagon, then $\V_{\f} = V(\chi_2\chi_4\chi_6)$ if the dashed variables are not present, otherwise $\V_{\f} = \A_k^{6}.$
\end{corollary}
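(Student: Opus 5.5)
The proof follows the template of Theorem \ref{theorem interesting supports 6 gen}. There are two inclusions for the interesting case and a separate argument when some dashed variable is present. Throughout, the labelling has odd generators $f_1,f_3,f_5$ and even generators $f_2,f_4,f_6$. All four type C graphs contain the hexagon $1\!-\!2\!-\!3\!-\!4\!-\!5\!-\!6\!-\!1$, plus possibly the edges of the triangle $135$. The dashed variables are the singletons $x_2,x_4,x_6$. For Graph 41 we are in the setting of Remark \ref{remark cycle edge ideals and taylro subgraphs}: $\T_{\f}$ is a subgraph of the Taylor graph of the hexagon's edge ideal, and the condition that $\f$ is not the edge ideal forces some singleton $x_i$ to be present. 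After relabeling by the dihedral symmetry, the case with $x_2,x_4,x_6$ absent means some odd singleton, say $x_1$, is present.

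\textbf{Containment $V(\chi_2\chi_4\chi_6)\subseteq \V_{\f}$.} For each even $i$ I will exhibit a homotopy source or sink for $\chi_i$ and apply Proposition \ref{proposition homotopy soure/sink gives subvariety}. For $\chi_2$ I will use a vertex such as $v_{135}\cup\{f_4,f_6\}$ minus suitable elements, chosen so that its neighborhood covers everything except $f_2$. Natural candidates are $v_{46}$ as a homotopy source or $v_{1346}$-type sets. Each candidate is checked to be differentially isolated via Proposition \ref{proposition homotopically/differentially isolated}, together with the present edge variables $x_{12},x_{23},\dots$ or the triangle variables. The rotations $i\mapsto i+2$ give the other two. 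For Graph 41 with $x_1$ present, the natural candidate is the vertex ${\tt000011}$-type source/sink found in Example \ref{example hexagon}'s cycle: the presence of $x_1$ removes the $\dd_1$ edge, leaving an endpoint of degree one. I would instead look for sources/sinks directly.

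\textbf{Containment $\V_{\f}\subseteq V(\chi_2\chi_4\chi_6)$.} I will construct an $E$-perfect matching with $E=\{f_2,f_4,f_6\}$ and show it is triangular using Proposition \ref{proposition det by cycle decompositions}. For Graph 41, I reuse the $O$-determined matching $M$ of Example \ref{example hexagon}. Since $x_1$ is present, no $\dd_1$ edge exists, so $\T_{\f}\subseteq$ the hexagon's Taylor graph loses every $\dd_1$. As argued there, every cycle of $\Aux_{\f}^M$ must traverse both $\dd_1$ and $\h_1$. It remains to check that the edges of $M$ themselves survive, since they are only $\h_{even}$ and $\dd_{even}$ edges and $x_2,x_4,x_6$ are absent. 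Then $\Aux_{\f}^M$ is acyclic and $\chi_2\chi_4\chi_6\in\I_{\f}$.

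For Graphs 38--40, the extra edges among $f_1,f_3,f_5$ only delete homotopy edges $\h_{odd}$ and alter differential edges. I plan to build an analogous matching, again $O$-determined with colors indexed by $b\in\{0,1\}^3$. Because the triangle edges kill the $\h_1,\h_3,\h_5$ edges that the hexagon cycles needed (a cycle requires $\h_1$ from a vertex containing neither $f_6$ nor $f_2$ but possibly $f_3$ or $f_5$), the cycles should break. I expect the main obstacle to be verifying, graph by graph, that the matching edges exist, which depends on which triangle variables are present. This is where Lemma \ref{lemma existence of hypercubes} and Lemma \ref{lemma variables present for degree 2 vertex} (applied at the degree 2 vertices $f_2,f_4,f_6$) are used.

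\textbf{Full support when a dashed variable is present.} Since the graphs have the rotational symmetry permuting $2,4,6$, I may assume $x_2$ is present. Mimicking the final table of Theorem \ref{theorem interesting supports 6 gen}, I will locate an isolated vertex, for instance $v_{13}$ or $v_{1356}$-type vertices whose only possible incoming/outgoing differential edge is a $\dd_2$ killed by $x_2$. Failing that, I will apply Lemma \ref{lemma more sinks than sources} or Lemma \ref{lemma odd length path in taylor graph}. For Graph 41 this is exactly Corollary \ref{corollary cycle gcds n not 2 mod 4}-style reasoning: $x_2$ removes the $\dd_2$ edges, and the remaining dangling vertices become isolated, or an odd balanced walk appears.
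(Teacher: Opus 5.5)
\textbf{Verdict.} The proposal has the same overall structure as the paper, but three of its four parts contain real gaps, and one planned step cannot work in general.

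\textbf{What matches the paper.} You use homotopy sources/sinks for $V(\chi_2\chi_4\chi_6)\subseteq\V_{\f}$, the $O$-determined matching $M$ of Example~\ref{example hexagon} for the reverse inclusion, and isolated vertices for the full case. Your Graph~41 upper bound is exactly the paper's argument: once $x_1$ is present there is no $\dd_1$, while the $\h_{2i},\dd_{2i}$ edges of $M$ survive.

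\textbf{Full-support case.} The reduction ``by rotational symmetry assume $x_2$ is present'' fails in two ways.
\begin{itemize}
\item Graphs~39 and~40 have no automorphism taking $f_6$ to $f_2$. For example, the only nontrivial automorphism of Graph~40 swaps $1\leftrightarrow5$, $2\leftrightarrow4$ and fixes $6$. So $x_2$ and $x_6$ must be treated separately.
\item For Graph~41, $x_2$ alone does \emph{not} give full support. Rotating by one turns $x_2$ into $x_1$, so that ideal has $\V_{\f}=V(\chi_1\chi_3\chi_5)$. For Graph~41, ``otherwise'' means singletons of both parities are present. This reduces to $\{x_1,x_2\}$ or $\{x_1,x_4\}$, where \texttt{111001}, respectively \texttt{011011}, is an isolated vertex.
\end{itemize}

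\textbf{Upper bound for Graphs~38--40.} You do not show that a matching exists, nor that the cycles break.
\begin{itemize}
\item The missing idea is an interval. With $x_2,x_4,x_6$ absent, Lemma~\ref{lemma variables present for degree 2 vertex} forces all six hexagon edge variables to be present. By Proposition~\ref{proposition Taylor graphs are order respecting}, every $\T_{\f}$ then lies between the Taylor graph of the $\Delta(3)$-ideal of Theorem~\ref{theorem union of hyperplanes} and that of the hexagon edge ideal.
\item $M$ coincides with that theorem's $[g]$-perfect matching (take $g_1=f_2$), so it exists for every such $\f$. No graph-by-graph check is needed.
\item Your acyclicity heuristic is inaccurate. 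In Graph~40 the $\h_3$ edges survive, as do the $\h_1$ edges out of $M_{\texttt{000}}$ and $M_{\texttt{010}}$.
\item What actually works: every cycle must enter $M_{\texttt{101}}$. The adjacency $f_1\sim f_5$ kills the entries $\dd_6^{-1}\h_5$ from $M_{\texttt{100}}$ and $\dd_6^{-1}\h_1$ from $M_{\texttt{001}}$. It also kills $\dd_2^{-1}\h_1$ from $M_{\texttt{011}}$, which is the only way into $M_{\texttt{111}}$ from outside $\{M_{\texttt{101}},M_{\texttt{111}}\}$. Two-colour cycles are impossible, so no cycle remains.
\end{itemize}

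\textbf{Lower bound.} This part is not carried out, and as planned it cannot always be.
\begin{itemize}
\item Your candidates fail. $v_{46}$ receives $\h_4$ and $\h_6$ because $f_4\not\sim f_6$. $v_{1346}$ is homotopically isolated and receives $\dd_2$. Rotating a witness does not help, because $x_1$ or the chords break the symmetry.
\item More seriously, a homotopy source or sink for $\chi_6$ alone need not exist. Take the hexagon edge ideal plus $x_{156}$ (Graph~40). Here $\dd_{\sigma,i}$ exists iff $f_{i-1},f_{i+1}\in\sigma$, exactly as for the edge ideal. So a differentially isolated $\sigma$ contains at most one odd and one even generator, and checking all of these shows none works for $\chi_6$.
\item The paper's table entry \texttt{001101} breaks here too, since $f_5=x_{45}x_{56}x_{156}$ divides $f_{346}$.
\end{itemize}

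\textbf{A uniform fix.} This follows your own remark about the cycle of Example~\ref{example hexagon}. Use the $12$-vertex component through \texttt{001100}.
\begin{itemize}
\item Its six even edges survive for every $\f$ in the class.
\item Every extra variable deletes an odd edge: $x_1$ kills the $\dd_1$ out of \texttt{110011}; $f_1\sim f_5$ kills the $\h_5$ out of \texttt{110000}; $f_1\sim f_3$ kills the $\h_1$ out of \texttt{001100}; the remaining cases follow by rotation.
\item The component therefore splits into paths alternating sources and sinks. Their source-to-sink blocks are bidiagonal, with total determinant $\pm\chi_2\chi_4\chi_6$.
\item On $V(\chi_{2i})$, the path containing $\h_{2i}$ has a nonzero kernel vector supported on sources. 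This is a cycle that is not a boundary, which gives the lower bound.
\end{itemize}
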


\begin{proof}
    We first consider the cases when the dashed variables are absent. A combination of Lemmas \ref{lemma leafs and edges} and \ref{lemma variables present for degree 2 vertex} give that each variable corresponding to an edge of graph 41 is present. In particular, then by Proposition \ref{proposition Taylor graphs are order respecting} the Taylor graphs in each of these cases form an interval with maximal element given by the edge ideal of the hexagon and minimal element given by the ideal with GCD graph $\Delta(3)$ (i.e. graph 38) in Theorem \ref{theorem union of hyperplanes}. We observe that the $O$-determined $E$-perfect matching of Example \ref{example hexagon} coincides with the $[f]$-determined $[g]$-perfect matching of Theorem \ref{theorem union of hyperplanes} when we set vertex 2 of graph 38 to be $g_1$ of $\Delta(3)$. Call this matching $M$ and note that the containment $\V_{f}\subseteq V(\chi_2\chi_4\chi_6)$ can be verified by showing there are no cycles in $\Aux_{\f}^M.$ Up to symmetry, there are four variables that could be added to the edge ideal of the hexagon: $x_1,$ $x_{15},$ $x_{135}$, and $x_{156}$. If $x_1$ is present, then no $\dd_1$ is present, breaking each cycle. If any of the other three are present, then $f_1$ and $f_5$ are neighbors in $\G_{\f}$. Because of this, the $\dd_{6}^{-1}\h_5$ and $\dd_{6}^{-1}\h_1$ edges entering $M_{{\tt101}}^{D_6}$ do not exist. The only cycle that avoids these edges traverses $\dd_{2}^{-1}\h_1$ from $M_{{\tt011}}^{D_4}$ to $M_{{\tt111}}^{D_2}$, which now also does not exist. 
    It follows that $M$ is triangular and hence 
$\V_{\f}\subseteq V(\chi_2\chi_4\chi_6)$.

    It now suffices to show $V(\chi_2\chi_4\chi_6)\subseteq \V_{\f}$. 
    We do so with Proposition \ref{proposition homotopy soure/sink gives subvariety} by supplying a homotopy source/sink for each of $\chi_2$, $\chi_4$, and $\chi_6$.

% Please add the following required packages to your document preamble:
% \usepackage{multirow}
\begin{table}[H]
\begin{tabular}{c|c|c|c|c|c|}
\cline{2-6}
    & $x_\sigma$ Present & $\chi_2$ Source                & $\chi_2$ Sink     & $\chi_4$ Source                & $\chi_6$ Sink \\ \cline{2-6} 
    & $x_1$              &                                & ${\tt010011}$     & ${\tt110001}$      & ${\tt011001}$ \\ \hline
\multicolumn{1}{|c|}{\multirow{3}{*}{$f_1$, $f_5$ neighbors}} & $x_{15}$           & \multirow{3}{*}{${\tt000110}$} & \multirow{3}{*}{} & \multirow{3}{*}{${\tt110000}$} & ${\tt001101}$ \\ \cline{2-2} \cline{6-6} 
\multicolumn{1}{|c|}{}                                        & $x_{135}$          &                                &                   &                                & ${\tt001001}$ \\ \cline{2-2} \cline{6-6} 
\multicolumn{1}{|c|}{}                                        & $x_{156}$          &                                &                   &                                & ${\tt001101}$ \\ \hline
\end{tabular}
\end{table}

Finally, we consider when (up to relabeling) an even singleton variable is present. 
In particular, for graph 41 this means that there exists a pair of singleton variables of different parity $x_i$, $x_j$ that are present. 
It suffices to consider the pairs $x_1, x_2$ and $x_1,x_4$ being present. 
Whence ${\tt111001}$ and ${\tt011011}$ are isolated vertices of $\T_{\f}$ respectively. For graphs 38-40, $f_{1}$ and $f_{5}$ are neighbors and it suffices to consider when $x_2$ is present or when $x_6$ is present. We note ${\tt111000}$ and ${\tt100110}$ are isolated in the respective cases. Hence, if $\G_{\f}$ is type C but not as prescribed in the Corollary statement, then $\V_{\f} = \A_k^{6}.$
\end{proof}

\begin{remark}\label{remark equigenerated}
    Recall that \cite{gintz} considers equigenerated monomial ideals. In our framework, these can be formulated by assigning a degree in $\N$ to each variable, taking 0 if the variable is not present and a positive degree otherwise. The condition that $\f$ generates an equigenerated ideal is equivalent to $\sum_{i\in \sigma} \deg(x_\sigma)$ being a constant function of $i$. 
\end{remark}

\begin{corollary}\label{corllary no equigenerated 3 hyperplanes}
    No monomial ideal with 6 minimal generators $\f$ with a support variety $\V_{\f}$ that is the union of 3 hyperplanes is equigenerated.
\end{corollary}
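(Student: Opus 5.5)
The plan is to use the classification in Theorem \ref{theorem A.2}, and the table in its proof, to locate every six-generated $\f$ whose support variety is a union of three hyperplanes. Up to relabeling, the only variety on that list which is a union of three hyperplanes is $V(\chi_2\chi_4\chi_6)$. By the table and Corollary \ref{corollary gcd fiber over hexagon}, this variety arises only when $\G_{\f}$ is of type C (Graphs 38--41). Moreover, the dashed variables $x_2,x_4,x_6$ must then be absent, and $\f$ must not be the edge ideal of the hexagon, which instead gives the irreducible hypersurface. We must also cover the extra case of Graph 41 noted in the table's caveat, where $V(\chi_1\chi_3\chi_5)$ occurs. After relabeling by the rotation $i\mapsto i+1$ this is the same situation with odd and even swapped: exactly the singleton variables of one parity class are permitted, with at least one of them present. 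So in every case the structure is the same. The six generators form a hexagonal pattern. Every variable of the form $x_{i,i+1}$ is present, by Lemmas \ref{lemma leafs and edges} and \ref{lemma variables present for degree 2 vertex} as in the proof of Corollary \ref{corollary gcd fiber over hexagon}. The singletons of the ``even'' vertices $2,4,6$ are all absent, and at least one further variable among $x_1,x_3,x_5$, $x_{15}$, $x_{135}$, $x_{156}$ (up to symmetry) is present.

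Next I would apply Remark \ref{remark equigenerated}. Assign positive degrees $d_\sigma$ to the present variables and set $\deg f_i=\sum_{i\in\sigma} d_\sigma$. Because $x_2,x_4,x_6$ are absent, each even generator $f_{2j}$ is divisible only by variables whose label contains $2j$ together with an odd neighbor. For Graph 41, those variables are exactly $x_{2j-1,2j}$ and $x_{2j,2j+1}$. For Graphs 38--40, the triangle variables such as $x_{123}$, $x_{345}$, $x_{561}$, $x_{135}$ could also appear. I would then sum the degrees over the even generators and over the odd generators. Every variable dividing an even generator either is an edge variable $x_{i,i+1}$ or contains an even index together with both of its odd neighbors. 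Each such variable contributes to an even generator exactly as often as it contributes to the odd ones, and in fact at most as often.

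The key inequality to aim for is
\[
\sum_{j}\deg f_{2j}\;<\;\sum_{j}\deg f_{2j-1}.
\]
It comes from double counting. An edge variable $x_{i,i+1}$ contributes $d$ once to each side. A triangle variable $x_{2j-1,2j,2j+1}$ contributes $d$ to the even side and $2d$ to the odd side. Any variable with only odd labels, such as $x_1$, $x_{15}$, $x_{135}$, contributes only to the odd side. Since at least one strictly odd-heavy variable must be present (the hexagon edge ideal is excluded), the inequality is strict, so the degrees cannot all be equal. The main obstacle is bookkeeping: I must check, for each of Graphs 38--41 and its clique complex, that every admissible present variable has at least as many odd labels as even ones. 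This holds because the even vertices $2,4,6$ are pairwise non-adjacent in all type C graphs, so any clique contains at most one even vertex and at least one odd vertex. I must also make sure that the Graph 41 $V(\chi_1\chi_3\chi_5)$ case, after the parity swap, is covered by the same count.
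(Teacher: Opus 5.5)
Your proposal is correct and is essentially the paper's argument. You reduce via the classification to type C graphs with $x_2,x_4,x_6$ absent and at least one strictly odd-heavy variable present, then double count $\sum_\sigma d_\sigma\lvert\sigma\cap O\rvert$ against $\sum_\sigma d_\sigma\lvert\sigma\cap E\rvert$; the paper phrases this as the mean odd degree exceeding the mean even degree. Your observation that $f_2,f_4,f_6$ are pairwise non-adjacent in every type C graph is exactly the justification the paper leaves implicit: it ensures every admissible variable other than an even singleton has at least as many odd labels as even ones.
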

\begin{proof}
    These ideals only occur for type C GCD graphs where up to relabeling, at least one of $x_1$, $x_{15}$, $x_{135},$ or $x_{156}$ is present and, $x_2$, $x_4,$ and $x_6$ are absent. We note that this implies that the $x_\sigma$ with positive degree have at most as many even entries of $\sigma$ as odd entries, and at least one $x_\sigma$ with positive degree has more odd entries. 
    It follows that the mean degree of an odd vertex is strictly lager than the mean degree of even vertices, hence the ideal is not equigenerated.
\end{proof}

\begin{remark}\label{remark Delta n and C_2n not generally an interval}
The fact that the Taylor graphs realizing interesting support formed an interval between ideals with GCD graphs $\Delta(3)$ and $C_6$ was extremely useful in proving Corollary \ref{corollary gcd fiber over hexagon}. One might hope for similar a phenomenon to occur between our ideals on $\Delta(n)$ and $C_{2n}$ for $n>3$. 
However, $g_1$ in $\Delta(n)$ cannot be the generator of an edge ideal, as every present $x_\sigma$ dividing $g_1$ has $\lvert\sigma\rvert>2$. Since $g_1$ must be divisible by at least 2 variables, the degree of $g_1$ is at least 6 for any Taylor graph larger than the one given by our ideal for $\Delta(n)$. 
\end{remark}

We return to the case of that $\f$ generates the edge ideal of $C_n$ for $n=4m+2$. 
With our $O$-determined $E$-perfect matching $M$ in hand, let us recursively construct an undirected version of the graph depicted in Example \ref{example hexagon} with vertex set $\setbuild{M_b}{b\in \{{\tt 0,1}\}^{2m+1}}$. 
Each vertex of this graph is a coordinate color class of $\Aux_{\f}^M$, so the edges of this graph will represent the possible edges between these color classes. 

\begin{definition} \label{definition G(p,s)}
Let $p\geq -1$ be an integer called the \emph{prefix length} and $s$ a binary string with even length called the \emph{suffix}. 
The graph $G(p,s)$ is defined recursively as follows. 
If $p = -1$ then  $G(p,s)$ has two vertices $M_{{\tt0}\odot s},$ $M_{{\tt1}\odot s}$ and the edge between them. 
Otherwise let $r = {\tt 0\odot(10)}^{\odot p}$ and $\tilde{r} = {\tt 1\odot(01)}^{\odot p}$ and define $G(p,s)$ as the graph below. 
We call $M_{r\odot{\tt10}\odot s}$ and $M_{\tilde r\odot{\tt01}\odot s}$ the left and right endpoints, respectively. 
Each dashed edge represents a subgraph with a smaller prefix length whose left and right endpoints are exactly the endpoints of the dashed edge. 
There are $2\cdot 4^{p+1}$ vertices in $G(p,-)$ which becomes quite large, even for small $p$. 
We depict $G(1,\emptyset)$ in Example \ref{example G(p,s)}.
\vspace{3pt}
\begin{center}
\begin{minipage}{0.9\textwidth}
\centering
\begin{tikzpicture}[scale=1, fatnode/.style={circle, inner sep=0pt, minimum size=20pt, draw}]

  % Nodes
\node[fatnode] (v000) at (-2,0) {$M_{r\odot {\tt00}\odot s}^{H_{4p+2}}$};

\node[fatnode] (v001) at (2,-3) {$M_{r\odot {\tt01}\odot s}^{H_{4p+2}}$};
\node[fatnode] (v010) at (-5.5,0) {$M_{r\odot {\tt10}\odot s}^{}$};
\node[fatnode] (v100) at (2,3) {$M_{\tilde{r}\odot {\tt00} \odot s}^{H_{4p+4}}$};

\node[fatnode] (v011) at (-2,-3) {$M_{r\odot {\tt11}\odot s}^{D_{4p+4}}$};
\node[fatnode] (v101) at (5.5,0) {$M_{\tilde{r}\odot {\tt01}\odot s}$};
\node[fatnode] (v110) at (-2,3) {$M_{\tilde{r}\odot {\tt10}\odot s}^{D_{4p+2}}$};

\node[fatnode] (v111) at (2,0) {$M_{\tilde{r}\odot {\tt11}\odot s}^{D_{4p+2}}$};

  % Edges
\draw[dashed, thick] (v110) edge node[midway, fill=white ]{$G(p-1,{\tt10}\odot s)$}  (v010);

\draw[dashed, thick] (v100) edge node[midway, fill=white ]{$G(p-1,{\tt00}\odot s)$}  (v000);

\draw[dashed, thick] (v111) edge node[midway, fill=white ]{$G(p-1,{\tt11} \odot s)$}  (v011);
\draw[dashed, thick] (v101) edge node[midway, fill=white ]{$G(p-1,{\tt01}\odot s)$}  (v001);

\draw[thick] (v000) --  (v010);

\draw[thick] (v100) --  (v110);

\draw[thick] (v011) --  (v010);

\draw[thick] (v100) --  (v101);

\draw[thick] (v001) edge (v011);

\draw[thick] (v101) --  (v111);

\end{tikzpicture}\\

\Large{$ G(p,s)$}
\end{minipage}

\end{center}
\vspace{3pt}
\end{definition}

\begin{example} \label{example G(p,s)}
    The second easiest case to consider is $G(0,\emptyset),$ which recovers an undirected version of the graph in Example \ref{example hexagon}. Below we have $G(1,\emptyset)$ which will correspond to the edge ideal of the decagon.
    Each vertex is decorated with a superscript denoting the type color, $H_i$ or $D_j$, it has in $\Aux_{\f}^M$.
    The circles indicate the vertices given by the top level of recursion, which then form the left and right endpoints of some $G(0,s)$ in the second level of recursion. 
\[\begin{tikzcd}
	& {M_{{\tt11010}}^{D_2}} & {M_{{\tt10010}}^{H_4}} &&& \\
	|[draw, circle, inner sep=.1em]|{M_{{\tt01010}}^{H_{10}}} & {M_{{\tt00010}}^{H_2}} & {M_{{\tt11110}}^{D_2}} & |[draw, circle,inner sep=.1em]|{M_{{\tt10110}}^{D_6}} && {G(0,{\tt10})} \\
	& {M_{{\tt01110}}^{D_4}} & {M_{{\tt00110}}^{H_2}} \\
	&& {M_{{\tt11000}}^{D_2}} & {M_{{\tt10000}}^{H_4}} \\
	& |[draw, circle,inner sep=.1em]|{M_{{\tt01000}}^{H_6}} & {M_{{\tt00000}}^{H_2}} & {M_{{\tt11100}}^{D_2}} & |[draw, circle,inner sep=.1em]|{M_{{\tt10100}}^{H_8}} & {G(0,{\tt00})} \\
	&& {M_{{\tt01100}}^{D_4}} & {M_{{\tt00100}}^{H_2}} \\
	& {M_{{\tt11011}}^{D_2}} & {M_{{\tt10011}}^{H_4}} \\
	|[draw, circle,inner sep=.1em]|{M_{{\tt01011}}^{D_8}} & {M_{{\tt00011}}^{H_2}} & {M_{{\tt11111}}^{D_2}} & |[draw, circle,inner sep=.1em]| {M_{{\tt10111}}^{D_6}} && {G(0,{\tt11})} \\
	& {M_{{\tt01111}}^{D_4}} & {M_{{\tt00111}}^{H_2}} \\
	&& {M_{{\tt11001}}^{D_2}} & {M_{{\tt10001}}^{H_4}} \\
	& |[draw, circle,inner sep=.1em]|{M_{{\tt01001}}^{H_6}} & {M_{{\tt00001}}^{H_2}} & {M_{{\tt11101}}^{D_2}} & |[draw, circle,inner sep=.1em]|{M_{{\tt10101}}^{D_{10}}} & {G(0,{\tt01})} \\
	&& {M_{{\tt01101}}^{D_4}} & {M_{{\tt00101}}^{H_2}}
	\arrow[no head, from=1-2, to=1-3]
	\arrow[no head, from=1-2, to=2-1]
	\arrow[no head, from=1-3, to=2-2]
	\arrow[no head, from=1-3, to=2-4]
	\arrow[no head, from=2-1, to=2-2]
	\arrow[no head, from=2-1, to=3-2]
	\arrow[no head, from=2-1, to=8-1]
	\arrow[no head, from=2-3, to=2-4]
	\arrow[no head, from=2-3, to=3-2]
	\arrow[no head, from=2-4, to=3-3]
	\arrow[no head, from=3-2, to=3-3]
	\arrow[no head, from=4-3, to=4-4]
	\arrow[no head, from=4-3, to=5-2]
	\arrow[no head, from=4-4, to=5-3]
	\arrow[no head, from=4-4, to=5-5]
	\arrow[no head, from=5-2, to=2-1]
	\arrow[no head, from=5-2, to=5-3]
	\arrow[no head, from=5-2, to=6-3]
	\arrow[no head, from=5-4, to=5-5]
	\arrow[no head, from=5-4, to=6-3]
	\arrow[no head, from=5-5, to=2-4]
	\arrow[no head, from=5-5, to=6-4]
	\arrow[no head, from=5-5, to=11-5]
	\arrow[no head, from=6-3, to=6-4]
	\arrow[no head, from=7-2, to=7-3]
	\arrow[no head, from=7-2, to=8-1]
	\arrow[no head, from=7-3, to=8-2]
	\arrow[no head, from=7-3, to=8-4]
	\arrow[no head, from=8-1, to=8-2]
	\arrow[no head, from=8-1, to=9-2]
	\arrow[no head, from=8-3, to=8-4]
	\arrow[no head, from=8-3, to=9-2]
	\arrow[no head, from=8-4, to=9-3]
	\arrow[no head, from=9-2, to=9-3]
	\arrow[no head, from=10-3, to=10-4]
	\arrow[no head, from=10-3, to=11-2]
	\arrow[no head, from=10-4, to=11-3]
	\arrow[no head, from=10-4, to=11-5]
	\arrow[no head, from=11-2, to=8-1]
	\arrow[no head, from=11-2, to=11-3]
	\arrow[no head, from=11-2, to=12-3]
	\arrow[no head, from=11-4, to=11-5]
	\arrow[no head, from=11-4, to=12-3]
	\arrow[no head, from=11-5, to=8-4]
	\arrow[no head, from=11-5, to=12-4]
	\arrow[no head, from=12-3, to=12-4]
\end{tikzcd}\]
\end{example}
\begin{lemma}
    If $\f$ generates the edge ideal of a $(4m+2)$-cycle, then each vertex of $G(m-1,\emptyset)$ is a coordinate color class of $\Aux_{\f}^M$ and moreover, for every edge of $\Aux_{\f}^M$ there is an edge between the color classes of its vertices in $G(m-1,\emptyset)$.
\end{lemma}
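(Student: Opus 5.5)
The plan is to treat the two assertions separately: first identify the vertex set of $G(m-1,\emptyset)$ with $\setbuild{M_b}{b\in\{{\tt0},{\tt1}\}^{2m+1}}$, then show that every edge of $\Aux_{\f}^M$ joining two color classes flips exactly one odd-indexed bit, and that each such flip that can actually occur is an edge of $G(m-1,\emptyset)$.

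For the vertices, I would induct on the prefix length $p$, proving the following: the vertices of $G(p,s)$ are exactly the $M_{t\odot s}$ with $t$ ranging over all binary strings of length $2p+3$, each appearing once. The base case $p=-1$ is immediate. For the inductive step, the eight named vertices of $G(p,s)$ have the form $M_{r\odot xy\odot s}$ or $M_{\tilde r\odot xy\odot s}$. The four subgraphs $G(p-1,xy\odot s)$, one for each $xy\in\{{\tt00},{\tt01},{\tt10},{\tt11}\}$, contribute by induction all strings $t'\odot xy\odot s$ with $|t'|=2p+1$. Together these give all $4\cdot 2^{2p+1}=2^{2p+3}$ strings, so $2\cdot 4^{p+1}$ vertices. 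The named vertices are among them: for instance $r\odot{\tt10}$ has the prefix ${\tt0}\odot({\tt10})^{\odot p}$, which splits as a prefix of length $2p-1$ followed by a pair. So the named vertices are exactly the endpoints of the dashed subgraphs, as the definition intends. Taking $p=m-1$ and $s=\emptyset$ gives length $2m+1$, which proves the first claim.

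For the edges, take an edge of $\Aux_{\f}^M$ from $\e^s\in M_b$ to $\e'^s\in M_{b'}$, witnessed by some $\e_\ell$ with $\e_\ell^s=\e^s$ and $\e_\ell^t=\e'^t$. Every edge of $M$ has even index, so it preserves the $O$-coordinate. Hence if $\ell$ is even then $b=b'$, and the edge is not between two classes. If $\ell$ is odd, then $b'$ is $b$ with bit $\ell$ flipped. The hard constraints come from Remark \ref{remark cycle edges}. The three bits of $\e^s$ in positions $\ell-1,\ell,\ell+1$ must be equal, and since position $\ell-1$ and position $\ell+1$ of $\e^s$ are even positions, this pins down those even bits. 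Likewise, the target of $\e'$, which is $\e_{\ell}^t$, must satisfy the target pattern around the index $j'=i'+1$ of the matching edge of $M_{b'}$, where $i'$ is the first repetition index of $b'$. The same holds on the source side with $j=i+1$ for $b$.

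The key step is then to show that, for such a configuration to exist, the flipped position $\ell$ must be adjacent (cyclically) to $j$ or to $j'$, and that the resulting pair $\{b,b'\}$ is one of the solid edges at some level of the recursion of $G(m-1,\emptyset)$. Concretely, I would do induction on $p$ again. At the top level, $b$ and $b'$ share a suffix $xy\odot s$, and the prefix is either alternating ($r$ or $\tilde r$) or not. If the first repetition index of both strings lies inside the suffix block, the flip $\ell$ occurs inside the prefix. In that case the alternation pattern forces the pair into the subgraph $G(p-1,xy\odot s)$, and the inductive hypothesis applies. Otherwise the repetition is at the block boundary. Then there are only the six solid edge types pictured, and they can be checked directly, exactly as in the hexagon computation of Example \ref{example hexagon}, which is the case $p=0$ and serves as the base case.

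I expect the main obstacle to be this bookkeeping. One must show that a flip far from the first repetition of $b$ (and of $b'$) cannot produce an edge, because it would change neither index while violating the equal-bit requirement at position $\ell\pm1$. One must also handle the wraparound case $i=n-1$ cyclically. I would first verify the whole claim by hand for the decagon, using Example \ref{example G(p,s)}, to pin down the correct case split, and then write the general induction following that pattern.
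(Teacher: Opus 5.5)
Your induction identifying the vertices of $G(m-1,\emptyset)$ with all $M_b$, $b\in\{{\tt0},{\tt1}\}^{2m+1}$, is fine; the paper takes this for granted. Your plan for the edges is also a legitimate alternative to the paper's: you want to show directly that every pair of classes joined in $\Aux_{\f}^M$ is a solid edge at some level of the recursion. The paper instead derives the same necessary conditions and then closes with a degree count. The conditions are that a single odd coordinate $\ell$ is flipped, with $\ell\le j+1$, where $j=i(b)+1$ is the type index of the source class. The count shows that each $M_b$ has as many neighbors in $G(m-1,\emptyset)$ as there are admissible flips.

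The step of your plan that carries the content is not set up correctly, however.

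\emph{The obstruction is in the wrong place.} For a flip with $\ell>j+1$, the positions $\ell\pm1$ are even coordinates different from $j$. They are therefore free inside the class $M_b$, so the condition $b_{\ell-1}=b_\ell=b_{\ell+1}$ on $\e^s$ obstructs nothing. For instance, in the hexagon $\h_5$ leaves $v_{{\tt000000}}\in M_{{\tt000}}$, yet $M_{{\tt000}}$ and $M_{{\tt001}}$ are not joined. The real obstruction sits at the even position $j$. If $\ell>j+1$, then $b'$ has the same first repetition and the same type $\h_j$ or $\dd_j$. Hence $\e'^t=\e_\ell^t$ has the same $j$-th bit as $\e^s$ and as $\e'^s$, which is impossible for a target of $\e'$. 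Equivalently, the type coloring of Proposition \ref{proposition type coloring is a coloring} is proper. You also need that proposition to \emph{exclude} witnesses $\e_\ell$ with $\ell$ even, rather than just setting them aside. If such edges existed the statement would fail, since $G(m-1,\emptyset)$ has no loops.

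\emph{The case split does not work.} You split according to where the first repetitions lie. In the hexagon ($p=0$), consider the edge $M_{{\tt100}}^{H_4}\to M_{{\tt101}}^{D_6}$. Its first repetitions are at $i=3$ and at $i=5$ (by the wrap-around convention), both in the block $xy$. Yet the flip is at position $5$, not in the prefix, and this pair is one of the solid edges.

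\emph{The repair.} Split on the flipped position instead, with base case $p=-1$ rather than the hexagon. Inside $G(p,s)$:
\begin{itemize}
\item If $\ell\le 4p+1$, both classes lie in the same $G(p-1,xy\odot s)$, and you induct. Carry the hypothesis $\ell\le i(b)+2$, computed on the whole string $b=t\odot s$ for the source $b$ of the directed edge.
\item If $\ell\in\{4p+3,4p+5\}$, that inequality forces $b_1,b_3,\ldots,b_{\ell-2}$ to alternate. So the prefix is $r$ or $\tilde r$, and for $\ell=4p+5$ the bit $x$ is forced as well.
\end{itemize}
The resulting four flips of $x$ and two flips of $y$ are exactly the six solid edges.

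With these repairs your direct route is complete. Compared with the paper's degree count, it locates each admissible flip at a specific level of the recursion. It also needs only the implication ``admissible $\Rightarrow$ edge of $G$'', not the exact degree of every vertex of $G(m-1,\emptyset)$.
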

\begin{proof}
Since $M$ is an $O$-determined, $E$-perfect matching, we have that every edge in $\Aux_{\f}^M$ has a weight $\e_i^{-1}\e_j$ where $i$ is even and $j$ is odd. The containment of $M_b$ inside $H_\ell$ or $D_\ell$ is determined by the location of the first $\tt00$ or $\tt11$ in appearing in $b$. 
It follows that any edge with weight $\e_i^{-1}\e_j$ in $\Aux_{\f}^M$ and source in $H_\ell$ or $D_\ell$ must have that $j\leq \ell+1$ in order to change the type color.
Moreover, edges in $\Aux_{\f}^M$ between coordinate colors $M_b$ and $M_{b'}$ must have that $b$ and $b'$ differ in exactly one entry.
It suffices to show that the edges of $G(m-1,\emptyset)$ are exactly those satisfying the above criteria. 

By construction it is clear that every edge of $G(m-1,\emptyset)$ is between color classes differing by one coordinate.
It is also easy to see that edges of $G(m-1,\emptyset)$ involve changing a coordinate at or before the first repetition. 
The result follows by showing every vertex $M_b$ with type color $H_i$ or $D_i$ has degree $i/2+1$.
We observe this is true for the initial left and right endpoints $M_{{\tt1}\odot ({\tt01})^{\odot2m}}$ and $M_{{\tt0}\odot ({\tt10})^{\odot2m}}$ as each recursive call adds two edges with the exception of the base case adding just one edge.
For other vertices, consider the first appearance (largest prefix length $p$) of a given vertex. 
This vertex must have type color in $\{H_{4p+2},D_{4p+2}, H_{4p+4}, D_{4p+4}\}$ and be an internal vertex (i.e. not a left/right endpoint) in the depiction of $G(p,s)$, whereas in all subsequent recursive calls, the vertex is a left or right endpoint. 
The subsequent recursive calls result in adding $2p+1$ edges. 
Notably vertices with type color with subscript $4p+2$ have 1 edge coming from the first appearance, and type color with subscript $4p+4$ have 2 such edges.
In both cases, this gives the desired degrees.
\end{proof}

\begin{theorem}\label{theorem cycle edge ideal supports}
    Let $R=k[x_1,\ldots,x_n]/(x_1x_2,x_2x_3,\ldots,x_n x_1)$, the ring defined by the edge ideal of an n-cycle. Then 
    \[\V_{\f} = \begin{cases}
    V(\chi_1\chi_3\ldots\chi_{n-1} + \chi_2\chi_4\ldots\chi_{n}) & \text{if} \ n \equiv {2 \pmod {4}} \\
    \A^n_k & otherwise.
\end{cases}\]
\end{theorem}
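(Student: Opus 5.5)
The case $n\not\equiv 2 \pmod 4$ is exactly Lemma \ref{lemma not 2 mod 4 edge ideals}, so the plan concentrates on $n=4m+2$. Here I follow the template of Example \ref{example hexagon}: prove $\V_{\f}\subseteq V(P)$ by computing $\det(\T_{\f}^M)$ for the $O$-determined $E$-perfect matching $M$ built before Definition \ref{definition G(p,s)}, and prove $V(P)\subseteq \V_{\f}$ by exhibiting an explicit connected component of $\T_{\f}$. Throughout, write $P=\chi_1\chi_3\cdots\chi_{n-1}+\chi_2\chi_4\cdots\chi_n$.

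For the reverse containment I take the generalization of the $12$-vertex cycle of the hexagon example. Start at a source such as ${\tt0011}$-patterned strings; concretely, the vertex in $M_{{\tt0}\odot({\tt10})^{\odot 2m}}$ obtained by choosing even bits appropriately. Then flip bits $1,2,\ldots,n$ in turn, alternating homotopy and differential edges, and return after $2n$ steps. Using Remark \ref{remark cycle edges}, I check that every vertex on this walk is a source or a sink of degree exactly $2$, so it is a connected component. The corresponding $n\times n$ block of $\T_{\f}(a)$ has the bidiagonal-plus-corner shape of the hexagon example: diagonal entries alternate $a_{odd}$ and $\pm1$, superdiagonal entries alternate $\pm1$ and $a_{even}$, and the corner is $a_n$. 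Its determinant is $\pm(a_1a_3\cdots a_{n-1}\pm a_2a_4\cdots a_n)$. The sign bookkeeping uses $\sign(\sigma,i)$, and the count of $-1$'s is $2m+1$, which is odd. This oddness is what forces the $+$ sign exactly when $n\equiv2\pmod 4$. Vanishing of $P$ then gives a kernel vector supported on sources, which is a cycle but not a boundary.

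For the forward containment I use the preceding lemma, which says every edge of $\Aux_{\f}^M$ lies over an edge of $G(m-1,\emptyset)$, to classify the directed cycles of $\Aux_{\f}^M$. As in the hexagon, $2$-cycles are impossible. Any cycle must flip $b_n$ both ways, so it passes through the two initial endpoints $M_{{\tt0}\odot({\tt10})^{\odot 2m}}$ and $M_{{\tt1}\odot({\tt01})^{\odot 2m}}$. By induction on the recursion level $p$ of $G(p,s)$, I would show two things. First, a directed path between the left and right endpoints of $G(p,s)$ is uniquely determined by its starting vertex of $\T_{\f}$, because the even bits force the choice at each branching, as with $b_2,b_4$ in the hexagon. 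Second, such a path ends at the complementary vertex. Hence the cycles of $\Aux_{\f}^M$ are pairwise disjoint complementary loops, each using every $\dd_i,\h_i$ exactly once, and there are $2^{2m}$ of them.

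Proposition \ref{proposition det by cycle decompositions} then gives $\det(\T_{\f}^M)=\pm\, \mu\, P^{2^{2m}}$ with $\mu$ a monomial in the even $\chi$'s. Each cycle replaces $-\chi_2\cdots\chi_n$ by $-\chi_1\cdots\chi_{n-1}$, again using the sign parity. So $\chi_2\cdots\chi_n P\in\I_{\f}$ up to radical, and by the rotational symmetry of $C_n$ also $\chi_1\cdots\chi_{n-1}P\in\I_{\f}$. Summing gives $P^2\in\I_{\f}$, hence $\V_{\f}\subseteq V(P)$.

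The main obstacle is the inductive path analysis in $G(p,s)$, namely proving uniqueness and complementarity of paths, together with the global sign computation showing every cycle decomposition contributes with the same sign. I would handle this by tracking the bits through one recursive layer explicitly and checking that the cycle sign $(-1)^{n-1}$ combines with the $2m+1$ negative weights consistently.
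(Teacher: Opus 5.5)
Your proposal is correct and follows the paper's proof essentially step for step. The shared steps are: the reduction to $n=4m+2$ via Lemma~\ref{lemma not 2 mod 4 edge ideals}; the classification of cycles of $\Aux_{\f}^M$ through $G(m-1,\emptyset)$ by induction on $p$ (unique complementing paths, hence $2^{2m}$ disjoint cycles); the symmetry trick giving $\chi^E P,\chi^O P\in\I_{\f}$ with $P=\chi^O+\chi^E$; and the $2n$-vertex connected component through $({\tt0011})^{\odot m}\odot{\tt00}$ for the reverse containment. The sign issue you flag as the main obstacle is settled, exactly as in Example~\ref{example hexagon}, by complementarity alone: on each cycle the $\h_i$ and $\dd_i$ edges have complementary sources, so their signs agree for odd $i$ and differ for even $i$, and therefore every cycle contributes the same factor.
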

\begin{proof}
By Lemma \ref{lemma not 2 mod 4 edge ideals}, it suffices to just consider the case when $n = 4m+2$ for $m \geq 1$. 
We closely follow the same strategy outlined in Example \ref{example hexagon}. 
Our first goal is to classify the $M$-walks on the Taylor graph that act as the identity. 
If $\h_i^{-1}$ appears in such an $M$-walk, it must be that $\dd_i^{-1}$ also appears. 
Note that this forces $\h_1$ and $\dd_1$ to be traversed on the walk as the coordinate colors contained in $H_i$ and $D_i$ differ in the first coordinate.
In turn, $\h_{n}$ and $\dd_{n}$ must also be traversed.
Notably both $H_n$ and $D_n$ consist of exactly one coordinate color each given by $M_{ {\tt0\odot (10)}^{\text{\tiny$\odot2m$}}}$ and $M_{ {\tt1\odot (01)}^{\text{\tiny$\odot2m$}}}$ respectively.
We have argued that any cycle of $\Aux_{\f}^M$ must pass through these two color classes, hence we consider paths between them.

For any $G(p,s)$, we wish to show that each vertex of $\Aux_{\f}^M$ with coordinate color $M_{r\odot {\tt10} \odot s}$ (the left endpoint) has exactly one vertex of color $M_{\tilde{r}\odot {\tt01} \odot s}$ (the right endpoint) reachable by a (unique) path contained in $G(p,s)$.
The argument is by induction on $p$, and the claim is obvious for $p=-1$.
Note that $r$ and $\tilde{r}$ do not contain ${\tt00}$ or ${\tt11}$ as substrings and hence every vertex drawn has type color $H_i$ or $D_i$ with $i\geq 4p+2.$ 
Moreover, the type color of the endpoints $M_{r\odot {\tt10} \odot s}$ and $M_{\tilde{r}\odot {\tt01} \odot s}$ depends on $s$ but the index of this type color is larger than $4p+4.$
Hence, the type colors depicted in Definition \ref{definition G(p,s)} are correct. 
It follows that if $f_{4p+2}$ is not in a vertex in the left endpoint of $G(p,s)$ then $G(p-1,{\tt00}\odot s)$ cannot be traversed. 
Similarly,
if $f_{4p+2}$ is in a vertex in the left endpoint of $G(p,s)$ then $G(p-1,{\tt10}\odot s)$ cannot be traversed.  
Furthermore, whether or not $f_{4p+2}$ is in a vertex in $M_{r\odot {\tt11}\odot s}^{D_{4p+4}}$ excludes one of $G(p-1,{\tt11}\odot s)$ or $G(p-1,{\tt01}\odot s)$ from being traversed. 
Moreover, if a vertex in the left endpoint does not contain $f_{4p+4}$, then no path contained in $G(p,s)$ starting from this vertex can land at $M_{\tilde{r}\odot {\tt00}\odot s}^{H_{4p+4}}$.
Hence, if there is an edge from a vertex in $\Aux_{\f}^M$ with coordinate color given by the left endpoint to a vertex with coordinate color $M_{r\odot {\tt11}\odot s}^{D_{4p+4}}$, it must be traversed. 
This gives uniqueness of a path, should one exist.
We note that in all four cases of $f_{4p+2}$ and $f_{4p+4}$ being present or not in a vertex of the left endpoint, a path to a left endpoint of some $G(p-1,-)$ exists. 
Applying the inductive hypothesis yields our desired path.
Note that this $M$-walk results in taking the complement of the prefix.

Applying this to $G(m-1,\emptyset)$ gives that every vertex of $D_{n}$ can be reached by a (unique) path from a unique vertex in $H_n$ and that these vertices are complements. 
By symmetry, the same is true when swapping the roles of $H_n$ and $D_n$.
Hence we have shown there are exactly $\lvert H_n\rvert = 2^{2m}$ cycles in $\Aux_{\f}^M$ and they are all disjoint.
Further, each cycle traverses each even $\dd_i^{-1}$ and $\h_i^{-1}$ along with each odd $\dd_j$ and $\h_j$ exactly once. 
The argument that each cycle used in a cycle decomposition multiplies the product of the diagonal of $\T_{\f}^M$ by ${\chi^O}/{\chi^E}$ is the exact same as in Example \ref{example hexagon}.
It follows that $\chi^E(\chi^O+\chi^E) \in \I_{\f}$. 
Hence by symmetry $\V_{\f}\subseteq V(\chi^O + \chi^E)$.

For the other containment, we note that the $M$-walk \[({\tt0011})^{\odot m}\odot {\tt00} = \h_{n}^{-1}\dd_{n-1}\cdots\dd_4^{-1}\h_3\h_2^{-1}\dd_1 \dd_{n}^{-1}\h_{n-1}\cdots\h_4^{-1}\dd_3\dd_2^{-1}\h_1\left(({\tt0011})^{\odot m}\odot {\tt00}\right)\] 
traverses every edge of a connected component of the Taylor graph. When $a\in V(\chi^O+\chi^E)$ the corresponding block of $\T_{\f}(a)$ admits a cycle that is not a boundary.
Consequently, we conclude that $\V_{\f} =  V(\chi^O+~\chi^E).$
\end{proof}

\begin{corollary}\label{corollary gcd graph a cycle}
    If $\G_{\f} = C_n$ and $\V_{\f}\neq  \A_k^{n}$, then  $n=4m+2$ (for some $m$) and, up to relabeling, $\V_{\f}$ is either $V(\chi^O +\chi^E)$ or $V(\chi^E)$.
\end{corollary}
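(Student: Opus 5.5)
By Corollary \ref{corollary cycle gcds n not 2 mod 4}, $\V_{\f}\neq\A_k^n$ already forces $n=4m+2$. So I assume $n=4m+2$ and work through the fiber of ideals over $C_n$ described in Remark \ref{remark cycle edge ideals and taylro subgraphs}. Since $C_n$ is triangle-free, every edge variable $x_{i,i+1}$ is present by Lemma \ref{lemma leafs and edges}. Hence $\f$ is determined by the set $S\subseteq[n]$ of indices whose singleton variables $x_i$ are present, and $\T_{\f}$ is the Taylor graph of the edge ideal with every $\dd_i$, $i\in S$, deleted; the homotopy edges are unchanged because $\G_{\f}=C_n$ is fixed. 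The case $S=\emptyset$ is Theorem \ref{theorem cycle edge ideal supports}, giving $V(\chi^O+\chi^E)$. It remains to show that for $S\neq\emptyset$ either $\V_{\f}$ is full, or $S$ lies inside one parity class and $\V_{\f}=V(\chi^E)$ up to relabeling.

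\textbf{Mixed parity.} First I show that if $S$ contains indices of both parities then some vertex of $\T_{\f}$ is isolated, so $\V_{\f}=\A^n_k$ by Corollary \ref{corollary isolated vertex}. This generalizes the graph 41 computation in Corollary \ref{corollary gcd fiber over hexagon}, where ${\tt111001}$ and ${\tt011011}$ serve this purpose. Take $i\in S$ odd and $j\in S$ even, with $j-i$ minimal up to rotation and reflection. I would then build a binary string avoiding ${\tt000}$ and ${\tt010}$ everywhere, so that no homotopy edges meet it (by Remark \ref{remark cycle edges}), and in which the patterns ${\tt111}$ and ${\tt101}$ occur only centred at $i$ or $j$. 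Since $\dd_i$ and $\dd_j$ are deleted, such a vertex is isolated. The natural template is the ${\tt1100}$-periodic pattern from Lemma \ref{lemma not 2 mod 4 edge ideals}: when $n\equiv 2 \pmod 4$ it leaves a parity defect of length two, and the defect can be placed across the positions $i$ and $j$. Showing that this placement always succeeds, whatever the distance between $i$ and $j$, is the step I expect to need the most care. My first attempt would be to walk ${\tt1100}$ blocks from $i$ to $j$ and then check the two gaps separately modulo 4.

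\textbf{Single parity.} Now suppose, up to the rotation $f_i\mapsto f_{i+1}$ which exchanges $O$ and $E$, that $\emptyset\neq S\subseteq O$. I reuse the $O$-determined $E$-perfect matching $M$ from the proof of Theorem \ref{theorem cycle edge ideal supports}. Its edges are $\h$ and $\dd$ edges indexed by $E$, and no $\dd_{\text{even}}$ has been deleted, so $M$ survives in $\T_{\f}$. Every cycle of $\Aux_{\f}^M$ found there traverses each odd $\dd_j$, in particular $\dd_i$ for some $i\in S$, so all cycles are broken and $M$ is triangular. Proposition \ref{proposition det by cycle decompositions} then gives $\chi^E\in\I_{\f}$, i.e. $\V_{\f}\subseteq V(\chi^E)$. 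For the reverse inclusion, for each even $\ell$ I would exhibit a homotopy source or sink for $\chi_\ell$, as in the table of Corollary \ref{corollary gcd fiber over hexagon}. The candidate is a vertex containing no ${\tt000}$ or ${\tt010}$ except one centred at $\ell$, and no ${\tt111}$ or ${\tt101}$ except ones centred at $i\in S$. This is the same kind of string construction as in the mixed-parity case, now with a single defect at $\ell$ and one at $i$. Proposition \ref{proposition homotopy soure/sink gives subvariety} then gives $V(\chi_\ell)\subseteq\V_{\f}$ for every even $\ell$, hence $V(\chi^E)\subseteq\V_{\f}$. Finally, the case $S\subseteq E$ yields $V(\chi^O)$, which is $V(\chi^E)$ after relabeling.
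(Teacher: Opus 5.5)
Your reduction (to $n=4m+2$, with $\f$ determined by the set $S$ of present singleton variables, and $\T_{\f}$ obtained from the edge-ideal Taylor graph by deleting the $\dd_i$, $i\in S$) matches the paper's proof, and so does your single-parity argument: $M$ survives, every cycle of the old auxiliary graph uses every odd $\dd_j$, and homotopy sources/sinks give the reverse inclusion.

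The real difference is the mixed-parity case. The paper does not look for an isolated vertex. It takes the $2n$-vertex cyclic component of $\T_{\f}$ through $(\texttt{0011})^{\odot m}\odot\texttt{00}$ from the proof of Theorem \ref{theorem cycle edge ideal supports}. Deleting $\dd_i$ and $\dd_j$ of different parity cuts this component into paths to which Lemma \ref{lemma odd length path in taylor graph} applies, and that lemma needs the generic-point/density argument. Your isolated-vertex route is valid and more direct, since Corollary \ref{corollary isolated vertex} then gives full support with no genericity. However, as written you never construct the string. Walking \texttt{1100} blocks from $i$ to $j$ and checking gaps modulo 4 is more work than needed.

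The observation that closes the step is a parity decoupling. A window $b_{\ell-1}b_\ell b_{\ell+1}$ is one of \texttt{000}, \texttt{010}, \texttt{111}, \texttt{101} exactly when $b_{\ell-1}=b_{\ell+1}$. Hence $v_b$ is isolated if and only if:
\begin{itemize}
\item $b_{\ell-1}\neq b_{\ell+1}$ for every $\ell\notin S$;
\item $(b_{\ell-1},b_{\ell+1})\neq(0,0)$ for every $\ell\in S$, because the $\h_\ell$ edges survive.
\end{itemize}
The condition at $\ell$ involves only bits of the parity opposite to $\ell$. So the odd-indexed and even-indexed bits can be chosen independently, each forming a cyclic sequence of odd length $2m+1$.

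For odd $i\in S$, let the even-indexed bits alternate along $i+1,i+3,\ldots,i-1$ (indices mod $n$). This takes $2m$ steps, so $b_{i-1}=b_{i+1}$, and you set both to $1$. Treat the odd-indexed bits the same way around an even $j\in S$. The only windows with equal outer bits are then those centred at $i$ and $j$, with outer bits equal to $1$, and their $\dd$-edges are deleted. So $v_b$ is isolated no matter how far apart $i$ and $j$ are. For $n=6$ this gives \texttt{111001} and \texttt{011011}, the paper's isolated vertices for graph 41.

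The same recipe handles your single-parity step. Keep the defect at $i$ for the even-indexed bits, and let the odd-indexed bits alternate around an even $\ell$ with $b_{\ell-1}=b_{\ell+1}=0$. This yields the homotopy source or sink for $\chi_\ell$ that you need. For $n=6$ with $x_1$ present it produces \texttt{010011}, \texttt{110001}, \texttt{011001}, exactly the vertices in the paper's table.

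The decoupling also shows that $\T_{\f}$ has an isolated vertex if and only if $S$ meets both parity classes, since an odd cycle cannot alternate everywhere. This explains why the single-parity case genuinely needs the matching argument.
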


\begin{proof}
    By Corollary \ref{corollary cycle gcds n not 2 mod 4} and Theorem \ref{theorem cycle edge ideal supports} we need only consider the case $n=4m+2$ and at least one singleton variable is present.
    Recall that by Proposition \ref{proposition Taylor graphs are order respecting} we have that $\T_{\f}$ is a subgraph of the Taylor graph for the edge ideal of $C_n$.
    We first consider when $x_i$ and $x_j$ are present with different parity. 
    Note that this removes differential edges of different parity in the connected component of $\T_{\f}$ containing $({\tt0011})^{\odot m}\odot {\tt00}$. 
    This results in the existence of at least one walk satisfying the hypothesis of Lemma \ref{lemma odd length path in taylor graph}, hence $\V_{\f} = \A_k^n$. Thus, we may assume that all $x_i$ present have the same parity, and by relabeling, we may further assume that they are all odd.
    Without loss of generality, we may assume that $x_1$ is present, so no $\dd_1$ exists.
    Thus there are no cycles in the auxiliary graph.
    We remark that the $E$-perfect matching $M$ remains and is now triangular, hence by Proposition \ref{proposition det by cycle decompositions}, $V_{\f}\subseteq V(\chi^E)$.
    For the other containment, we have that for every $0\leq s \leq m$, the vertex $ \tt (0110)^{\odot s}\odot 01 \odot (0011)^{\odot m-s}$ is a homotopy sink for $\chi_{4s+2}$ and that $ \tt(1100)^{\odot s} \odot 01\odot(1001)^{\odot m-s}$ is a homotopy source for $\chi_{4s}$.
    Thus every even indexed $V(\chi_i)$ is contained in $\V_{\f}$ and hence we have $\V_{\f} = V(\chi^E)$.
\end{proof}

%%%%%%%%%%%%%%%%%%%%%%%%%%%%%
\section{Open Questions}
We pose the following questions in light of the work presented in this paper.

\begin{question}
    Can the cohomological support varieties of 7 and 8 generated monomial ideals be classified by our approach, at least computationally?
\end{question}

\begin{question} Are the edge ideals of $(4m+2)$-gons the only monomial ideals that have a cohomological support variety that is not the union of coordinate subspaces?
\end{question}

\begin{question}
   Suppose a monomial ideal has a connected GCD graph. Let $n$ be the number of minimal generators of the monomial ideal, $c$ the codimension of $\V_{\f}$, and $d$ the number of irreducible components of $\V_{\f}$. Do the inequalities $d\leq \frac{n-3}{2}$ and $c\leq \frac{n}{2}$ hold?
\end{question}

%%%%%%%%%%%%%%%%%%%%%%%%%%%%%
%%%%%%%%%%%%%%%%%%%%%%%%%%%%%
\section*{Acknowledgments}

We are grateful to Brian Harbourne, Eloísa Grifo, Mark Walker, and Michael Gintz for several valuable discussions. 
We would like to thank Tyler Dang, 
Brian Lopez Medina,
Andy Nguyen, 
Henry Nguyen, and
Gabriela Portales for their work in the 2025 UNL FGFY REU which inspired this project. 
The authors KF, JF, BK, SS, and RW were supported by NSF grant DMS-2342256. 
Additionally, KMS was supported by NSF Grant DMS-2236983.

%%%%%%%%%%%%%%%%%%%%%%%%%%%%%

\bibliographystyle{alpha}
\bibliography{references.bib}

\end{document}